\definecolor{ChapUC3M}{RGB}{0,14,120}
\definecolor{gris}{gray}{.94}
\ifpdf\hypersetup{%
pdftitle = {Coupled systems of nonlinear Schr\"odinger and
Korteweg-de Vries equations},
pdfsubject = {Trabajo fin de M\'{a}ster},
pdfkeywords = {Schr\"odinger equation. KdV equation. Coupled nonlinear systems. Variational Methods.},
pdfauthor = {Rasiel Fabelo Berm\'udez \textcopyright\ },
}
\newcommand{\be}{\begin{equation}}
\newcommand{\ee}{\end{equation}}
\newenvironment{pf}{\noindent{\it
Proof}.\enspace}{\rule{2mm}{2mm}\medskip}
\newenvironment{pfn}{\noindent{\it Proof. }}{\rule{2mm}{2mm}\medskip}
\newenvironment{pfnt}{\noindent{\it Proof of Theorem \ref{caso N=1}. }}{\rule{2mm}{2mm}\medskip}
\newenvironment{gradientesim}{\noindent{\it Proof of Theorem \ref{th:deriv}. }}{\rule{2mm}{2mm}\medskip}
\newenvironment{pfnkondra}{\noindent{\it Proof of Theorem \ref{compact emb}. }}{\rule{2mm}{2mm}\medskip}
\newcommand{\R}{\mathbb{R}}
  \newcommand{\RN}{\mathbb{R}^N}
\newcommand{\N}{\mathbb{N}}
\newcommand{\E}{\mathbb{E}}
\newcommand{\h}{\mathbb{H}}
\newcommand{\dyle}{\displaystyle}
\renewcommand{\a }{\alpha }
\renewcommand{\b }{\beta }
\newcommand{\D }{\Delta }
\newcommand{\e }{\varepsilon }
\newcommand{\g }{\gamma}
\newcommand{\G }{\Gamma }
\renewcommand{\l }{\lambda }
\renewcommand{\L }{\Lambda }
\newcommand{\m }{\mu }
\newcommand{\n }{\nabla }
\renewcommand{\o }{\omega }
\newcommand{\bra}{\langle} 
\newcommand{\ket}{\rangle} 
\newcommand{\cM}{{\cal M}}
\newcommand{\cN}{{\mathcal{N}}}
\newcommand{\intN}{\int_{\R^N}}
\newcommand{\intR}{\int_\R}
\newcommand{\wt}{\widetilde}
\newcommand{\bu}{{\bf u}}
\newcommand{\bv}{{\bf v}}
\newcommand{\bw}{{\bf w}}
\newcommand{\bo}{{\bf 0}}
\newcommand{\bh}{{\bf h}}
\newcommand{\bk}{{\bf k}}
\newtheorem{Theorem}{Theorem}[section]
\newtheorem{Corollary}[Theorem]{Corollary}
\newtheorem{Lemma}[Theorem]{Lemma}
\newtheorem{Proposition}[Theorem]{Proposition}
\newtheorem{Definition}[Theorem]{Definition}
\newtheorem{remark}[Theorem]{Remark}
\newtheorem{remarks}[Theorem]{Remarks}
\newtheorem{example}[Theorem]{Example}
\newtheorem{examples}[Theorem]{Examples}
\newenvironment{Example}{\begin{example}
\rm}{\rule{2mm}{2mm}\end{example}}
\titleformat{\chapter}[display]
 {\bfseries\LARGE}
 {\filright\MakeUppercase{ \LARGE  \chaptertitlename} \LARGE\thechapter}
 {2ex}
  {\titleline[c]{\titlerule[2pt]}\vspace{1.5ex}\filleft}
  [\vspace{0ex}{\rule{\textwidth}{2pt}}]
\titleformat{\section}[runin]
{\large \normalfont\bfseries}
{\S\ \thesection.}{.5em}{}[.]
\titleformat{\subsection}[runin]
{\large \normalfont\bfseries}
{\S\ \thesubsection.}{.5em}{}[.]
\newcommand{\rnc}{\renewcommand}
\rnc{\exp}{\mathrm{e}}
\date{}
\begin{document}


\thispagestyle{empty}

\vspace*{-1cm}
 {\color{ChapUC3M}\begin{center}

 \includegraphics[width=.7\textwidth]{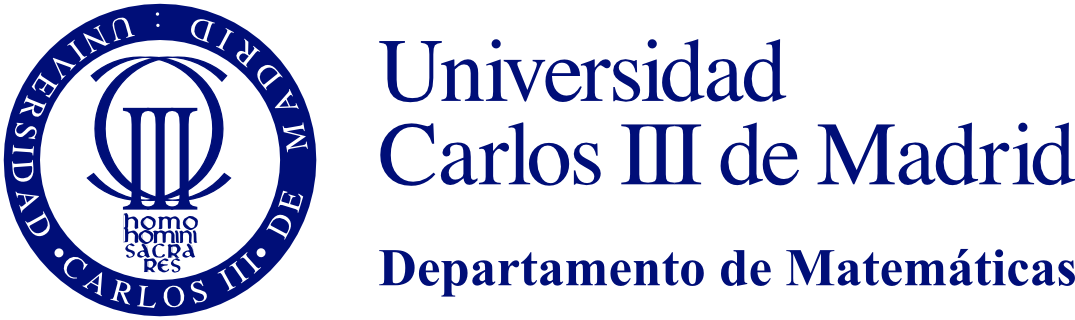}

\vspace{4cm}
\noindent {\color{ChapUC3M}{\fontsize{15}{0} \textbf{Master's Thesis} } \hfill {\rule{.7\textwidth}{3pt}}

\vspace{0.5cm}

\noindent{\fontsize{20.8}{0}\selectfont\textbf{ Coupled system of nonlinear Schr\"odinger and
Korteweg-de Vries equations}}


\vspace{0.3cm}
{\rule{\textwidth}{3pt}}}

\vspace{3cm}  {\fontsize{15}{0}  \begin{tabular}{rl}
   \textbf{Author:} & \textbf{Rasiel Fabelo Berm\'udez} \\ & \\
  \textbf{Advisors:} &  \textbf{Eduardo Colorado Heras} \\ & \textbf{Pablo \'Alvarez Caudevilla}
\end{tabular}}

\vspace{2cm}

 \noindent{\fontsize{15}{0}\selectfont\textbf{ Master in Mathematical Engineering}}

 \vspace{3cm} \hfill { \fontsize{15}{0}   \textsl{Legan\'{e}s,  September 2016}}
\end{center}}

\clearpage
\
\thispagestyle{empty}
\newpage

\thispagestyle{empty}

\vspace*{4cm}

\begin{flushright}
\emph{To my mother,\\
my father and my sister.}
\end{flushright}

\clearpage
\
\thispagestyle{empty}

\newpage
\pagenumbering{Roman} \setcounter{page}{1}


\begin{center}\vspace{5mm}{\Large\bf Abstract}\end{center}

This work is divided into two parts. First, we analyze the existence of positive bound and ground states for a second order stationary system coming from a coupled system of nonlinear Schr\"odinger--Korteweg-de Vries equations. Second, we extend these results for a higher order system of nonlinear Schr\"odinger--Korteweg-de Vries equations. Looking for  ``standing-traveling'' waves we arrive at a bi-harmonic stationary system, for which we prove the existence and multiplicity of solutions under appropriate conditions on the parameters.

\begin{center}\vspace{5mm}{\Large\bf Resumen}\end{center}

Este trabajo est\'a dividido en dos partes. Primero, se analiza la existencia de soluciones de un sistema estacionario de segundo orden que proviene de un sistema no linear tipo Schr\"odinger--Korteweg-de Vries. La otra parte del trabajo est\'a dedicada al estudio un sistema no lineal de alto orden tambi\'en de tipo Schr\"odinger--Korteweg-de Vries. Buscando soluciones en forma de onda ``estacionaria-viajera'' se obtiene un sistema biarm\'onico estacionario, para el cual se demuestra la existencia y multiplicidad  de soluciones bajo determinadas condiciones de los par\'ametros.

\tableofcontents
\chapter*{Acknowledgements}
\addcontentsline{toc}{chapter}{Acknowledgements}
\markboth{ACKNOWLEDGEMENTS}{ACKNOWLEDGEMENTS}
\

First, I would like to express my sincere gratitude to my advisors Dr. Eduardo Colorado Heras \ and \  Dr. Pablo \'Alvarez Caudevilla  \ for \ the \ continuous \  support throughout my Master studies  and related research, for their patience, motivation, and immense knowledge. Their guide helped me at all times during the research and writing of this thesis.  I could not have imagined having better advisors and mentors for my Master thesis.

I am grateful to the Department of Mathematics of UC3M, for giving me the opportunity to carry out my studies and helping me out with the scholarship. 

I would like to thank  all who have been at my side during this time, in particular  to Dr. H\'ector Pijeira Cabrera for his advice and Edith for the warm hospitality.  I would also like to give special thanks to my friends  Noel, Ruxlan, Yanay, Abel, Yanely, Ariel, Janice, Laura and Ana Luisa for their company.

Last but not least, I would like to express my deepest gratitude to my loved ones, who have supported me throughout the entire  process. I am grateful to my mother, my father, my sister and my grandparents for supporting me spiritually throughout these studies and my life in general. I will be grateful forever for your love.
\chapter*{Notations}
\addcontentsline{toc}{chapter}{Notations}
\markboth{NOTATIONS}{NOTATIONS}
\begin{tabular}{p{7.4cm}p{7cm}}
$L^p(\Omega)$ & Lebesgue space with norm $\|\cdot\|_{L^p}$. \\
$L^q_{loc}(\Omega)$&Space of functions $f:\Omega\to\R$ such that $f\in L^p(K)$ for every compact set $K\subset\Omega$.\\ 
$\mathcal{C}(\Omega)$ & Space of continuous functions in $\Omega$. \\
$\mathcal{C}^n(\Omega)$& Space of $n$ times continuously differentiable functions in $\Omega$. \\
$\mathcal{C}^\infty(\Omega)$ & Space of infinitely differentiable  functions in $\Omega$. \\
$\mathcal{C}_c(\Omega)$ & Space of functions with compact support in $\Omega$. \\
$\mathcal{C}_c^\infty(\Omega)$ & Space of infinitely differentiable functions with compact support in $\Omega$.\\
$\R^N_+=\{(x',x_N)\in \R^N\ : x_N>0\ \}$& Set with the last component $x_N>0$.\\
${\bf Q}=\{(x',x_N)\in \R^N: |x'|<1,\ |x_N|<1\}$ & Set with the last component $x_N<1$ and $x'$ in the unit ball in $\R^{N-1}$. \\
${\bf Q}_+={\bf Q}\cap \R^N_+$& Intersection between ${\bf Q}$ and $\R^N_+$.\\
${\bf Q}_0=\{(x',0)\in \R^N\ : |x'|<1\}$&Set with $x'$ in the unit ball in $\R^{N-1}$ and $x_N=0$.\\
$W^{m,p}(\Omega)$& Sobolev space with $m$ derivatives in $L^p(\Omega)$.\\
$W^{m,p}_{r}(\Omega)$& Space of the radially symmetric functions belong $W^{m,p}(\Omega)$.\\
$W^{m,p}_{rd}(\Omega)$& Space of the non-increasing radially symmetric functions belong $W^{m,p}(\Omega)$.\\
$L(X,Y)$ & Set of the linear continuous maps from $X$ into $Y$.\\
$L_k(X,Y)$& Space of $k$-linear maps from $X$ into $Y$.\\

$\mathcal{C}(\mathcal{U},Z)$& Set of continuous maps from $\mathcal{U}$ onto $Z$.\\

\end{tabular}

\begin{tabular}{p{7.4cm}p{7cm}}

$\mathcal{C}^k(\mathcal{U},Z)$& Subset of $\mathcal{C}(\mathcal{U},Z)$ of $k$ times differentiable maps $J$ such that the application $\mathcal{U}\to L_k(X,\R)$, defined as $u\mapsto d^kJ(u)$, is continuous.
\\

$\mathcal{C}^{0,\alpha}(\mathcal{U},Z)$& Set of maps $J\in \mathcal{C}(\mathcal{U},Z)$ such that
\[
\sup\limits_{\substack{u,v \in \mathcal{U}\\ u\neq v}}
\left(\frac{|J(u)-J(v)|}{\|u-v\|^\alpha}\right)<+\infty
\]
for some $\alpha\in (0,1]$. If $\alpha=1$ these maps are called Lipschitz continuous and if $\alpha<1$ these maps are nothing but the H\"older continuous maps.\\
$\mathcal{C}^{k,\alpha}(\mathcal{U},Z)$& Set of maps $J\in \mathcal{C}^k(\mathcal{U},Z)$ such that $d^kJ(u)\in\mathcal{C}^{0,\alpha}(\mathcal{U},Z)$.\\
$\Omega^\star$&Symmetrized set of $\Omega$\\
$u^\star$&Schwarz symmetrization of function $u$\\
$\hbar$ & Reduced Planck constant (Planck constant divided by $2\pi$)\\
 $\displaystyle \nabla =\left(\frac{\partial }{\partial x_1},\frac{\partial }{\partial x_2},\dots,\frac{\partial }{\partial x_N}\right)
$ & Gradient differential operator\\
 $\displaystyle\Delta=\left(\frac{\partial^2 }{\partial x_1^2}+\frac{\partial ^2}{\partial x_2^2}+\dots+\frac{\partial^2 }{\partial x_N^2}\right)$ & Laplacian differential operator\\
 $\bra\cdot,\cdot\ket_H$&Scalar product in the Hilbert space $H$.\\
$\|\cdot\|_X$& Norm in the space X.\\
$X'$ & Dual space of $X$\\
 $\hookrightarrow$& Continuous embedding.\\
$\hookrightarrow\hookrightarrow$& Compact embedding.\\
 $\rightharpoonup$& Weak Convergence.\\
 $a.e.$& Convergence almost everywhere.\\
$f|_\Omega$& Restriction of function $f$ to $\Omega$.\\
$\mathcal{F}|_\Omega$& Set of functions $f\in\mathcal{F}$ restricted to $\Omega$.\\ 
$\omega_N$ & Volume of the unit ball in $\R^N$.\\
$\omega_N'$ & Surface of the unit sphere in $\R^N$.\\

$T_pM$&Tangent space to $M$ at the point $p\in M$.\\
$\oplus$& Direct sum.\\
$d_GJ$&G\^ateaux differential of $J$.\\
$dJ$&Fr\'echet differential of $J$.\\
$d_MJ$&Constrained derivative of $J$ on a manifold $M.$\\
$\nabla_M$&Constrained gradient of $J$ on $M$.\\
$ \mathcal{H}_{N}X$& $N$-dimensional (Hausdorff) measure of the set $X$.
 
\end{tabular}


\chapter*{Introduction}
\addcontentsline{toc}{chapter}{Introduction}
\markboth{INTRODUCTION}{INTRODUCTION}

This work aims to prove the existence of solutions for two coupled systems of partial differential equations.  
\

The first system is composed by a nonlinear Schr\"odinger equation and a Korteweg-de Vries equation as follows 
\begin{equation}\label{NLS-KdV11}\tag{S1}
\boxed{\left\{\begin{array}{rcl}
if_t + f_{xx} + |f|^2f+ \b fg & = &0\\
g_t +g_{xxx} +gg_x  + \frac 12\b(|f|^2)_x  & = & 0,
\end{array}\right.}
\end{equation}
where $f=f(x,t)\in \mathbb{C}$ while $g=g(x,t)\in \mathbb{R}$, and
$\b\in \mathbb{R}$ is the real coupling coefficient. System \eqref{NLS-KdV11}
appears in phenomena of interactions between short and long
dispersive waves, arising in fluid mechanics, such as  the
interactions of capillary-gravity water waves. Indeed, $f$
represents the short wave, while $g$ stands for the long wave. See
\cite{aa,c2,c3,cl,fo} and the references therein for more details. We  look for  solitary ``traveling'' waves solutions, namely
solutions to \eqref{NLS-KdV11} of the form
\[ (f(x,t),g(x,t))=\left(e^{i\o t}
e^{i\frac c2 x}u(x-ct),v(x-ct)\right), \] with $u$ and $v$ real functions. Choosing  $\l_1=\o+\frac{c^2}{4}$,
$\l_2=c$, we get that $u,\, v$ solve the following stationary problem in dimension one 
\begin{equation}\label{NLS-KdV111}
\left\{\begin{array}{rcl}
-u'' +\l_1 u & = & u^3+\beta uv \\
-v'' +\l_2 v & = & \frac 12 v^2+\frac 12\beta u^2.
\end{array}\right.
\end{equation}
This system has been previously studied by Dias, Figueira and Oliveira in 
\cite{dfo}. Also, a generalization of \eqref{NLS-KdV111}  with general power nonlinearities, has been previously analyzed  by the same authors in \cite{dfo2} and by Albert and Bhattarai in \cite{ab}. The results obtained in the works previously mentioned were improved in several points in \cite{c2}. In this work,  we focus our attention on one of these points which deals with the existence of positive even ground and bound states of \eqref{NLS-KdV111} under the appropriate range of parameter settings. We mainly perform a detailed analysis of the recent work \cite{c2,c3}, where positive solutions of  \eqref{NLS-KdV111} are classified proving:

\begin{itemize}

\item Existence of positive even ground states of \eqref{NLS-KdV111} under the
following hypotheses:
\begin{itemize}
\item  the coupling coefficient $\b>\L>0$ for an appropriate constant $\L$; see Theorem \ref{th:1},
\item $\b>0$ and $\l_2\gg 1$; see Theorem \ref{th:ground2}.
\end{itemize}
\item Existence of positive even bound states of \eqref{NLS-KdV111} when:
\begin{itemize}
\item $0<\b\ll 1$; see Theorem \ref{th:2}, where we also give a bifurcation result,
\item $0<\b<\L$ and $\l_2\gg 1$; see Theorem \ref{th:bound2}.
\end{itemize}
\end{itemize}
We also extend these results to dimensions $N=2,3$. The coexistence of positive bound and ground states for  $0<\b<\L$
and $\l_2$ large is a great novelty due to the  difference with the more
studied systems of nonlinear Schr\"odinger  equations in the last several years; see
Remark \ref{rem:11}.

\

The second system that we study is a higher order system coming from  \eqref{NLS-KdV111} as a natural extension. More
precisely, we consider the following system
\begin{equation}\label{NLS-KdV edu111}\tag{S2}
\boxed{\left\{\begin{array}{rcl}
if_t -  f_{xxxx} + |f|^2f+ \b fg & = & 0\\
g_t-g_{xxxxx}+|g|g_x+\tfrac12\beta(|f|^2)_x& = & 0.
\end{array}\right.}
\end{equation}
Looking for ``standing-traveling" waves
solutions of the form
\[
(f(x,t),g(x,t))=\left(e^{i\l_1 t} u(x),v(x-\l_2t)\right),
\]
with $u$ and $v$ real functions,  we arrive at the fourth-order stationary system
 \be \left\lbrace
\begin{array}{ccl}\label{eq:NLS-KdV2221}
u^{(iv} +\l_1u & =& u^3+\beta uv\\
v^{(iv}+\l_2v & =& \frac{1}{2}|v|v+\frac{1}{2}\beta u^2,
\end{array}
 \right.
\ee where $w^{(iv}$ denotes the fourth derivative of $w$. This is the first time,
up to our knowledge, that the interaction of standing waves and
traveling waves is analyzed in the mathematical literature. 
Although system \eqref{NLS-KdV edu111} only make sense in dimension $N=1$, we can consider the stationary system \eqref{eq:NLS-KdV2221} in higher dimensional cases
\be
\left\lbrace
\begin{array}{ccl}\label{eq:NLS-KdV21}
\D^2u +\l_1u & =& u^3+\beta uv\\
\D^2v+\l_2v & =& \frac{1}{2}|v|v+\frac{1}{2}\beta u^2,
\end{array}
 \right.
\ee
 where $u,v\in W^{2,2}(\R^N)$, $1\le N\le 7$, $\l_j>0$ with
$j=1,2$ and $\beta>0$ is the coupling parameter.

Recently, other similar fourth-order systems studying the interaction of coupled nonlinear Schr\"odinger equations have appeared, for example in \cite{pev}, where the
coupling terms have the same homogeneity as the nonlinear terms. Note that, as far as we know, there is no previous mathematical work analyzing a higher order system with the nonlinear and coupling terms considered in \eqref{eq:NLS-KdV21}.

In system \eqref{eq:NLS-KdV21}, we first analyze the dimensional cases $2\le N\le 7$ in the radial framework by using the compactness described in Remark
\ref{re:functionl constrain y PS condition}-$(iii)$. The one dimensional case is also studied through the application of a  measure lemma due to P. L. Lions \cite{lions2} to circumvent the lack of compactness. To be more precise, we
prove that there exists a positive critical value of the coupling
parameter $\beta$, denoted by $\L'$ and defined by
 \eqref{Lambda},
such that the associated functional constrained to the corresponding Nehari manifold possesses
 a positive global minimum. We show that this positive global minimum is a critical point with energy below the energy of the semi-trivial solution under the following
 hypotheses:
 either $\beta>\Lambda'$, or  $\beta>0$ and $\lambda_2\gg 1$.
Furthermore, we find a mountain pass critical point if $\beta<\L'$ and $\lambda_2\gg 1$.

\

This work is organized as follows. In Chapter \ref{Preliminaries} we present some preliminaries necessary for the proper understanding of the results and the sake of completeness. The Schr\"odinger equations and the Korteweg-de Vries equation are introduced with a brief historical summaries of their discoveries. We recall the Sobolev spaces and their most important properties. Basic concepts of calculus of variation are included, such as the Palais-Smale compactness condition and the Mountain Pass Theorem. We also present some results about the Schwarz symmetrization that will be useful for our work, especially in the second chapter.

Chapter \ref{chap 2to orden} is devoted to the study of system \eqref{NLS-KdV11}. In Section \ref{sec:funct} we
introduce the functional framework and give some
definitions. Next,  we define the Nehari Manifold in Section
\ref{sec:key}, proving some properties of it. We establish a useful
measure lemma and show a result dealing with qualitative properties
of the semi-trivial solution. Section \ref{sec:mainR} is divided
into two subsections; the first one contains the proof of the existence of ground
states, and the second one deals with the existence of bound states. 

In Chapter \ref{chap 4to}, we perform the corresponding analysis of the fourth order system \eqref{NLS-KdV edu111}. In Section \ref{sec:2}, we
introduce the notation, establish the functional framework,
 define the Nehari manifold and study its properties.
Section \ref{sec:4} is devoted to prove the main results. It is divided into two subsections; in the first one
 we study the high-dimensional case ($2\le
N\le 7$), while the second one   deals
with the one-dimensional case.

\clearpage
\

\thispagestyle{empty}
\newpage
\pagenumbering{arabic} \setcounter{page}{1}

\chapter{Preliminaries}\label{Preliminaries}

In this chapter we present the Schr\"odinger equation and the Korteweg-de Vries equation. We  also discuss some preliminary notions that we are going to use throughout this work. 

\section{The Schr\"odinger equation}\

In quantum mechanics, the Schr\"odinger equation is a partial differential equation that describes how the quantum state of a quantum system changes with time. It was formulated in 1926  by the Austrian physicist Erwin Schr\"odinger \cite{S}. 
In classical mechanics, Newton's second law ($F=ma$) is used to mathematically predict the state of a given system at any time after a known initial condition. In quantum mechanics, the analogue of Newton's law is Schr\"odinger equation for a quantum system (usually atoms, molecules, and subatomic particles). The Schr\"odinger equation is a linear partial differential equation, describing the time-evolution of the system's ``wave function'' (also called a ``state function") \cite{Griffiths}.
Although Schr\"odinger equation is often presented as a separate postulate, some authors \cite[\textsection3]{Ballentine} show that some properties resulting from the Schr\"odinger equation may be deduced just from symmetry principles alone; for example, the commutation relations. Generally, ``derivations'' of the Schr\"odinger equation demonstrate its mathematical plausibility for describing wave-particle duality but, to date, there are no universally accepted derivations of the Schr\"odinger equation from appropriate axioms.
In the Copenhagen interpretation\footnote{The Copenhagen interpretation is an expression of the meaning of quantum mechanics that was largely devised in the years 1925 to 1927 by Niels Bohr and Werner Heisenberg. It remains one of the most commonly taught interpretations of quantum mechanics. 
According to the Copenhagen interpretation, physical systems generally do not have definite properties prior to being measured, and quantum mechanics can only predict the probabilities that measurements will produce certain results.} of quantum mechanics, the wave function is the most complete description that can be given of a physical system. The Schr\"odinger equation describes not only molecular, atomic, and subatomic systems, but also macroscopic systems, possibly even the whole universe. This equation, in its most general form, is consistent with both classical mechanics and special relativity, but the original formulation by Schr\"odinger himself was non-relativistic.

The Schr\"odinger equation takes the form
\begin{equation}\label{eq:Schrodinger_R}
i\hbar\,f_t=\widehat{H} f,
\end{equation}
where $i$ is the imaginary unit, $\hbar$ is the reduced Planck constant, $f=f(\mathbf{x},t)$ is a complex wave function on $\R^N\times \R$, $f_t$ denote $\partial f/ \partial t$ and $\widehat{H}$ is a Hamiltonian operator which characterizes the total energy of a given wave function and takes different forms depending on the physical situation. The best known example of this kind of equation is the non-relativistic Schr\"odinger equation for a single particle moving in an external field
\begin{equation}\label{eq:Schrodinger_famous}
i\hbar\, f_t=\left[-\frac{\hbar}{2\mu}\Delta+V\right] f,
\end{equation}
where $\widehat{H}$ was taken as the total energy equals kinetic energy plus potential energy, $\Delta$ is the Laplacian differential operator and $\mu$ is the reduced mass.
Rescaling \eqref{eq:Schrodinger_famous} by
\begin{equation}
f'(\mathbf{x}',t)= f(\mathbf{x},t),\qquad \mathbf{x}'=\sqrt{2\mu}\mathbf{x},
\end{equation} 
and taking a nonlinear variation of the form $$V=-\hbar J(|f'|^2),$$ for a  given smooth complex function $J$, we obtain (omitting primes) the so called nonlinear Schr\"odinger equation
\begin{equation}\label{eq:NLSE1}
if_t+\Delta f + J(|f|^2)f=0.
\end{equation}
The nonlinear Schr\"odinger equation is a classical field equation whose principal applications are related to the propagation of light in nonlinear optical fibers and planar waveguides \cite{Malomed}, and Bose-Einstein condensates\footnote{A Bose-Einstein condensate  is a state of matter of a dilute gas of bosons cooled to temperatures very close to absolute zero.} confined to highly anisotropic cigar-shaped traps, in the mean-field regime \cite{Pitaevskii}. Additionally, the equation appears in the studies of small-amplitude gravity waves\footnote{In fluid dynamics, gravity waves are waves generated in a fluid medium or at the interface between two media when the force of gravity or buoyancy tries to restore equilibrium. An example of such an interface is that between the atmosphere and the ocean, which gives rise to wind waves.} on the surface of deep inviscid (zero-viscosity) water \cite{Malomed}, the Langmuir waves\footnote{The Langmuir waves are rapid oscillations of the electron density in conducting media such as plasmas or metals. } in the plasma \cite{Malomed}, the propagation of plane-diffracted wave beams in the focusing regions of the ionosphere \cite{Gurevich}, the propagation of Davydov's alpha-helix solitons\footnote{Davydov soliton is a quantum quasiparticle representing an excitation propagating along the protein alpha-helix self-trapped amide I. It is a solution of the Davydov Hamiltonian. It is named for the Soviet and Ukrainian physicist Alexander Davydov.}, which are responsible for energy transport along molecular chains \cite{Balakrishnan}, and many others. 
More generally, the nonlinear Schr\"odinger equation  appears as one of the universal equations that describe the evolution of slowly varying packets of quasi-monochromatic waves in weakly nonlinear media that have dispersion \cite{Malomed}. 

 In particular, the one-dimensional nonlinear Schr\"odinger equation is an example of an integrable model. In quantum mechanics, the one-dimensional nonlinear Schr\"odinger equation is a special case of the classical nonlinear Schr\"odinger field\footnote{In quantum mechanics and quantum field theory, a Schr\"odinger (nonlinear Schr\"odinger) field is a quantum field which obeys the  Schr\"odinger (nonlinear Schr\"odinger) equation.}, which in turn is a classical limit of a quantum Schr\"odinger field. Both the quantum and the classical one-dimensional nonlinear Schr\"odinger equation are integrable. In more than one dimension, the equation is not integrable. It allows us a collapse and wave turbulence \cite{Falkovich}.

 Another important equation related to the Schr\"odinger equation is the so called fractional Schr\"odinger equation, which is a fundamental equation of fractional quantum mechanics. It was introduced by Nick Laskin in 1999 (see \cite{Las2000,Las2002}) as a result of extending the Feynman path integral, from the Brownian-like to L\'evy-like quantum mechanical paths. The term fractional Schr\"odinger equation was coined by Nick Laskin who made a generalization of standard quantum mechanics called fractional quantum mechanics. The fractional Schr\"odinger equation is obtained  form \eqref{eq:Schrodinger_R} replacing $\widehat{H}$ by a  fractional Hamiltonian operator $\widehat{H}_\alpha$ of the from 
\begin{equation}
\widehat{H}_\alpha=D_\alpha(-\hbar^2\Delta)^{\alpha/2}+V,
\end{equation}  
where $D_\alpha$ is a scale constant, $(-\hbar^2\Delta)^{\alpha/2}$ is the quantum Riesz fractional derivative\footnote{ The Riesz fractional derivative was originally introduced in \cite{Riesz}.}. The most common case in the literature of this equation is the three-dimensional case, in which the 3D quantum  Riesz fractional derivative is given by
\[
(-\hbar^2\Delta)^{\alpha/2}f(\mathbf{x},t)=\frac{1}{(2\pi\hbar)^3}\int_{\R^3}e^{i\frac{\mathbf{p}\mathbf{x}}{\hbar}}|\mathbf{p}|^\alpha\widehat{f}(\mathbf{p},t)\,d\mathbf{p},
\]
where
\[
\widehat{f}(\mathbf{p},t)=\int_{\R^3}e^{-i\frac{\mathbf{p}\mathbf{x}}{\hbar}} f(\mathbf{x},t)\,d\mathbf{x},
\]
is the  three-dimensional Fourier transform of $f$. The index $1 < \alpha \leq≤ 2$ in the above expression  is the so called  L\`evy index. Thus, the fractional Schr\"odinger equation includes a space derivative of fractional order $\alpha$ instead of the second order space derivative in the standard Schr\"odinger equation. At $\alpha= 2$, the fractional Schr\"odinger equation becomes the standard Schr\"odinger equation.
There are many applications of the fractional Schr\"odinger equation such as the  	fractional Bohr atom, the  fractional quantum oscillator, the 	fractional quantum mechanics in solid state systems, and others; see \cite{Xiaoyi} for more applications.

\section{The Korteweg-de Vries equation}\

The Korteweg-de Vries  equation  is a universal mathematical model for the description of weakly nonlinear long wave propagation in dispersive media. This equation is given by
\begin{equation}\label{eq:KDVE}
g_t+\alpha gg_x+\beta g_{xxx}=0, 
\end{equation}
where $g(x,t)$ is a real function of the one-dimensional space coordinate $x$ and time $t$. The coefficients $\alpha$ and $\beta$
are determined by the medium properties and can be either constants or functions.

 An incomplete list of physical applications of the Korteweg-de Vries equations includes shallow-water gravity waves \cite{Hereman}, ion-acoustic waves\footnote{In plasma physics, an ion-acoustic wave is one type of longitudinal oscillation of the ions and electrons in a plasma, much like acoustic waves travelling in neutral gas.} in collisionless plasma \cite{Nakamura,Wakil}, waves in bubbly fluids \cite{Khismatullin}, waves in the ocean and many others \cite{Crighton}. This broad range of applicability is explained by the fact that the Korteweg-de Vries equation describes a combined effect of the lowest-order, quadratic, nonlinearity (term $gg_x$) and the simplest long-wave dispersion (term $g_{xxx}$). One can find derivations of the Korteweg-de Vries equation for different physical contexts in the books by Dodd et al \cite{Dodd}, Drazin and Johnson \cite{Drazin},  Newell \cite{Newell}, and others. 
 
  Although the Korteweg-de Vries equations with constant coefficient was originally derived in the second half of the 19th century, its real significance as a fundamental mathematical model for the generation and propagation of long nonlinear waves of small amplitude has been understood only after the seminal works of Zabusky and Kruskal (1965)\cite{Zabusky}, Gardner, Greene, Kruskal and Miura (1967)\cite{Gardner} and Lax (1968)\cite{Lax}. These authors showed that the Korteweg-de Vries equation (unlike a ``general'' nonlinear dispersive equation) can be solved exactly for a broad class of initial or boundary conditions and, importantly, the solutions often contain a combination of localized wave states, which preserve their ``identity'' in the interactions with each other; pretty much as classical particles do. In the longtime asymptotic solutions, such localized states represent solitary waves, which are waves that maintain their shape while propagating at a constant speed. Such solitary wave solutions of the Korteweg-de Vries equation have been called {\it solitons}\footnote{The soliton phenomenon was first described in 1834 by John Scott Russell who observed a solitary wave in the Union Canal in Scotland. He reproduced the phenomenon in a wave tank and named it the {\it Wave of Translation}. Solitons are caused by a cancellation of nonlinear and dispersive effects in the medium. These are the solutions of a widespread class of weakly nonlinear dispersive partial differential equations describing physical systems.} by Zabusky and Kruskal  in 1965 \cite{Zabusky} owing to their unusual particle-like behaviour in the interactions with other solitary waves and nonlinear radiation. However, the solitons were already well known due to the original works of Russel (1845) \cite{Russell}, Boussinesq (1972) \cite{Boussinesq}, Rayleigh (1876), and Korteweg and de Vries (1895) \cite{Korteweg}. 
  

When  $\alpha$ y $\beta$ are constant, the Korteweg-de Vries equation  can be rescaled in order to eliminate the constants. Setting 
\begin{equation}
g'(x',t')=\alpha g(x,t),\qquad x'=\frac{x}{\sqrt{\beta}},\qquad t'=\frac{t}{\sqrt{\beta}},
\end{equation}
equation \eqref{eq:KDVE} takes the form (omitting primes),
\begin{equation}
g_t+gg_x+g_{xxx}=0.
\end{equation}
We shall look for a solution of the above equation in the form of a traveling wave, i.e., $g(x,t)=v(\theta)$, where $\theta=x-ct$ is the travelling phase and $c$ is the phase velocity. Moreover, we assume that $v,v'$ and $v''$ vanish at infinity where, now, $v'$ and $v''$ represent the first and second derivative respectively of $v$. Now, the Korteweg-de Vries equation reduces to an ordinary differential equation for the function $v$ as follows

\begin{figure}[t]
\begin{center}
    \includegraphics[height=6cm]{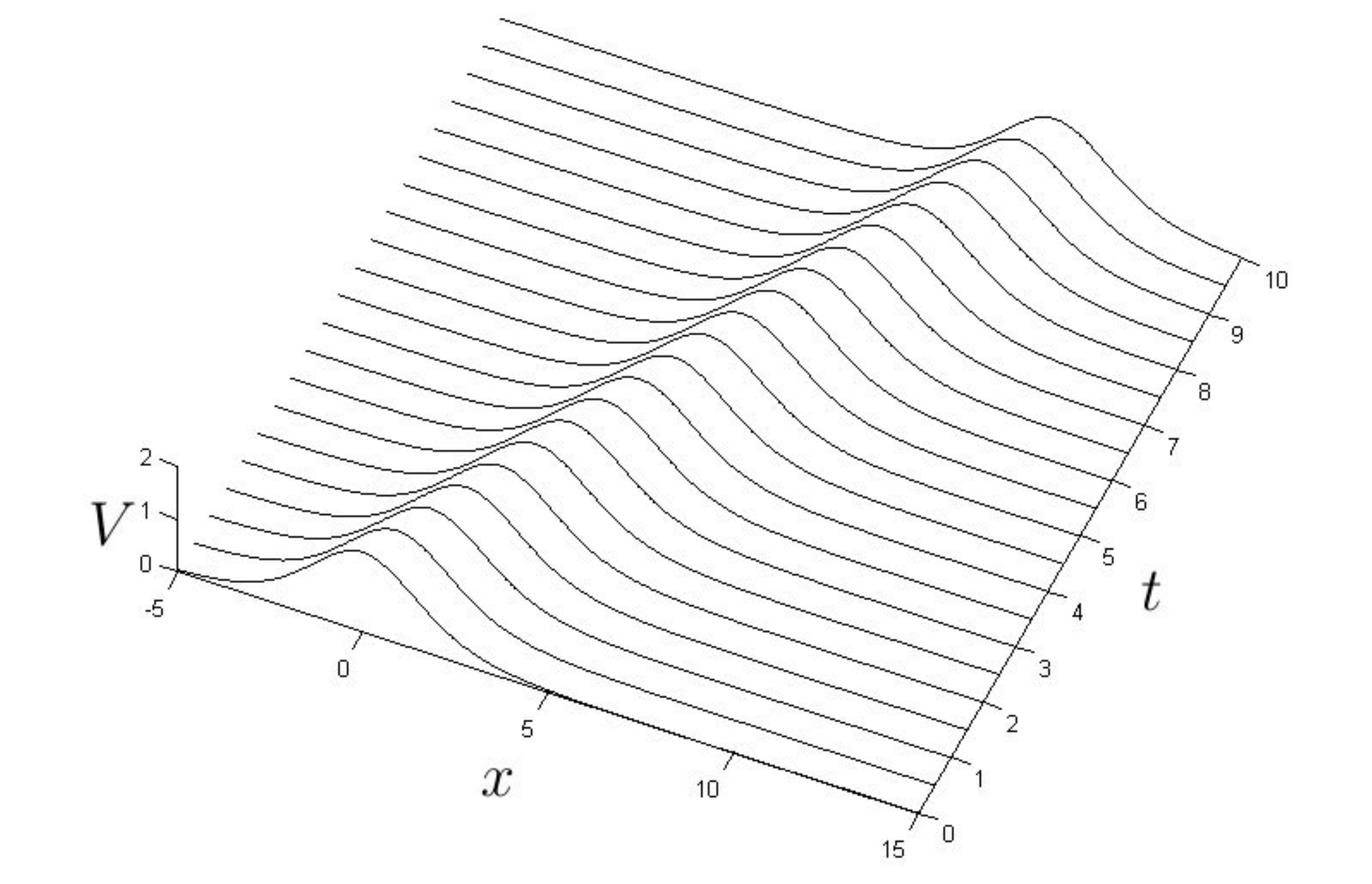}
\end{center}
   \caption{Soliton}
 \label{Soliton}
\end{figure}\begin{equation}\label{cambio kdv}
-cv'+vv'+v'''=0,
\end{equation}
which integrated gives us
\begin{equation}
-cv+\frac{v^2}{2}+v''+c_1=0.
\end{equation}
Taking into account that $v$ vanishes   at infinity it follows that $c_1=0$, and performing the change of variable $v(\theta)=2cV(\sqrt{c}\theta)$ we get
\begin{equation}
V''-V+V^2=0.
\end{equation}
We can see in \cite{kw} that the above equation has a unique positive even solution given by
\begin{equation}\label{v semitrivial}
V(\theta)=\frac{3}{2\cosh^2\left(\frac{\theta}{2}\right)}.
\end{equation}
Taking into account that $\theta=x-ct$, the expression \eqref{v semitrivial} describes a right-moving soliton (see Figure \ref{Soliton}).

\section{Sobolev spaces}
\

In this section we will present the Sobolev spaces and some of their properties.

 Let $\Omega\subset\R^N$ be an open set.
\begin{Definition}
 For $1\leq p\leq \infty$, the Sobolev space $W^{1,p}(\Omega)$ is defined by
\[
W^{1,p}(\Omega)=\left\{
u\in L^p(\Omega) \left|
\begin{array}{l}
\exists\ g_1,g_2,\dots,g_N\in L^p(\Omega)\qquad\text{such that}\\ 
\displaystyle\int_\Omega u\frac{\partial \varphi}{\partial x_i}=-\displaystyle\int_\Omega g_i\varphi\quad\forall\varphi\in \mathcal{C}_c^\infty(\Omega),\quad\forall i=1,2,\dots,N
\end{array} 
\right.
\right\}.
\]
\end{Definition}
For $u\in W^{1,p}(\Omega)$ we define the weak partial derivatives and gradient as follows
\[
\frac{\partial u}{\partial x_i}=g_i,\qquad
 \nabla u=\left(\frac{\partial u}{\partial x_1},\frac{\partial u}{\partial x_2},\dots,\frac{\partial u}{\partial x_N}\right).
\]
The space $W^{1,p}(\Omega)$ is equipped with the norm
\begin{equation}\label{norma sobolev 1}
\|u\|_{W^{1,p}}=\|u\|_{L^p}+\sum\limits_{i=1}^N\left\|\frac{\partial u}{\partial x_i}\right\|_{L^p},\footnote{ When there is no confusion we shall often write $W^{m,p}$ and $L^p$ instead of $W^{m,p}(\Omega)$ and $L^p(\Omega)$ respectively.}
\end{equation}
which, for $1\leq p< \infty$, is equivalent to the norm
\begin{equation}\label{norma sobolev 2}
\|u\|_{W^{1,p}}=\left(\|u\|_{L^p}^p+\|\nabla u\|_{L^p}^p\right)^{\frac{1}{p}},
\end{equation}
where 
\[
\|\nabla u\|_{L^p}=\left(\sum\limits_{i=1}^N\left\|\frac{\partial u}{\partial x_i}\right\|_{L^p}^p\right)^\frac{1}{p}.
\]

Now we will introduce the Sobolev spaces with higher orders of regularity.
\begin{Definition}
Let $m\geq 2$ be an integer and $1\leq p\leq \infty$, then, we define by induction the set $W^{m,p}(\Omega)$
\[
W^{m,p}(\Omega)=\left\{
u\in W^{m-1,p}(\Omega)
\left|\ 
\frac{\partial u}{\partial x_i}\in W^{m-1,p}(\Omega),\quad \forall i=1,2,\dots,N
\right.
\right\}.
\]
\end{Definition}
We use the standard multi-index notation $\alpha=(\alpha_1,\alpha_2,\dots,\alpha_N)$, with $\alpha_i\geq 0$ integers, to denote the weak partial derivatives as follows,
\[
D^\alpha u =\frac{\partial^{|\alpha|}u}{\partial x_1^{\alpha_1}\partial x_2^{\alpha_2},\cdots, \partial x_N^{\alpha_N}},\quad\text{where}\quad |\alpha|=\sum\limits_{i=1}^N \alpha_i\leq m.
\]
The space $W^{m,p}(\Omega)$ is a Banach space equipped with the norm
\begin{equation}\label{norma sobolev m1}
\|u\|_{W^{m,p}}=\sum\limits_{0\leq|\alpha|\leq m}\left\|D^\alpha u\right\|_{L^p}.
\end{equation}
which, for $1\leq p< \infty$, is equivalent to the norm
\begin{equation}\label{norma sobolev m2}
\|u\|_{W^{m,p}}=\left(\sum\limits_{0\leq|\alpha|\leq m}\left\|D^\alpha u\right\|^p_{L^p}\right)^\frac{1}{p}.
\end{equation}
Moreover, $H^m(\Omega)=W^{m,2}(\Omega)$ equipped with the scalar product 
\be
\bra u,v\ket_{W^{m,2}}=\sum\limits_{0\leq|\alpha|\leq m} \bra D^\alpha u,D^\alpha v\ket_{L^2},
\ee 
is a Hilbert space.  We also have the following result.

\begin{Theorem}\label{reflex}{\normalfont \cite[Theorem 3.6]{Adams}}
The space $W^{m,p}(\Omega)$ is separable if $1\leq p< \infty$, and reflexive if $1<p<\infty$.
\end{Theorem}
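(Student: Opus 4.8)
The plan is to realize $W^{m,p}(\Omega)$ isometrically as a closed subspace of a finite product of copies of $L^p(\Omega)$, and then invoke the corresponding stability properties (separability passes to subsets of metric spaces; reflexivity passes to closed subspaces) together with the known facts that $L^p(\Omega)$ is separable for $1\le p<\infty$ and reflexive for $1<p<\infty$, and that a finite product of such spaces inherits these properties.

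First I would let $k$ denote the number of multi-indices $\alpha$ with $0\le|\alpha|\le m$, and equip the product $E:=\bigl(L^p(\Omega)\bigr)^k$ with the norm $\|(w_\alpha)_\alpha\|_E=\bigl(\sum_{0\le|\alpha|\le m}\|w_\alpha\|_{L^p}^p\bigr)^{1/p}$ (for $1\le p<\infty$). Since $L^p(\Omega)$ is separable when $1\le p<\infty$ and reflexive when $1<p<\infty$, the finite product $E$ has the same properties. Next I would define the map
\[
T:W^{m,p}(\Omega)\to E,\qquad Tu=\bigl(D^\alpha u\bigr)_{0\le|\alpha|\le m}.
\]
By the definition of the norm \eqref{norma sobolev m2} on $W^{m,p}(\Omega)$, the operator $T$ is a linear isometry of $W^{m,p}(\Omega)$ onto the subspace $T\bigl(W^{m,p}(\Omega)\bigr)\subset E$.

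The one point requiring a small argument is that $T\bigl(W^{m,p}(\Omega)\bigr)$ is \emph{closed} in $E$. This follows from completeness: since $W^{m,p}(\Omega)$ is a Banach space and $T$ is an isometry, $T\bigl(W^{m,p}(\Omega)\bigr)$ is a complete subspace of $E$, hence closed. (Alternatively, one checks directly that if $u_n\to v$ in $L^p$ and $D^\alpha u_n\to w_\alpha$ in $L^p$ for all $\alpha$, then passing to the limit in $\int_\Omega u_n D^\alpha\varphi = (-1)^{|\alpha|}\int_\Omega (D^\alpha u_n)\varphi$ for $\varphi\in\mathcal C_c^\infty(\Omega)$ shows $w_\alpha = D^\alpha v$, so the limit lies in the image.) This is the step I expect to be the mild technical heart of the argument, though it is routine.

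Finally I would conclude: if $1\le p<\infty$, then $E$ is separable, hence its subset $T\bigl(W^{m,p}(\Omega)\bigr)$ is separable (being a subset of a separable metric space), and therefore $W^{m,p}(\Omega)$ is separable since it is isometrically isomorphic to it. If $1<p<\infty$, then $E$ is reflexive, and a closed subspace of a reflexive Banach space is reflexive, so $T\bigl(W^{m,p}(\Omega)\bigr)$ is reflexive, whence $W^{m,p}(\Omega)$ is reflexive by the isometric isomorphism. This completes the proof.
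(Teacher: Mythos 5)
Your proposal is correct, and it is essentially the same argument as in the cited source: the paper states this result without proof, quoting \cite[Theorem 3.6]{Adams}, where the proof proceeds exactly by the isometric embedding $u\mapsto (D^\alpha u)_{0\le|\alpha|\le m}$ into a finite product of $L^p(\Omega)$ spaces, closedness of the image via completeness, and the inheritance of separability by subsets and of reflexivity by closed subspaces. Nothing is missing; the weak-derivative limit argument you sketch for closedness is the standard justification.
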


Another important result is the Theorem of global approximation by smooth functions, or also known by the Meyers-Serrin Theorem; see \cite{meyers}. 

 \begin{Theorem}[Global approximation by smooth functions]\label{Global approximation by smooth functions}{\normalfont \cite[\textsection5.3.2]{Evans}}
 Assume $\Omega$ is bounded, and suppose as well that $u\in W^{m,p}(\Omega)$ for some $1\leq p<\infty$. Then there exist functions $u_n\in \mathcal{C}^\infty(\Omega)\cup W^{m,p}(\Omega)$ such that $u_n\to u$ in $W^{m,p}(\Omega)$.
 \end{Theorem}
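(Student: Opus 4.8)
The plan is to prove the Meyers--Serrin theorem via the standard partition-of-unity-plus-mollification argument, reducing the global statement to the elementary local fact that mollification approximates $W^{m,p}$ functions on slightly smaller subdomains.

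First I would fix $u\in W^{m,p}(\Omega)$ and $\varepsilon>0$, and introduce an exhaustion of $\Omega$ by open sets: set $\Omega_0=\emptyset$ and, for $k\ge 1$,
\[
\Omega_k=\left\{x\in\Omega\ :\ \dist(x,\partial\Omega)>\tfrac1k\right\}\cap B(0,k),
\]
so that $\Omega_k$ is open, $\overline{\Omega_k}\subset\Omega_{k+1}$, and $\bigcup_k\Omega_k=\Omega$. Writing $V_k=\Omega_{k+1}\setminus\overline{\Omega_{k-1}}$, the family $\{V_k\}_{k\ge 1}$ is an open cover of $\Omega$, and I would take a smooth partition of unity $\{\zeta_k\}$ subordinate to it: $\zeta_k\in\mathcal C_c^\infty(V_k)$, $0\le\zeta_k\le 1$, and $\sum_k\zeta_k\equiv 1$ on $\Omega$ (locally finite sum). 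Then $\zeta_k u\in W^{m,p}(\Omega)$ with support in $V_k$; this uses the Leibniz rule for weak derivatives, $D^\alpha(\zeta_k u)=\sum_{\gamma\le\alpha}\binom{\alpha}{\gamma}D^\gamma\zeta_k\,D^{\alpha-\gamma}u$, which is a routine consequence of the definition of weak derivative.

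Next, for each $k$ I would mollify. Let $\eta_\delta$ be a standard mollifier; the key local estimate is that for a compactly supported $W^{m,p}$ function $w$, one has $\eta_\delta * w\to w$ in $W^{m,p}$ as $\delta\to 0$, because $D^\alpha(\eta_\delta * w)=\eta_\delta * D^\alpha w$ and convolution with $\eta_\delta$ converges to the identity in $L^p$ for $1\le p<\infty$. So I would choose $\delta_k>0$ small enough that $\eta_{\delta_k}*(\zeta_k u)$ is supported in $V_k$ (enlarge slightly if needed, still inside a fixed $W_k\subset\subset\Omega$ neighborhood of $V_k$, keeping local finiteness) and
\[
\left\|\eta_{\delta_k}*(\zeta_k u)-\zeta_k u\right\|_{W^{m,p}(\Omega)}<\frac{\varepsilon}{2^k}.
\]
Set $v=\sum_{k\ge 1}\eta_{\delta_k}*(\zeta_k u)$. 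On any $\Omega_j$ only finitely many terms are nonzero, so $v\in\mathcal C^\infty(\Omega)$; and since $u=\sum_k\zeta_k u$ with the same local finiteness, on each $\Omega_j$ we get $\|v-u\|_{W^{m,p}(\Omega_j)}\le\sum_k\varepsilon 2^{-k}<\varepsilon$. Letting $j\to\infty$ and using monotone convergence gives $\|v-u\|_{W^{m,p}(\Omega)}\le\varepsilon$. Running this for $\varepsilon=1/n$ produces the desired sequence $u_n\in\mathcal C^\infty(\Omega)$ with $u_n\to u$ in $W^{m,p}(\Omega)$; note each $u_n$ also lies in $W^{m,p}(\Omega)$ by construction, matching the statement's $\mathcal C^\infty(\Omega)\cap W^{m,p}(\Omega)$.

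The main obstacle — really the only delicate point — is bookkeeping with the supports: one must choose the mollification parameters $\delta_k$ small enough that the locally finite structure of the cover is preserved (each $\eta_{\delta_k}*(\zeta_k u)$ stays supported in a neighborhood that meets only finitely many others), so that the infinite sum defining $v$ is genuinely smooth and the error estimates can be summed. The boundedness of $\Omega$ assumed in the statement is a convenience (it makes the final $L^p$ norms finite without extra care and lets one quote it in the form given), but the argument is essentially the same without it. Everything else — the Leibniz rule, the commutation $D^\alpha(\eta_\delta*w)=\eta_\delta*D^\alpha w$, and $L^p$-convergence of mollifications — is standard and I would not belabor it.
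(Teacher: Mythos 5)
Your argument is correct: the paper itself offers no proof of this theorem, quoting it directly from Evans \cite[\textsection5.3.2]{Evans} (the Meyers--Serrin theorem \cite{meyers}), and your partition-of-unity-plus-mollification construction, with the exhaustion $\Omega_k$, the locally finite cover $V_k$, the choice of $\delta_k$ preserving local finiteness of supports, and the $\varepsilon 2^{-k}$ error budget summed via monotone convergence, is precisely the standard proof given in that cited source. You also rightly read the statement's $\mathcal{C}^\infty(\Omega)\cup W^{m,p}(\Omega)$ as the intended intersection $\mathcal{C}^\infty(\Omega)\cap W^{m,p}(\Omega)$, and your observation that boundedness of $\Omega$ is not actually needed is accurate.
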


\subsection{Continuous and compact embedding}

\begin{Definition}
Let $X$ and $Y$ be two Banach spaces, with norms $\|\cdot\|_X$ and $\|\cdot\|_Y$ respectively. We say that $X$ is continuously embedded in $Y$, and we denote it by
\[
X\hookrightarrow Y,
\]
if $X\subset Y$ and the inclusion map is continuous, i.e., if there exists a constant $C\geq 0$ such that
\[
\|x\|_Y\leq C\|x\|_X,\qquad\forall x\in X.
\]
\end{Definition}
\begin{Definition}
Let $X$ and $Y$ be two Banach spaces, with norms $\|\cdot\|_X$ and $\|\cdot\|_Y$ respectively. We say that $X$ is compactly embedded in $Y$, and we denote it by
\[
X\hookrightarrow \hookrightarrow Y,
\]
if $X\hookrightarrow Y$ and the embedding of $X$ into $Y$ is a compact operator, i.e., if  every bounded sequence in the norm $\|\cdot\|_X$ has a convergent subsequence in the norm $\|\cdot\|_Y$.
\end{Definition}


\begin{Definition}\label{p aster} Given integers $N,m\geq 1$, and a real number  $1\leq p<\infty$, we define the critical exponent $p^*$ as follows
\[
p^*=\left\{
\begin{array}{cc}
\displaystyle\frac{Np}{N-mp}&\quad\text{if}\ \ mp<N\vspace{0.3cm}\\
+\infty&\quad\text{if}\ \ mp\geq N
.\end{array}
\right.
\]
\end{Definition}

The next theorem gives us the continuous embedding of Sobolev spaces. Its proof can be found in detail in \cite[Theorem 4.12]{Adams}.

\begin{Theorem}\label{cont emb}
Let $m\geq 1$ be an integer and $1\leq p<\infty$. Then,
\[\begin{array}{llr}
W^{m,p}(\Omega)\hookrightarrow L^q(\Omega),\quad & 
\displaystyle\forall q\in [p,p^*],& \quad\displaystyle\text{if}\ \ mp<N,\\ 
W^{m,p}(\Omega) \hookrightarrow L^q(\Omega),\quad & 
\displaystyle\forall q\in [p,\infty), &\quad\displaystyle\text{if}\ \ mp=N,\\ 
W^{m,p}(\Omega) \hookrightarrow \mathcal{C}(\overline{\Omega}),\quad &  &\quad\displaystyle\text{if}\ \ mp>N.
\end{array}\]
\end{Theorem}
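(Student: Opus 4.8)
The plan is to prove the continuous embedding Theorem \ref{cont emb} following the classical strategy of establishing the fundamental Gagliardo--Nirenberg--Sobolev inequality first in the base case $m=1$, $p<N$, and then bootstrapping to higher orders and the remaining ranges of $mp$ relative to $N$. First I would reduce to the case $\Omega = \R^N$: since the statement concerns an arbitrary open set but the embedding constants should be universal, one works with $u \in \mathcal{C}_c^\infty(\R^N)$ and then extends by density (invoking Theorem \ref{Global approximation by smooth functions} for bounded $\Omega$, or an extension-operator argument for sufficiently regular domains, which is the standard hypothesis tacitly required here). The core of the argument is the estimate $\|u\|_{L^{p^*}(\R^N)} \le C \|\nabla u\|_{L^p(\R^N)}$ for $1 \le p < N$ with $p^* = Np/(N-p)$.

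The key steps, in order. Step one: prove the case $p=1$, where $1^* = N/(N-1)$. For $u \in \mathcal{C}_c^\infty(\R^N)$ one writes $|u(x)| \le \int_{-\infty}^{\infty} |\partial_i u|\, dx_i$ for each $i$, multiplies these $N$ inequalities, takes the $(N-1)$-th root, and integrates successively in $x_1, \dots, x_N$, applying the generalized H\"older inequality at each stage to peel off one factor at a time; this yields $\|u\|_{L^{N/(N-1)}} \le \prod_i \|\partial_i u\|_{L^1}^{1/N} \le \|\nabla u\|_{L^1}$. Step two: for general $1 < p < N$, apply the $p=1$ inequality to $|u|^t$ with $t = p(N-1)/(N-p) > 1$ chosen so that the exponent matches, then estimate $\|\,|\nabla u|\,|u|^{t-1}\|_{L^1}$ by H\"older with exponents $p$ and $p'$; after rearranging one obtains $\|u\|_{L^{p^*}} \le C(N,p)\|\nabla u\|_{L^p}$, hence $W^{1,p}(\R^N) \hookrightarrow L^{p^*}$, and by interpolation with $L^p$ the embedding into $L^q$ for all $q \in [p, p^*]$. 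Step three: iterate for $m \ge 2$. If $mp < N$, then for $u \in W^{m,p}$ one has $D^\alpha u \in W^{m-1,p}$ for $|\alpha| = 1$; applying the $m=1$ result gives $u \in W^{m-1, p_1}$ with $p_1 = Np/(N-p)$, and one checks $(m-1)p_1 < N$ exactly when $mp<N$, so repeating $m$ times lands in $L^{p^*}$ with $p^* = Np/(N-mp)$. Step four: the borderline case $mp = N$ and the supercritical case $mp > N$. For $mp>N$ one reduces (by the iteration, peeling off derivatives until $kp \le N < (k+1)p$ for the remaining order, or directly) to Morrey's inequality: for $p > N$, $\|u\|_{\mathcal{C}^{0,1-N/p}} \le C\|u\|_{W^{1,p}}$, proven via the pointwise bound on $|u(x)-u(y)|$ by an average of $|\nabla u|$ over a ball using the Poincar\'e-type estimate $\fint_{B} |u - u_B| \le C \int_B \frac{|\nabla u(y)|}{|x-y|^{N-1}}\,dy$; the case $mp=N$ follows from the fact that $W^{m,p} \hookrightarrow W^{1,q}$ for every finite $q$ via the sub-critical iteration, giving $L^r$ for all finite $r$.

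I expect the main obstacle to be the borderline case $mp = N$, where there is \emph{no} embedding into $L^\infty$ in general (the logarithmic counterexamples), so one must be content with $L^q$ for all $q < \infty$ and argue this carefully by exhausting the scale of sub-critical exponents $p_1 < p_2 < \cdots$ produced by the iteration and showing $p_k \to \infty$. A secondary technical point, which I would handle by simply citing the stated reference \cite[Theorem 4.12]{Adams}, is the passage from $\R^N$ to a general open $\Omega$: without regularity of $\partial\Omega$ an extension operator need not exist, so strictly speaking the theorem as phrased presupposes $\Omega$ has (say) the cone property or a Lipschitz boundary; since the excerpt defers to Adams for the proof, I would state the $\R^N$ version in full, note the density/extension reduction, and refer to \cite{Adams} for the domain-dependent details rather than reproving the extension theorem here.
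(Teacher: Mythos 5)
The paper does not actually prove Theorem \ref{cont emb}: it states the result and defers entirely to \cite[Theorem 4.12]{Adams}, so there is no internal argument to compare yours against. Your proposal is the classical self-contained route (Gagliardo--Nirenberg--Sobolev for $m=1$, $p<N$ via the slicing/generalized H\"older argument, the $|u|^t$ trick for $1<p<N$, iteration on the order $m$, Morrey for the supercritical range, and the exhaustion of subcritical exponents at the borderline), essentially as in \cite{Brezis} or \cite{Evans}, and it is sound; note that Adams' own proof proceeds differently, through the cone condition and potential-type estimates rather than extension operators, so what your approach buys is a more elementary and familiar argument, at the price of needing an extension theorem (or density of restrictions of $\mathcal{C}_c^\infty(\R^N)$ functions) to pass from $\R^N$ to $\Omega$, whereas Adams' machinery works directly on cone-condition domains.

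Two points deserve tightening if you were to write this out in full. First, your reduction of the case $mp=N$ to ``$W^{m,p}\hookrightarrow W^{1,q}$ for every finite $q$'' is not quite how the iteration lands: for $mp=N$ the subcritical bootstrap stops at $W^{1,N}$, and the first-order borderline case $p=N$ must then be handled by a separate argument (apply the $p=1$ inequality to $|u|^t$ and H\"older with exponents $N,N'$, then iterate in $t$ to climb through all finite exponents); your phrase ``exhausting the scale of sub-critical exponents'' gestures at this but the needed inequality is for $p=N$ itself, not a subcritical one. Second, you are right that the theorem as stated suppresses hypotheses on $\Omega$ (Adams requires the cone condition, and the embedding into $\mathcal{C}(\overline{\Omega})$ in the third line additionally needs enough boundary regularity to make sense up to $\overline{\Omega}$); also be careful that Theorem \ref{Global approximation by smooth functions} (Meyers--Serrin) only gives approximation by functions smooth \emph{inside} $\Omega$, not by restrictions of compactly supported functions, so it cannot by itself replace the extension step. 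Since the paper itself only cites \cite{Adams}, your plan of proving the $\R^N$ case in full and referring to \cite{Adams} for the domain-dependent part is a reasonable and honest resolution.
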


\begin{remark}
The value of $p^*$ defined in \ref{p aster} is obtained by a scaling argument. For example, let us take $m=1$ and $\Omega=\R^N$. If we suppose that there exists a constant $C$ and $1\leq q\leq \infty$ such that 
\begin{equation}\label{sobolev ineq}
\|u\|_{L^q}\leq C\|\nabla u\|_{L^p},\qquad\forall u\in \mathcal{C}_c^\infty(\R^N),
\end{equation}
then, in particular, it is true for $u_\lambda(x)=u(\lambda x)$ for all $\l>0$. Note that on one hand we have
\[
\|u_\l\|_{L^q}^q=\int_{\RN}|u(\l x)|^q\,dx=\int_{\RN}|u( y)|^q\l^{-N}\,dy=\l^{-N}\|u\|_{L^q}^q,
\]
and, on the other hand 
\begin{align*}
\|\nabla u_\l\|_{L^p}^p&=\sum\limits_{i=1}^N\left\|\frac{\partial }{\partial x_i}u_\l(x)\right\|_{L^p}^p=\sum\limits_{i=1}^N\int_{\RN}\left|\frac{\partial }{\partial x_i}u(\l x)\right|^p\,dx\\
&=\sum\limits_{i=1}^N\int_{\RN}\left|\l\frac{\partial }{\partial y_i}u(y)\right|^p\l^{-N}\,dy=\l^{p-N}\|\nabla u\|_{L^p}^p.
\end{align*}
Thus, substituting $u_\l$ in \eqref{sobolev ineq} we obtain
\begin{align*}
\|u_\l\|_{L^q}&\leq C\|u_\l\|_{L^p}\\
\|u\|_{L^q}&\leq C \l^{\left(1+\frac{N}{q}-\frac{N}{p}\right)}\|\nabla u\|_{L^p},
\end{align*}
and, taking into account that this holds for all $\lambda>0$, then
\[
1+\frac{N}{q}-\frac{N}{p}=0,
\]
or equivalently
\[q=\frac{Np}{N-p}=p^*.\]
The above expression coincides with Definition \ref{p aster} for $m=1$.
\end{remark}

Now we will introduce the notion of set of class $\mathcal{C}^1$ in order to present later the Rellich-Kondrachov Theorem.
\begin{Definition}
We say that an open set $\Omega\subset\RN$ with boundary $\G$, is of class $\mathcal{C}^m$, if for every $z\in\G$ there exists a neighbourhood $\mathcal{U}$ of $z$ in $\RN$ and a bijective map $h:{\bf Q}\to \mathcal{U}$ such that
\[
h\in \mathcal{C}^m(\overline{\bf Q}),\quad 
h^{-1}\in \mathcal{C}^m(\overline{\mathcal{U}}),\quad
h({\bf Q}_+)=\mathcal{U}\cap\{\bf Q\}\quad\text{and\ \ \  h}({\bf  Q}_0)=\mathcal{U}\cap \G.
\]  
\end{Definition}
The following theorem is a very important result about compact embedding in Sobolev spaces and it is obtained as a part of the Rellich-Kondrachov Theorem (see \cite[Theorem 6.3 ]{Adams} for further information). 
\begin{Theorem}[Rellich-Kondrachov]\label{compact emb}
Suppose that $\Omega\subset\R^N$ is a bounded set of class $\mathcal{C}^m$ and  $1\leq p<\infty$. Then we have the following compact embeddings
\[\begin{array}{llr}
W^{m,p}(\Omega)\hookrightarrow\hookrightarrow L^q(\Omega),\quad & 
\displaystyle\forall q\in [1,p^*),& \quad\displaystyle\text{if}\ \ mp<N,\\ 
W^{m,p}(\Omega)\hookrightarrow \hookrightarrow L^q(\Omega),\quad & 
\displaystyle\forall q\in [p,\infty), &\quad\displaystyle\text{if}\ \ mp=N,\\ 
W^{m,p}(\Omega)\hookrightarrow \hookrightarrow \mathcal{C}(\overline{\Omega}),\quad &  &\quad\displaystyle\text{if}\ \ mp>N.
\end{array}\]
\end{Theorem}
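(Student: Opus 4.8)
The plan is to prove the Rellich--Kondrachov theorem by reducing it to the first-order case $m=1$ and then reducing that case, via the Sobolev embedding already available (Theorem~\ref{cont emb}), to a combination of boundedness in a higher Lebesgue space and precompactness in $L^1(\Omega)$; the latter is checked with the Fr\'echet--Kolmogorov criterion after extending functions to all of $\R^N$. First I would dispose of the high-regularity ranges: when $mp>N$, Theorem~\ref{cont emb} gives $W^{m,p}(\Omega)\hookrightarrow \mathcal{C}(\overline{\Omega})$, and since $\Omega$ is bounded of class $\mathcal{C}^m$ one has $W^{m,p}(\Omega)\hookrightarrow W^{m-1,p}(\Omega)\hookrightarrow \mathcal{C}^{0,\alpha}(\overline{\Omega})$ for a suitable H\"older exponent once $mp>N\geq (m-1)p$, and bounded subsets of $\mathcal{C}^{0,\alpha}(\overline{\Omega})$ are precompact in $\mathcal{C}(\overline{\Omega})$ by Arzel\`a--Ascoli (equicontinuity from the uniform H\"older bound, uniform boundedness from the embedding). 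For the borderline case $mp=N$ one interpolates: the target $L^q(\Omega)$ with $q<\infty$ lies strictly below the available embedding into every $L^r(\Omega)$, so it suffices to prove compactness into some fixed $L^{q_0}(\Omega)$ and then use the $L^r$-bound together with H\"older's inequality to upgrade to all finite $q$.

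The heart of the argument is the subcritical case $mp<N$, $q\in[1,p^\ast)$. The plan is: (i) reduce to $m=1$ by induction, observing that if $u\in W^{m,p}$ then $u$ and each $\partial u/\partial x_i$ lie in $W^{m-1,p}$, so by the inductive hypothesis a bounded sequence in $W^{m,p}$ has, after passing to a subsequence, $u_n$ convergent in $L^{(m-1)p\text{-type}}$ and $\nabla u_n$ convergent in a suitable space, whence $u_n$ is Cauchy in the $W^{1,p'}$-norm for an appropriate exponent and the $m=1$ statement finishes the job; in practice it is cleaner to state and prove the $m=1$ case and then remark that iterating it together with Theorem~\ref{cont emb} yields all $m$. (ii) For $m=1$: since $\Omega$ is of class $\mathcal{C}^1$ and bounded there is a bounded extension operator $E\colon W^{1,p}(\Omega)\to W^{1,p}(\R^N)$ with support in a fixed bounded open set $\Omega'\supset\overline{\Omega}$; replace $u_n$ by $Eu_n$, still bounded in $W^{1,p}(\R^N)$ and supported in $\Omega'$. (iii) Show this family is precompact in $L^1(\Omega')$ via Fr\'echet--Kolmogorov: equi-integrability is automatic (all supported in the bounded set $\Omega'$ and bounded in $L^p$, $p>1$), and the uniform decay of translates, $\|\tau_h v - v\|_{L^1(\R^N)}\leq |h|\,\|\nabla v\|_{L^1}\leq C|h|\,\|\nabla v\|_{L^p}$ (first for smooth $v$, then by density using Theorem~\ref{Global approximation by smooth functions} or mollification), gives a modulus of continuity uniform over the bounded family. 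Hence a subsequence converges in $L^1(\Omega)$. (iv) Upgrade from $L^1$ to $L^q$ for $q<p^\ast$: by Theorem~\ref{cont emb} the sequence is bounded in $L^{p^\ast}(\Omega)$, and for $q\in[1,p^\ast)$ write $q=\theta\cdot 1+(1-\theta)p^\ast$ in the sense of the interpolation inequality $\|w\|_{L^q}\leq \|w\|_{L^1}^{\theta}\|w\|_{L^{p^\ast}}^{1-\theta}$ with $\theta\in(0,1]$; applying this to $w=u_n-u_m$ shows the subsequence is Cauchy, hence convergent, in $L^q(\Omega)$.

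The main obstacle is step~(ii)--(iii): producing the extension operator and then running Fr\'echet--Kolmogorov correctly. The extension requires the $\mathcal{C}^1$ (really $\mathcal{C}^m$) regularity of $\partial\Omega$ in an essential way --- it is what lets us flatten the boundary with the maps $h$ from the definition of a $\mathcal{C}^m$ set and reflect across it --- and one must check the reflected function genuinely has weak derivatives in $L^p$ (no distributional mass concentrates on the interface), which is the one genuinely technical verification. Once the extension is in hand, the translation estimate is the only other analytic input, and it is elementary for smooth functions and passes to the limit by density. Everything after that --- Arzel\`a--Ascoli in the super-critical case, and the interpolation upgrade in the sub- and critical cases --- is soft. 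I would therefore organize the write-up as: reduce to $m=1$; state the extension lemma (citing \cite{Adams} or proving it via local flattening and reflection); prove $L^1$-precompactness by Fr\'echet--Kolmogorov; conclude by interpolation with Theorem~\ref{cont emb}; handle $mp=N$ by the same interpolation and $mp>N$ by Arzel\`a--Ascoli.
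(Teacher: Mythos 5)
Your plan is correct and follows essentially the same route as the paper's proof: extend to $\R^N$ with the extension operator available for bounded $\mathcal{C}^m$ domains (Theorem \ref{extension}), verify the Riesz--Fr\'echet--Kolmogorov translation criterion via the gradient bound $\|\tau_h u-u\|_{L^p}\le |h|\,\|\nabla u\|_{L^p}$, and combine with the $L^{p^*}$-bound from the continuous embedding (Theorem \ref{cont emb}) by interpolation, treating $mp=N$ by replacing $p^*$ with a large finite exponent. The only differences are cosmetic or additional: the paper interpolates the translate estimate and applies the compactness criterion directly in $L^q$ (with no reduction to $m=1$, since $W^{m,p}\subset W^{1,p}$ already gives the translation bound), whereas you apply the criterion in $L^1$ and then interpolate differences against the $L^{p^*}$-bound, and you sketch the $mp>N$ case (H\"older bound plus Arzel\`a--Ascoli) which the paper simply cites from Adams.
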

We will only show the proof for the cases $mp< N$ and $mp= N$ of the above theorem, because these are the most important cases we will use in our work. The proof of the case $mp> N$ can be found in \cite[\textsection6.5]{Adams}. Before starting with the proof we will introduce some necessary results such as the following extension theorem  and the Riesz-Fr\'echet-Kolmogorov Theorem.

\begin{Theorem}\label{extension}{\normalfont \cite[Theorem 5.22]{Adams}} Suppose that $\Omega\subset\R^N$ is of class $\mathcal{C}^m$ with bounded boundary (or $\Omega=\R^N_+$). Then there exists a linear extension operator 
\begin{align*}
P:W^{m,p}(\Omega)&\to W^{m,p}(\R^N),\qquad(1\leq p\leq \infty)
\end{align*}
such that for all $u\in W^{m,p}(\Omega)$,
\begin{align}
Pu|_\Omega&=u,\\
\|Pu\|_{L^p(\R^N)}&\leq C\|u\|_{L^p(\Omega)},\\
\|Pu\|_{W^{m,p}(\R^N)}&\leq C\|u\|_{W^{m,p}(\Omega)}\label{iii extension},
\end{align}
where $C$ is a constant that depends only on $\Omega$.
\end{Theorem}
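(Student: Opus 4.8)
The plan is to reduce the general $\mathcal{C}^m$ domain to the model half-space $\R^N_+$ by localization, and to construct the extension on the half-space by a higher-order reflection across $\{x_N=0\}$.

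First I would treat the half-space. For a function $u$ that is smooth up to the boundary, set
\[
Pu(x',x_N)=
\begin{cases}
u(x',x_N), & x_N\geq 0,\\[2pt]
\displaystyle\sum_{j=1}^{m}\lambda_j\,u(x',-jx_N), & x_N<0,
\end{cases}
\]
where $\lambda_1,\dots,\lambda_m$ are chosen so that the one-sided normal derivatives of $Pu$ agree up to order $m-1$ on $\{x_N=0\}$, i.e.
\[
\sum_{j=1}^{m}\lambda_j(-j)^k=1,\qquad k=0,1,\dots,m-1.
\]
The coefficient matrix $[(-j)^k]$ is a Vandermonde matrix in the distinct nodes $-1,-2,\dots,-m$, hence invertible, so the $\lambda_j$ exist, are unique, and depend only on $m$. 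With this choice $Pu\in\mathcal{C}^{m-1}$ across the boundary hyperplane, so its order-$m$ weak derivatives coincide with the piecewise classical ones and carry no singular part on $\{x_N=0\}$; the substitution $y_N=-jx_N$ in each reflected term then gives, for every multi-index with $|\alpha|\leq m$,
\[
\|D^\alpha Pu\|_{L^p(\R^N)}\leq C(m,p)\,\|D^\alpha u\|_{L^p(\R^N_+)},
\]
and in particular $\|Pu\|_{L^p}\leq C\|u\|_{L^p}$ and $\|Pu\|_{W^{m,p}}\leq C\|u\|_{W^{m,p}}$, while $Pu|_{\R^N_+}=u$ is immediate. These inequalities, first verified for functions smooth up to the boundary, pass to all of $W^{m,p}$ by density for $1\leq p<\infty$ (the approximation of Theorem \ref{Global approximation by smooth functions}, applied on a bounded half-cube rather than on all of $\R^N_+$); for $p=\infty$ the reflection formula applies directly since the functions involved are $\mathcal{C}^{m-1,1}$.

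Next I would globalize. Since $\Gamma$ is bounded and of class $\mathcal{C}^m$, cover it by finitely many charts $\mathcal{U}_1,\dots,\mathcal{U}_k$ with boundary-flattening diffeomorphisms $h_i:{\bf Q}\to\mathcal{U}_i$ satisfying $h_i({\bf Q}_+)=\mathcal{U}_i\cap\Omega$ and $h_i({\bf Q}_0)=\mathcal{U}_i\cap\Gamma$, and adjoin one open set $\mathcal{U}_0\Subset\Omega$ so that $\overline{\Omega}\subset\bigcup_{i=0}^{k}\mathcal{U}_i$. Let $\{\psi_i\}$ be a $\mathcal{C}^\infty$ partition of unity subordinate to this cover and write $u=\sum_{i=0}^{k}\psi_i u$. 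The interior piece $\psi_0 u$ is compactly supported in $\Omega$ and is extended by zero. For $i\geq 1$, the pullback $(\psi_i u)\circ h_i$ is a $W^{m,p}$ function on ${\bf Q}_+$ whose support sits in a thin collar of ${\bf Q}_0$; apply the half-space reflection $P$ (so the reflected support stays inside ${\bf Q}$), then push forward by $h_i$ and extend by zero. The $\mathcal{C}^m$ change-of-variables bounds---chain-rule estimates involving the derivatives of $h_i$ and $h_i^{-1}$ up to order $m$---show each of these operations preserves $W^{m,p}$ with constants controlled by the finite atlas. Summing the pieces yields a linear operator $P$ with $Pu|_\Omega=u$ and the required estimates, the constant $C$ depending only on $\Omega$ through the chosen charts and partition of unity.

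The main obstacle is the half-space step for $m\geq 2$. The naive even reflection that settles the case $m=1$ creates a jump in the first normal derivative and the extension then fails to lie in $W^{2,p}$; the real content is therefore the higher-order reflection---solving the Vandermonde matching conditions and verifying that the order-$m$ weak derivatives of the reflected function genuinely belong to $L^p$ with no distributional contribution concentrated on $\{x_N=0\}$. Once this is in place the localization is routine, the only points requiring care being that the change of variables be carried out at the level of $W^{m,p}$ functions (not merely smooth ones) and that every constant be tracked so as to depend on $\Omega$ alone.
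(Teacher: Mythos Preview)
The paper does not prove this theorem at all: it is stated with a citation to Adams \cite[Theorem~5.22]{Adams} and used as a black box in the proof of the Rellich--Kondrachov theorem. Your proposal is the standard construction---higher-order reflection across $\{x_N=0\}$ with Vandermonde-matched coefficients, followed by localization through a finite $\mathcal{C}^m$ atlas and partition of unity---and is essentially the argument one finds in Adams or in the Stein/Seeley tradition, so there is nothing to compare.
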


\begin{Theorem}[Riesz-Fr\'echet-Kolmogorov]\label{rfc} {\normalfont \cite[Theorem 4.26]{Brezis}} Let $\mathcal{F}$ be a bounded set in $L^p(\R^N)$ with $1\leq p<\infty$, and let
\begin{align*}
\tau_h:L^p(\R^N)&\to L^p(\R^N),\\
u&\mapsto \tau_hu,
\end{align*}
 be the shift map such that $\tau_hu(x)=u(x+h)$ with $x,h\in\R^N.$  Assume that 
 \begin{equation}\label{limh}
 \lim\limits_{|h|\to 0} \|\tau _h u-u\|_{L^p(\R^N)}=0,\qquad\text{uniformly in } u\in\mathcal{F}. 
 \end{equation}
Then, the closure of $\mathcal{F}|_\Omega$ in $L^p(\Omega)$ is compact for any measurable set $\Omega\subset\R^N$ with finite measure.
\end{Theorem}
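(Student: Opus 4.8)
The plan is to prove that $\mathcal{F}|_\Omega$ is \emph{totally bounded} in $L^p(\Omega)$; since $L^p(\Omega)$ is complete, total boundedness is equivalent to having compact closure, which is exactly the assertion. Write $M=\sup_{u\in\mathcal{F}}\|u\|_{L^p(\R^N)}<\infty$. The two ingredients are a mollification argument, which replaces the elements of $\mathcal{F}$ by smooth functions that are uniformly close to them, and the classical Ascoli--Arzel\`a theorem, which provides compactness for the mollified family on a bounded region. Let $(\rho_n)$ be a standard mollifying sequence, with $\rho_n\geq 0$, $\supp{\rho_n}\subset B(0,1/n)$ and $\int_{\R^N}\rho_n=1$.

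First I would establish the uniform approximation. Since $\int_{\R^N}\rho_n=1$, for every $u\in\mathcal{F}$,
\[
(\rho_n*u)(x)-u(x)=\int_{\R^N}\rho_n(y)\bigl(u(x-y)-u(x)\bigr)\,dy,
\]
so Minkowski's integral inequality gives
\[
\|\rho_n*u-u\|_{L^p(\R^N)}\leq\int_{\R^N}\rho_n(y)\,\|\tau_{-y}u-u\|_{L^p(\R^N)}\,dy\leq\sup_{|y|\leq 1/n}\|\tau_{-y}u-u\|_{L^p(\R^N)}.
\]
By hypothesis \eqref{limh}, and because $|-y|=|y|\leq 1/n\to 0$, the right-hand side tends to $0$ as $n\to\infty$ \emph{uniformly} in $u\in\mathcal{F}$. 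This is the crucial point where the translation-equicontinuity assumption \eqref{limh} is used, and it is precisely what makes every subsequent estimate uniform over the whole family.

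Next, for fixed $n$, I would verify that the mollified family $\{\rho_n*u:u\in\mathcal{F}\}$ is uniformly bounded and equicontinuous: by H\"older's inequality with the conjugate exponent $p'$, $\|\rho_n*u\|_{L^\infty}\leq\|\rho_n\|_{L^{p'}}M$ and $\|\nabla(\rho_n*u)\|_{L^\infty}=\|(\nabla\rho_n)*u\|_{L^\infty}\leq\|\nabla\rho_n\|_{L^{p'}}M$, so these functions are Lipschitz with a common constant. Because $\Omega$ need not be bounded, I would truncate: since $|\Omega|<\infty$ I pick $R$ with $|\Omega\setminus B_R|$ as small as desired, and use the uniform $L^\infty$ bound to control the tail $\|\rho_n*u\|_{L^p(\Omega\setminus B_R)}\leq\|\rho_n*u\|_{L^\infty}\,|\Omega\setminus B_R|^{1/p}$ uniformly in $u$. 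On the compact set $\ov{B_R}$, Ascoli--Arzel\`a shows $\{(\rho_n*u)|_{\ov{B_R}}:u\in\mathcal{F}\}$ is relatively compact in $C(\ov{B_R})$, hence totally bounded there, and since $|B_R|<\infty$ it is totally bounded in $L^p(B_R)$ as well.

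Finally I would assemble a finite $\varepsilon$-net. Given $\varepsilon>0$, fix $n$ so the mollification error is below $\varepsilon/4$ for every $u\in\mathcal{F}$, fix $R$ so each tail term $\|\rho_n*u\|_{L^p(\Omega\setminus B_R)}$ is below $\varepsilon/8$ uniformly, and extract $u_1,\dots,u_k\in\mathcal{F}$ whose mollifications form an $(\varepsilon/4)$-net in $L^p(B_R)$. Splitting
\[
\|u-u_j\|_{L^p(\Omega)}\leq\|u-\rho_n*u\|_{L^p(\Omega)}+\|\rho_n*u-\rho_n*u_j\|_{L^p(\Omega)}+\|\rho_n*u_j-u_j\|_{L^p(\Omega)},
\]
and further splitting the middle term over $\Omega\cap B_R$ and $\Omega\setminus B_R$, every piece is controlled by the choices just made, yielding $\|u-u_j\|_{L^p(\Omega)}<\varepsilon$ for a suitable $j$. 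Hence $\{u_1,\dots,u_k\}$ is a finite $\varepsilon$-net and $\mathcal{F}|_\Omega$ is totally bounded. The main obstacle is keeping all estimates uniform over $\mathcal{F}$ simultaneously: hypothesis \eqref{limh} is exactly the tool that upgrades the pointwise convergence $\rho_n*u\to u$ to convergence that is uniform on the family, and the truncation to $B_R$ is what reduces the unbounded but finite-measure set $\Omega$ to a compact region where Ascoli--Arzel\`a can be applied.
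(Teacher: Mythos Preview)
The paper does not supply its own proof of this theorem; it merely cites \cite[Theorem 4.26]{Brezis} and then invokes the result inside the proof of the Rellich--Kondrachov theorem. Your argument is correct and is essentially the classical proof found in Brezis: mollify to get a uniformly close family of equi-Lipschitz functions (the uniform closeness coming exactly from hypothesis \eqref{limh}), use the finite measure of $\Omega$ to discard a tail $\Omega\setminus B_R$, apply Ascoli--Arzel\`a on $\ov{B_R}$, and assemble a finite $\varepsilon$-net. The bookkeeping in your final paragraph is sound; in particular, choosing the net from elements of $\mathcal{F}$ (rather than arbitrary points of $L^p$) is legitimate because any totally bounded set admits a finite net drawn from the set itself.
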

We will also need to use the following proposition, which we will include without proof.
\begin{Proposition}\label{desig gradiente}{\normalfont \cite[Proposition  9.3]{Brezis}}
Let $u\in W^{1,p}(\R^N)$ with $1
\leq p\leq\infty$. Then 
\[
\|\tau _h u-u\|_{L^p(\R^N)}\leq |h|\|\nabla u\|_{L^p(\R^N)}
.\] 
\end{Proposition}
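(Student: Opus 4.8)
The plan is to prove the estimate first for smooth functions by the fundamental theorem of calculus, and then to remove the regularity assumption by an approximation argument, treating the case $p=\infty$ separately.

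First I would fix $u\in\mathcal{C}_c^\infty(\R^N)$ and $h\in\R^N$, and write the increment as a line integral along the segment from $x$ to $x+h$:
\[
u(x+h)-u(x)=\int_0^1\frac{d}{dt}u(x+th)\,dt=\int_0^1\nabla u(x+th)\cdot h\,dt.
\]
Applying the Cauchy--Schwarz inequality $|\nabla u\cdot h|\le|\nabla u|\,|h|$ to the integrand yields the pointwise bound $|u(x+h)-u(x)|\le|h|\int_0^1|\nabla u(x+th)|\,dt$. For $1\le p<\infty$ I would raise this to the $p$-th power and use Jensen's inequality, since $s\mapsto|s|^p$ is convex and $dt$ is a probability measure on $[0,1]$, to get $|u(x+h)-u(x)|^p\le|h|^p\int_0^1|\nabla u(x+th)|^p\,dt$.

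Integrating in $x$ over $\R^N$, exchanging the order of integration by Fubini's theorem, and using the translation invariance of Lebesgue measure (so that $\intN|\nabla u(x+th)|^p\,dx=\|\nabla u\|_{L^p(\R^N)}^p$ for each fixed $t$), I would obtain
\[
\|\tau_h u-u\|_{L^p(\R^N)}^p\le|h|^p\int_0^1\|\nabla u\|_{L^p(\R^N)}^p\,dt=|h|^p\|\nabla u\|_{L^p(\R^N)}^p,
\]
which is the claimed inequality after taking $p$-th roots. The case $p=\infty$ follows immediately from the same pointwise bound by taking the essential supremum in $x$, since $\int_0^1|\nabla u(x+th)|\,dt\le\|\nabla u\|_{L^\infty(\R^N)}$.

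Finally I would pass from $\mathcal{C}_c^\infty(\R^N)$ to a general $u\in W^{1,p}(\R^N)$. For $1\le p<\infty$ I would pick $u_n\in\mathcal{C}_c^\infty(\R^N)$ with $u_n\to u$ in $W^{1,p}(\R^N)$; because $\tau_h$ is an isometry of $L^p(\R^N)$, the left-hand side $\|\tau_h u_n-u_n\|_{L^p}$ converges to $\|\tau_h u-u\|_{L^p}$ and the right-hand side $|h|\,\|\nabla u_n\|_{L^p}$ converges to $|h|\,\|\nabla u\|_{L^p}$, so the inequality is preserved in the limit. The main obstacle is the case $p=\infty$, where smooth functions are not dense in $W^{1,\infty}$ for the $W^{1,\infty}$ norm, so the clean density argument fails. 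Here I would instead mollify, setting $u_\varepsilon=\rho_\varepsilon*u$ for a standard mollifier $\rho_\varepsilon$: then $u_\varepsilon$ is smooth, $\nabla u_\varepsilon=\rho_\varepsilon*\nabla u$ gives $\|\nabla u_\varepsilon\|_{L^\infty}\le\|\nabla u\|_{L^\infty}$ by Young's inequality, and $u_\varepsilon\to u$ pointwise almost everywhere. Applying the estimate already proved for the smooth $u_\varepsilon$ and letting $\varepsilon\to0$ through the pointwise convergence then delivers the inequality for $u$.
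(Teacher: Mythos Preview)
Your argument is correct and is essentially the standard proof one finds in Brezis's book. Note, however, that the paper does not actually prove this proposition: it is stated ``without proof'' and simply cited as \cite[Proposition~9.3]{Brezis}, so there is no proof in the paper to compare against.
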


\

\begin{pfnkondra} Let $\mathcal{H}$ be the unit ball in $W^{m,p}(\Omega)$ and  $mp< N$. Let $P$ be the extension operator of Theorem \ref{extension}. Set $\mathcal{F}=P(\mathcal{H})$, so that $\mathcal{H}=\mathcal{F}|_\Omega$.  In order to show that $\mathcal{H}$ has compact closure in $L^q(\Omega)$ for $q\in [1,p^*)$ we invoke Theorem \ref{rfc}. Since $\Omega$ is bounded, we may always assume that $p\leq q$.  Clearly, $\mathcal{F}$ is bounded in $W^{m,p}(\R^N)$ by \eqref{iii extension} and thus it is also
bounded in $L^r(R^N)$ with $r\in [p,p^*]$ thanks to the continuous embedding seen in Theorem \ref{cont emb}. Now we need to check that
\[\lim\limits_{|h|\to 0} \|\tau _h u-u\|_{L^q(\R^N)}=0,\qquad\text{uniformly in } u\in\mathcal{F}. \]
By Proposition \ref{desig gradiente}, we have 
\[
\|\tau _h u-u\|_{L^p(\R^N)}\leq |h|\|\nabla u\|_{L^p(\R^N)}\leq |h|\|u\|_{W^{m,p}(\R^N)} ,\qquad\forall v\in \mathcal{F}.
\]
Since $p\leq q<p^*$, we may write 
\[
\frac1q=\frac{\alpha}{p}+\frac{1-\alpha}{p^*}\qquad\text{for some }\quad \alpha\in (0,1].
\]
Thanks to the interpolation inequality (see \cite[pp 93]{Brezis}), we have
\begin{align*}
\|\tau _h u-u\|_{L^q(\R^N)}&\leq\|\tau _h u-u\|_{L^p(\R^N)}^\alpha\|\tau _h u-u\|_{L^{p^*}(\R^N)}^{1-\alpha}\\
&\leq|h|^\alpha\|u\|_{W^{m,p}(\R^N)}^\alpha\left(2\|u\|_{L^{p^*}(\R^N)}\right)^{1-\alpha}\leq C|h|^\alpha,
\end{align*}
where $C$ is independent of $u$ since $\mathcal{F}$ is bounded in $W^{m,p}(\R^N)$ and in $L^{p^*}(\R^N)$. Therefore, the desired conclusion  is obtained by Theorem \ref{rfc}. 

The case $mp=N$  reduces to the same analysis substituting $p^*$ by a large enough number $l$. This is possible thanks to the continuous embedding $W^{m,p}(\RN)\hookrightarrow L^r(\R^{N})$ for all $r\in [p,\infty)$.

\end{pfnkondra}

 Notice that in Theorem \ref{compact emb} the region $\Omega$ must be bounded. We also have another result that holds for $\Omega=\RN$ but it is only true for a particular subspace of $W^{m,p}(\RN)$, as we will see below.
\begin{Theorem}\label{radial compact embe} {\normalfont\cite[Theorem II.1.]{Lions-JFA82}} Suppose $m\geq 1$,  $1\leq p<\infty$ and $N\geq 2$. Let $W^{m,p}_r(\RN)$ be the subspace of the radially symmetric functions belong $W^{m,p}(\RN)$. Then 
\begin{equation}\label{compact n>2}
W_r^{m,p}(\RN)\hookrightarrow\hookrightarrow L^q(\RN),\qquad  
\text{with}\quad \ p<q<p^*.
\end{equation}
\end{Theorem}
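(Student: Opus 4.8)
The plan is to combine a pointwise decay estimate for radial Sobolev functions (the classical Strauss lemma) with the compactness result on bounded domains (Theorem \ref{compact emb}) through a truncation-plus-diagonal argument. Since $W^{m,p}_r(\RN)\hookrightarrow W^{1,p}_r(\RN)$ with $\|u\|_{W^{1,p}}\le \|u\|_{W^{m,p}}$, it suffices to run the whole argument with $m=1$ and simply replace the $W^{1,p}$ norm by the (larger) $W^{m,p}$ norm wherever it appears; the Rellich--Kondrachov step on balls is the only place where $m$ reappears, and there $W^{m,p}(B_R)\hookrightarrow\hookrightarrow L^q(B_R)$ for $q\in[1,p^*)$ is exactly what is needed.

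The first step is the decay estimate: there is a constant $C=C(N,p)$ such that every $u\in W^{1,p}_r(\RN)$ satisfies
\[
|u(x)|\le C\,|x|^{-(N-1)/p}\,\|u\|_{W^{1,p}(\RN)},\qquad |x|\ge 1.
\]
By density of radial functions of $\mathcal{C}_c^\infty(\RN)$ in $W^{1,p}_r(\RN)$ (mollify with a radial mollifier, truncate with a radial cutoff), it is enough to prove this for $u=u(r)$ smooth with compact support. For $r>0$,
\[
r^{N-1}|u(r)|^p=-\int_r^\infty \frac{d}{ds}\bigl(s^{N-1}|u(s)|^p\bigr)\,ds=-\int_r^\infty\Bigl[(N-1)s^{N-2}|u(s)|^p+p\,s^{N-1}|u(s)|^{p-1}\partial_s|u(s)|\Bigr]\,ds.
\]
Since $N\ge 2$, the term $(N-1)s^{N-2}|u|^p$ is nonnegative, so it can be dropped, and then H\"older's inequality in the measure $s^{N-1}\,ds$ gives
\[
r^{N-1}|u(r)|^p\le p\int_r^\infty s^{N-1}|u(s)|^{p-1}|u'(s)|\,ds\le C\,\|u\|_{L^p(\RN)}^{p-1}\,\|\nabla u\|_{L^p(\RN)}\le C\,\|u\|_{W^{1,p}(\RN)}^{p},
\]
using $|\nabla u|=|u'(r)|$ for radial $u$; taking $p$-th roots yields the claim.

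Now let $(u_n)\subset W^{m,p}_r(\RN)$ be bounded, $\|u_n\|_{W^{m,p}}\le M$, and fix $q$ with $p<q<p^*$. Writing $|u_n|^q=|u_n|^p\,|u_n|^{q-p}$ and using the decay estimate, for $R\ge 1$ and every $n$,
\[
\int_{|x|\ge R}|u_n|^q\,dx\le (CM)^{q-p}\,R^{-(N-1)(q-p)/p}\int_{|x|\ge R}|u_n|^p\,dx\le (CM)^{q-p}M^p\,R^{-(N-1)(q-p)/p}.
\]
Because $q>p$ and $N\ge 2$, the exponent is negative, so the tails $\|u_n\|_{L^q(\RN\setminus B_R)}$ tend to $0$ as $R\to\infty$, uniformly in $n$. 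On the other hand, for each $k\in\N$ the ball $B_k$ is a bounded set of class $\mathcal{C}^m$, the restrictions $u_n|_{B_k}$ are bounded in $W^{m,p}(B_k)$, and $q<p^*$, so Theorem \ref{compact emb} provides a subsequence converging in $L^q(B_k)$. A diagonal extraction yields a single subsequence $(u_{n_j})$ converging in $L^q(B_k)$ for all $k$. Given $\varepsilon>0$, pick $R$ with $\|u_{n_j}\|_{L^q(\RN\setminus B_R)}<\varepsilon$ for all $j$; since $(u_{n_j})$ is Cauchy in $L^q(B_R)$, splitting the integral over $B_R$ and its complement shows $(u_{n_j})$ is Cauchy, hence convergent, in $L^q(\RN)$. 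This proves \eqref{compact n>2}.

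The main obstacle is the pointwise radial decay estimate: it is the only place where radial symmetry and the hypothesis $N\ge2$ enter, and it is precisely what turns the mere boundedness in $L^p$ into a genuine gain of integrability on the tails, killing the lack of compactness at infinity. The rest is the routine truncation/diagonal packaging of Rellich--Kondrachov. The one technical point worth stating carefully is the passage from $\mathcal{C}_c^\infty$ to $W^{1,p}_r$ (equivalently, that the radial representative is absolutely continuous in $r$ away from the origin, so the fundamental theorem of calculus applies), which is what legitimizes the computation above.
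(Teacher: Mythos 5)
Your proof is correct. Note first that the paper does not actually prove Theorem \ref{radial compact embe}: it quotes it from Lions \cite[Theorem II.1]{Lions-JFA82}, and the only compactness proof written out in the text is for the one-dimensional analogue, Theorem \ref{caso N=1}. Your argument is therefore a genuinely self-contained alternative, and it differs in structure from the in-paper proof of that analogue. You use the Strauss-type radial lemma $|u(x)|\le C|x|^{-(N-1)/p}\|u\|_{W^{1,p}}$ (correctly derived by dropping the nonnegative term $(N-1)s^{N-2}|u|^p$ and applying H\"older in the measure $s^{N-1}ds$; this is exactly where $N\ge2$ and radial symmetry enter), convert it into a uniform $L^q$-tail estimate via $|u_n|^q=|u_n|^p|u_n|^{q-p}$, and then glue Rellich--Kondrachov on balls with a diagonal extraction. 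The paper's route for $N=1$ instead extracts an a.e.\ convergent subsequence by the same exhaustion-plus-diagonal device, but then invokes the Berestycki--Lions compactness lemma (Theorem \ref{teorema1}) with $P(s)=|s|^q$, $Q(s)=|s|^p+|s|^{q+1}$, the decay Lemma \ref{lema para 2} (which for non-increasing rearranged functions gives $|x|^{-N/p}$ decay from the $L^p$ norm alone), and finally Brezis--Lieb to upgrade convergence of norms to strong convergence. Your approach buys a direct Cauchy-in-$L^q(\RN)$ conclusion without any measure-theoretic lemma or Brezis--Lieb, at the price of needing the $W^{1,p}$ radial decay estimate (hence $N\ge2$); the paper's $N=1$ scheme is the natural substitute when that decay fails and only monotonicity is available. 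Two small points you could state explicitly: in the case $mp>N$ Theorem \ref{compact emb} gives compactness into $\mathcal{C}(\overline{B_k})$, which then yields compactness into $L^q(B_k)$ since $B_k$ is bounded; and your reduction to $m=1$ is legitimate precisely because the decay estimate only needs one derivative while the Rellich--Kondrachov step on balls is applied with the full $W^{m,p}$ norm, so the range $q<p^*$ with $p^*=Np/(N-mp)$ is preserved.
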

In the one-dimensional case (N=1), we do not have the compact embedding \eqref{compact n>2} but it holds if we work in the subspace of the non-increasing radially symmetric functions of $W^{m,p}(\R)$, i.e., we have the following result.

\begin{Theorem}\label{caso N=1} Suppose $m\geq 1$ and  $1\leq p<\infty$. Let $W^{m,p}_{rd}(\R)$ be the subspace of the non-increasing radially symmetric functions belong $W^{m,p}(\R)$. Then 
$$W^{m,p}_{rd}(\R)\hookrightarrow\hookrightarrow L^{q}(\R),\qquad \text{with}\quad \ p<q<\infty.
$$
\end{Theorem}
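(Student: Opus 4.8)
The plan is to combine a pointwise decay estimate that is specific to monotone radial profiles with the local compactness furnished by Theorem \ref{compact emb} and a diagonal extraction, and then to kill the tails at infinity using the decay bound together with the \emph{strict} inequality $q>p$.

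\textbf{Step 1: a monotone ``radial lemma''.} Let $u\in W^{m,p}_{rd}(\R)$ and write $u(x)=g(|x|)$ with $g$ non-increasing on $[0,\infty)$. Since $u\in L^p(\R)$ and $g$ is monotone, $g$ cannot take a negative value (otherwise it would stay below that value on a half-line and $u\notin L^p$), so $g\ge 0$ and $|u|=g(|x|)$ is itself non-increasing in $|x|$. Hence for $x\neq 0$,
\[
\|u\|_{L^p(\R)}^p\ \ge\ \int_{-|x|}^{|x|}|u(t)|^p\,dt\ \ge\ 2|x|\,|u(x)|^p ,
\]
so that $|u(x)|\ \le\ (2|x|)^{-1/p}\,\|u\|_{L^p(\R)}$ for all $x\neq 0$.

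\textbf{Step 2: local compactness and a diagonal subsequence.} Let $(u_n)\subset W^{m,p}_{rd}(\R)$ be bounded, say $\|u_n\|_{W^{m,p}(\R)}\le M$. Because $N=1$ and $m\ge1$ we always have $mp\ge N$, so Theorem \ref{compact emb} applied to each bounded interval $(-k,k)$ gives $W^{m,p}((-k,k))\hookrightarrow\hookrightarrow L^{q}((-k,k))$ for our fixed exponent $q\in(p,\infty)$. A standard diagonal argument then yields a subsequence, still written $(u_n)$, and a function $u$ with $u_n\to u$ in $L^{q}((-k,k))$ for every $k\in\N$ and, along a further subsequence, $u_n\to u$ a.e. on $\R$. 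Passing to the limit in the bound of Step 1, which holds for each $u_n$ with $M$ in place of $\|u_n\|_{L^p}$, gives $|u(x)|\le (2|x|)^{-1/p}M$ for a.e. $x$; in particular $u\in L^{q}(\R)$.

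\textbf{Step 3: tail estimate and conclusion.} Since $q>p$ we have $q/p>1$, hence
\[
\int_{|x|>R}\big(|u_n(x)|^q+|u(x)|^q\big)\,dx\ \le\ 2M^q\!\int_{|x|>R}(2|x|)^{-q/p}\,dx\ =\ C_{M,p,q}\,R^{\,1-q/p}\ \xrightarrow[R\to\infty]{}\ 0,
\]
uniformly in $n$. Given $\varepsilon>0$, fix $R$ making the left-hand side $<\varepsilon$; then
\[
\|u_n-u\|_{L^{q}(\R)}\ \le\ \|u_n-u\|_{L^{q}((-R,R))}+\Big(\int_{|x|>R}|u_n|^q\Big)^{1/q}+\Big(\int_{|x|>R}|u|^q\Big)^{1/q}\ \le\ \|u_n-u\|_{L^{q}((-R,R))}+2\varepsilon^{1/q}.
\]
Letting $n\to\infty$ the first term tends to $0$ by Step 2, so $\limsup_n\|u_n-u\|_{L^q(\R)}\le 2\varepsilon^{1/q}$; as $\varepsilon$ is arbitrary, $u_n\to u$ in $L^{q}(\R)$, which is the asserted compact embedding. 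The only ingredient beyond the standard Sobolev embeddings is the monotonicity-driven decay of Step 1, and that is also where the hypothesis $q>p$ is genuinely used (the embedding $W^{m,p}_{rd}(\R)\hookrightarrow L^p(\R)$ is continuous but not compact, and no such decay holds for a general even function in $W^{m,p}(\R)$). I do not expect a serious obstacle; the one point requiring care is the uniformity in $n$ of the tail bound, which is exactly what Step 1 provides.
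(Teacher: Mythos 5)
Your proof is correct, and it departs from the paper's argument at the decisive step. The shared ingredients are the pointwise decay estimate for non-increasing radial functions (your Step 1 is exactly Lemma \ref{lema para 2} in its one-dimensional form, $|u(x)|\le (2|x|)^{-1/p}\|u\|_{L^p}$, including the correct observation that monotonicity plus $u\in L^p$ forces $u\ge 0$) and the extraction of an a.e./locally convergent subsequence via Rellich--Kondrachov on the intervals $(-k,k)$ and a diagonal argument. After that the paper follows Berestycki--Lions: it applies Theorem \ref{teorema1} with $P(s)=|s|^q$, $Q(s)=|s|^p+|s|^{q+1}$ (the uniform decay serving as hypothesis \eqref{c5}) to obtain $\|u_n\|_{L^q}\to\|u\|_{L^q}$, and then invokes the Brezis--Lieb lemma (Theorem \ref{Brezis-Lieb}) to convert norm convergence plus a.e.\ convergence into strong $L^q$ convergence. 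You instead close the argument directly: the uniform-in-$n$ decay makes the tails $\int_{|x|>R}|u_n|^q$ small uniformly (here is where $q>p$ enters, to integrate $|x|^{-q/p}$ at infinity), and combined with the local strong $L^q$ convergence this gives global strong convergence by a three-term triangle inequality. Your route is more elementary and self-contained (no Theorem \ref{teorema1}, no Brezis--Lieb), and it also avoids the paper's preliminary weak-compactness step, which rests on reflexivity (Theorem \ref{reflex}) and hence tacitly needs $p>1$, whereas your construction of the limit $u$ as the local strong limit works for all $1\le p<\infty$ as stated. What the paper's machinery buys in exchange is generality: Theorem \ref{teorema1} handles general nonlinearities $P(u_n)$, not just powers, which is why Berestycki--Lions phrase the compactness that way. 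The only micro-step you leave implicit (as does the paper) is that for $mp>1$ the Rellich--Kondrachov conclusion lands in $\mathcal{C}(\overline{I_k})$ and must be composed with the inclusion into $L^q(I_k)$; this is immediate on a bounded interval.
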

 A similar result to Theorem \ref{caso N=1} is proved in \cite{lions} and the author proposes the idea of the proof for the one-dimensional case.  Now we present some results in order to prove the above theorem according to these ideas.
\begin{Theorem}\label{teorema1}{\normalfont  \cite[Theorem A.I.]{lions}}
Let $P,Q:\R\to\R$ be two continuous functions satisfying 
\begin{equation}\label{c1}
\dfrac{P(s)}{Q(s)}\to0,\ \ \  \text{as}\ |s|\to\infty,
\end{equation}\label{c2}
and $u_n$ be a sequence of measurable functions from  $\R^N$ to $\R$ such that
\begin{equation}\label{c2}
q=\sup_n \int\limits_{\R^N}|Q(u_n(x))| dx<\infty,
\end{equation}
and
\begin{equation}\label{c3}
P(u_n)\xrightarrow[\ n\ ]{} v\ \ \ \text{in}            \ \ \R^N.
\end{equation}
Then, for any bounded Borel set $B$, one has
\[
\int\limits_{B}|P(u_n(x))-v(x)|dx\xrightarrow[\ n\ ]{}0.
\]
If one further assume that
\begin{equation}\label{c4}
\dfrac{P(s)}{Q(s)}\to0,\ \ \  \text{as}\ s\to0,
\end{equation}
and
\begin{equation}\label{c5}
u_n(x)\to0,\ \ \  \text{as}\ |x|\to\infty, \  \text{uniformly  with respect to } n,
\end{equation}
then, $P(u_n)$ converges to $v$ in $L^1(\R^N)$ as $n\to \infty$.

\end{Theorem}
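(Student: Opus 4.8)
The plan is to obtain the first assertion from uniform integrability together with the Vitali convergence theorem, and then to upgrade to convergence in $L^1(\RN)$ by a uniform tail estimate resting on the extra hypotheses \eqref{c4}--\eqref{c5}. I read \eqref{c3} as pointwise convergence $P(u_n(x))\to v(x)$ for almost every $x\in\RN$, which is the natural reading here. The one structural fact behind everything is \eqref{c1}: given $\e>0$ there is $M>0$ with $|P(s)|\le\e\,|Q(s)|$ whenever $|s|\ge M$. Thus on the set where $|u_n|$ is large the contribution of $P(u_n)$ is dominated by the uniformly bounded mass $\intN|Q(u_n)|\le q$ from \eqref{c2}, while on $\{|u_n|\le M\}$ the continuity of $P$ gives $|P(u_n)|\le C_M:=\max_{|s|\le M}|P(s)|<\infty$.

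First I would prove uniform integrability of $\{P(u_n)\}$ on an arbitrary bounded Borel set $B$ (so $|B|<\infty$). For measurable $A\subset B$, splitting the integral according to $\{|u_n|\le M\}$ and $\{|u_n|>M\}$ and using the two bounds above gives
\[
\int_A |P(u_n)|\,dx \;\le\; C_M\,|A| + \e\,q .
\]
Hence, for a target $\eta>0$, I first fix $\e$ with $\e q<\eta/2$ (which fixes $M$ and $C_M$), then pick $\delta>0$ with $C_M\,\delta<\eta/2$; for every $A\subset B$ with $|A|<\delta$ one has $\int_A|P(u_n)|<\eta$ \emph{uniformly in} $n$. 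Combined with the a.e.\ convergence $P(u_n)\to v$ on $B$ and $|B|<\infty$, the Vitali convergence theorem yields $v\in L^1(B)$ and $\int_B|P(u_n)-v|\,dx\to 0$, which is exactly the first conclusion.

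For the global statement I would split $\RN$ into a ball $B_R$ and its complement. On the complement the extra hypotheses take over: by \eqref{c4} choose $\mu>0$ with $|P(s)|\le\e\,|Q(s)|$ for $|s|\le\mu$, and by the uniform decay \eqref{c5} choose $R$ with $|u_n(x)|<\mu$ for all $|x|>R$ and all $n$. Then $\int_{|x|>R}|P(u_n)|\,dx\le\e\intN|Q(u_n)|\le\e q$ uniformly in $n$, and Fatou's lemma transfers this to the limit, $\int_{|x|>R}|v|\,dx\le\e q$. Consequently the tail of $|P(u_n)-v|$ is at most $2\e q$ uniformly in $n$, while $\int_{B_R}|P(u_n)-v|\,dx\to 0$ by the first part. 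Freezing $\e$ (hence $\mu,R$) first and letting $n\to\infty$ gives $\limsup_n\intN|P(u_n)-v|\,dx\le 2\e q$; since $\e$ is arbitrary, $P(u_n)\to v$ in $L^1(\RN)$.

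The only delicate point will be the order of the quantifiers: $\e$ (and therefore $M$, $C_M$, $\mu$, $R$) must be frozen \emph{before} passing to the limit in $n$, so that the uniform-integrability bound and the tail bound are genuinely independent of $n$; once this order is respected, both conclusions reduce to the Vitali theorem plus the elementary splittings above, and no regularity of the $u_n$ beyond measurability is used. Should \eqref{c3} instead be meant as convergence in measure on $\RN$, one extracts an a.e.\ convergent subsequence, runs the same argument, and uses uniqueness of the limit $v$ to recover convergence of the whole sequence.
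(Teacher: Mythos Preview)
The paper does not supply its own proof of this theorem: it is stated with a citation to \cite[Theorem~A.I.]{lions} and used as a black box in the proof of Theorem~\ref{caso N=1}. Your argument is correct and is essentially the classical proof given in Berestycki--Lions: the splitting $\{|u_n|\le M\}\cup\{|u_n|>M\}$ combined with \eqref{c1}--\eqref{c2} yields uniform integrability on sets of finite measure, Vitali then gives the local $L^1$ convergence, and the extra hypotheses \eqref{c4}--\eqref{c5} furnish the uniform tail control needed to upgrade to $L^1(\RN)$. Your care with the order of quantifiers (freezing $\e$, hence $M,\mu,R$, before sending $n\to\infty$) is exactly the point to watch, and your remark on reading \eqref{c3} as a.e.\ convergence (or passing to a subsequence if it means convergence in measure) is appropriate.
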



\begin{Lemma}\label{lema para 2}

If  $u\in L^p(\R^N)$ with $1 \leq  p < \infty$ is a non-increasing radially symmetric function, then 
\[
|u(x)|\leq |x|^{-N/p}\left(\frac{N}{\omega_N'} \right)^{1/p} \|u\|_{L^p},\ \ \ \forall\ x\neq0. 
\] 
where $\omega_N'$ is the surface of the unit sphere in $\R^N$.

\end{Lemma}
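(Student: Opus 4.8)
The plan is to exploit monotonicity to bound $|u(x)|^p$ by the average of $|u|^p$ over the ball of radius $|x|$, and then compute the volume of that ball in terms of $\omega_N'$.

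First I would fix $x \neq 0$ and use that $u$ is radially symmetric and non-increasing: writing $u(y) = \widetilde{u}(|y|)$ with $\widetilde{u}$ non-increasing on $[0,\infty)$, for every $y$ with $|y| \le |x|$ we have $|u(y)| \ge |u(x)|$. Hence
\[
\|u\|_{L^p}^p = \int_{\R^N} |u(y)|^p\,dy \ge \int_{\{|y|\le |x|\}} |u(y)|^p\,dy \ge |u(x)|^p \cdot \big|\{y : |y|\le |x|\}\big|.
\]
Next I would evaluate the measure of the ball: $\big|\{|y|\le |x|\}\big| = \omega_N |x|^N$, and use the elementary identity $\omega_N' = N\,\omega_N$ relating the surface of the unit sphere to the volume of the unit ball (obtained, e.g., by integrating in polar coordinates). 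This gives $\big|\{|y|\le |x|\}\big| = \tfrac{\omega_N'}{N}\,|x|^N$.

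Combining the two displays yields $\|u\|_{L^p}^p \ge |u(x)|^p \,\tfrac{\omega_N'}{N}\,|x|^N$, and solving for $|u(x)|$ produces exactly
\[
|u(x)| \le |x|^{-N/p}\left(\frac{N}{\omega_N'}\right)^{1/p}\|u\|_{L^p}.
\]
There is essentially no serious obstacle here; the only point requiring a little care is the measure-theoretic interpretation of ``non-increasing radially symmetric'' (so that the pointwise inequality $|u(y)|\ge|u(x)|$ is legitimate, at least for a.e.\ representative, which suffices since the right-hand side only involves $|x|$ and $\|u\|_{L^p}$), together with the normalization constant $\omega_N' = N\omega_N$.
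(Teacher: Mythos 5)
Your proof is correct and is essentially the paper's argument: the paper's one-line proof, $\omega_N'|u(r)|^p\tfrac{r^N}{N}\le\omega_N'\int_0^r|u(s)|^ps^{N-1}\,ds\le\|u\|_{L^p}^p$ with $r=|x|$, is exactly your ball-averaging estimate written in polar coordinates, with the factor $\omega_N'/N$ playing the role of your identity $\omega_N=\omega_N'/N$.
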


\begin{pfn}
Setting $r=|x|$, we have
\[
 \omega_N'|u(r)|^p\frac{r^N}{N} \le \omega_N'\int_0^r|u(s)|^p s^{N-1}ds\le\|u\|^p_{L^p},
\] 
which concludes the proof.
\end{pfn}
\begin{Theorem}[Brezis-Lieb]\label{Brezis-Lieb}{\normalfont \cite[Theorem 1]{brezis-lieb}} Suppose $u_n\to u$ a.e. and $\|u_n\|_{L^p} \le M<\infty$ for all $n\in\N$ and for some $p\in[1,\infty)$. Then 
\begin{equation}
\| u_n \|_{L^p}^p-
\|  u_n-u\|_{L^p}^p
\xrightarrow[\ n\ ]{}
\|  u\|_{L^p}^p.
\end{equation}
\end{Theorem}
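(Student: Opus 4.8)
The plan is to follow the classical Brezis--Lieb argument, reducing the claim to a single dominated-convergence step after isolating the troublesome term with an elementary pointwise inequality. First I would record that $u\in L^p(\R^N)$: since $u_n\to u$ a.e. and $\|u_n\|_{L^p}\le M$, Fatou's lemma gives $\|u\|_{L^p}\le M$, so all quantities below are finite. Writing $g_n=u_n-u$, so that $g_n\to 0$ a.e. and $\|g_n\|_{L^p}\le 2M$, the statement is equivalent to
\[
\int_{\R^N}\Big(|u_n|^p-|g_n|^p-|u|^p\Big)\,dx\xrightarrow[\ n\ ]{}0,
\]
and it suffices to prove that $W_n:=\big||u_n|^p-|g_n|^p-|u|^p\big|$ has integral tending to $0$.

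The key tool is the elementary inequality: for each $\varepsilon>0$ there is a constant $C_\varepsilon>0$ with
\[
\big||a+b|^p-|a|^p\big|\le \varepsilon\,|a|^p+C_\varepsilon\,|b|^p,\qquad\forall\,a,b\in\R.
\]
For $p=1$ this is just the reverse triangle inequality (with $\varepsilon=0$, $C=1$); for $p>1$ I would prove it via the mean value theorem applied to $x\mapsto|x|^p$, bounding $(|a|+|b|)^{p-1}\le C_p(|a|^{p-1}+|b|^{p-1})$ and then absorbing the cross term $|a|^{p-1}|b|$ by Young's inequality. Applying this with $a=g_n$ and $b=u$, together with $u_n=g_n+u$, yields the pointwise estimate
\[
W_n\le \varepsilon\,|g_n|^p+(C_\varepsilon+1)\,|u|^p.
\]

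Now comes the decisive step, which is also the main obstacle: $W_n$ is \emph{not} dominated by a fixed $L^1$ function independent of $n$, because the mass of $|g_n|^p$ may escape to infinity, so the dominated convergence theorem cannot be applied to $W_n$ directly. To circumvent this I set $W_{n,\varepsilon}:=\big(W_n-\varepsilon|g_n|^p\big)^+$. From the previous bound, $0\le W_{n,\varepsilon}\le(C_\varepsilon+1)|u|^p\in L^1(\R^N)$, an $n$-independent integrable majorant; and since $g_n\to0$ a.e. we have $W_n\to0$ a.e., hence $W_{n,\varepsilon}\to0$ a.e. The dominated convergence theorem then gives $\int_{\R^N}W_{n,\varepsilon}\,dx\to0$.

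Finally, using $W_n\le W_{n,\varepsilon}+\varepsilon|g_n|^p$ and $\int_{\R^N}|g_n|^p\le(2M)^p$, I obtain
\[
\limsup_{n\to\infty}\int_{\R^N}W_n\,dx\le \varepsilon\,(2M)^p,
\]
and letting $\varepsilon\to0$ forces $\int_{\R^N}W_n\,dx\to0$. Since the signed integral is bounded in absolute value by $\int W_n$, the conclusion $\|u_n\|_{L^p}^p-\|u_n-u\|_{L^p}^p\to\|u\|_{L^p}^p$ follows. The only genuinely delicate points are the verification of the elementary inequality and the positive-part truncation that supplies an $n$-independent integrable majorant; the rest is bookkeeping.
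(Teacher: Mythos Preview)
Your argument is correct and is precisely the classical Brezis--Lieb proof: Fatou for $u\in L^p$, the elementary inequality $\big||a+b|^p-|a|^p\big|\le \varepsilon|a|^p+C_\varepsilon|b|^p$, the positive-part truncation $W_{n,\varepsilon}=(W_n-\varepsilon|g_n|^p)^+$ to manufacture an $n$-independent majorant, and then dominated convergence followed by $\varepsilon\to0$. The paper itself does not supply a proof of this theorem; it merely states it with a citation to \cite{brezis-lieb}, whose original argument is exactly the one you have reproduced.
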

Knowing the previous results, we can prove Theorem \ref{caso N=1}.

\

\begin{pfnt} The continuous embedding
$$
W_{rd}^{m,p}(\R)\hookrightarrow L^q(\R),\qquad\text{with}\quad p\leq q<\infty,$$ is obtained from Theorem \ref{cont emb}, since $W_{rd}^{m,p}(\R)$ is continuously embedded in $W^{m,p}(\R)$. Now we are going to prove that the embedding is compact.
Thanks to Theorem \ref{reflex} we have the reflexivity of $W^{m,p}(\R)$, then, all bounded subsets of $W^{m,p}(\RN)$ have  weakly compact closure. Thus, from a bounded sequence $u_n\in W_{rd}^{m,p}(\R) $  we can extract a weakly convergent subsequence in $W^{m,p}(\R)$, i.e., there exists $u\in W^{m,p}(\R)$ such that the relabelled subsequence $u_n\rightharpoonup u$.

If we denote by $I_k$ the interval $(-k,k)$, the Rellich-Kondrachov Theorem gives us in particular
\begin{equation}\label{compacta en intervalo}
W^{m,p}(I_k)\hookrightarrow\hookrightarrow L^q(I_k),\qquad\text{with}\quad p\leq q<\infty,\quad \forall k\in \N.
\end{equation}
We will denote by $u_n|_{I_k}$ the restriction of $u_n$ on $I_k$. Notice
that
$$\|u_n|_{I_k}\|_{W^{m,p}(I_k)}\leq\|u_n\|_{W^{m,p}(\R)}<M,\qquad\forall n,k\in \N,$$
for some constant $M$ since $u_n$ is bounded. Fixing $k=1$ and using \eqref{compacta en intervalo}, we can extract a subsequence $u_n^1$ of $u_n$ such that
$$
u_n^1|_{I_1}\xrightarrow[\ n\ ]{} u|_{I_1}\quad\text{in}\ L^q(I_1),\qquad\text{with}\quad 1\leq q<\infty.
$$
Moreover, from a strongly convergent sequence we can extract a subsequence which converges almost everywhere (see \cite[Theorem 4.9]{Brezis}). Thus, we can  assume that 
$$
u_n^1|_{I_1}\xrightarrow[\ n\ ]{} u|_{I_1}\ \ a.e.\ \text{in}\ I_1.$$
Repeating the same procedure we can construct a sequence of subsequences in the form $$\left\{ u^1_n\right\}_{n\in\N}\supset\cdots\supset\left\{u^k_n\right\}_{n\in\N}\supset\left\{u^{k+1}_n\right\}_{n\in\N}\supset\cdots,$$
 such that 
 $$
u_n^k|_{I_k}\xrightarrow[\ n\ ]{} u|_{I_k}\ \ a.e.\ \text{in}\ I_k\qquad \forall k\in \N.$$
Then, we obtain that the subsequence  
\begin{equation}\label{casi donde quiera}
u_n^n\xrightarrow[\ n\ ]{} u\ \ a.e.\ \text{in}\ \R,
\end{equation}
and $u_n^n$ also verifies the weak convergence. We will rewrite $u_n^n$ as $u_n$ for simplicity.

In order to prove strong convergence of $u_n$ we will use  Theorem \ref{teorema1} choosing $P$ and $Q$ as follows
\[
P(s)=|s|^q,\qquad Q(s)=|s|^p+|s|^{q+1},
\]
and we are going to check its hypothesis. Provided $p<q<\infty$ , it is clear that, when $s\to 0$ or $s\to \infty$, we have 
\begin{equation}
\dfrac{P(s)}{Q(s)}=\dfrac{|s|^q}{|s|^p+|s|^{q+1}}\to 0,
\end{equation}
thus, the hypothesis (\ref{c1}) and (\ref{c4}) hold. Using the continuous embedding we obtain that 
\begin{align*}
\int_{\R}|Q(u_n(x))| dx = & \int_{\R}\left(|u_n|^p+|u_n|^{q+1} \right) dx =\|u_n\|_{L^p}^p + \|u_n\|_{L^{q+1}}^{q+1} \\ 
 \le& \|u_{n}\|_{W^{m,p}}^p+C\|u_{n}\|_{W^{m,p}}^p \leq (C+1)M^p,
\end{align*}
hence, hypothesis (\ref{c2}) holds too. Regarding hypothesis (\ref{c3}), it is clear that 
$$
P(u_n)\xrightarrow[\ n\ ]{} P(u)\ \ a.e. \ \text{in}            \ \ \R.
$$
by the continuity of $P$ and \eqref{casi donde quiera}. In order to check the last hypothesis (\ref{c2}), we will use Lemma \ref{lema para 2}, then 
\[
|u_n(x)|\leq \frac{\|u_n\|_{L^p}}{(2|x|)^{\frac1p}} \leq\frac{M}{(2|x|)^{\frac1p}} \ \ \ \forall\ x\neq0.
\]
Now, applying Theorem \ref{teorema1}, we have
\[
\| |u_n|^q-|u|^q \|_{L^1}\xrightarrow[\ n\ ]{} 0,
\]
thus, it follows that 
\begin{equation}\label{covergencia de normas}\|u_n\|_{L^q}^q\xrightarrow[\ n\ ]{}\|u\|_{L^q}^q.
\end{equation}
To conclude the proof, we will use the Brezis-Lieb Theorem. Notice that $\|u_n\|_{L^q}$ is bounded due to the continuous embedding. This fact, together with \eqref{covergencia de normas}, gives us, through Theorem \ref{Brezis-Lieb}, that

\[
\|  u_n-u\|_{L^q}^q\xrightarrow[\ n\ ]{} 0,
\]
which means that $u_n$ converges strongly in $L^q(\R)$ for $p<q<\infty $. Therefore, the embedding is compact. 
\end{pfnt}


\section{Calculus of variations}\
  
 In this section, we will show some basic elements of the calculus of variations that we will use for the variational formulation problems in PDEs.

\subsection{G\^ateaux and Fr\'echet differential}\label{sec:dif}
 Let $X$ be a Banach spaces and let $J: \mathcal{U}\subset X\to \R$ be a map where $\mathcal{U}$ is an open non-empty subset of $X$. Let us denote by $L(X,\R)$ the set of the linear continuous maps from $X$ into $\R$.
 
\begin{Definition} We say that $J$ is G\^ateaux differentiable at $u\in \mathcal{U}$ if there exists $A\in L(X, \R)$ such that 
\[
\lim\limits_{\epsilon\to 0}\frac{J(u+\epsilon h)-J(u)}{\epsilon}=A(h).\]
for all $h\in X$. The map $A$ is uniquely determined and is  called the G\^ateaux differential of $J$ at $u$. We denote it  by $d_GJ(u)$ where $d_GJ(u)[h]=A(h)$.
\end{Definition}
The G\^ateaux differential can be interpreted as a generalization of the usual directional derivative of  differential calculus of several variables.   
\begin{Definition}
We say that the map $J$ is Fr\'echet differentiable at $u\in \mathcal{U}$, if there exists a map $A\in L(X,\R)$ such that
\begin{equation}\label{frechet}
J(u+h)-J(u)-A(h)=o(\|h\|),\footnote{
The symbol $o(x)$ is one of the Landau symbols. It is used to symbolically express the asymptotic behaviour. Given two functions $f(x)$ and $g(x)$, it is said that $f=o(g)$ as $x\to a$, if $\lim_{x\to a}f(x)/g(x)=0$.
}\qquad\text{as}\quad h\to 0.
\end{equation}
In this case, $A$ is the Fr\'echet differential of $J$ at the point $u$, and it is denoted by $dJ(u)$, where  $dJ(u)[h]=A(h)$. 
\end{Definition}
If $J$ is Fr\'echet differentiable at every point $u\in X$ then J is said to be differentiable on $X$.

\begin{Theorem}
If the Fr\'echet differential exist, it is unique.
\end{Theorem}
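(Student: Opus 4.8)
The plan is to assume that two maps $A_1, A_2 \in L(X,\R)$ both satisfy the defining relation \eqref{frechet} at the point $u$, and to deduce $A_1 = A_2$. First I would write down the two instances of \eqref{frechet}:
\[
J(u+h) - J(u) - A_1(h) = o(\|h\|), \qquad J(u+h) - J(u) - A_2(h) = o(\|h\|),
\]
as $h \to 0$. Subtracting the second from the first and using linearity of $A_1$ and $A_2$ (so that $A_1(h) - A_2(h) = (A_1 - A_2)(h)$), one obtains $(A_1 - A_2)(h) = o(\|h\|)$ as $h \to 0$.

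Next I would exploit the homogeneity of the linear map $B := A_1 - A_2$ to upgrade the estimate ``$o(\|h\|)$'' to ``identically zero''. Fix an arbitrary $v \in X$ with $v \neq 0$. For $t > 0$ put $h = tv$; then $\|h\| = t\|v\| \to 0$ as $t \to 0^+$, so $B(tv)/(t\|v\|) \to 0$. By linearity $B(tv) = t\,B(v)$, hence $B(v)/\|v\| \to 0$ as $t \to 0^+$. Since the left-hand side does not depend on $t$, it must vanish, so $B(v) = 0$. As $v \neq 0$ was arbitrary and $B(0) = 0$ trivially, we conclude $B \equiv 0$, i.e. $A_1 = A_2$.

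The only real subtlety — and hence the ``main obstacle'' — is making sure the quantity $B(v)/\|v\|$ is genuinely $t$-independent after cancelling the factor $t$, so that a limit equal to $0$ forces the constant itself to be $0$; everything else is routine manipulation of the Landau symbol together with linearity. Note that no completeness, compactness, or finite-dimensionality is used, and the same argument applies verbatim if the target $\R$ is replaced by any normed space.
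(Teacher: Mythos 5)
Your proof is correct and follows essentially the same route as the paper: subtract the two candidate differentials, evaluate along a ray $tv$ (the paper fixes $\|h\|=1$ and argues on the unit sphere, you normalize by $\|v\|$, which is only a cosmetic difference), use linearity to cancel the factor $t$, and conclude the difference vanishes identically. No gaps.
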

\begin{pfn}
Suppose that there exist two maps $A,B\in L(X,\R)$ that satisfy \eqref{frechet}. Then, shooing  $h$ such that $\|h\|=1$ we obtain
\[
A(th)-B(th)=o(t),\qquad\text{as}\quad t\to 0,
\]
or equivalently
\[
0=\lim\limits_{t\to0}\frac{A(th)-B(th)}{t}=A(h)-B(h).
\]
Therefore, $A$ and $B$ are two linear functionals  matching the unit sphere and, hence, equal. As a consequence, the Fr\'echet differential is unique.
\end{pfn}

The next proposition establishes the relation between Fr\'echet and G\^ateaux differentiability.

\begin{Proposition}
If $J$ is Fr\'echet differentiable at the point $u\in \mathcal{U}$, then, $J$ is also G\^ateaux differentiable at the point $u$ and $dJ(u)[h]=d_GJ(u)[h]$.
\end{Proposition}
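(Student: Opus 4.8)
The plan is to exploit the Fr\'echet condition along one-dimensional paths through $u$. Recall that Fr\'echet differentiability at $u$ provides a map $A\in L(X,\R)$ with $J(u+h)-J(u)-A(h)=o(\|h\|)$ as $h\to 0$; the candidate for the G\^ateaux differential will be exactly this same $A$. So the whole argument amounts to showing that, for every fixed $h\in X$, the difference quotient $\big(J(u+\epsilon h)-J(u)\big)/\epsilon$ converges to $A(h)$ as $\epsilon\to 0$.

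First I would dispose of the trivial direction $h=0$. For $h\neq 0$, fix $h$ and consider the increment $k=\epsilon h$, noting that $\|k\|=|\epsilon|\,\|h\|\to 0$ as $\epsilon\to 0$, so the Fr\'echet estimate applies with $h$ replaced by $\epsilon h$. Using linearity of $A$, namely $A(\epsilon h)=\epsilon A(h)$, I would write
\[
\frac{J(u+\epsilon h)-J(u)}{\epsilon}-A(h)=\frac{J(u+\epsilon h)-J(u)-A(\epsilon h)}{\epsilon},
\]
and then estimate the right-hand side in absolute value by
\[
\left|\frac{J(u+\epsilon h)-J(u)-A(\epsilon h)}{\epsilon}\right|
=\|h\|\cdot\frac{\big|J(u+\epsilon h)-J(u)-A(\epsilon h)\big|}{\|\epsilon h\|}.
\]
Since $\|h\|$ is a fixed constant and the last fraction tends to $0$ as $\epsilon\to 0$ by the definition of $o(\|\cdot\|)$, the whole expression tends to $0$. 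Hence the limit defining the G\^ateaux differential exists and equals $A(h)$.

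Finally I would record that this holds for all $h\in X$, so $J$ is G\^ateaux differentiable at $u$, and since the G\^ateaux differential is uniquely determined, $d_GJ(u)[h]=A(h)=dJ(u)[h]$. There is no real obstacle here: the only point deserving a word is that we are restricting the limit $h\to 0$ in $X$ (which ranges over all of $X$) to the single ray $\{\epsilon h:\epsilon\in\R\}$, which is legitimate precisely because the Fr\'echet condition is a statement about \emph{all} increments tending to zero, in particular those of the form $\epsilon h$.
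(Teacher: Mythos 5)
Your proposal is correct and follows essentially the same route as the paper: apply the Fr\'echet estimate along the ray $\epsilon h$, use linearity of $A$, and pass to the limit in the difference quotient. The only cosmetic difference is that you treat an arbitrary $h\neq 0$ directly by factoring out the constant $\|h\|$, whereas the paper first normalizes to $\|h\|=1$ and then concludes by noting that two continuous linear maps agreeing on the unit sphere agree everywhere.
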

\begin{pfn}
Let $h$ be a vector in $X$ such that $\|h\|=1$. Since $J$ is Fr\'echet differentiable 
\[
\lim\limits_{t\to 0} \frac{J(u+th)-J(u)-dJ(u)[th]}{\|th\|}=0.
\]
Now, using the linearity of $dJ(u)$ and multiplying by a bounded quantity 
\[
\lim\limits_{t\to 0} \frac{|t|}{t}\frac{J(u+th)-J(u)-tdJ(u)[h]}{|t|}=0,
\]
and 
\[
\lim\limits_{t\to 0} \frac{J(u+th)-J(u)}{t}-dJ(u)[h]=0,
\]
hence
\[
dJ(u)[h]=\lim\limits_{t\to 0} \frac{J(u+th)-J(u)}{t}=d_GJ(u)[h].
\]
Therefore, taking into account that $dJ(u)$ and $ d_GJ(u) $ are linear continuous maps that coincide at the unit sphere, we conclude that $dJ(u)[h]=d_GJ(u)[h]$ for all $h\in X$.
\end{pfn}

\begin{remark}
The converse of the above proposition is not true in general, but it holds if, for example, we have the existence and continuity of $d_GJ$ in a neighbourhood of $u$, i.e., if for some neighbourhood $\mathcal{V}\subset X$, the map $\mathcal{V}\to X'$  given by $v\mapsto  d_GJ(u)$ is well defined and continuous; see \cite[Theorem 1.9]{a-prodi}.
\end{remark}

Let $\mathbb{X}=X\times X$ and $J:\mathbb{X}\to \R$. We also  consider the maps
 \[J_u: v\mapsto J(u,v)\qquad\text{and}\qquad J_v: u\mapsto J(u,v).\] 
The partial derivative of $J$ with respect to $u$ (with respect to $v$), at the point $(u,v)\in \mathbb{X}$ is defined by
 \[\partial_uJ(u,v)=dJ_v(u)\qquad(\partial_vJ(u,v)=dJ_u(v)).\]
  where $\partial_uJ(u,v),\partial_vJ(u,v)\in L(X,\R)$. If $J$ is differentiable at the point $(u,v)$ there exists a map $dJ(u,v)\in L(\mathbb{X}, \R)$ such that
\begin{equation}\label{differential uv}
J(u+h_1,v+h_2)-J(u,v)-dJ(u,v)[h_1,h_2]=o\left(\|(h_1,h_2)\|_{\mathbb{X}}\right)
\end{equation}
where $\|\cdot\|_{\mathbb{X}}$ denotes a norm in the product space, for example
$$
\|h_1,h_2\|_{\mathbb{X}}=\max\{\|h_1\|,\|h_2\|\},\qquad\text{with}\quad (h_1,h_2)\in \mathbb{X}.
$$
From \eqref{differential uv}, we obtain
\[
J(u+h_1,v)=J(u,v)+dJ(u,v)[h_1,0]+o\left(\|h_1\|\right),
\] 
that we can rewrite as
\[
J_v(u+h_1)=J_v(u)+dJ(u,v)[h_1,0]+o\left(\|h_1\|\right).
\] 
Thus, $J_v$ is differentiable at the point $u$ and $$dJ(u,v)[h_1,0]=dJ_v(u)[h_1]=\partial_uJ(u,v)[h_1].$$
Analogously, $J_u$ is differentiable at the point $v$ and $$dJ(u,v)[0,h_2]=dJ_u(v)[h_2]=\partial_vJ(u,v)[h_2].$$ Now, using the linearity of the map $dJ(u,v)$, we have
\begin{align}
dJ(u,v)[h_1,h_2]&=dJ(u,v)[(h_1,0)+(0,h_2)]\nonumber\\
&=dJ(u,v)[h_1,0]+dJ(u,v)[0,h_2]\label{partial diff}\\
&=\partial_uJ(u,v)[h_1]+\partial_vJ(u,v)[h_2]\nonumber.
\end{align} 
Furthermore, the following result holds; see \cite{a-prodi}.

\begin{Proposition}
If $J$ possesses the partial derivative with respect to $u$ and $v$ in a neighbourhood $\mathcal{V}$ of $(u,v)$ and the maps $u\mapsto\partial_uJ$ and $v\mapsto\partial_vJ$ are continuous in $\mathcal{V}$, then J is differentiable at $(u,v)$ and
\begin{equation}
dJ(u,v)[h_1,h_2]=\partial_uJ(u,v)[h_1]+\partial_vJ(u,v)[h_2].
\end{equation}
\end{Proposition}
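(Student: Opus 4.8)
The plan is to estimate the increment $J(u+h_1,v+h_2)-J(u,v)$ by changing one variable at a time: the step in the first variable is handled with the one-dimensional mean value theorem, the step in the second variable with the already-known partial differentiability of $J_u$, and the resulting errors are absorbed into an $o(\|(h_1,h_2)\|_{\mathbb{X}})$ term by invoking the continuity of the partial derivatives on $\mathcal{V}$.

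First I would fix $(h_1,h_2)$ small enough that all the points $(u+th_1,v+h_2)$ with $t\in[0,1]$ lie in $\mathcal{V}$ (possible since $\mathcal{V}$ is open), and decompose
\[
J(u+h_1,v+h_2)-J(u,v)=\bigl[J(u+h_1,v+h_2)-J(u,v+h_2)\bigr]+\bigl[J(u,v+h_2)-J(u,v)\bigr].
\]
The second bracket is immediate: since $J_u=J(u,\cdot)$ is Fr\'echet differentiable at $v$ with differential $\partial_vJ(u,v)$, it equals $\partial_vJ(u,v)[h_2]+o(\|h_2\|)$.

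For the first bracket I would introduce $\phi:[0,1]\to\R$, $\phi(t)=J(u+th_1,v+h_2)$. Because $J_{v+h_2}$ is Fr\'echet differentiable at every point of the segment $u+th_1$ with differential $\partial_uJ(\,\cdot\,,v+h_2)$, composing with the affine map $t\mapsto u+th_1$ shows $\phi$ is differentiable on $[0,1]$ with $\phi'(t)=\partial_uJ(u+th_1,v+h_2)[h_1]$. The classical mean value theorem for real-valued functions gives $\theta=\theta(h_1,h_2)\in(0,1)$ with
\[
J(u+h_1,v+h_2)-J(u,v+h_2)=\phi(1)-\phi(0)=\partial_uJ(u+\theta h_1,v+h_2)[h_1].
\]
Writing $\partial_uJ(u+\theta h_1,v+h_2)[h_1]=\partial_uJ(u,v)[h_1]+\bigl(\partial_uJ(u+\theta h_1,v+h_2)-\partial_uJ(u,v)\bigr)[h_1]$, the last term is bounded in absolute value by $\|\partial_uJ(u+\theta h_1,v+h_2)-\partial_uJ(u,v)\|_{L(X,\R)}\,\|h_1\|$; as $(h_1,h_2)\to0$ the argument $(u+\theta h_1,v+h_2)$ tends to $(u,v)$ independently of $\theta$, so continuity of $\partial_uJ$ on $\mathcal{V}$ forces this to be $o(\|h_1\|)=o(\|(h_1,h_2)\|_{\mathbb{X}})$.

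Collecting the pieces yields
\[
J(u+h_1,v+h_2)-J(u,v)=\partial_uJ(u,v)[h_1]+\partial_vJ(u,v)[h_2]+o(\|(h_1,h_2)\|_{\mathbb{X}}),
\]
and since $(h_1,h_2)\mapsto\partial_uJ(u,v)[h_1]+\partial_vJ(u,v)[h_2]$ belongs to $L(\mathbb{X},\R)$, this is precisely the assertion that $J$ is Fr\'echet differentiable at $(u,v)$, with the differential being the stated one, in accordance with \eqref{partial diff}. The only point requiring care is the bookkeeping ensuring the intermediate points stay in $\mathcal{V}$ so that $\phi'$ is defined, and checking that the continuity estimate is uniform in the unknown $\theta$; since $J$ is scalar-valued there is no genuine functional-analytic obstacle, as the ordinary mean value theorem applies to $\phi$ verbatim.
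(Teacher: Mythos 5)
Your proof is correct: the splitting of the increment, the scalar mean value theorem applied to $\phi(t)=J(u+th_1,v+h_2)$, and the uniform-in-$\theta$ use of the continuity of $\partial_uJ$ are all sound, and the paper itself offers no proof of this proposition but simply refers to Ambrosetti--Prodi, where essentially this same classical argument is given. As a minor remark, your argument only uses continuity of $\partial_uJ$ together with the mere existence of $\partial_vJ(u,v)$ (i.e.\ differentiability of $J_u$ at $v$), so it in fact establishes a slightly stronger statement than the one quoted.
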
  

Let $J:\mathcal{U}\to \R$ be a differentiable map on $\mathcal{U}\subset X$ such that the map $X\to L(X,\R)$, of the form $v\mapsto dJ(v)$, is differentiable at $u\in \mathcal{U}$. Then, the derivative of such a map at $u$ is denoted as the second derivative $d^2J(u)\in L(X,L(X,\R))$. From the canonical isomorphism between $L(X,L(X,\R))$ and $L_2(X,\R)$, the space of the bilinear maps from $X$ to $\R$, we can consider $d^2J(u)\in L_2(X,\R)$. By induction on $k$, we can define the $k-$th derivative $d^kJ(u)$ belonging to $L_k(X,\R)$, the space of $k$-linear maps from $X$ into $\R$. If $J$ is $k$ times differentiable at every point of $\mathcal{U}$, we say that $J$ is $k$ times differentiable on $\mathcal{U}$.

%
%
%
%

\subsection{Critical points and extremes of functionals}
By a functional we mean a correspondence which assigns a definite real or complex number to each function belonging to some class $X$. In this work we will considerate real functional that  take values in some Banach space $X$ of functions, which could be for example a Sobolev space. In general, one could consider functionals defined on open subsets of $X$. But, for the sake of simplicity, in the sequel we will always deal with functionals defined on all of $X$, unless explicitly remarked.  The differential of a functional $J:X\to\R$ is defined as we saw in subsection \ref{sec:dif}.

\begin{Definition}
A critical point of the functional $J:X\to\R$ is a point $z\in X$ such that $J$ is differentiable at $z$ and $dJ(z)=0$. 
\end{Definition}

According to the previous definition, a critical point $z$ satisfies
\[
dJ(z)[h]=0,\qquad\forall h\in X.
\] 
In the applications, critical points turn out to be weak solutions of differential equations. Roughly, we look for solutions of boundary value problems consisting of a differential equation together with some boundary conditions. These equations will have a variational structure: they be the Euler-Lagrange equation of a functional J on a suitable space of functions $X$, chosen depending on the boundary conditions. The critical points of $J$ on $X$ give rise to solutions of these boundary value problems.

If we consider a Hilbert space $E$ and $J\in \mathcal{C}^1(E,\R)$, then, taking into account the Riesz Theorem, for all $u\in E$ there exists a unique element in $J'(u)\in E$ such that
\begin{equation}\label{Riesz}
\bra J'(u),h\ket=dJ(u)[h],\qquad\forall h\in E. 
\end{equation}
The element $J'(u)$, sometimes also denoted by $\nabla J(u)$, is called the gradient of $J$ at $u$. With this notation, a critical point of $J$ is a solution of the equation $J'(u)=0$. The second derivative, which is a symmetric bilinear map, can be also represented as the operator $J''(u):E\to E$, $h\mapsto J''(u)h$ such that
\[
\bra J''(u)h,k\ket=d^2J(u)[h][k],\qquad\forall\ h,k\in E.
\]
\begin{Definition}
We say that a point $z\in X$ is a local minimum (maximum) of the functional $J:X\to \R$ if there exists a neighbourhood $\mathcal{V}$ of $z$ such that 
\[
J(z)\leq J(u)\quad \left( J(z)\geq J(u)\right),\qquad \forall u\in \mathcal{V}\setminus\{z\}.
\] 
If the above inequality is strict, we say that $z$ is a strict local minimum (maximum) of $J$. If this inequality holds for every $u\in X\setminus\{z\}$, $z$ is said to be a global minimum (maximum) of the functional.
\end{Definition}

\begin{Proposition}\label{punto estacionario} If $z\in X$ is a local minimum (maximum) of a functional $J:X\to \R$, and $J$ is differentiable  at $z$, then $z$ is a critical point of $J$.
\end{Proposition}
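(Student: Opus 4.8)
The plan is to reduce this infinite-dimensional statement to the elementary one-variable fact that a differentiable function on an interval can attain an extremum only where its derivative vanishes. Assume without loss of generality that $z$ is a local minimum; the maximum case then follows by applying the result to $-J$, which is differentiable at $z$ with $d(-J)(z)=-dJ(z)$, so that $dJ(z)=0$ as well.

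First I would fix an arbitrary direction $h\in X$. Since $z$ is a local minimum, there is a neighbourhood $\mathcal{V}$ of $z$ with $J(z)\le J(u)$ for all $u\in\mathcal{V}$; in particular $J(z)\le J(z+th)$ for every real $t$ with $|t|$ small enough. Invoking the Fréchet differentiability of $J$ at $z$, one writes
\[
J(z+th)-J(z)=dJ(z)[th]+o(\|th\|)=t\,dJ(z)[h]+o(|t|),\qquad\text{as }t\to 0 .
\]
Dividing this relation by $t>0$ and letting $t\to 0^+$ gives $dJ(z)[h]\ge 0$; dividing instead by $t<0$ reverses the inequality, and letting $t\to 0^-$ gives $dJ(z)[h]\le 0$. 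Hence $dJ(z)[h]=0$.

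Since $h\in X$ was arbitrary, $dJ(z)[h]=0$ for every $h\in X$, i.e. $dJ(z)=0$, which by definition means that $z$ is a critical point of $J$. The only point that needs a little care is the bookkeeping of signs when dividing the asymptotic expansion by $t$ of either sign; equivalently, one may phrase the argument by noting that $\varphi(t):=J(z+th)$ has a local minimum at $t=0$ and is differentiable there with $\varphi'(0)=dJ(z)[h]$, and then apply Fermat's theorem from one-variable calculus. There is no genuine obstacle here.
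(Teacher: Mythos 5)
Your proof is correct and follows essentially the same argument as the paper: fix a direction $h$, use the local minimality of $z$ to control the sign of the one-sided difference quotients of $t\mapsto J(z+th)$ at $t=0$, and use the differentiability of $J$ at $z$ to identify their common limit with $dJ(z)[h]$, forcing $dJ(z)[h]=0$ for every $h$. No further comment is needed.
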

\begin{pfn}
If $z$ is a minimum, for a fixed $h\in X$, there exists $\delta>0$ such that,  
\[
J(z)\leq J(u+th),\qquad\forall\ |t|<\delta.
\]
Now, taking into account that $J$ is differentiable at $z$, the differential evaluated in the direction $h$ coincides with the limits
\[
0\leq\lim\limits_{t\to 0^+}\frac{J(u+th)-J(u)}{t}=dJ(z)[h]=\lim\limits_{t\to 0^-}\frac{J(u+th)-J(u)}{t}\leq 0.
\]
Therefore $dJ(z)=0$.
\end{pfn}

Next, we state some results dealing with the existence of minima or maxima for coercive and weakly lower semi-continuous  functionals.
\begin{Definition}
A functional $J$ is called coercive if
\[
\lim\limits_{\|u\|_X\to +\infty}J(u)=+\infty.
\]
\end{Definition} 
\begin{Definition}
A functional $J$ is said to be weakly lower semi-continuous if, for every sequence $u_n \in X$ such that $u_n\rightharpoonup u$, the following holds
\[
J(u)\leq \liminf\limits_n J(u_n).
\]
\end{Definition}
\begin{Lemma}\label{lema acotacion}
Let $X$ be a reflexive Banach space and let $J:X\to\R$ be a coercive and weakly lower semi-continuous. Then, $J$ is bounded from below on $X$, i.e., there exists $a\in \R$ such that $J(u)\geq a$ for all $u\in X$.
\end{Lemma}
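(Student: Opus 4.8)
The plan is to argue by contradiction, exploiting coercivity to force boundedness of a minimizing-type sequence and then reflexivity plus weak lower semi-continuity to close the argument. So suppose $J$ is \emph{not} bounded from below on $X$; then there is a sequence $(u_n)\subset X$ with $J(u_n)\to -\infty$ as $n\to +\infty$ (concretely, choose $u_n$ with $J(u_n)\le -n$).

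First I would show that any such sequence is bounded in $X$. If it were not, one could extract a subsequence $(u_{n_k})$ with $\|u_{n_k}\|_X\to +\infty$; coercivity would then give $J(u_{n_k})\to +\infty$, contradicting $J(u_n)\to -\infty$. Hence $M:=\sup_n\|u_n\|_X<+\infty$.

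Next, since $X$ is reflexive, every bounded sequence has a weakly convergent subsequence; so (relabelling) we may assume $u_n\wk u$ for some $u\in X$, while still $J(u_n)\to -\infty$. Applying the weak lower semi-continuity of $J$ to this subsequence yields $J(u)\le \liminf_n J(u_n)=-\infty$, which is absurd because $J(u)\in\R$. This contradiction shows that $J$ is bounded from below, i.e. there exists $a\in\R$ with $J(u)\ge a$ for all $u\in X$.

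The only step that is not completely elementary is the passage from reflexivity of $X$ to weak sequential compactness of its bounded subsets; this is where one invokes the Eberlein--\v{S}mulian theorem (equivalently, in the separable setting relevant to our Sobolev-space applications, the metrizability of bounded sets in the weak topology together with Kakutani's theorem). Once that tool is available, the rest of the argument is a direct combination of the definitions of coercivity and of weak lower semi-continuity, and I do not expect any genuine obstacle.
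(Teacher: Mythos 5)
Your argument is correct and is essentially the same as the paper's: contradiction via a sequence with $J(u_n)\to-\infty$, boundedness from coercivity, weak convergence of a subsequence by reflexivity, and then weak lower semi-continuity giving the absurd inequality $J(u)\leq-\infty$. The only difference is that you spell out the boundedness step and cite Eberlein--\v{S}mulian explicitly, which the paper leaves implicit.
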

\begin{pfn}
We suppose by contradiction that there exists a sequence $u_n\in X$ such that $J(u_n)\to -\infty$. Since $J$ is coercive, it follows that $u_n$ is bounded. Thus, by the reflexivity of $X$, there exists an element $u\in X$ and a weakly convergent subsequence (relabelling) such that $u_n\rightharpoonup u$. Now, since $J$ is weakly lower semi-continuous, we infer that
\[
J(u)\leq \liminf J(u_n)=-\infty,
\]
and it is a contradiction. Therefore $J$ is bounded from below. 
\end{pfn}
\begin{Theorem}
Let $X$ be a reflexive Banach space and let $J:X\to\R$ be  coercive and weakly lower semi-continuous. Then, $J$ has a global minimum, i.e., there exists $z\in X$ such that 
\[J(z)=\inf\limits_{u\in X}J(u).\]
Moreover, if $J$ is differentiable at $z$, then $dJ(z)=0$.
\end{Theorem}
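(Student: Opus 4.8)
The plan is to apply the classical direct method of the calculus of variations, chaining together exactly the three hypotheses (coercivity, weak lower semi-continuity, reflexivity). First I would invoke Lemma~\ref{lema acotacion} to ensure that
$m:=\inf\limits_{u\in X}J(u)$
is a finite real number, since $J$ is coercive and weakly lower semi-continuous on the reflexive Banach space $X$. Then I would choose a minimizing sequence $(u_n)\subset X$, that is, a sequence with $J(u_n)\to m$ as $n\to\infty$.

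Next I would verify that $(u_n)$ is bounded in $X$. If it were not, there would be a subsequence with $\|u_{n_k}\|_X\to+\infty$, and coercivity would force $J(u_{n_k})\to+\infty$, contradicting $J(u_n)\to m\in\R$. Having a bounded sequence in a reflexive Banach space, I would use reflexivity to extract a weakly convergent subsequence; relabelling, there is $z\in X$ with $u_n\rightharpoonup z$.

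Then the weak lower semi-continuity of $J$ gives
$J(z)\le\liminf\limits_{n}J(u_n)=m$,
while by definition of the infimum $J(z)\ge m$. Hence $J(z)=m$, so $z$ realizes the global minimum. For the last assertion, if moreover $J$ is differentiable at $z$, then $z$ is in particular a local minimum of $J$, and Proposition~\ref{punto estacionario} yields $dJ(z)=0$.

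I do not expect a genuine obstacle here: the argument is a direct concatenation of the stated hypotheses. The only two points requiring a little care are the finiteness of $m$, which is already packaged in Lemma~\ref{lema acotacion}, and the extraction of a weakly convergent subsequence from a bounded sequence, which is precisely the place where reflexivity of $X$ is essential. Everything else is routine.
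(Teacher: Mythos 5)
Your argument is correct and follows exactly the paper's proof: Lemma~\ref{lema acotacion} for finiteness of the infimum, a minimizing sequence made bounded by coercivity, weak compactness from reflexivity, weak lower semi-continuity to identify the weak limit as a minimizer, and Proposition~\ref{punto estacionario} for $dJ(z)=0$. No differences worth noting beyond your slightly more explicit justification of the boundedness step.
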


\begin{pfn}
From the Lemma \ref{lema acotacion} it follows that 
\[
m=\inf\limits_{u\in X}J(u),
\]
is finite. If we take a minimizing sequence, namely $u_n\in X$ such that $J(u_n)\to m$, we have again by the coercivity of $J$ that $u_n$ is a bounded sequence and $u_n\rightharpoonup u$ for some $u\in X$. Using that $J$ is  weakly lower semi-continuous we obtain
\[
J(u)\leq \liminf J(u_n)=m,
\]
and the last inequality cannot be strict because $m$ is the infimum of $J$ on $X$. Therefore $J$ achieves its global minimum at $z$: $J(z)=m$. Using Proposition \ref{punto estacionario}, we can conclude the proof.
\end{pfn}
\begin{remark}
Since $z$ is a maximum for $J$ if and only if it is a minimum for $-J$, a similar result holds for the existence of maxima, provided $-J$ is coercive and weakly lower semi-continuous
\end{remark}

\subsection{Differentiable manifolds}
In this subsection, we recall some aspects about  differentiable manifolds. 
\begin{Definition}
Let $X$ by a Hilbert space and $I$ a set of indices. A topological space $M$ is a $\mathcal{C}^k$ Hilbert manifold modelled on $X$, if there exists an open covering $\{\mathcal{U}_i\}_{i\in I}$ of $M$ and a family $\psi_i:\mathcal{U}_i\to X$ of mappings such that the following conditions hold
\begin{itemize}
\item $\mathcal{V}_i=\psi_i(\mathcal{U}_i)$ is open in $X$ and $\psi_i$ is an homeomorphism from $\mathcal{U}_i$ onto $\mathcal{V}_i$;
\item $\psi_j\circ\psi_i^{-1}:\psi_i(\mathcal{U}_i\cap\mathcal{U}_j)\to\psi_i(\mathcal{U}_i\cap\mathcal{U}_j)$ is of class $\mathcal{C}^k$.
\end{itemize}
\end{Definition} 
Each pair $(\mathcal{U}_i,\psi_i)$ in the preceding definition is called a local chart, the maps  $\psi_j\circ\psi_i^{-1}$ are the changes of charts and the pair $(\mathcal{V}_i,\psi_i^{-1})$ is called a local parametrization of $M$.  We have been assuming that $X$ is a Banach space and in this case $M$ is said to be a Banach manifold modelled on X. Moreover, in more general situations, each $\psi_i$ could map $\mathcal{U}_i$ in different Hilbert spaces $X_i$. However, on any connected component of $M$, each $X_i$ can be identified through isomorphism with a single Hilbert space $X$ and we will still say that $M$ is modelled on $X$. For the applications that we will use in this work is suffices to consider the specific case in which $M$ is a subset of a Hilbert space $E$ and is modelled on a Herbert subspace $E_1\subset E $. In particular we will limit ourselves to the case where the manifold is defined in the form 
\begin{align}\label{eq:manifold}\tag{M}
\begin{split} M=G^{-1}(0),&
\\
\text{where}\quad G\in \mathcal{C}^1(E,\R),\quad \text{such that} &\quad G'(u)\neq 0, \quad  \forall u\in M.
\end{split}
\end{align}



\begin{Definition} 
Let $M$ be a manifold as \eqref{eq:manifold}. We define the tangent space to $M$ at the point $p\in M$ by
\[
T_pM=\{h\in E\ :\bra G'(p), h\ket=0\}.
\]
\end{Definition}
Note that $T_pM$ is the orthogonal of $G'(p)$ in $E$, thus $T_pM$ is a closed subspace of $E$ and hence this is also a Hilbert space with the same scalar product of $E$. 

\

Given a functional $J:E\to \R$, the constrained derivative of $J$ on a manifold $M\subset E$ at the point $p\in M$, is the restriction to $T_pM$ of the linear map $dJ(p)\in L(E,\R)$, i.e., if we denote this constrained derivative as $d_MJ$, we have that $d_MJ(p)\in L(T_pM,\R)$ and 
\[
d_MJ(p)[h]=dJ(p)[h],\qquad \forall h\in T_pM.
\]
Using again the Riesz theorem we obtain that there exists a unique element in $T_pM$, which we denote by $\nabla _MJ(p)$, such that 
\begin{equation}\label{proyec}
\bra\nabla_MJ(p),h\ket=d_MJ(p)[h],\qquad\forall h\in T_pM.
\end{equation}
The element $\nabla_MJ(p)$ is named the constrained gradient of $J$ on $M$. Moreover, from \eqref{proyec} we have
\[
\bra\nabla_MJ(p),h\ket=\bra J'(p),h\ket,\qquad\forall h\in T_pM,
\]
hence, $\nabla_MJ(p)$ is nothing but the projection of $J'(p)$ on $T_pM$, which can be written as 
\[
\nabla_MJ(p)=J'(p)-\lambda_p G'(p),
\]
where
\[
\lambda_p=\frac{\bra J'(p),G'(p)\ket}{\|G'(p)\|^2}.
\]
Note that $\lambda$ is well defined on $M$ due to the condition $G'(u)\neq0$ for all $u\in M$, thus the above expression for the constrained derivative makes sense. 

\begin{Definition}
A manifold $M\in E$ is said to be of codimension one if it is modelled on a subspace $X$ of codimension one in $E$, i.e., X satisfies
\[
E=X\oplus\bra w\ket,
\]
for some $w\in E$.
\end{Definition}

In particular we have the following result.
\begin{Theorem}\label{TH:cod}
Let $M$ be a manifold of the form \eqref{eq:manifold}. Then $M$ has codimension one.
\end{Theorem}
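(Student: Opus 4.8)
The plan is to exhibit, at every point $p\in M$, a concrete vector $w_p\in E$ such that $E=T_pM\oplus\langle w_p\rangle$, and thereby conclude that $M$ is modelled locally on the codimension-one subspace $T_pM$. Recall that $M=G^{-1}(0)$ with $G\in\mathcal{C}^1(E,\R)$ and $G'(u)\neq 0$ for all $u\in M$, and that $T_pM=\{h\in E:\langle G'(p),h\rangle=0\}$ is the orthogonal complement of $G'(p)$ in the Hilbert space $E$ (so in particular it is closed). Here I read $G'(p)$ via the Riesz identification, so that $G'(p)\in E$ and $dG(p)[h]=\langle G'(p),h\rangle$.

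First I would set $w_p=G'(p)$, which is nonzero by hypothesis, so $\langle w_p\rangle$ is a genuine one-dimensional subspace. Next I would verify $T_pM\cap\langle w_p\rangle=\{0\}$: if $h=t\,G'(p)\in T_pM$ then $0=\langle G'(p),h\rangle=t\|G'(p)\|^2$, and since $\|G'(p)\|\neq 0$ this forces $t=0$, hence $h=0$. Then I would verify $E=T_pM+\langle w_p\rangle$ by the usual orthogonal-projection decomposition: for arbitrary $u\in E$ put
\[
\mu=\frac{\langle u,G'(p)\rangle}{\|G'(p)\|^2},\qquad h=u-\mu\,G'(p);
\]
a direct computation gives $\langle G'(p),h\rangle=\langle u,G'(p)\rangle-\mu\|G'(p)\|^2=0$, so $h\in T_pM$ and $u=h+\mu\,w_p$. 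Combining the two facts yields $E=T_pM\oplus\langle w_p\rangle$, which is exactly the statement that $T_pM$ has codimension one in $E$. Since $M$ is, by the setup in \eqref{eq:manifold}, modelled on the tangent subspace (the local charts near $p$ take values in $T_pM$ up to isomorphism), this shows $M$ has codimension one per the preceding definition.

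There is essentially no hard step here; the only thing to be careful about is the identification of the abstract differential $dG(p)\in L(E,\R)$ with the Riesz representative $G'(p)\in E$, so that the hypothesis $G'(u)\neq 0$ is correctly interpreted and the orthogonal-complement description of $T_pM$ is legitimate. If one wanted full rigor about the manifold structure (rather than just the linear codimension-one property of the tangent spaces), one would also invoke the implicit function theorem to produce charts $\mathcal{U}_p\to T_pM$ near each $p\in M$, but the statement as phrased only asks for codimension one in the sense of the definition given, so the orthogonal splitting above suffices.
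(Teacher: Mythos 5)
Your orthogonal splitting $E=T_pM\oplus\bra G'(p)\ket$ is correct as linear algebra, but it is only the short final observation in the paper's argument, and by itself it does not prove the theorem in the sense the paper intends. The definition of ``codimension one'' used here requires $M$ to be \emph{modelled} on a codimension-one subspace, i.e.\ it requires local charts: homeomorphisms of neighbourhoods in $M$ onto open subsets of such a subspace, with $\mathcal{C}^1$ changes of charts. Nothing in \eqref{eq:manifold} hands you that structure --- $M$ is merely defined as the level set $G^{-1}(0)$ --- so your sentence that $M$ is ``by the setup, modelled on the tangent subspace'' assumes exactly what has to be proved. The substance of the paper's proof is the construction of these charts: it defines $\psi_p(u)=u-p-\bra G'(p),u-p\ket w_p+G(u)w_p$ with $w_p=G'(p)/\|G'(p)\|^2$, verifies $\bra G'(p),\psi_p(u)\ket=G(u)$ so that $\psi_p$ sends points of $M$ into $T_pM$, notes $\psi_p(p)=0$, $\psi_p\in\mathcal{C}^1$ and $d\psi_p(p)=Id$, and then invokes the Inverse Function Theorem to get a local parametrization $(\mathcal{V},\varphi_p)$ of $M$ by an open subset of $T_pM$; only after that does the splitting $E=T_pM\oplus\bra G'(p)\ket$ identify the model space as having codimension one.

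You do acknowledge that ``one would also invoke the implicit function theorem to produce charts,'' but you then set this aside as optional, and that is precisely the gap: without the chart construction you have shown only that each tangent space is a closed hyperplane of $E$, which is a statement about linear subspaces, not that $M$ is a $\mathcal{C}^1$ Hilbert manifold modelled on one. To repair the proof, carry out that step --- either with the paper's explicit $\psi_p$ and the Inverse Function Theorem, or by writing $E=T_pM\oplus\bra G'(p)\ket$ and applying the implicit function theorem to $G$ in these coordinates --- and then note, as the paper does, that the resulting model spaces $T_pM$ are all isomorphic to a single Hilbert subspace $X$ with $E=X\oplus\bra w\ket$, which is what the definition of codimension one demands.
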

\begin{proof}
We consider for each point $p\in M$ the map $\psi_p:E\to E$ defined as
\[
\psi_p(u)=u-p-\bra G'(p),u-p\ket w_p+G(u)w_p,\qquad\text{with}\quad
w_p=\frac{G'(p)}{\|G'(p)\|^2}.\]
Note that $\bra G'(p),w_p\ket=1$ and it follows that 
\begin{align*}
\bra G'(p),\psi_p (u)\ket=&\bra G'(p),u-p-\bra G'(p),u-p\ket w_p+G(u)w_p\ket\\
=&\bra G'(p),u-p\ket-\bra G'(p),\bra G'(p),u-p\ket w_p\ket+\bra G'(p),G(u)w_p\ket\\
=&\bra G'(p),u-p\ket-\bra G'(p),u-p\ket\bra G'(p), w_p\ket+G(u)\bra G'(p),w_p\ket\\
=&G(u).
\end{align*}
Thus, we have that $\psi_p (u)\in T_pX$ if and only if $u\in M$ and, hence, the restriction of $\psi_p$ to $M$ mapping $M$ onto $T_pM$. Moreover, $\psi_p(p)=0$, $\psi_p$ is of class $\mathcal{C}^1$ and $d\psi_p(p)=Id\in L(E,E)$. Using the Inverse Function Theorem  (see \cite{a-prodi}) we obtain that $\psi_p$ is locally invertible at $p$, furthermore,   $\psi_p$ induces  a diffeomorphism  between a neighbourhood $\wt{\mathcal{U}}$ of $p$ and a neighbourhood $\wt{\mathcal{V}}$ of $0$. Now, if we define the map $\varphi_p$ as the restriction of $\psi_p^{-1}$ to $\mathcal{V}=\wt{\mathcal{V}}\cap T_pM$, it follows that $M$ is a $\mathcal{C}^1$ manifold with local parametrization given by $(\mathcal{V},\varphi_p)$ at the point $p$. On the other hand we know that the tangent space $T_pM$ for all $p\in M$ are isomorph to some Hilbert space $X$ with codimension one, since
\[
E=T_pM\oplus\bra G'(p)\ket.
\]
Therefore, we have proved that $M$ is a Hilbert manifold modelled on a subspace $X$ of codimension one in $E$. Hence, $M$ has codimension one.
\end{proof}

\begin{Definition}
Let $J:E\to \R$ be a differentiable functional an let $M$ be a smooth Hilbert manifold.  We say that $z\in M$ is a constrained critical point of $J$ on $M$ if 
$$
d_MJ(z)=0.
$$
\end{Definition}
Moreover, a constrained critical point $z$ satisfies the equation $\nabla_MJ(z)=0$. Furthermore $J'(z)$ is orthogonal to the tangent space $T_zM$ and, if $M$ has the form \eqref{eq:manifold}, there exists a constant $\lambda$ such that
\[
J'(z)=\lambda G'(z).
\]
\begin{Definition}\label{Def:contstrained max}
We say that a point $z\in E$ is a local constrained  minimum (maximum) of the functional $J\in C(E,\R)$ on a smooth manifold $M$, if there exists a neighbourhood $\mathcal{V}$ of $z$ such that 
\[
J(z)\leq J(u)\quad \left( J(z)\geq J(u)\right),\qquad \forall u\in (\mathcal{V}\cap M)\setminus\{z\},
\] 
If the above inequality is strict we say that $z$ is a strict local constrained minimum (maximum) of $J$. In case that this inequality holds for every $u\in X\cap M\setminus\{z\}$, $z$ is called a global constrained minimum (maximum) of the functional on $M$.
\end{Definition}
Similarly to Proposition \ref{punto estacionario} we have the following necessary condition to constrained extremes.
\begin{Proposition}\label{punto estacionario constrained} If $z\in X$ is a local constrained minimum (maximum) of a functional $J:E\to \R$ on a smooth manifold $M$, and $J$ is differentiable  at $z$, then $z$ is a constrained critical point of $J$ on $M$. 
\end{Proposition}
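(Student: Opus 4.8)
The plan is to reduce this constrained statement to the unconstrained Proposition \ref{punto estacionario} by pulling $J$ back through a local parametrization of $M$ at $z$. Assume first that $z$ is a local constrained \emph{minimum} of $J$ on $M$; the maximum case then follows by applying the conclusion to $-J$, for which $z$ is a local constrained minimum and which satisfies $d_M(-J)(z) = -\,d_MJ(z)$.

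By Theorem \ref{TH:cod} — more precisely, by the construction in its proof — the set $M$, being of the form \eqref{eq:manifold}, admits at the point $z$ a local parametrization $(\mathcal{V},\varphi_z)$ with $\mathcal{V}$ an open neighbourhood of $0$ in the Hilbert space $T_zM$, $\varphi_z\in\mathcal{C}^1(\mathcal{V},E)$, $\varphi_z(\mathcal{V})\subset M$, $\varphi_z(0)=z$, and — since the map $\psi_z$ there satisfies $d\psi_z(z)=\mathrm{Id}$ — with $d\varphi_z(0)\colon T_zM\to E$ equal to the canonical inclusion $T_zM\hookrightarrow E$. Set $\widetilde{J}=J\circ\varphi_z\colon\mathcal{V}\to\R$. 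Since $J$ is Fr\'echet differentiable at $z=\varphi_z(0)$ and $\varphi_z$ is differentiable at $0$, the composition $\widetilde{J}$ is differentiable at $0$, and the chain rule gives $d\widetilde{J}(0)[h]=dJ(z)\big[d\varphi_z(0)[h]\big]=dJ(z)[h]$ for every $h\in T_zM$.

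Next I would check that $0$ is a local minimum of $\widetilde{J}$ on $\mathcal{V}$: taking a neighbourhood $\mathcal{W}$ of $z$ in $E$ with $J(z)\le J(u)$ for all $u\in(\mathcal{W}\cap M)\setminus\{z\}$, continuity of $\varphi_z$ at $0$ yields a neighbourhood $\mathcal{V}'\subset\mathcal{V}$ of $0$ with $\varphi_z(\mathcal{V}')\subset\mathcal{W}$, and since $\varphi_z(\mathcal{V}')\subset M$ we get $\widetilde{J}(h)=J(\varphi_z(h))\ge J(z)=\widetilde{J}(0)$ for all $h\in\mathcal{V}'$. Applying Proposition \ref{punto estacionario} to $\widetilde{J}$ on the Hilbert space $T_zM$ (its proof is purely local, hence valid for a functional defined only on the open set $\mathcal{V}$) gives $d\widetilde{J}(0)=0$, and combining this with the chain-rule identity yields $dJ(z)[h]=0$ for all $h\in T_zM$, i.e. $d_MJ(z)=dJ(z)|_{T_zM}=0$. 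Thus $z$ is a constrained critical point of $J$ on $M$.

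The only delicate point is the behaviour of the parametrization at $z$: one must read off from the proof of Theorem \ref{TH:cod} both that $\varphi_z(0)=z$ and that $d\varphi_z(0)$ is precisely the inclusion of $T_zM$ into $E$, so that the chain rule recovers exactly the restriction $dJ(z)|_{T_zM}=d_MJ(z)$ rather than some distorted functional; everything else is routine. (Equivalently, one may avoid Proposition \ref{punto estacionario} altogether and argue directly with the curve $\gamma(t)=\varphi_z(th)$ lying on $M$: the real function $t\mapsto J(\gamma(t))$ has a local minimum at $t=0$, and since $\gamma(t)-z=th+o(t)$ and $J$ is Fr\'echet differentiable at $z$ one has $J(\gamma(t))-J(z)=t\,dJ(z)[h]+o(t)$, so that the one-sided difference quotients at $t=0$ force $dJ(z)[h]=0$, exactly as in the proof of Proposition \ref{punto estacionario}.)
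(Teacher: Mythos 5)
Your proposal is correct and follows essentially the same route as the paper: both use the local parametrization $(\mathcal{V},\varphi_z)$ constructed in the proof of Theorem \ref{TH:cod}, observe that $z$ being a local constrained extremum is equivalent to $0$ being a local extremum of $J\circ\varphi_z$, apply Proposition \ref{punto estacionario} to this composition, and exploit $d\varphi_z(0)=Id$ on $T_zM$ to recover $d_MJ(z)=0$. Your additional verifications (continuity argument for the pulled-back minimum, the $-J$ reduction for maxima, and the alternative curve-based argument) are just more explicit renderings of steps the paper leaves implicit.
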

\begin{pfn}
Let $(\mathcal{V},\varphi_z)$ be the local parametrization of $M$ at the point $z$ that was used previously in the proof of Theorem \ref{TH:cod}. Recall that
$$
\varphi_z=\psi_z^{-1}|_\mathcal{V}:\mathcal{V}\subset T_zM\to M,
$$
such that $\varphi_z(0)=z$, and
\begin{equation}\label{eq:ident}
d\varphi_z(0)=(d\psi_z(z))^{-1}|_{T_zM}=Id,
\end{equation}
(see \cite[\textsection6.3]{AM}). From the Definition \ref{Def:contstrained max} we deduce that, $z$ is a local constrained minimum (maximum) of $J$ in $M$ if and only if $0$ is a local minimum (maximum) of $J\circ\varphi_z$ on $\mathcal{V}$. Now, applying the Proposition \ref{punto estacionario}, we have that $0$ is a critical point of the functional $J\circ\varphi_z$, i.e.,
\[
d(J\circ\varphi_z)(0)[h]=dJ(z)[d\varphi_z(0)[h]]=0,\qquad\forall h\in T_zM.
\]
Therefore, since $d\varphi_z(0)=Id$, we have
\[
dJ(z)[h]=d_MJ(z)[h]=0,\qquad\forall h\in T_zM,
\]
and we can conclude that $z$ is a constrained critical point of $J$ on $M$.
\end{pfn}

\subsection{Natural constraints}
Frequently, in the variational formulation of a partial differential equation we have that the associated functional is not bounded. A useful technique used to avoid this issue is to find a manifold $M$ such that the constrained functional is bounded and $M$ contains all the critical points.

\begin{Definition}[Natural constraint]
 Let $E$ be a Hilbert space. A manifold $M$ is called a natural constraint for $J$, if $J\in \mathcal{C}^1(E,\R)$ satisfies that every constrained critical point of $J$ on $M$ is indeed a critical point of $J$, namely
 \[
\nabla _MJ(u)=0,\quad u\in M\qquad\Longleftrightarrow\qquad J'(u)=0. 
 \]
\end{Definition}
One of the most used manifolds as natural constraint is the called Nehari Manifold which was introduced by Zeev Nehari in 1960-1961 (see \cite{Nehari,Nehari2}) and defined as follows
\begin{equation}\label{Nehari def}
M=\left\{u\in E\setminus\{0\} \ |\ \bra J'(u),u\ket=0 \right\},
\end{equation}
for some functional $J\in \mathcal{C}^1(E,\R)$.
\begin{Theorem}\label{TH:natural constrain}
Let $J\in \mathcal{C}^2(E,\R)$ for some Hilbert space $E$ and let $M$ be a non-empty manifold defined as \eqref{Nehari def}. If we assume the following conditions:
\begin{equation}\label{condition1}
\exists\  r>0\quad
\text{such that}\quad B_r\cap M=\emptyset,
\end{equation} 
and
\begin{equation}\label{condition2}
\bra J''(u)u,u\ket\neq 0,\quad\forall u\in M,
\end{equation} 
then, $M$ is a natural constraint for $J$.
\end{Theorem}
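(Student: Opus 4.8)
The plan is to show that if $u \in M$ is a constrained critical point of $J$ on $M$, i.e. $\nabla_M J(u) = 0$, then in fact $J'(u) = 0$; the reverse implication is essentially immediate since a genuine critical point of $J$ that lies on $M$ automatically satisfies $d_M J(u) = 0$ because $d_M J(u)$ is the restriction of $dJ(u)$ to $T_u M$. So the content is in the forward direction.

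First I would use the characterization of the constrained gradient derived earlier in the excerpt: for the manifold $M = G^{-1}(0)$ with $G(u) = \langle J'(u), u\rangle$, one has $\nabla_M J(u) = J'(u) - \lambda_u G'(u)$ where $\lambda_u = \langle J'(u), G'(u)\rangle / \|G'(u)\|^2$. Here I should note that $M$ defined by \eqref{Nehari def} really is of the form \eqref{eq:manifold}: we need $G'(u) \neq 0$ on $M$, which follows because $\langle G'(u), u\rangle = \langle J''(u)u, u\rangle + \langle J'(u), u\rangle = \langle J''(u)u, u\rangle \neq 0$ by hypothesis \eqref{condition2} (using that $\langle J'(u), u\rangle = 0$ on $M$), so in particular $G'(u) \neq 0$. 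Hence $\lambda_u$ is well defined and the formula applies.

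Now suppose $\nabla_M J(u) = 0$. Then $J'(u) = \lambda_u G'(u)$ for the scalar $\lambda_u$. Testing this identity against $u$ itself gives $\langle J'(u), u\rangle = \lambda_u \langle G'(u), u\rangle$. The left-hand side is $0$ since $u \in M$, and $\langle G'(u), u\rangle = \langle J''(u)u, u\rangle \neq 0$ by \eqref{condition2}. Therefore $\lambda_u = 0$, and consequently $J'(u) = 0$. This is the whole argument; hypothesis \eqref{condition1} (that a ball $B_r$ around the origin misses $M$) is not strictly needed for this implication but guarantees $M$ is a genuine manifold bounded away from $0$ so that $0 \notin M$ and the setup is non-degenerate — I would mention it is what makes $M$ a well-behaved $\mathcal{C}^1$ manifold (together with the $\mathcal{C}^2$ regularity of $J$ ensuring $G \in \mathcal{C}^1$).

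The only delicate point — and the one I would be most careful about — is verifying that the abstract machinery from the differentiable-manifolds subsection genuinely applies: namely that $G = \langle J'(\cdot), \cdot\rangle$ is $\mathcal{C}^1$ (this needs $J \in \mathcal{C}^2$, which is assumed) and that $G'(u) \neq 0$ on $M$ (which is exactly where \eqref{condition2} enters, as computed above). Once that is in place, Theorem \ref{TH:cod} tells us $M$ has codimension one and all the formulas for $\nabla_M J$ and $\lambda_u$ are legitimate, and the computation collapses to the one-line argument $\lambda_u \langle J''(u)u,u\rangle = \langle J'(u),u\rangle = 0 \Rightarrow \lambda_u = 0$. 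I do not anticipate any real obstacle beyond bookkeeping of these hypotheses.
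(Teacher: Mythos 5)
Your proof is correct and follows essentially the same route as the paper: use \eqref{condition2} together with $G(u)=0$ to get $\bra G'(u),u\ket=\bra J''(u)u,u\ket\neq 0$, hence $G'(u)\neq 0$, write $\nabla_MJ(u)=J'(u)-\lambda_u G'(u)$, and test against $u$ to force $\lambda_u=0$. Your handling of the converse (a critical point of $J$ on $M$ trivially has vanishing constrained derivative) and your remark on the role of \eqref{condition1} are consistent with the paper's argument, so no changes are needed.
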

\begin{pfn}
First, we  show that all constrained critical points of $J$ on $M$ are indeed critical points of $J$. In order to continue with the same notation, we set
\[
G(u)=\bra J'(u),u \ket,
\]
and we have that $G\in \mathcal{C}^1(E,\R)$, thus $M=G^{-1}(0)\setminus\{0\}$. Moreover, for $u\in M$ we can see that
\begin{equation}\label{deriv inner}
\bra G'(u),u\ket=\bra J''(u)u,u\ket+\bra J'(u),u\ket=\bra J''(u)u,u\ket\neq 0,
\end{equation}
and, hence, $G'(u)\neq 0$ for all $u\in M$. This fact and \eqref{condition1} imply that $M$ is a close $\mathcal{C}^1$ manifold of codimension one via Theorem \ref{TH:cod}. If we suppose that $z\in M$ is a constrained critical point of $J$, then
\begin{equation}\label{croyeccion cero}
\nabla_MJ(z)=J'(z)-\lambda_z G'(z)=0.
\end{equation}
Now, considering the following scalar product
\[
\l_z\bra G'(z),z\ket=\bra\l_z G'(z),z\ket=\bra J'(z),z\ket=G(z)=0
\]
we obtain that $\l_z=0$ since $\bra G'(u),u\ket\neq 0$. Therefore $J'(z)=0$ and hence $z$ is a critical point of $J$. Conversely, if we suppose that $J'(z)=0$, then $G(z)=\bra0, z\ket= 0$, thus $z\in M$. Moreover
\[
\lambda_z=\frac{\bra J'(z),G'(z)\ket}{\|G'(z)\|^2}=0,
\]
and, hence, \eqref{croyeccion cero} holds and $z$ is a constrained critical point.
\end{pfn}

\subsection{The Palais-Smale compactness condition}
The existence of constrained critical points is closely related with some compactness condition. In this subsection, we discuss the Palais-Smale condition which we use in the following chapters.

\begin{Definition}\label{Def:ps}
Let $E$ be a Hilbert space and $J\in \mathcal{C}^1(E,\R)$. We say that a sequence $u_n\in E$ is a Palais-Smale sequence if  it satisfies:
\begin{itemize}
\item $J(u_n)$ is bounded in $E$,
\item $J'(u_n)\to 0$ in $E'$.
\end{itemize}
\end{Definition}

\begin{Definition}[Palais-Smale condition]
We say that a functional $J$ satisfies the Palais-Smale condition on $E$, if every Palais-Smale sequence has a convergent subsequence in $E$.
\end{Definition}

Note that, if the functional $J\in \mathcal{C}^1(E.\R)$ satisfies the Palais-Smale condition, then, for all Palais-Smale sequence $u_n$ in $E$ such that $J(u_n)\to c$, there exists an element $u\in E$ and a convergent subsequence (relabelling) $u_n$  such that $u_n\to u$ in $E$. Therefore, by continuity, we have that $J(u)=c$ and $J'(u)=0$. In other words, $u$ is a critical point of $J$ on $E$ and $c$ is said to be a critical level. 


The following principle is an useful tool to obtain a Palais-Smale sequence.

\begin{Theorem}[Ekeland's Variational Principle]\label{principio de eke}{\normalfont \cite{eke}}
Let $J\in \mathcal{C}^{1}(E,\R)$, and let $M$ be a manifold of the form \eqref{eq:manifold}. If $J$ is bounded from below on $M$, then, for every $\epsilon>0$, there exists some point $u_\epsilon\in M$ such that
\[
J(u_\epsilon)\leq \inf\limits_{u\in M} J(u)+\epsilon^2 \qquad\text{and}\qquad\|\nabla_MJ(u_\epsilon)\|\leq\epsilon.\]
\end{Theorem}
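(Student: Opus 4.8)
The plan is to reduce the statement to the standard (unconstrained) form of Ekeland's principle applied to a suitable complete metric space, and then to translate the metric conclusion into the gradient estimate via the manifold structure developed above. First I would observe that $M=G^{-1}(0)$, with $G\in\mathcal{C}^1(E,\R)$ and $G'(u)\neq 0$ on $M$, is a closed subset of the Hilbert space $E$ (it is the preimage of a closed set under a continuous map), hence a complete metric space when equipped with the distance inherited from the norm of $E$. Since $J$ is bounded from below on $M$ and, being $\mathcal{C}^1$, is in particular lower semicontinuous on $M$, the hypotheses of the classical Ekeland principle are met. Applying it with the parameter $\e^2$ (and, say, $\lambda=\e$ in the usual two-parameter version, or simply the one-parameter version with a convenient choice), I obtain a point $u_\e\in M$ such that
\[
J(u_\e)\le \inf_{u\in M}J(u)+\e^2,
\qquad
J(v)\ge J(u_\e)-\e\,\|v-u_\e\|\quad\text{for all }v\in M.
\]
The first inequality is already the first assertion of the theorem.

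Next I would convert the second, metric, inequality into the bound $\|\nabla_M J(u_\e)\|\le\e$. The idea is to test the inequality along a curve in $M$ through $u_\e$ tangent to an arbitrary $h\in T_{u_\e}M$. Using the local parametrization $\varphi_{u_\e}:\mathcal V\subset T_{u_\e}M\to M$ constructed in the proof of Theorem \ref{TH:cod}, which satisfies $\varphi_{u_\e}(0)=u_\e$ and $d\varphi_{u_\e}(0)=Id$, I would set $v=v(t)=\varphi_{u_\e}(th)$ for small $t>0$ and $h\in T_{u_\e}M$ with $\|h\|=1$. Then $\|v(t)-u_\e\|=t\|h\|+o(t)=t+o(t)$, while $J(v(t))=J(u_\e)+t\,dJ(u_\e)[h]+o(t)$ because $J\circ\varphi_{u_\e}$ is differentiable at $0$ with derivative $dJ(u_\e)\circ d\varphi_{u_\e}(0)=dJ(u_\e)|_{T_{u_\e}M}=d_MJ(u_\e)$. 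Plugging these expansions into $J(v(t))\ge J(u_\e)-\e\|v(t)-u_\e\|$, dividing by $t$, and letting $t\to 0^+$ gives $d_MJ(u_\e)[h]\ge -\e$ for every unit $h\in T_{u_\e}M$; replacing $h$ by $-h$ yields $|d_MJ(u_\e)[h]|\le\e$ for all unit $h$, hence $\|d_MJ(u_\e)\|_{L(T_{u_\e}M,\R)}\le\e$. By the Riesz representation \eqref{proyec} this is exactly $\|\nabla_M J(u_\e)\|\le\e$, completing the proof.

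The main obstacle is the second paragraph: one must be careful that the parametrization $\varphi_{u_\e}$ is only defined on a neighborhood $\mathcal V$ of $0$ in $T_{u_\e}M$, so the test curve $v(t)$ lives in $M$ only for $t$ small — which is fine since we only take $t\to 0^+$. One should also check that the $o(t)$ terms are genuinely uniform enough to survive division by $t$; this follows from Fréchet differentiability of both $\|\cdot\|$ composed with $\varphi_{u_\e}-u_\e$ and of $J\circ\varphi_{u_\e}$ at $0$, together with $d\varphi_{u_\e}(0)=Id$. Everything else — completeness of $M$, lower semicontinuity of $J$, and the invocation of the classical Ekeland principle — is routine.
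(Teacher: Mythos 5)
Your argument is correct. Note that the paper does not prove this statement at all --- it is stated as a classical result with the citation \cite{eke} --- so there is no internal proof to compare against; what you supply is the standard derivation, and it meshes with the framework the paper has already set up. Concretely: $M=G^{-1}(0)$ is closed in $E$, hence complete in the induced metric, and $J|_M$ is continuous and bounded below, so the metric (two-parameter) Ekeland principle with parameters $\e^2$ and $\lambda=\e$ yields $u_\e\in M$ with the energy estimate and the slope inequality $J(v)\ge J(u_\e)-\e\|v-u_\e\|$ for all $v\in M$; your passage from this inequality to $\|\nabla_M J(u_\e)\|\le\e$, testing along $v(t)=\varphi_{u_\e}(th)$ with the local parametrization of Theorem \ref{TH:cod} (which satisfies $\varphi_{u_\e}(0)=u_\e$ and $d\varphi_{u_\e}(0)=Id$ as in \eqref{eq:ident}) and then invoking the Riesz identification \eqref{proyec}, is exactly the standard way to obtain the constrained-gradient form of the principle. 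Two minor remarks: you do not need any differentiability of the norm to get $\|v(t)-u_\e\|=t+o(t)$ --- the triangle inequality applied to $v(t)-u_\e=th+o(t)$ suffices; and it is worth stating explicitly that $\varphi_{u_\e}$ maps a neighbourhood of $0$ in $T_{u_\e}M$ into $M$ (guaranteed by the construction in Theorem \ref{TH:cod}), since that is what makes $v(t)$ an admissible test point in the Ekeland inequality.
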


Note that, through the Ekeland's Variational Principle, we obtain a Palais-Smale sequence $u_n$ such that 
\begin{equation}\label{m}
J(u_n)\to m=\inf\limits_{u\in M} J(u)>-\infty.
\end{equation}

The next theorem gives us the existence of constrained extremes.
\begin{Theorem}{\normalfont \cite[Theorem 7.12]{AM}}
Let $J\in \mathcal{C}^{1,1}(E,\R)$, and let $M$ be a manifold of the form \eqref{eq:manifold}. If $J$ is bounded from below on $M$ and there exists a Palais-Smale sequence in $M$ satisfying  \eqref{m},
then the infimum $m$ is achieved at some point $z\in M$ and $\nabla_M J(z)=0$. 
\end{Theorem}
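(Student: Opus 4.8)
The plan is to turn the minimization problem into the search for a constrained critical point: I would take the Palais--Smale sequence supplied by the hypothesis, extract from it a strongly convergent subsequence using the Palais--Smale compactness condition satisfied by $J$ on $M$, and then verify that the limit lies on $M$ and realizes $\inf_M J$, from which the vanishing of the constrained gradient is immediate.

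Concretely, let $(u_n)\subset M$ be a Palais--Smale sequence for $J$ on $M$ at the level $m$, so that $J(u_n)\to m=\inf_M J>-\infty$ and $\nabla_M J(u_n)\to 0$. Such a sequence always exists: since $J\in\mathcal{C}^{1,1}(E,\R)$ is bounded from below on $M=G^{-1}(0)$, Ekeland's Variational Principle (Theorem \ref{principio de eke}) applied with $\epsilon=1/n$ produces points $u_n\in M$ with $J(u_n)\le m+1/n^2$ and $\|\nabla_M J(u_n)\|\le 1/n$, which is exactly of this form, so the hypothesis \eqref{m} is satisfied. Invoking now the Palais--Smale condition for $J$ on $M$, a subsequence, still denoted $(u_n)$, converges strongly, $u_n\to z$ in $E$.

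It remains to identify $z$. Since $G\in\mathcal{C}^1(E,\R)$ is continuous and $G(u_n)=0$ for every $n$, letting $n\to\infty$ gives $G(z)=0$, i.e. $z\in M$. The continuity of $J$ then yields $J(z)=\lim_n J(u_n)=m=\inf_M J$, so $z$ is a global constrained minimum of $J$ on $M$; by Proposition \ref{punto estacionario constrained} it is a constrained critical point, that is $d_M J(z)=0$, equivalently $\nabla_M J(z)=0$. (Alternatively one can pass to the limit directly in $\nabla_M J(u_n)\to 0$, using that $u\mapsto \lambda_u=\langle J'(u),G'(u)\rangle/\|G'(u)\|^2$, hence also $u\mapsto \nabla_M J(u)=J'(u)-\lambda_u G'(u)$, is continuous on $M$ because $G'$ does not vanish there.)

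The single genuinely delicate step is the extraction of a strongly convergent subsequence from $(u_n)$: this is exactly where the Palais--Smale compactness condition is used, and it is indispensable, since a Palais--Smale sequence at level $m$ may perfectly well exist without the infimum being attained. The remaining ingredients are soft: the regularity $J\in\mathcal{C}^{1,1}$ is what makes Ekeland's principle applicable (so that the minimizing sequence can be upgraded to one with $\nabla_M J(u_n)\to 0$) and what guarantees the continuity of the constrained gradient used to conclude, while the closedness of $M$, inherited from the continuity of $G$, keeps the limit on the manifold.
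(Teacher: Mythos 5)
The paper itself offers no proof of this statement --- it is quoted from \cite[Theorem 7.12]{AM} --- so there is nothing internal to compare with; judged on its own terms, your argument is the standard Ekeland-plus-compactness proof, and everything after the extraction of a convergent subsequence is fine: $G(z)=\lim_n G(u_n)=0$ puts the limit on $M$ (where $G'(z)\neq 0$ holds by the very definition of \eqref{eq:manifold}), continuity of $J$ gives $J(z)=m$, and Proposition \ref{punto estacionario constrained} (or, as in your alternative, the continuity of $u\mapsto\nabla_MJ(u)$ on $M$, which uses $G'(z)\neq0$) gives $\nabla_MJ(z)=0$.

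The genuine gap is the step ``invoking the Palais--Smale condition for $J$ on $M$'': that condition is not among the hypotheses of the statement as written. The hypotheses are only that $J\in\mathcal{C}^{1,1}(E,\R)$ is bounded below on $M$ and that \emph{some} Palais--Smale sequence satisfying \eqref{m} exists --- and, as you yourself point out, the latter is automatic from Ekeland's principle (Theorem \ref{principio de eke}) once $J$ is bounded below on $M$, so it carries no compactness information at all. Read literally the statement is false: on $E=\R^2$ take $G(x,y)=y$ and $J(x,y)=e^{-x^2}+y^2$; then $M$ is of the form \eqref{eq:manifold}, $J\in\mathcal{C}^{1,1}$ is bounded below on $M$, and $u_n=(n,0)$ is a Palais--Smale sequence with $J(u_n)\to 0=\inf_MJ$, yet the infimum is not attained. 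So the theorem must be understood as in the cited source, with the Palais--Smale condition (or at least the assumption that the sequence in \eqref{m} has a convergent subsequence) as part of the hypotheses; under that reading your proof is correct, and it is in fact the lighter route --- \cite{AM} argue via the steepest-descent deformation flow, which is where the $\mathcal{C}^{1,1}$ regularity is really used, whereas your Ekeland-based argument needs only $\mathcal{C}^1$, exactly as the Remark following the theorem (citing \cite[Remarks 7.13, 10.11]{AM}) indicates. Your comment that $\mathcal{C}^{1,1}$ is ``what makes Ekeland's principle applicable'' is therefore slightly off, though it does not affect the argument.
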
  

\begin{remark}
In the above theorem the condition $f\in \mathcal{C}^{1,1}(E,\R)$ can be weakened to $f\in \mathcal{C}^{1}(E,\R)$ if $M$ is a $\mathcal{C}^{1,1}$ Hilbert  or Banach Manifold, see \cite[Remarks 7.13, 10.11]{AM} for further information. 
\end{remark}

\subsection{The Mountain Pass Theorem}

In this subsection, we see one of the most useful results to prove the existence of critical points different from minima or maxima. This result is known as the {\it Mountain Pass }Theorem  and it is of particular importance for functionals that are not bounded either from below, or from above.

Let $E$ be a Hilbert space and we consider a functional $J$ with the following geometric features:
\begin{itemize}

 \item[(MP-1)] $J\in \mathcal{C}^1(E,\R)$ with $J(0)=0$ and there exist $r, \rho>0$ such that $J(u)\geq\rho$ for all $u\in S_r$, where
\[S_r=\{u\in E\  :\ \|u\|=r\};\]

\item[(MP-2)] there exists $e\in E$ with $\|e\|>r$ such that $J(e)\leq \rho$.
\end{itemize}
Notice that J might be unbounded from below. We only require that it is bounded from below on $S_r$.

We denote by $\G$ the set of all continuous  paths on $E$ joining $u=0$ and $u=e$ as follows
\begin{equation}
\G=\{\g\in \mathcal{C}([0,1],E)\ |\ \g(0)=0,\ \g(1)=e\}.
\end{equation}
We can see that $\G$ is a non-empty set because the path $\g(t)=te$ belongs to $\Gamma$. We set
\begin{equation}\label{eq:cMP}
c=\inf\limits_{\g\in\G}\max\limits_{t\in[0,1]}J(\g(t)).
\end{equation}
Note that (MP-1) implies 
\[
\max\limits_{t\in[0,1]}J(\g(t))\geq \rho,\qquad\forall\g\in\G,
\]
since all of these paths cross $S_r$, therefore $c\geq\rho>0$. 
On the other hand, we cannot ensure in general that the infimum  in \eqref{eq:cMP} is attained in $\G$, there are examples even in finite dimensional cases that show this fact. To avoid this problem we will use the Palais-Smale compactness condition. 
\begin{figure}[t]
 \centering
  \subfloat[ ]{
   \label{montaña}
     \includegraphics[height=6cm]{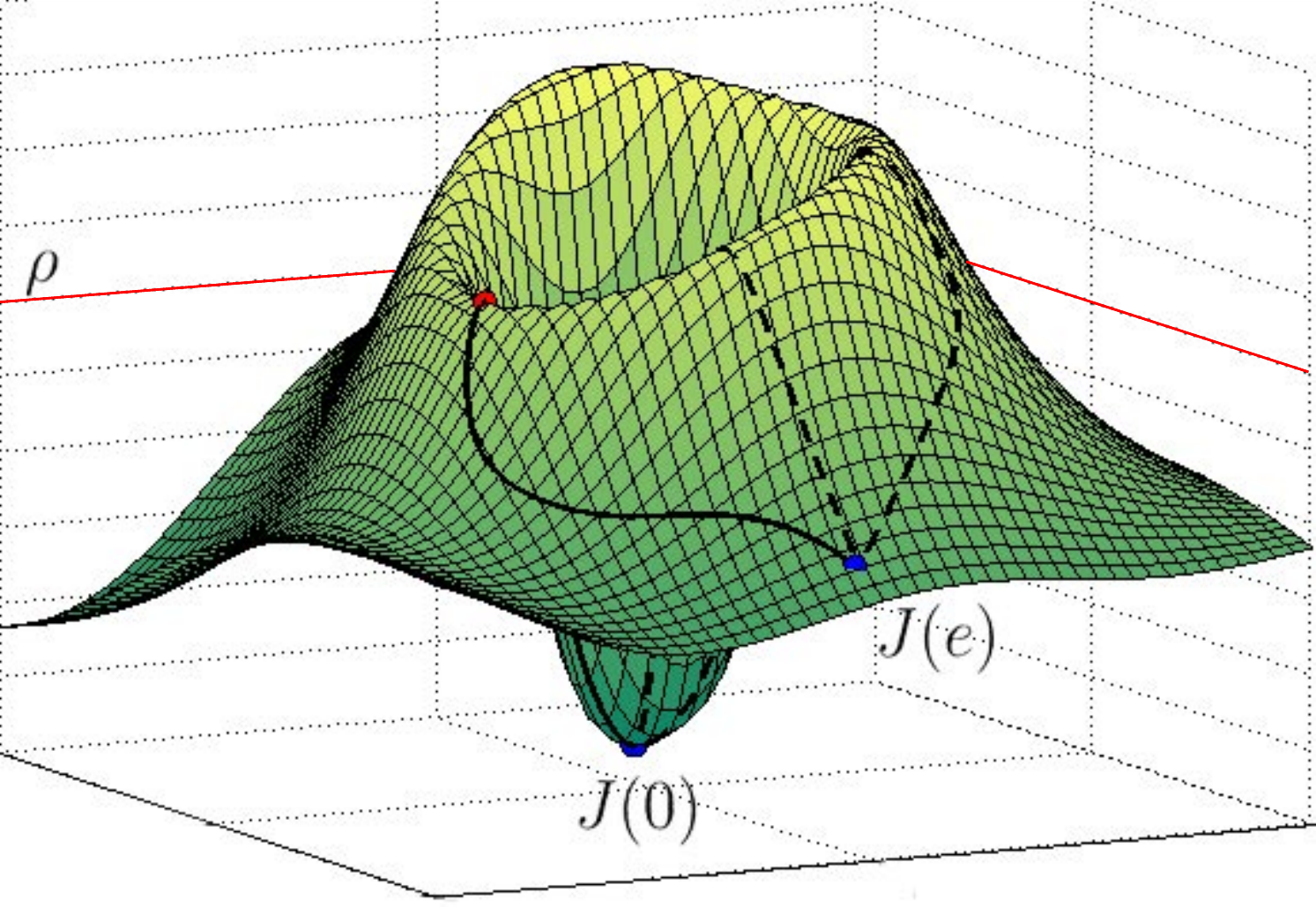}}
  \subfloat[]{
   \label{niveles}
    \includegraphics[height=5.7cm]{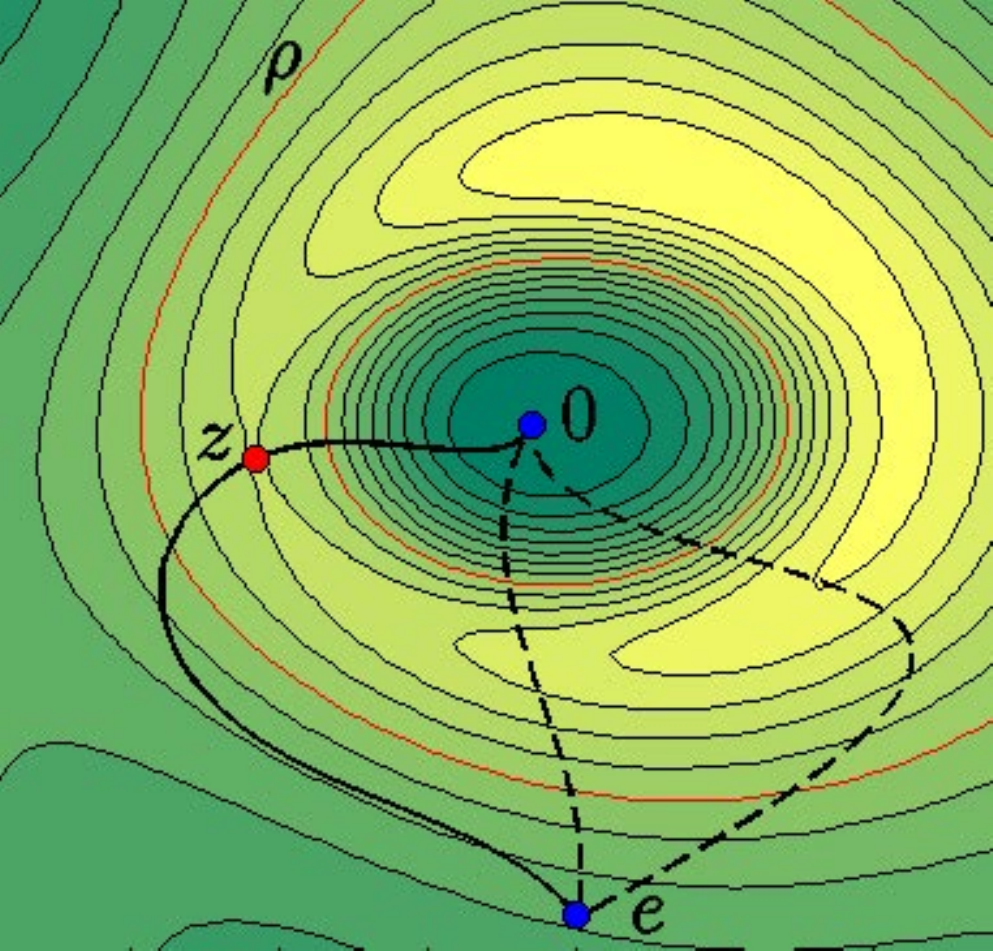}}
 \caption{ Figure (a) is a geometric notion of the Mountain Pass Theorem in $\R^3$ and Figure (b) shows the level isolines of the functional represented in Figure (a). The paths $\g\in\G$ are represented by the black curves (solid and dashed) and $z$ is the Mountain Pass critical point.}
 \label{figura montaña}
\end{figure}
\begin{Theorem}[Mountain Pass]
Let $J$ be a functional that satisfies (MP-1) and (MP-2). Let $c$ be defined as \eqref{eq:cMP} and suppose that the Palais-Smale  condition holds.
Then $c$ is a critical level for $J$. Precisely, there exists $z\in E\setminus\{0,e\}$ such that $J(z)=c$ and $J'(z)=0$. 
\end{Theorem}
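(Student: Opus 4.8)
The plan is to argue by contradiction, using the Palais--Smale condition to manufacture a deformation of the sublevel sets of $J$ which, when applied to a near-optimal path in $\G$, would lower its maximum value below $c$, contradicting the definition \eqref{eq:cMP} of $c$ as an infimum. So assume, towards a contradiction, that $c$ is \emph{not} a critical level, i.e. $J'(z)\neq 0$ for every $z\in E$ with $J(z)=c$.

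\textbf{Step 1: a uniform gradient bound near the level $c$.} I first claim there are $\varepsilon_0>0$ and $\delta>0$ with $\|J'(u)\|_{E'}\geq\delta$ whenever $|J(u)-c|\leq\varepsilon_0$. Indeed, otherwise one finds $u_n\in E$ with $J(u_n)\to c$ and $J'(u_n)\to 0$; this is a Palais--Smale sequence in the sense of Definition \ref{Def:ps} (the boundedness of $J(u_n)$ being clear), so by the Palais--Smale condition a subsequence converges to some $u\in E$, and continuity of $J$ and $J'$ forces $J(u)=c$ and $J'(u)=0$, contradicting the assumption. Shrinking $\varepsilon_0$ if necessary, I may also assume $\varepsilon_0<c-\max\{J(0),J(e)\}$; this quantity is positive since $J(0)=0$ and $c\geq\rho>0$, and for $e$ one uses $J(e)\leq\rho<c$ (the degenerate case $J(e)=\rho=c$ being disposed of directly, as the minimum of $J$ on $S_r$ then already provides a critical point).

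\textbf{Step 2: the deformation.} Using the uniform bound of Step 1, one constructs a locally Lipschitz pseudo-gradient vector field for $J$ on the band $\{\,|J-c|\leq\varepsilon_0\,\}$, cuts it off outside this band, and integrates the resulting flow. This is the standard \emph{deformation lemma}: there exist $\varepsilon\in(0,\varepsilon_0)$ and a continuous map $\eta:[0,1]\times E\to E$ such that
\begin{itemize}
\item $\eta(t,u)=u$ whenever $J(u)\leq c-\varepsilon_0$ (more generally, whenever $|J(u)-c|\geq\varepsilon_0$);
\item $t\mapsto J(\eta(t,u))$ is non-increasing;
\item $\eta\bigl(1,\{J\leq c+\varepsilon\}\bigr)\subseteq\{J\leq c-\varepsilon\}$.
\end{itemize}
(One works with a pseudo-gradient field rather than with $J'$ itself precisely because $J$ is only $\mathcal{C}^1$, so $J'$ need not be Lipschitz and the flow might fail to exist.) Alternatively, since Ekeland's Variational Principle (Theorem \ref{principio de eke}) is available, one may instead apply it to the functional $\gamma\mapsto\max_{t\in[0,1]}J(\gamma(t))$ on the complete metric space $\G$ equipped with the uniform metric, and extract from the resulting near-minimal paths a Palais--Smale sequence at level $c$; the Palais--Smale condition then yields the critical point directly.

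\textbf{Step 3: conclusion.} By \eqref{eq:cMP} choose $\gamma\in\G$ with $\max_{t\in[0,1]}J(\gamma(t))\leq c+\varepsilon$, and set $\widetilde\gamma(t)=\eta(1,\gamma(t))$. Then $\widetilde\gamma\in\mathcal{C}([0,1],E)$, and since $J(\gamma(0))=J(0)=0\leq c-\varepsilon_0$ and $J(\gamma(1))=J(e)\leq c-\varepsilon_0$, the first property of $\eta$ gives $\widetilde\gamma(0)=\eta(1,0)=0$ and $\widetilde\gamma(1)=\eta(1,e)=e$, so $\widetilde\gamma\in\G$. But the third property yields $\max_{t\in[0,1]}J(\widetilde\gamma(t))\leq c-\varepsilon<c$, contradicting $c=\inf_{\gamma\in\G}\max_t J(\gamma(t))$. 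Hence $c$ is a critical level: there is $z\in E$ with $J(z)=c$ and $J'(z)=0$; and $z\notin\{0,e\}$ because $J(z)=c\geq\rho>0=J(0)$ and $J(z)=c>J(e)$. The main obstacle is Step 2 --- the deformation lemma --- namely building a locally Lipschitz pseudo-gradient vector field subordinate to $J'$ on $\{|J-c|\leq\varepsilon_0\}$, cutting it off so that points with $J$-value away from $c$ stay fixed, and running its flow; once this tool is in hand, the rest is bookkeeping with the definition of $c$.
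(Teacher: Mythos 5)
Your argument is the classical deformation-lemma proof of the Mountain Pass Theorem, which is precisely the route of Ambrosetti--Rabinowitz \cite{ar}, the reference the paper itself defers to for the proof. Step 1 (the uniform lower bound on $\|J'\|$ in a band around level $c$, obtained from the Palais--Smale condition by contradiction) and Step 3 (deforming a near-optimal path and contradicting the definition of $c$) are correct, and invoking the standard quantitative deformation lemma in Step 2 is the usual way to package the pseudo-gradient construction; so in the non-degenerate situation $c>\max\{J(0),J(e)\}$ your proof is sound and matches the cited argument.

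The genuine soft spot is your treatment of the borderline case, which the paper's hypotheses as literally stated do allow: (MP-2) only requires $J(e)\le\rho$, so it can happen that $c=\rho=J(e)$, and then no $\varepsilon_0$ with $\varepsilon_0<c-\max\{J(0),J(e)\}$ exists, which is exactly what you need in Step 3 to keep the endpoints of the path fixed under the deformation. Your parenthetical fix --- that in this case ``the minimum of $J$ on $S_r$ already provides a critical point'' --- does not work: $J$ restricted to $S_r$ need not attain its infimum (Palais--Smale gives no compactness of the sphere), and even if it did, a minimizer of $J|_{S_r}$ is only a constrained critical point, not a zero of $J'$. To close this case one must either strengthen (MP-2) to strict separation, $J(e)<\rho$ (in \cite{ar} the hypothesis is $J(e)\le 0<\rho$, which is surely what the statement intends), or run a refined deformation argument of Ghoussoub--Preiss/Pucci--Serrin type showing that when $c=\rho$ there is a critical point at level $c$ located on $S_r$. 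As written, your one-line disposal of the degenerate case is not a proof; everything else is fine.
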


The point $z$ in the above Theorem is said to be a Mountain Pass critical point and $c$ a Mountain Pass critical level. Figure \ref{figura montaña} gives us a geometric notion of the Mountain Pass Theorem. See \cite{ar} for further details and proof.

\section{The Schwarz symmetrization}\label{Schwarz}\

In this section we will define the Schwarz symmetrization as well as some of its basic properties. The contents of this section can be found at \cite{Bandle}. Let $\Omega$ be a bounded subset in $\RN$ and we denote by $|\Omega|$ the volume (Lebesgue measure) of $\Omega$ which it is clearly finite.

\begin{Definition}
The symmetrized set of $\Omega$, denoted by $\Omega^\star$, is the ball $$\Omega^\star=\{x\in\RN: |x|< r\},$$ such that $|\Omega^\star|=|\Omega|$. If $\Omega$ is compact, we set $$\Omega^\star=\{x\in\RN: |x|\leq r\}.$$
\end{Definition}
From the above definitions it follows clearly that, if $|\Omega_1|\leq |\Omega_2|$, then $\Omega_1^\star\subset\Omega_2^\star$. In particular it is true when $\Omega_1\subset\Omega_2$.

\begin{Definition}\label{def:schw}
Let $u:\Omega\subset \R^N\to \R$. Then we define the Schwarz symmetrization of $u$ in $\Omega$ as the map $u^\star:\Omega^\star\to \R$ such that
\begin{equation}\label{schwarz symm}
u^\star(x)=\sup\{\kappa:x\in(\Omega_u(\kappa))^\star\},
\end{equation}
where
\begin{equation}\label{defsim}
\Omega_u(\kappa)=\{x\in \Omega: u(x)\geq\kappa\}.
\end{equation}
\end{Definition}
Notice that if $x,y\in\Omega^\star$ and $|x|=|y|$, then
\begin{equation}
u^\star(x)=\sup\{\kappa:x\in(\Omega_u(\kappa))^\star\}=\sup\{\kappa:y\in(\Omega_u(\kappa))^\star\}=u^\star(y),
\end{equation}
thus, $u^\star$ is a radially symmetric function in $\Omega^\star$ and for this reason the term ``symmetrization'' is used. Another observation to take into account is derived from \eqref{defsim}, we have that, if $\kappa_1\geq\kappa_2$, then $\Omega_u(\kappa_1)\subset\Omega_u(\kappa_2)$ and hence $(\Omega_u(\kappa_1))^\star\subset(\Omega_u(\kappa_2))^\star$.
\begin{Proposition}
The Schwarz symmetrization $u^\star$ is a radially non-increasing function.
\end{Proposition}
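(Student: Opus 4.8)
The plan is to unwind the two definitions (the symmetrized set $\Omega^\star$ of a set, and the Schwarz symmetrization $u^\star$ of a function via its super-level sets) and use the nesting property of super-level sets that is already observed right before the statement: if $\kappa_1 \ge \kappa_2$ then $\Omega_u(\kappa_1) \subset \Omega_u(\kappa_2)$, hence $(\Omega_u(\kappa_1))^\star \subset (\Omega_u(\kappa_2))^\star$. What we must show is that along any ray from the origin, $u^\star$ does not increase as $|x|$ grows; equivalently, that $x \mapsto u^\star(x)$ is a non-increasing function of $r = |x|$. Since we already know $u^\star$ is radial, it suffices to compare $u^\star(x)$ and $u^\star(y)$ for $|x| \le |y|$.

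First I would fix $x, y \in \Omega^\star$ with $|x| \le |y|$ and recall
\[
u^\star(x) = \sup\{\kappa : x \in (\Omega_u(\kappa))^\star\}, \qquad
u^\star(y) = \sup\{\kappa : y \in (\Omega_u(\kappa))^\star\}.
\]
The key observation is that each set $(\Omega_u(\kappa))^\star$ is a ball centered at the origin (by definition of the symmetrized set), so it is ``monotone in $|x|$'': if $y \in (\Omega_u(\kappa))^\star$ and $|x| \le |y|$, then $x \in (\Omega_u(\kappa))^\star$ as well. Consequently
\[
\{\kappa : y \in (\Omega_u(\kappa))^\star\} \subseteq \{\kappa : x \in (\Omega_u(\kappa))^\star\},
\]
and taking suprema gives $u^\star(y) \le u^\star(x)$. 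Since $x, y$ were arbitrary with $|x| \le |y|$, this says precisely that $u^\star$ is radially non-increasing.

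The argument is short and the only point requiring a little care is the claim that $(\Omega_u(\kappa))^\star$ is genuinely a ball centered at the origin containing all points of smaller or equal norm — but this is immediate from the Definition of the symmetrized set (it is $\{x : |x| < r\}$, or the closed ball in the compact case). So I do not expect any real obstacle here; the main thing is simply to phrase the set-inclusion of index sets correctly and invoke the monotonicity of $\sup$ over nested sets. One could equivalently phrase it via the nesting of the super-level sets $\Omega_u(\kappa)$ and their symmetrizations, but the cleanest route is the direct comparison of the two index sets defining $u^\star(x)$ and $u^\star(y)$ as above.
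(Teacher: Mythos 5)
Your argument is correct and coincides with the paper's own proof: both compare $u^\star(x)$ and $u^\star(y)$ for $|x|\le|y|$ by noting that each $(\Omega_u(\kappa))^\star$ is a ball centered at the origin, so membership of $y$ implies membership of $x$, and the supremum comparison follows. No gaps; this is essentially the same proof.
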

\begin{pfn}
We suppose that $x,y\in\Omega^\star$ and $|x|< |y|$. Then, for all $\kappa\in \R$ such that $y\in (\Omega_u(\kappa))^\star$, from the symmetries $(\Omega_u(\kappa))^\star$, it follows that $x\in (\Omega_u(\kappa))^\star$, therefore
\begin{equation}
u^\star(x)=\sup\{\kappa:x\in(\Omega_u(\kappa))^\star\}\geq\sup\{\kappa:y\in(\Omega_u(\kappa))^\star\}=u^\star(y).
\end{equation}
\end{pfn}

The set defined by
$$
\Omega^\star_{u}(\kappa)=\{x\in\Omega^\star:u^\star(x)\geq\kappa\}, 
$$
is clearly a ball, due to the radial symmetries of $u^\star$, and $\Omega^\star_{u}(\kappa)$ has the same volume than $\Omega_u(\kappa)$ as we will prove below. First,  we will show that $\Omega_{u}^\star(\kappa)=(\Omega_u(\kappa))^\star$. If $x\in \Omega_{u}^\star(\kappa)$, then $\kappa\leq u^\star(x)=\sup\{\kappa':x\in(\Omega_u(\kappa'))^\star\}$. Thus, there exists $\kappa_1\geq\kappa$ such that $x\in(\Omega_u(\kappa_1))^\star\subset(\Omega_u(\kappa))^\star$. Conversely, if $x\in (\Omega_u(\kappa))^\star$, from \eqref{schwarz symm} we have that $u^\star (x)\geq\kappa$ and hence $x\in \Omega_{u}^\star(\kappa)$. Therefore $\Omega_{u}^\star(\kappa)=(\Omega_u(\kappa))^\star$ and finally we obtain
\begin{equation}\label{quantity}
|\Omega_u(\kappa)|=|(\Omega_u(\kappa))^\star|=|\Omega_{u}^\star(\kappa)|.
\end{equation}
Taking into account the above equalities we say that the functions $u$ and $u^\star$ are equimeasurable. In the sequel, if no confusion arises, we will denote the quantity \eqref{quantity} only by $a(\kappa)$. The fact of $u$ and $u^\star$ being equimeasurable implies the following Lemma.
\begin{Lemma}\label{th:sim1}{\normalfont \cite[pp. 49]{Bandle}}
Let $\psi(t)$ be a continuous real function, then
\begin{equation}
\int_\Omega\psi(u(x))\,dx =\int_{\Omega^\star}\psi(u^\star(x))\,dx.
\end{equation} 
\end{Lemma}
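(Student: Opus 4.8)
The plan is to reduce everything to the equimeasurability of $u$ and $u^\star$ that has just been established, namely that their distribution functions coincide: $a(\kappa)=|\Omega_u(\kappa)|=|\Omega_u^\star(\kappa)|$ for every $\kappa\in\R$. Since $\Omega$ (hence $\Omega^\star$) has finite measure and $\Omega_u(\kappa')\subset\Omega_u(\kappa)$ whenever $\kappa\le\kappa'$, this immediately gives, for any $\kappa<\kappa'$,
\[
|\{x\in\Omega:\ \kappa\le u(x)<\kappa'\}|=a(\kappa)-a(\kappa')=|\{x\in\Omega^\star:\ \kappa\le u^\star(x)<\kappa'\}|.
\]
Therefore the identity of the lemma holds trivially when $\psi$ is replaced by any step function that is constant on such ``bands'', i.e. $\int_\Omega s(u)\,dx=\int_{\Omega^\star}s(u^\star)\,dx$ for $s=\sum_i c_i\mathds{1}_{[\kappa_i,\kappa_{i+1})}$. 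The remaining task is to approximate a general continuous $\psi$ well enough by such step functions; the slight discrepancy between $\{u\ge\kappa\}$ and $\{u>\kappa\}$ is harmless, since the monotone function $a$ is continuous at all but countably many $\kappa$.

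I would first treat the case of bounded $u$. After a vertical translation removing the essential infimum of $u$ (which commutes with symmetrization, as $(u-c)^\star=u^\star-c$ follows directly from the definition of $u^\star$), the essential range of $u$, and hence of $u^\star$, lies in a compact interval $[0,M]$, on which $\psi$ is uniformly continuous. Given a partition $0=t_0<t_1<\dots<t_n=M$ of mesh $\delta_n$, set $E_i=\{t_i\le u<t_{i+1}\}$ (with $E_{n-1}$ closed on the right), $E_i^\star=\{t_i\le u^\star<t_{i+1}\}$, and form the step functions $s_n=\sum_i\psi(t_i)\mathds{1}_{E_i}$ and $\widetilde s_n=\sum_i\psi(t_i)\mathds{1}_{E_i^\star}$. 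By the band identity above $|E_i|=|E_i^\star|$, so $\int_\Omega s_n\,dx=\int_{\Omega^\star}\widetilde s_n\,dx$. On the other hand $\|\psi\circ u-s_n\|_{L^\infty(\Omega)}\le\sup\{|\psi(t)-\psi(t')|:\ |t-t'|\le\delta_n\}$ and likewise for $\psi\circ u^\star-\widetilde s_n$; since $|\Omega|=|\Omega^\star|<\infty$, both $L^1$-errors tend to $0$ as $\delta_n\to0$ by uniform continuity, and passing to the limit yields $\int_\Omega\psi(u)\,dx=\int_{\Omega^\star}\psi(u^\star)\,dx$. (Alternatively one could invoke the Weierstrass theorem to reduce to polynomial $\psi$ and then handle monomials via a layer-cake representation, but the step-function argument is self-contained.)

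Finally, the general case follows by truncation: for $u_k=\max(-k,\min(u,k))$ one checks from the definition that $(u_k)^\star=\max(-k,\min(u^\star,k))$, so the bounded case applies to $u_k$, and one lets $k\to\infty$. I expect the only genuinely delicate point to be the passage to this limit together with the uniform control of the step-function approximation: it needs the natural hypothesis that $\psi\circ u\in L^1(\Omega)$ (implicit in the statement for the identity to be meaningful) and a routine but slightly technical argument to dispose of the contributions on $\{|u|>k\}$. The measure-theoretic core, the equality $|\{u\ge\kappa\}|=|\{u^\star\ge\kappa\}|$, is already in hand, which is what makes the whole scheme work.
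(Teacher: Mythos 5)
Your argument is essentially the standard proof of this fact. Note that the paper itself does not prove Lemma \ref{th:sim1} at all: it is quoted from Bandle \cite[pp.~49]{Bandle}, whose argument likewise rests on the equimeasurability identity \eqref{quantity}, i.e.\ on the fact that $u$ and $u^\star$ share the distribution function $a(\kappa)=|\Omega_u(\kappa)|=|\Omega_u^\star(\kappa)|$. Your reduction is exactly the right one: since $|\Omega|<\infty$ and $\Omega_u(\kappa')\subset\Omega_u(\kappa)$ for $\kappa\le\kappa'$, the ``bands'' $\{\kappa\le u<\kappa'\}$ and $\{\kappa\le u^\star<\kappa'\}$ have equal measure, the identity holds for step functions constant on bands, and for bounded $u$ the uniform continuity of $\psi$ on a compact interval containing the (common) essential range lets you pass to the limit because $|\Omega|=|\Omega^\star|<\infty$. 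This part is complete and correct, and it is all that is needed for the applications in the paper (there $u$ is bounded, e.g.\ continuous and vanishing at infinity, and the corollary uses $\psi(t)=t^p$).

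The only soft spot is the final truncation step for unbounded $u$, which you flag but do not close. As written it is genuinely delicate: on $\{|u|>k\}$ one has $\psi(u_k)\in\{\psi(k),\psi(-k)\}$, and this need not be dominated by $|\psi(u)|$ when $\psi$ oscillates, so neither dominated nor monotone convergence applies directly to $\int_\Omega\psi(u_k)\,dx\to\int_\Omega\psi(u)\,dx$. The clean way to finish is to bypass truncation: let $\mu_u(B)=|u^{-1}(B)|$ and $\mu_{u^\star}(B)=|(u^\star)^{-1}(B)|$ be the pushforward measures on $\R$; they are finite Borel measures agreeing on all rays $[\kappa,\infty)$ by \eqref{quantity}, hence (by a monotone class/$\pi$--$\lambda$ argument) they coincide, and the abstract change-of-variables formula gives
\[
\int_\Omega\psi(u(x))\,dx=\int_\R\psi\,d\mu_u=\int_\R\psi\,d\mu_{u^\star}=\int_{\Omega^\star}\psi(u^\star(x))\,dx,
\]
valid whenever $\psi\ge0$ (possibly with value $+\infty$ on both sides) or $\psi\circ u\in L^1(\Omega)$, after splitting $\psi=\psi^+-\psi^-$. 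With that replacement (or with the explicit hypothesis that $u$ is bounded, which covers every use made in the paper) your proof is complete.
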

\begin{Corollary}\label{corolario sim}
Taking $\psi(t)=t^p$ with $1\leq p< \infty$ in Lemma \ref{th:sim1}, we obtain
\begin{equation}\label{normas sim}
\|u\|_{L^p(\Omega)}=\|u^\star\|_{L^p(\Omega^\star)}.
\end{equation}
\end{Corollary}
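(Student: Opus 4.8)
The plan is to invoke Lemma \ref{th:sim1} directly, since all the content of the statement is already carried by the equimeasurability of $u$ and $u^\star$. The only mild subtlety is the choice of the test function: for a general real exponent $p$ the map $t\mapsto t^p$ is only defined on $[0,\infty)$, whereas Lemma \ref{th:sim1} asks for $\psi$ continuous on $\R$. I would therefore apply the lemma with $\psi(t)=|t|^p$, which is continuous on all of $\R$ and agrees with $t^p$ on the range of any nonnegative $u$; in the symmetrization setting $u$ is in any case taken nonnegative, so the two choices coincide.

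First I would apply Lemma \ref{th:sim1} with this $\psi$ to get
\[
\int_\Omega |u(x)|^p\,dx=\int_{\Omega^\star}|u^\star(x)|^p\,dx,
\]
that is, $\|u\|_{L^p(\Omega)}^p=\|u^\star\|_{L^p(\Omega^\star)}^p$. Both sides are nonnegative real numbers (finite precisely when $u\in L^p(\Omega)$), so taking $p$-th roots — using that $s\mapsto s^{1/p}$ is a well-defined, continuous, strictly increasing bijection of $[0,\infty)$ onto itself — yields $\|u\|_{L^p(\Omega)}=\|u^\star\|_{L^p(\Omega^\star)}$, which is the claim. Alternatively the same identity can be read off from the equimeasurability relation \eqref{quantity} (i.e.\ the fact that $|\Omega_u(\kappa)|=a(\kappa)=|\Omega^\star_u(\kappa)|$ for every $\kappa$) via the distribution-function representation of the $L^p$ norm, but routing it through Lemma \ref{th:sim1} is the shortest path.

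There is essentially no hard step here. The only point deserving a line of care is the one already mentioned — feeding Lemma \ref{th:sim1} a function continuous on all of $\R$, hence $|t|^p$ rather than $t^p$ — together with the observation that the case $p=\infty$ is legitimately excluded from the statement, since $t\mapsto t^\infty$ is not a continuous test function covered by this argument (the identity $\|u\|_{L^\infty}=\|u^\star\|_{L^\infty}$ does nonetheless hold, by a separate argument on essential suprema of level sets).
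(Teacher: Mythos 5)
Your proposal is correct and follows exactly the paper's route: the corollary is obtained by feeding the power function into Lemma \ref{th:sim1} and taking $p$-th roots, which is all the paper does. Your extra remark about using $\psi(t)=|t|^p$ to have continuity on all of $\R$ (harmless since $u$ is nonnegative in the symmetrization setting) is a reasonable refinement but does not change the argument.
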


If $u$ is continuous then the function $a(\kappa)$ is strictly decreasing and has discontinuities only for those values of $\kappa$ for which the set $\{x\in\Omega:u(x)=\kappa\}$ has a non-vanishing volume. Let $\kappa(a)$ be the inverse function of $a(\kappa)$, and in the points $\kappa_0$ such that $a(\kappa_0^+)=a_1< a_2=a(\kappa_0^-)$ we complete the definition of $\kappa(a)$ by setting $\kappa(a)=\kappa_0$ for all $a\in[a_1,a_2]$. 
\begin{Lemma}\label{proposym}
Let $x\in\Omega^\star$. Then
\[
u^\star(x)=\kappa(\omega_N|x|^N)
\]
where $\omega_N$ is the volume of the unit ball in $\R^N$.
\end{Lemma}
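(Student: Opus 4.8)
The plan is to unwind the definition \eqref{schwarz symm} of $u^\star(x)$ and reduce the claim to an elementary statement about the one-variable function $a(\kappa)$ and its completed inverse $\kappa(a)$. Set $t=\omega_N|x|^N$, so that $t\in[0,|\Omega|]$ since $x\in\Omega^\star$ and $|\Omega^\star|=|\Omega|$. We have already shown that $\Omega^\star_u(\kappa)=(\Omega_u(\kappa))^\star$ is the ball centred at the origin of volume $a(\kappa)=|\Omega_u(\kappa)|$, hence of radius $\bigl(a(\kappa)/\omega_N\bigr)^{1/N}$. Consequently $x\in(\Omega_u(\kappa))^\star$ precisely when $\omega_N|x|^N<a(\kappa)$, i.e.\ $t<a(\kappa)$ (with ``$\le$'' instead of ``$<$'' in the compact case, which will turn out not to matter). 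Substituting this into \eqref{schwarz symm} yields
\[
u^\star(x)=\sup\{\kappa\in\R\ :\ a(\kappa)>t\},
\]
so it remains to identify this supremum with $\kappa(t)$.

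Call this supremum $\kappa^\star$. First I would note it is a well-defined real number: $a(\kappa)\to|\Omega|>t$ as $\kappa\to-\infty$ (so the set is non-empty), while $a(\kappa)\to 0$ as $\kappa\to+\infty$ by continuity from above of the finite measure $|\cdot|$ (so the set is bounded above). Since $a$ is non-increasing, $a(\kappa)>t$ for all $\kappa<\kappa^\star$ and $a(\kappa)\le t$ for all $\kappa>\kappa^\star$. Now use that $a(\kappa)=|\{u\ge\kappa\}|$ is left-continuous (again by continuity from above, since $\bigcap_{\kappa'<\kappa}\{u\ge\kappa'\}=\{u\ge\kappa\}$): letting $\kappa\uparrow\kappa^\star$ gives $a(\kappa^\star)\ge t$, and letting $\kappa\downarrow\kappa^\star$ gives $a(\kappa^{\star+})\le t$. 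Hence
\[
a(\kappa^{\star+})\ \le\ t\ \le\ a(\kappa^\star)=a(\kappa^{\star-}).
\]
If $\kappa^\star$ is a point of continuity of $a$, this forces $a(\kappa^\star)=t$, so $\kappa^\star$ is the ordinary inverse value, $\kappa(t)=\kappa^\star$. If instead $\kappa^\star$ is one of the jump points $\kappa_0$ with $a(\kappa_0^+)=a_1<a_2=a(\kappa_0^-)$, then $t\in[a_1,a_2]$ and the convention defining the completed inverse sets $\kappa(t)=\kappa_0=\kappa^\star$. In either case $u^\star(x)=\kappa^\star=\kappa(t)=\kappa(\omega_N|x|^N)$, as claimed.

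The only real obstacle is the bookkeeping around the discontinuities of $a$: since $a$ jumps downward at each level $\kappa_0$ with $|\{u=\kappa_0\}|>0$, one must make sure the supremum lands on the value that the explicit completion assigns to $\kappa(t)$ rather than on a ``skipped'' value inside a jump interval — this is exactly what the one-sided-limit inequality above achieves. Because $a$ is strictly decreasing away from its jumps there are no plateaus, so $\sup\{\kappa:a(\kappa)>t\}$ and $\sup\{\kappa:a(\kappa)\ge t\}$ coincide; this is why the open-versus-compact distinction in $(\Omega_u(\kappa))^\star$ is harmless. Finally, the degenerate endpoints $t=0$ (i.e.\ $x=0$, where $u^\star(0)$ is the essential supremum of $u$) and, in the compact case, $t=|\Omega|$ are checked by the same argument.
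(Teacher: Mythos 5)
Your proof is correct, and it rests on the same underlying idea as the paper's: reduce the membership $x\in(\Omega_u(\kappa))^\star$ to the volume comparison between $\omega_N|x|^N$ and $a(\kappa)$ (the symmetrized superlevel set is a ball centred at the origin of volume $a(\kappa)$), and then invert the monotone function $a$. The organization differs. The paper fixes the candidate value $\kappa_x=\kappa(\omega_N|x|^N)$, argues $(\Omega_u(\kappa_x))^\star=\overline{B}_{|x|}$ so that $u^\star(x)\ge\kappa_x$, and excludes strict inequality by a contradiction based on $a$ being strictly decreasing; you instead compute $u^\star(x)=\sup\{\kappa:\,a(\kappa)>t\}$ with $t=\omega_N|x|^N$ and identify this supremum with the completed inverse $\kappa(t)$ through the one-sided bounds $a(\kappa^{\star+})\le t\le a(\kappa^{\star-})=a(\kappa^\star)$ coming from left-continuity of $a$. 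What your route buys is a more careful treatment of the discontinuities: the paper's step $a_x=a(\kappa_x)$ is only literally valid when $\omega_N|x|^N$ lies in the range of $a$, and it silently skips the case where this value falls inside a jump interval $[a_1,a_2]$ of $a$ --- exactly the case your one-sided-limit argument together with the convention $\kappa(a)=\kappa_0$ on $[a_1,a_2]$ handles explicitly; your remark that strict decrease of $a$ makes the open/closed-ball distinction harmless is also a point the paper leaves implicit. The only loose ends on your side are the endpoints $t=0$ and $t=|\Omega|$, where the set $\{\kappa:a(\kappa)>t\}$ may be unbounded or empty; you flag them but do not actually carry out the check. Since these are degenerate boundary situations that the paper's proof does not address either, this does not affect the substance of your argument.
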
 
\begin{pfn}
Let us take the closed ball $\overline{B}_{|x|}\in\RN$ with  radius $|x|$ and we set
\[
a_x:=|\overline{B}_{|x|}|=\omega_N|x|^N\qquad\text{and}\qquad\kappa_x:=\kappa(a_x).
\]
Recalling that
\[
a_x=a(\kappa_x)=|(\Omega_u(\kappa_x))^\star|,
\]
and taking into account the spherical form of the set $(\Omega_u(\kappa_x))^\star$, we obtain that $(\Omega_u(\kappa_x))^\star=\overline{B}_{|x|}$ and obviously $x\in(\Omega_u(\kappa_x))^\star$. Hence
\begin{equation}
u^\star(x)=\sup\{\kappa:x\in(\Omega_u(\kappa))^\star\}\geq\kappa_x.
\end{equation}
If we assume that the above inequality is strict, then there exists $\m_y>\kappa_x$ such that $x\in(\Omega_u(\kappa_y))^\star$. Moreover, since $a(\kappa)$ is strictly decreasing we have $a(\kappa_y)<a(\kappa_x)$, thus 
$$x\in \overline{B}_{|y|}\subset \overline{B}_{|x|},
\qquad \text{with}\qquad |y|<|x|,$$
and this is a contradiction. Therefore 
$$u^\star(x)=\kappa_x=\kappa(a_x)=\kappa(\omega_N|x|^N).\qedhere$$
\end{pfn}
\begin{Lemma}\label{lema desig}
Let $\psi$ be a non-decreasing continuous real function and let $S\subset \Omega$ be an arbitrary region of volume $V$. Then, 
\begin{equation}\label{eq:desig S}
\int_S\psi(u(x))\,dx\leq\int_{S^*}\psi(u^\star(x))\,dx=\int_0^{V}\psi(\kappa(a))\,da.
\end{equation}
\end{Lemma}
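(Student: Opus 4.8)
The plan is to split the claim into the right-hand equality and the left-hand inequality and treat each separately; both rely only on the tools already developed for $u^\star$ and $\kappa(a)$.

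For the equality $\int_{S^*}\psi(u^\star)\,dx=\int_0^V\psi(\kappa(a))\,da$, I would simply change variables. By Lemma \ref{proposym} we have $u^\star(x)=\kappa(\omega_N|x|^N)$ for $x\in\Omega^\star$, and $S^*$ is the ball centered at the origin with $|S^*|=V$, i.e.\ of radius $R=(V/\omega_N)^{1/N}$; passing to polar coordinates and substituting $a=\omega_N r^N$ (so that $r^{N-1}\,dr=da/\omega_N'$, with $a$ running from $0$ to $V$) gives
\[
\int_{S^*}\psi(u^\star(x))\,dx=\omega_N'\int_0^R\psi\big(\kappa(\omega_N r^N)\big)r^{N-1}\,dr=\int_0^V\psi(\kappa(a))\,da .
\]

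For the inequality $\int_S\psi(u)\,dx\le\int_{S^*}\psi(u^\star)\,dx$ I would argue by the layer-cake (Cavalieri) formula. First I would reduce to $\psi\ge0$: adding a constant $c$ to $\psi$ shifts all three integrals in the statement by $cV$ (since $|S|=|S^*|=V$), and $u,u^\star,\kappa$ take values in the same range, so this is harmless. Then I would write $\int_S\psi(u)\,dx=\int_0^\infty\big|\{x\in S:\psi(u(x))>t\}\big|\,dt$ and likewise over $S^*$, and compare the $t$-sections. Trivially $|\{x\in S:\psi(u(x))>t\}|\le\min\big(V,|\{x\in\Omega:\psi(u(x))>t\}|\big)$. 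On the other side, $u^\star$ is radially non-increasing and $\psi$ is non-decreasing, so $\psi(u^\star)$ is radially non-increasing; hence $\{x\in\Omega^\star:\psi(u^\star(x))>t\}$ is a ball about the origin, and intersecting it with the ball $S^*$ of measure $V$ yields exactly $|\{x\in S^*:\psi(u^\star(x))>t\}|=\min\big(V,|\{x\in\Omega^\star:\psi(u^\star(x))>t\}|\big)$. Finally, equimeasurability of $u$ and $u^\star$ — equation \eqref{quantity}, equivalently Lemma \ref{th:sim1} — together with the monotonicity of $\psi$ gives $|\{x\in\Omega:\psi(u(x))>t\}|=|\{x\in\Omega^\star:\psi(u^\star(x))>t\}|$ for a.e.\ $t$. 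Combining these three facts, the $S$-section is $\le$ the $S^*$-section for a.e.\ $t$, and integrating in $t$ proves the inequality.

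The geometry here is easy — the fact that the symmetric superlevel sets are nested balls, which is precisely what makes $\int\psi(u^\star)$ the largest possible over sets of measure $V$, needs almost nothing. The one delicate part is the measure-theoretic bookkeeping: one must justify that $\psi\circ u$ and $\psi\circ u^\star$ are equimeasurable, by writing $\{\psi(u)>t\}=\{u>\tau_t\}$ for a suitable threshold $\tau_t$ (legitimate for all but countably many $t$, since $\psi$ is continuous and non-decreasing) and invoking $|\{u>\kappa\}|=|\{u^\star>\kappa\}|$, which follows from \eqref{quantity} by letting $\kappa$ increase. An alternative would be the ``swap'' argument: set $\kappa_0=\kappa(V)$, note $\psi(u)\ge\psi(\kappa_0)$ on $\Omega_u(\kappa_0)\setminus S$ and $\psi(u)\le\psi(\kappa_0)$ on $S\setminus\Omega_u(\kappa_0)$ to get $\int_S\psi(u)\le\int_{\Omega_u(\kappa_0)}\psi(u)$, then identify the latter with $\int_{S^*}\psi(u^\star)$ via Lemma \ref{th:sim1}; but that route must confront the jump of the distribution function at $a=V$, exactly the nuisance the layer-cake argument avoids.
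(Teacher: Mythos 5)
Your proof is correct, but it follows a genuinely different route from the paper's. The paper proves the inequality by the ``swap'' argument you mention only as an alternative: it compares $S$ with the superlevel set $\Omega_u(\kappa(V))$ of equal volume, uses \eqref{defsim} and the monotonicity of $\psi$ to trade $S\setminus\Omega_u(\kappa(V))$ against $\Omega_u(\kappa(V))\setminus S$ (both of the same measure), and then identifies $\int_{\Omega_u(\kappa(V))}\psi(u)\,dx$ with $\int_{S^\star}\psi(u^\star)\,dx=\int_0^V\psi(\kappa(a))\,da$ via Lemmas \ref{th:sim1} and \ref{proposym}; the jump case you flag (when $\kappa(a)$ is constant on a maximal interval $(a_1,a_2)\ni V$, so $|\Omega_u(\kappa(V))|=a_2>V$) is handled there by replacing $\Omega_u(\kappa(V))$ with an intermediate region $\Omega'$ of volume $V$ squeezed between $\Omega_u(\kappa(a_1^-))$ and $\Omega_u(\kappa(a_2^+))$, after which the same computation goes through. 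Your layer-cake argument buys uniformity: writing $\{\psi(u)>t\}=\{u>\tau_t\}$, using equimeasurability from \eqref{quantity} (passed from $\geq$ to $>$ by a monotone limit), and the fact that the superlevel sets of $\psi(u^\star)$ are concentric balls so that their intersection with $S^\star$ has measure $\min\bigl(V,|\{\psi(u^\star)>t\}|\bigr)$, you get the comparison of $t$-sections for every $t$, with no case distinction at the jump; the price is the Fubini-type machinery and the preliminary reduction to $\psi\ge 0$, which silently assumes $\psi$ bounded below on the range of $u$ (harmless at the paper's level of rigor, e.g.\ for continuous $u$ on a bounded domain or the non-negative vanishing-at-infinity setting, but worth a word). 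Your polar-coordinate substitution $a=\omega_N r^N$ for the right-hand equality is a clean, explicit version of what the paper leaves implicit in its appeal to Lemma \ref{proposym}, and it has the added advantage of proving the inequality directly against $S^\star$ without needing to symmetrize the restriction of $u$ to $\Omega_u(\kappa(V))$.
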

It is important to note that the Schwarz symmetrization in the above theorem is taken in $\Omega$ not in $S$.

\begin{pfn}
We first consider the case where $\kappa(a)$ is non-constant in a neighbourhood of $V$, namely the set $\{x\in\Omega:u(x)=\kappa(V)\}$ has a vanishing volume. Then 
$$\big|\Omega_u(\kappa(V))\big|=V=\big|S\big|,$$
and consequently 
\begin{equation}\label{volumendif}
\big|S\,\setminus \,\Omega_u(\kappa(V))\big|=\big|\Omega_u(\kappa(V))\,\setminus \,S\big|.
\end{equation}
Moreover, from \eqref{defsim} we have in particular 
\begin{equation}\label{desigsym}
u(x)<\kappa(V)\qquad\forall\, x\in S\,\setminus\,\Omega_u(\kappa(V)),
\end{equation}
thus, using \eqref{volumendif} and \eqref{desigsym} and the fact that $\psi$ is a non-decreasing function, we obtain
\begin{equation}
\int_{S\,\setminus \,\Omega_u(\kappa(V))}\psi(u(x))\,dx\leq\int_{\Omega_u(\kappa(V))\, \setminus  \,S}\psi(u(x))\,dx,
\end{equation}
hence
\begin{equation}\label{defigsym}
\int_S\psi(u(x))\,dx\leq\int_{\Omega_u(\kappa(V))}\psi(u(x))\,dx.
\end{equation}
We also have $\big(\Omega_u(\kappa(V))\big)^\star=S^\star$, thus applying the Lemmas \ref{th:sim1} and  \ref{proposym}, it follows that
\begin{equation}\label{lema y prop}
\int_{\Omega_u(\kappa(V))}\psi(u(x))\,dx=\int_{S^\star}\psi(u^\star(x))\,dx=\int_0^{V}\psi(\kappa(a))\,da.
\end{equation}
Therefore the assertion is obtained from \eqref{defigsym} and \eqref{lema y prop}. If $\kappa(a)$ is constant in a maximal interval $(a_1,a_2)$ which contains $V$, then 
$$\big|\Omega_u(\kappa(V))\big|=a_2>V.$$
 In this case we replace $\Omega_u(\kappa(V))$ by a region $\Omega'$ of volume $V$ such that
$$
\Omega_u(\kappa(a_1^-))\subset\Omega'\subset\Omega_u(\kappa(a_2^+)),
$$
and the procedure is the same. 
\end{pfn}

Next we will show an example where the inequality of Lemma \ref{lema desig} takes place.  
\begin{Example}
Let $\Omega=(0,1)$, $S=(0,\frac12)$ and 
$$u(x)=\left\{
\begin{array}{cc}
1 &\text{if }\ x\in \left(\frac12, 1\right) \\ 
0 &\text{if }\ x\in \left(0, \frac12\right]. 
\end{array} 
\right.$$
We can see that the symmetrized sets in this case are $\Omega^\star=(-\frac12,\frac12)$, and $S^\star=(-\frac14,\frac14)$. We also have that 
$$u^\star(x)=\left\{
\begin{array}{cl}
1 &\text{if }\ x\in \left(-\frac14, \frac14\right) \\ 
0 &\text{if }\ x\in \left(-\frac12, -\frac14\right]\cap \left[\frac14,\frac12\right). 
\end{array} 
\right.$$
Taking $\psi$ as the identity function we obtain that  
\[
\int_S\psi(u(x))\,dx=\int\limits_0^{1/2} u(x)\, dx=0<1= \int\limits_{-1/4}^{1/4}u^\star(x)\, dx=
\int_{S^*}\psi(u^\star(x))\,dx,
\]
which verify the inequality of \eqref{eq:desig S}.
\end{Example}

We will introduce the following integral identity which we will use later.
\begin{Lemma}\label{lema sin demo}{\normalfont \cite[Lemma 2.3]{Bandle}}
Let $u$ and $v$ be real-valued functions defined in $\Omega$ with $u$ integrable over $\Omega$ and $v$ measurable over $\Omega$, satisfying the bound condition $-\infty<a\leq v(x)\leq b<+\infty$. Then
\begin{equation}
\int_\Omega uv \,dx=a\int_\Omega u\,dx+\int
_{a}^{b}\left(\int_{\Omega_v(\kappa)}u\,dx\right)d\kappa,
\end{equation}
or equivalently
\begin{equation}
\int_\Omega uv \,dx=b\int_\Omega u\,dx-\int
_{a}^{b}\left(\int_{\Omega\,\setminus\,\Omega_v(\kappa)}u\,dx\right)d\kappa.
\end{equation}
\end{Lemma}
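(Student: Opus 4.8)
The plan is to reduce the identity to the Fubini--Tonelli theorem via a layer-cake decomposition of $v$. Since $a\le v(x)\le b$ for every $x\in\Omega$, for each fixed $x$ one can write
\[
v(x)-a=\int_a^{v(x)}d\kappa=\int_a^b\chi_{\{\kappa\le v(x)\}}(\kappa)\,d\kappa,
\]
and, by the definition \eqref{defsim} of $\Omega_v(\kappa)$, the condition $\kappa\le v(x)$ is exactly $x\in\Omega_v(\kappa)$ (the single value $\kappa=v(x)$ is a null set, so whether one writes $\le$ or $<$ is irrelevant). Hence $v(x)-a=\int_a^b\chi_{\Omega_v(\kappa)}(x)\,d\kappa$. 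Multiplying by $u(x)$ and integrating over $\Omega$,
\[
\int_\Omega u(x)\bigl(v(x)-a\bigr)\,dx=\int_\Omega\int_a^b u(x)\,\chi_{\Omega_v(\kappa)}(x)\,d\kappa\,dx.
\]

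Next I would justify the interchange of the two integrations. The map $(x,\kappa)\mapsto v(x)-\kappa$ is measurable on $\Omega\times(a,b)$ because $v$ is measurable, so $\{(x,\kappa):v(x)\ge\kappa\}$ is a measurable subset of the product, and therefore $(x,\kappa)\mapsto u(x)\chi_{\Omega_v(\kappa)}(x)$ is measurable on $\Omega\times(a,b)$. Moreover $|u(x)\chi_{\Omega_v(\kappa)}(x)|\le|u(x)|$ and
\[
\int_\Omega\int_a^b|u(x)|\,d\kappa\,dx=(b-a)\,\|u\|_{L^1(\Omega)}<+\infty,
\]
so this function is integrable over the finite-measure product space. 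Fubini's theorem then gives
\[
\int_\Omega u(x)\bigl(v(x)-a\bigr)\,dx=\int_a^b\left(\int_\Omega u(x)\chi_{\Omega_v(\kappa)}(x)\,dx\right)d\kappa=\int_a^b\left(\int_{\Omega_v(\kappa)}u(x)\,dx\right)d\kappa,
\]
and rearranging yields the first identity.

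For the second identity I would argue symmetrically, using $b-v(x)=\int_a^b\chi_{\{\kappa>v(x)\}}(\kappa)\,d\kappa=\int_a^b\chi_{\Omega\setminus\Omega_v(\kappa)}(x)\,d\kappa$ and the same Fubini step; alternatively it follows at once from the first identity together with $\int_{\Omega_v(\kappa)}u+\int_{\Omega\setminus\Omega_v(\kappa)}u=\int_\Omega u$, which after integrating in $\kappa$ over $(a,b)$ turns $a\int_\Omega u+\int_a^b\int_{\Omega_v(\kappa)}u\,d\kappa$ into $b\int_\Omega u-\int_a^b\int_{\Omega\setminus\Omega_v(\kappa)}u\,d\kappa$. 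The only genuine subtlety, and hence the step I would handle most carefully, is the verification of the hypotheses of Fubini's theorem, i.e.\ the joint measurability of $(x,\kappa)\mapsto u(x)\chi_{\Omega_v(\kappa)}(x)$ and its integrability on $\Omega\times(a,b)$; everything else is the elementary layer-cake identity and bookkeeping. It is worth noting in passing that the hypothesis $-\infty<a\le v\le b<+\infty$ is precisely what confines the $\kappa$-integration to a bounded interval and keeps all quantities finite.
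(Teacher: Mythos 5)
Your argument is correct and complete: the layer-cake identity $v(x)-a=\int_a^b\chi_{\Omega_v(\kappa)}(x)\,d\kappa$ (using that $\Omega_v(\kappa)=\{v\ge\kappa\}$ by \eqref{defsim}), the verification of joint measurability of $(x,\kappa)\mapsto u(x)\chi_{\Omega_v(\kappa)}(x)$, the domination by $|u|$ giving integrability on $\Omega\times(a,b)$, and the Fubini interchange are exactly the points that need to be checked, and your derivation of the second identity from the first via $\int_{\Omega_v(\kappa)}u+\int_{\Omega\setminus\Omega_v(\kappa)}u=\int_\Omega u$ is fine. Note, however, that the paper itself gives no proof of this lemma: it is quoted verbatim from Bandle \cite[Lemma 2.3]{Bandle} (hence the label ``lema sin demo''), so there is no in-paper argument to compare yours against; what you wrote is the standard proof that the reference supplies, and it fits the paper's setting, where $\Omega$ is bounded. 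One cosmetic remark: the finiteness of $|\Omega|$ is not actually needed for the Fubini step, since the bound $|u(x)\chi_{\Omega_v(\kappa)}(x)|\le|u(x)|$ together with $u\in L^1(\Omega)$ and the boundedness of the $\kappa$-interval already yields $\int_\Omega\int_a^b|u|\,d\kappa\,dx=(b-a)\|u\|_{L^1(\Omega)}<\infty$ on the $\sigma$-finite product.
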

\begin{Theorem}\label{lemma mult}
Let $u,v$ be continuous functions in $\Omega$ and let $v$ satisfy the bound condition of the Lemma \ref{lema sin demo}. Then
\begin{equation}\label{desig2}
\int_\Omega uv\,dx\leq\int_{\Omega^\star}u^\star v^\star\,dx.
\end{equation}
\end{Theorem}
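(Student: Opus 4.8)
The plan is to reduce \eqref{desig2} to the one‑function estimate of Lemma \ref{lema desig} by slicing both integrals along the level sets of $v$ (respectively $v^\star$) by means of the layer‑cake identity of Lemma \ref{lema sin demo}, and then comparing the resulting expressions term by term.

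First I would record two elementary facts about $v^\star$. Since $\Omega_v(\kappa)=\Omega$ for every $\kappa\le a$ and $\Omega_v(\kappa)=\emptyset$ for every $\kappa>b$, the definition \eqref{schwarz symm} immediately gives $a\le v^\star(x)\le b$ for all $x\in\Omega^\star$; hence $v^\star$ satisfies the same bound hypothesis as $v$. Moreover, as established in the discussion preceding Lemma \ref{th:sim1}, the super‑level set $\Omega^\star_v(\kappa)=\{x\in\Omega^\star:v^\star(x)\ge\kappa\}$ coincides with $(\Omega_v(\kappa))^\star$. Finally, $u^\star\in L^1(\Omega^\star)$ by Corollary \ref{corolario sim} (with $p=1$), so Lemma \ref{lema sin demo} is applicable both to the pair $(u,v)$ on $\Omega$ and to the pair $(u^\star,v^\star)$ on $\Omega^\star$.

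Next I would apply Lemma \ref{lema sin demo} to each side. On the left it yields
\[
\int_\Omega uv\,dx=a\int_\Omega u\,dx+\int_a^b\Big(\int_{\Omega_v(\kappa)}u\,dx\Big)\,d\kappa,
\]
and on the right, using the identification $\Omega^\star_v(\kappa)=(\Omega_v(\kappa))^\star$,
\[
\int_{\Omega^\star}u^\star v^\star\,dx=a\int_{\Omega^\star}u^\star\,dx+\int_a^b\Big(\int_{(\Omega_v(\kappa))^\star}u^\star\,dx\Big)\,d\kappa.
\]
By Lemma \ref{th:sim1} with $\psi$ the identity, $\int_\Omega u\,dx=\int_{\Omega^\star}u^\star\,dx$, so the two leading terms agree. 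For the integrands, fix $\kappa\in[a,b]$ and regard $S:=\Omega_v(\kappa)$ as a region contained in $\Omega$; applying Lemma \ref{lema desig} with $\psi$ the identity — the Schwarz symmetrization there being taken in $\Omega$, which is exactly what is needed — gives $\int_{\Omega_v(\kappa)}u\,dx\le\int_{(\Omega_v(\kappa))^\star}u^\star\,dx$. Integrating this inequality in $\kappa$ over $[a,b]$ and adding the common leading term $a\int_\Omega u\,dx$ yields \eqref{desig2}.

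The only point that requires genuine care is the matching of the two slicings: one must verify that $v^\star$ inherits the bounds $a\le v^\star\le b$ and that its super‑level sets are precisely the symmetrizations of the super‑level sets of $v$, so that the same range $[a,b]$ governs both layer‑cake expansions and the per‑slice comparison is literally Lemma \ref{lema desig}. Once this bookkeeping is in place the argument is a direct term‑by‑term comparison, and no analytic estimate beyond Lemmas \ref{lema sin demo}, \ref{lema desig}, \ref{th:sim1} (and Corollary \ref{corolario sim}) is needed.
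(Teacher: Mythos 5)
Your proof is correct and follows essentially the same route as the paper: apply the layer-cake identity of Lemma \ref{lema sin demo} to both $\int_\Omega uv$ and $\int_{\Omega^\star}u^\star v^\star$, match the leading terms via Lemma \ref{th:sim1}, and compare the slices with Lemma \ref{lema desig} using $\Omega^\star_v(\kappa)=(\Omega_v(\kappa))^\star$. The extra bookkeeping you supply (the bounds $a\le v^\star\le b$ and the identification of super-level sets) is exactly what the paper uses implicitly, so there is nothing further to add.
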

\begin{pfn}
Thanks to Lemma \ref{lema sin demo} we have 
\[
\int_\Omega uv \,dx=a\int_\Omega u\,dx+\int
_{a}^{b}\left(\int_{\Omega_v(\kappa)}u\,dx\right)d\kappa,
\]
and
\[
\int_{\Omega^\star} u^\star v^\star \,dx=a\int_{\Omega^\star} u^\star\,dx+\int
_{a}^{b}\left(\int_{\Omega_{v}^\star(\kappa)}u^\star\,dx\right)d\kappa.
\]
Now, using the Lemma \ref{th:sim1}, we obtain
\begin{equation}
a\int_\Omega u\,dx=a\int_{\Omega^\star} u^\star\,dx,
\end{equation}
and, taking into account the Lemma \ref{lema desig}, it follows that
\begin{equation}
\int_{\Omega_v(\kappa)}u\,dx\leq \int_{\Omega_{v}^\star(\kappa)}u^\star\,dx.
\end{equation}
Therefore inequality \eqref{desig2} holds.
\end{pfn}

\begin{Lemma}\label{Lipschitz}{\normalfont \cite[Lemma 2.1]{Bandle}}
If $u$ is a non-negative Lipschitz continuous map that vanishes on $\partial \Omega$, then $u^\star$ is also a Lipschitz continuous map with the same Lipschitz constant.
\end{Lemma}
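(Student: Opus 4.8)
The plan is to recast the Lipschitz property of $u$ as a statement about how the super-level sets $\{u\geq s\}$ fatten under closed $\delta$-neighbourhoods, to transfer that estimate to the spherical super-level sets of $u^\star$ by means of the Brunn--Minkowski inequality, and then to read off the Lipschitz bound for $u^\star$. First I would pass to $\RN$: since $u\geq 0$ is $L$-Lipschitz and vanishes continuously on $\partial\Omega$, its extension by $0$ to all of $\RN$ (still written $u$) is non-negative, compactly supported and $L$-Lipschitz, and its Schwarz symmetrization coincides with $u^\star$ extended by $0$ outside $\Omega^\star$; hence it is enough to treat a non-negative, compactly supported, $L$-Lipschitz function on $\RN$.

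The reformulation I would use is the elementary equivalence: a function $v:\RN\to\R$ is $L$-Lipschitz if and only if, for every $s\in\R$ and $\delta>0$,
\[
(\{v\geq s\})_\delta\ \subseteq\ \{v\geq s-L\delta\},\qquad\text{where}\quad A_\delta:=\{x\in\RN:\dist(x,A)\leq\delta\}.
\]
The forward implication is immediate from the Lipschitz estimate; the converse follows by taking $s=v(y)$ and $\delta=|x-y|$, which gives $v(x)\geq v(y)-L|x-y|$ for all $x,y$. Applying the forward implication to $u$ and passing to Lebesgue measure yields, with $a(s):=|\{u\geq s\}|$ the quantity appearing in \eqref{quantity},
\[
\big|(\{u\geq s\})_\delta\big|\ \leq\ a(s-L\delta)\qquad\text{for all }s\in\R,\ \delta>0 .
\]

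Next, for $0<s\leq\max u$ the set $\{u\geq s\}$ is compact and non-empty, and $(\{u\geq s\})_\delta$ is its Minkowski sum with the closed ball $\overline{B}_\delta$ of volume $\omega_N\delta^N$, so the classical Brunn--Minkowski inequality gives $\big|(\{u\geq s\})_\delta\big|^{1/N}\geq a(s)^{1/N}+\omega_N^{1/N}\delta$. Combining this with the previous bound and dividing by $\omega_N^{1/N}$ gives $\big(a(s-L\delta)/\omega_N\big)^{1/N}\geq\big(a(s)/\omega_N\big)^{1/N}+\delta$. Now recall from the discussion around \eqref{quantity} that $\{u^\star\geq s\}=\Omega^\star_u(s)=(\{u\geq s\})^\star$ is a ball of volume $a(s)$, hence of radius $g(s):=\big(a(s)/\omega_N\big)^{1/N}$. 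The inequality just obtained is precisely $g(s-L\delta)\geq g(s)+\delta$, so that $(\{u^\star\geq s\})_\delta=\overline{B}_{g(s)+\delta}\subseteq\overline{B}_{g(s-L\delta)}=\{u^\star\geq s-L\delta\}$ for every $s>0$ and $\delta>0$; when $s\leq 0$ one has $\{u^\star\geq s\}=\RN$ and when $s>\max u$ the set is empty, so the inclusion is trivial there. By the criterion above, $u^\star$ is $L$-Lipschitz on $\RN$, hence on $\Omega^\star$, with the same constant.

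The step I expect to be the crux is the invocation of the Brunn--Minkowski inequality: it is exactly what forces the spherically symmetrized super-level sets to shrink in radius, per unit increment of the level, at least as fast as those of $u$, and without it the comparison between $\{u\geq s\}$ and the ball of the same volume is lost. The remaining points are routine, though one must be a little careful with the reduction to $\RN$ (here $u\geq 0$ and $u=0$ on $\partial\Omega$ are what keep the constant at $L$) and with the open/closed-ball conventions hidden in the identity $\{u^\star\geq s\}=(\{u\geq s\})^\star$. An alternative proof runs through the coarea formula and the isoperimetric inequality to get $g'(s)\leq-1/L$ a.e., but that route then demands a separate verification that $s\mapsto g(s)$ is absolutely continuous, which the Brunn--Minkowski argument avoids.
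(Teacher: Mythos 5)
The paper does not actually prove this lemma: it is quoted verbatim from \cite[Lemma 2.1]{Bandle}, so there is no internal proof to compare against, and your argument has to be judged on its own. Judged so, it is correct, and it is in substance the classical argument behind Bandle's lemma: for levels $c_2<c_1$ the Lipschitz bound forces the $(c_1-c_2)/L$-neighbourhood of $\{u\ge c_1\}$ into $\{u\ge c_2\}$, and one then uses that among sets of prescribed volume the ball minimizes the volume of its $\delta$-neighbourhood --- which is exactly the Brunn--Minkowski step you invoke and can equally be drawn from the isoperimetric inequality --- to get the radius inequality $g(c_2)\ge g(c_1)+(c_1-c_2)/L$ and hence the Lipschitz bound for $u^\star$. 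Your packaging through the equivalence between ``$v$ is $L$-Lipschitz'' and the inclusions $(\{v\ge s\})_\delta\subseteq\{v\ge s-L\delta\}$ is a clean way to organize this, and it avoids the absolute-continuity issue that the coarea route would raise, as you note. Two small points deserve to be made explicit. First, the extension of $u$ by zero is $L$-Lipschitz precisely because $u\ge0$ and $u=0$ on $\partial\Omega$: for $x\in\Omega$, $y\notin\Omega$ the segment $[x,y]$ meets $\partial\Omega$ at some $z$, and $|u(x)-u(y)|=u(x)-u(z)\le L|x-z|\le L|x-y|$; you use this but do not say it. Second, in the final chain the case $0<s\le\max u$ with $s-L\delta\le0$ needs a separate word, since $g(s-L\delta)$ is then undefined while the target set $\{u^\star\ge s-L\delta\}$ is all of $\R^N$, so the inclusion is trivial; your closing remark covers $s\le0$ and $s>\max u$ but not quite this intermediate case. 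Both are trivial repairs and do not affect the validity of the proof.
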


\begin{remark}
It is important to note that all results in this section are presented assuming that $\Omega$ is a bounded set. The concept of symmetrized set cannot be extended to unbounded sets in general. Intuitively it is clear that $(\R^N)^\star=\R^N$ but we cannot define the symmetrized set for $\Omega\neq \R^N$  when $|\Omega|=\infty$. However, we can extend the Schwarz symmetrization for a function $u:\R^N\to \R_+$ that vanishes at infinity. Note that here $\Omega_u(\kappa)$ is bounded if $\kappa >0$ and $\Omega_u(\kappa)=\R^N$ if $\kappa \leq0$, in both cases the symmetrized set is well defined and the Schwarz symmetrization can be obtained by \eqref{schwarz symm}. All results presented in this section can be extended naturally to functions $u:\R^N\to \R_+$ that vanish at infinity.
\end{remark}

\begin{Theorem}\label{th:deriv}{\normalfont \cite[Lemma 1]{talenti}}
Let $u\in \mathcal{C}^\infty(\R^N)$ a non-negative real valued function that vanishes at infinity. Then 
\begin{equation}
\int_{\R^N}|\nabla u|^p\,dx\geq \int_{\R^N}|\nabla u^\star|^p\,dx.
\end{equation}
\end{Theorem}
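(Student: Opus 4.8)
The plan is to prove the inequality $\int_{\R^N}|\nabla u|^p\,dx\geq \int_{\R^N}|\nabla u^\star|^p\,dx$ by combining the coarea formula with the classical isoperimetric inequality, exactly in the spirit of Talenti's original argument. First I would introduce, for $t>0$, the super-level sets $\Omega_u(t)=\{x:u(x)\geq t\}$ and the distribution function $\mu(t)=|\Omega_u(t)|$. Since $u$ vanishes at infinity and is smooth, each $\Omega_u(t)$ has finite measure; moreover by Sard's theorem almost every $t$ is a regular value of $u$, so the level set $\{u=t\}$ is a smooth hypersurface. The symmetrized function $u^\star$ is radial and, by Lemma \ref{proposym}, satisfies $u^\star(x)=\kappa(\omega_N|x|^N)$, where $\kappa$ is the (generalized) inverse of $\mu$; in particular the super-level set $\Omega^\star_u(t)$ is the ball of volume $\mu(t)$, which is precisely why $u$ and $u^\star$ are equimeasurable (see \eqref{quantity}).

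The key computation is to express both integrals via the coarea formula
\[
\int_{\R^N}|\nabla u|^p\,dx=\int_0^\infty\left(\int_{\{u=t\}}|\nabla u|^{p-1}\,d\mathcal H_{N-1}\right)dt,
\]
and likewise for $u^\star$. For the radial function $u^\star$ the inner integral is explicit: on $\{u^\star=t\}$, which is a sphere of radius $r(t)$ with $\omega_N r(t)^N=\mu(t)$, the gradient $|\nabla u^\star|$ is constant, so one gets $\left(\int_{\{u^\star=t\}}|\nabla u^\star|^{p-1}\right)=|\nabla u^\star|^{p-1}\,N\omega_N r(t)^{N-1}$, and $|\nabla u^\star| = -(\mathrm{d}/\mathrm{d}r)(\text{profile})$ can be related to $\mu'(t)$ by the chain rule. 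The heart of the matter is then to show, for a.e.\ $t$,
\[
\int_{\{u=t\}}|\nabla u|^{p-1}\,d\mathcal H_{N-1}\;\geq\;\int_{\{u^\star=t\}}|\nabla u^\star|^{p-1}\,d\mathcal H_{N-1}.
\]
To do this I would combine three facts. (i) By the coarea formula applied to the derivative of $\mu$, $-\mu'(t)=\int_{\{u=t\}}|\nabla u|^{-1}\,d\mathcal H_{N-1}$ for a.e.\ $t$. (ii) By Hölder's inequality with exponents $p$ and $p'=p/(p-1)$,
\[
\mathcal H_{N-1}(\{u=t\})=\int_{\{u=t\}}|\nabla u|^{\frac{p-1}{p}}\,|\nabla u|^{-\frac{p-1}{p}}\,d\mathcal H_{N-1}\leq\left(\int_{\{u=t\}}|\nabla u|^{p-1}\right)^{1/p}\left(\int_{\{u=t\}}|\nabla u|^{-1}\right)^{1/p'},
\]
so that $\left(\int_{\{u=t\}}|\nabla u|^{p-1}\right)\geq \mathcal H_{N-1}(\{u=t\})^{p}\,(-\mu'(t))^{-(p-1)}$. (iii) By the isoperimetric inequality, $\mathcal H_{N-1}(\{u=t\})\geq \mathcal H_{N-1}(\partial B)$ where $B$ is the ball of volume $\mu(t)$, and this perimeter is exactly $\mathcal H_{N-1}(\{u^\star=t\})$. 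Since on the symmetrized side all the inequalities in (ii)–(iii) become equalities (the level sets are spheres and $|\nabla u^\star|$ is constant on them, and $-\mu'(t)$ is the same function because $u$ and $u^\star$ are equimeasurable), the same lower bound is attained with equality by $u^\star$, which gives the desired pointwise-in-$t$ inequality. Integrating in $t$ and using the coarea formula backwards yields the theorem.

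The main obstacle I anticipate is the rigorous handling of the non-regular values of $t$ and the possibility that $\nabla u$ vanishes on a set of positive measure inside a level set (so that the integral $\int_{\{u=t\}}|\nabla u|^{-1}$ may blow up or $\mu$ may fail to be absolutely continuous). The clean way around this is to first establish the result for functions whose critical values form a null set and for which $\{\nabla u=0\}\cap\{u=t\}$ is $\mathcal H_{N-1}$-null for a.e.\ $t$ — which holds for generic smooth $u$ — and then note that the statement is for $u\in\mathcal C^\infty$, so one may either invoke Sard's theorem directly or approximate and pass to the limit using lower semicontinuity of the left-hand side together with $L^p$-convergence of the symmetrizations (Corollary \ref{corolario sim}). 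A secondary technical point is justifying the coarea formula and the formula $-\mu'(t)=\int_{\{u=t\}}|\nabla u|^{-1}\,d\mathcal H_{N-1}$; these are standard but should be cited (e.g.\ from the same reference \cite{talenti}) rather than reproved.
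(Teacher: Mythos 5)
Your proposal is correct and follows essentially the same route as the paper's proof (Talenti's argument): coarea formula, a Hölder-type lower bound on each level set in terms of $\mathcal{H}_{N-1}\{u=\kappa\}$ and $-a'(\kappa)$, the isoperimetric inequality, and the observation that all inequalities become equalities for the radial rearrangement $u^\star$. The only noteworthy technical difference is that the paper derives your key level-set inequality (its \eqref{18...}) by applying Hölder to difference quotients over the slabs $\{\kappa\le u<\kappa+h\}$ and letting $h\to 0$, which avoids the integral $\int_{\{u=\kappa\}}|\nabla u|^{-1}\,d\mathcal{H}_{N-1}$ whose possible divergence at critical points you rightly flag as the main obstacle.
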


The above result is known as the P\'olya-Szeg\"o inequality, because it was first used in 1945 by G. P\'olya and G. Szeg\"o to prove that the capacity of a condenser diminishes or remains unchanged by applying the process of Schwarz symmetrization (see \cite{polya}).

For the proof of Theorem \ref{th:deriv} we need some results from the theory of functions of several real variables. Setting $\Omega=\R^N$, we need a formula connecting the integral of $|\nabla u|$ with the ($N-1$)-dimensional measure of the boundaries $\partial \Omega_u(\kappa)$ or the ($N-1$)-dimensional measure of the set $\{x\in\Omega:u(x)=\kappa\}$. This formulas are due to Federer \cite{federer} which in our case take the form 
\begin{equation}\label{Federer 1}
\int_\Omega|\nabla u|\,dx=\int\limits_0^{+\infty}\mathcal{H}_{N-1}\{x\in\Omega:u(x)=\kappa\}\,d\kappa,
\end{equation}
where $ \mathcal{H}_{N-1}$ stands for ($N-1$)-dimensional (Hausdorff) measure. A more general version of \eqref{Federer 1} (see \cite{federer}) is 
\begin{equation}\label{Federer 2}
\int_\Omega f(x)|\nabla u|\,dx=\int\limits_0^{+\infty}\,d\kappa\int\limits_{u(x)=\kappa}f(x)\mathcal{H}_{N-1}(dx),
\end{equation}
where $f$ is a real valued integrable function. We point out that the above formulas are valid provided $u$ is a Lipschitz continuous map. In our case this is not an issue since we are supposing that $u$ is a smooth function vanishing  at infinity, hence it is Lipschitz continuous.

Before starting with the proof let us state explicitly some properties of the level sets we have
used. The set $\partial \Omega_u(\kappa)$ is a subset of $\{x\in\Omega:u(x)=\kappa\}$ because of the continuity of $u$. Moreover, the set 
$$\{x\in\Omega:u(x)=\kappa\}\setminus \partial \Omega_u(\kappa), $$ 
only contains critical points of $u$. Hence, if $\{x\in\Omega:u(x)=\kappa\}$ does not contain critical points of $u$, then 
\begin{equation}\label{casi todo mu}
\Omega_u(\kappa)=\{x\in\Omega:u(x)=\kappa\}.
\end{equation} 
Note that if $u\in\mathcal{C}^\infty(\Omega)$, the set of all levels $\kappa$ for which $\{x\in\Omega:u(x)=\kappa\}$  contains critical points of $u$ has one-dimensional measure zero, via Sard's theorem (see \cite{Sard}). Therefore \eqref{casi todo mu} is valid at almost every $\kappa$.

\

\begin{gradientesim}
First, we will prove that the following inequality holds at almost every $\kappa$
\begin{equation}\label{18...}
\int\limits_{u(x)=\kappa}|\nabla u|^{p-1}\mathcal{H}_{N-1}(dx)\geq \left[-a'(\kappa)\right]^{1-p}\left[\mathcal{H}_{N-1}\{x\in\Omega:u(x)=\kappa\}\right]^p.
\end{equation}
where $a$ is defined by $a(\kappa)=|\Omega_u(\kappa)|$ in \eqref{quantity}.
If $p=1$ the above expression is clearly an equality. If $p>1$, we obtain by the H\"older inequality that
\[
\int\limits_{\kappa\leq u(x)<\kappa+h}\frac{|\nabla u|}{h}\,dx\leq\left[\int\limits_{\kappa\leq u(x)<\kappa+h}\frac{|\nabla u|^p}{h}\,dx\right]^\frac1p\left[-\frac{a(\kappa+h)-a(\kappa)}{h}\right]^{1-\frac{1}{p}},
\]
hence, making $h\to 0$ we have 
\begin{equation}\label{desig 121212}
-\frac{d}{d\kappa}
\int_{\Omega_u(\kappa)}|\nabla u|\,dx\leq\left[-\frac{d}{d\kappa}
\int_{\Omega_u(\kappa)}|\nabla u|^p\,dx\right]^\frac1p\left[a'(\kappa)\right]^{1-\frac{1}{p}}.
\end{equation}
On the other hand, from \eqref{Federer 1} we obtain for almost every $\kappa$ that
\[
\int_{\Omega_u(\kappa)}|\nabla u|\,dx=\int\limits_\kappa^{+\infty}\mathcal{H}_{N-1}\{x\in\Omega:u(x)=t\}\,dt,
\]
hence 
\begin{equation}\label{fom feder1}
-\frac{d}{d\kappa}
\int_{\Omega_u(\kappa)}|\nabla u|\,dx=\mathcal{H}_{N-1}\{x\in\Omega:u(x)=\kappa\}.
\end{equation}
Analogously  from \eqref{Federer 2} we have
\begin{equation}\label{a la p}
\int_{\Omega_u(\kappa)} |\nabla u|^p\,dx=\int\limits_\kappa^{+\infty}\,dt\int\limits_{u(x)=t}|\nabla u|^{p-1}\mathcal{H}_{N-1}(dx),
\end{equation}
then
\begin{equation}\label{fom feder2}
-\frac{d}{d\kappa}
\int_{\Omega_u(\kappa)}|\nabla u|^p\,dx=\int\limits_{u(x)=\kappa} |\nabla u|^{p-1} \mathcal{H}_{N-1}(dx).
\end{equation}
Thus,  substituting \eqref{fom feder1} and \eqref{fom feder2} in  \eqref{desig 121212} we obtain \eqref{18...} for almost every $\kappa$.

From \eqref{a la p} it also follows that
\begin{equation}\label{completo1}
\int_\Omega |\nabla u|^p\,dx=\int\limits_0^{+\infty}\,d\kappa\int\limits_{u(x)=\kappa}|\nabla u|^{p-1}\mathcal{H}_{N-1}(dx),
\end{equation}
and now we will use the isoperimetric inequality (see \cite[\textsection3.2.43]{federerlibro}). Recall  that in our case this inequality can be written for almost every $\kappa$ as
\begin{equation}\label{isoperimetric}
N \omega_N^{\frac{1}{N}}[a(\kappa)]^{1-\frac1N}\leq \mathcal{H}_{N-1}\{x\in\Omega:u(x)=\kappa\}.
\end{equation}
Therefore, using the inequalities \eqref{isoperimetric} and \eqref{18...} in \eqref{completo1} we obtain the estimate
\begin{equation}\label{estimate}
\int_\Omega |\nabla u|^p\,dx\geq N^p\omega_N^{\frac{p}{N}}
\int\limits_0^{+\infty}[a(\kappa)]^{p\left(1-\frac{1}{N}\right)}|a'(\kappa)|^{1-p}\,d\kappa.
\end{equation}
Notice that inequality \eqref{estimate} becomes an equality if $u$ is radially symmetric. Indeed, the equality holds in \eqref{isoperimetric} if the level set $\Omega_u(\kappa)$ is a ball, and the equality holds in \eqref{18...} if $|\nabla u|$ is constant on the level surface $\{x\in\Omega:u(x)=\kappa\}$. Note  that, in particular, the Schwarz symmetrization $u^\star$ is a Lipschitz continuous function by  Lemma \ref{Lipschitz} and it satisfies the equality in \eqref{estimate}, i.e., 
\begin{equation}\label{desigualdar con u aster}
\int_{\Omega^\star} |\nabla u^\star|^p\,dx= N^p\omega_N^{\frac{p}{N}}
\int\limits_0^{+\infty}[a(\kappa)]^{p\left(1-\frac{1}{N}\right)}|a'(\kappa)|^{1-p}\,d\kappa.
\end{equation}
Therefore, the desired conclusion is obtained  from \eqref{estimate} and \eqref{desigualdar con u aster}. 
\end{gradientesim}

\begin{remark}
All results showed in this section about the Schwarz symmetrization can be extended to functions in the Sobolev space $W^{m,p}(\Omega)$. It is possible thanks to the theorem of global approximation by smooth functions (see Theorem \ref{Global approximation by smooth functions}).
\end{remark}

\chapter{A system of nonlinear Schr\"odinger--Korteweg-de Vries equations}\label{chap 2to orden}
\markboth{SYSTEM OF NLS--KdV EQUATIONS}{SYSTEM OF NLS--KdV EQUATIONS}

In this chapter we will study in detail the results obtained in \cite{c3}, we will discuss the arguments used and complete some proofs for better understanding of procedures. This chapter deals with a system of coupled nonlinear Schr\"odinger--Korteweg-de Vries equations given by
\begin{equation}\label{NLS-KdV}\tag{S1}
\left\{\begin{array}{rcl}
if_t + f_{xx} + |f|^2f+ \b fg & = &0\\
g_t  +gg_x  + g_{xxx}+\frac 12\b(|f|^2)_x  & = & 0,
\end{array}\right.
\end{equation}
where $f=f(x,t)\in \mathbb{C}$ while $g=g(x,t)\in \mathbb{R}$, and $\b\in \mathbb{R}$ is the  coupling coefficient.  We  look for  solitary traveling waves solutions of \eqref{NLS-KdV} of the form
\be\label{eq:sol-t-v-solutions} 
(f(x,t),g(x,t))=\left(e^{i\o t}e^{i\frac c2 x}u(x-ct),v(x-ct)\right),
\ee 
with $u,v$ real functions and $c,w$ real positive constants. Note that
\begin{equation}
\begin{array}{rcl}
f_t(x,t)&=&e^{i\o t}e^{i\frac c2 x}(iwu(x-ct)-cu'(x-ct)),\\&\\
f_{xx}(x,t)&=&e^{i\o t}e^{i\frac c2 x}\left(-\frac{c^2}{4}u(x-ct)+icu'(c-ct)+u''(x-ct))\right),\\&\\
|f(x,t)|^2f(x,t)&=&e^{i\o t}e^{i\frac c2 x}(u(x-ct))^3,\\&\\
\beta f(x,t)g(x,t)&=&e^{i\o t}e^{i\frac c2 x}\beta u(x-ct)v(x-ct),\\&\\
\frac{1}{2}\beta(|f(x,t)|^2)_x&=&\beta u(x-ct)u'(x-ct),
\end{array} 
\end{equation}
therefore, the first equation of \eqref{NLS-KdV} is equivalent to solving the following ordinary differential equation
\begin{equation}\label{desglose 1}
-\left(w+\frac{c^2}{4}\right) u+u''+u^3+\beta uv=0.
\end{equation}
On the other hand, using \eqref{cambio kdv}, we have that the second equation takes the form
\[
-cv'+vv'+v'''+\beta uu'=0.
\]
Integrating the above equations, under the assumption that $u$ and $v$ vanish at infinity, we obtain
\begin{equation}\label{desglose 2}
-cv+\frac{1}{2} v^2+v''+\frac{1}{2}\beta u^2=0.
\end{equation}
Choosing  $\l_1=\o+\frac{c^2}{4}$ and 
$\l_2=c$, we get from \eqref{desglose 1} and \eqref{desglose 2} that $u,\, v$ solve the following system
\begin{equation}\label{NLS-KdV2}
\left\{\begin{array}{rcl}
-u'' +\l_1 u & = & u^3+\beta uv \\
-v'' +\l_2 v & = & \frac 12 v^2+\frac 12\beta u^2.
\end{array}\right.
\end{equation}

 We focus our attention on the existence of positive even ground and bound states of \eqref{NLS-KdV2} under an appropriate range of parameter settings. We also analyze the extension of system \eqref{NLS-KdV2} to the dimensional cases $N=2,3$.

\section{Functional setting and notation}\label{sec:funct}º

Let $E$ denotes the Sobolev space $W^{1,2}(\R)$ and, taking into account that $\l_1,\l_2>0$, we can check easily that
$$
\|u\|_j=\left(\intR ({u'}^2+\l_j u^2)\,
dx\right)^{\frac 12},\qquad j=1,2,
$$
are norms in $E$ which come from the inner product 
$$
\bra u,v\ket_j=\intR ( u'v'+ \l_j uv)\, dx, \qquad
j=1,2.
$$
\begin{Proposition}
Norms $\|\cdot\|_{W^{1,2}}$ and $\|\cdot\|_j$, for $j=1,2$ are equivalent in $E$. 
\end{Proposition}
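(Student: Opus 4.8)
The plan is to establish the two-sided inequality $c_1 \|u\|_{W^{1,2}} \le \|u\|_j \le c_2 \|u\|_{W^{1,2}}$ for each $j=1,2$, with positive constants $c_1,c_2$ depending only on $\lambda_j$. First I would recall that for $1\le p<\infty$ the norm $\|u\|_{W^{1,2}}$ defined in \eqref{norma sobolev 1} is equivalent to the norm in \eqref{norma sobolev 2}, namely $\left(\|u\|_{L^2}^2+\|u'\|_{L^2}^2\right)^{1/2}$, so it suffices to compare $\|u\|_j$ with this latter expression. Both are of the form ``square root of a sum of squares'', which makes the comparison purely algebraic.

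The key step is the elementary observation that for $j=1,2$,
\[
\|u\|_j^2=\intR \left({u'}^2+\lambda_j u^2\right)dx=\|u'\|_{L^2}^2+\lambda_j\|u\|_{L^2}^2.
\]
Setting $m_j=\min\{1,\lambda_j\}$ and $M_j=\max\{1,\lambda_j\}$, one has the pointwise bounds $m_j\left({u'}^2+u^2\right)\le {u'}^2+\lambda_j u^2\le M_j\left({u'}^2+u^2\right)$, and integrating over $\R$ yields
\[
m_j\left(\|u'\|_{L^2}^2+\|u\|_{L^2}^2\right)\le \|u\|_j^2\le M_j\left(\|u'\|_{L^2}^2+\|u\|_{L^2}^2\right).
\]
Since $\lambda_j>0$ by hypothesis, both $m_j$ and $M_j$ are strictly positive, so taking square roots gives $\sqrt{m_j}\,\|u\|_{W^{1,2}}\le \|u\|_j\le \sqrt{M_j}\,\|u\|_{W^{1,2}}$ (up to the fixed constant relating \eqref{norma sobolev 1} and \eqref{norma sobolev 2}). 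This shows the norms are equivalent, and in particular that $\|\cdot\|_1$ and $\|\cdot\|_2$ are equivalent to each other.

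There is essentially no obstacle here; the only thing worth checking carefully is that $\|\cdot\|_j$ genuinely is a norm coming from the stated inner product, i.e.\ that $\bra\cdot,\cdot\ket_j$ is positive definite — but this is immediate since $\bra u,u\ket_j=\|u'\|_{L^2}^2+\lambda_j\|u\|_{L^2}^2=0$ forces $\|u\|_{L^2}=0$ (using $\lambda_j>0$), hence $u=0$ in $E$. The bilinearity and symmetry are obvious, and completeness of $E$ in $\|\cdot\|_j$ follows from the equivalence just proved together with the completeness of $W^{1,2}(\R)$. Thus the proposition reduces to the one-line pointwise estimate above.
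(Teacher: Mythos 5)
Your proof is correct and follows essentially the same route as the paper: both reduce the claim to the elementary comparison $\min\{1,\lambda_j\}\left(\|u'\|_{L^2}^2+\|u\|_{L^2}^2\right)\le\|u\|_j^2\le\max\{1,\lambda_j\}\left(\|u'\|_{L^2}^2+\|u\|_{L^2}^2\right)$, the paper merely splitting into the cases $\lambda_j\ge 1$ and $\lambda_j\le 1$ where you use $\min/\max$ in one stroke. Your additional remarks on the equivalence of the two $W^{1,2}$ norms and on positive definiteness of $\bra\cdot,\cdot\ket_j$ are fine but not needed beyond what the paper records.
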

\begin{pfn}
Fixing $j$, if we suppose that $\l_j\geq 1$, for $u\in E$ we have
\[
\|u'\|_{L^2}^2+\|u\|_{L^2}^2\leq\|u'\|_{L^2}^2+\l_j\|u\|_{L^2}^2\leq\l_j\left(\|u'\|_{L^2}^2+\|u\|_{L^2}^2\right),
\]
then
\[
\|u\|_{W^{1,2}}\leq \|u\|_j^2\leq\l_j\|u\|_{W^{1,2}}.
\]
Similarly, if $\l_j\leq 1$ we obtain
\[
\l_j\|u\|_{W^{1,2}}\leq \|u\|_j^2\leq\|u\|_{W^{1,2}}.
\]
Therefore, the norms are equivalent.
\end{pfn}

Let us define the product Sobolev space $\E=E\times E$. The elements in $\E$ will be denoted by $\bu =(u,v)$, and $\bo=(0,0)$.
Recall that the product space $\E$ is also a Hilbert space with the inner product
\begin{equation}\label{inner product}
\bra\bu_1,\bu_2\ket=\bra u_1,u_2\ket_1+\bra v_1,v_2 \ket_2,
\end{equation}
which induces the norm
$$
\|\bu\|=\sqrt{\|u\|_1^2+\|v\|_2^2}.
$$
Let $\bu=(u,v)\in \E$, the notation $\bu\geq \bo$, respectively $\bu>\bo$, means that $u,v\geq 0$, respectively $u,v>0$. Let $H$ be the subspace $W^{1,2}_r(\R)$ of radially symmetric functions in $E$, and $\h=H\times H$.

We define the following functionals which are respectively associated with the equations in system \eqref{NLS-KdV2} without coupling,
$$
I_1(u)=\frac 12 \|u\|_1^2 -\frac 14\, \intR u^4dx,\qquad I_2(v)=\frac 12 \|v\|_2^2 -\frac 16\, \intR v^3dx,\qquad u,\, v\in E,
$$
and the associated functional for the system \eqref{NLS-KdV2} can be written as follows,
\begin{equation}\label{J}
J (\bu)= I_1(u)+I_2(v)- \frac 12\b \intR u^2v\,dx,\qquad \bu\in \E.
\end{equation}
Also we can write 
\begin{equation}\label{J con Q}
J (\bu)=\frac 12\|\bu \|^2 -G_\b (\bu),\quad \bu\in \E,
\end{equation}
where
$$
Q_\b (\bu)=\frac 14\, \intR u^4dx+\frac 16\, \intR v^3dx+ \frac 12\b \intR u^2v\,dx, \qquad \bu\in \E.
$$
Notice that $I_1,I_2$ and $J$ are differentiable on $\E$ and their differentials at $\bu=(u,v)\in \E$ are given by
\begin{align}
\displaystyle DI_1(u)[h_1]&=\intR ({u'h_1'}+\l_1 uh_1)\,dx-\intR u^3h_1dx\label{di1},\\
\displaystyle  DI_2(v)[h_2]&=\displaystyle  \intR ({v'h_2'}+\l_2 vh_2)\,dx-\frac 12\intR v^2h_2dx,\label{di2}
\end{align}
and
\begin{align}\label{Dphi}
\begin{split}
dJ(\bu)[\bh]=&\partial_uJ(\bu)[h_1]+\partial_vJ(\bu)[h_2]\\ 
=&DI_1(u)[h_1]-\b \intR uvh_1\,dx
+DI_2(v)[h_2]-\frac 12\b \intR u^2h_2\,dx.
\end{split}
\end{align}
We set 
\begin{equation}\label{J1,J2}
P_1(u)=DI_1(u)[u],\qquad P_2(v)=DI_2(v)[v],
\end{equation}
and
\begin{align}\label{Psi}
\begin{split}
G(\bu)=&dJ(\bu)[\bu]=P_1(u)+P_2(v)-\frac 32\b \intR u^2v\,dx\\
=&\|\bu\|^2-\intR u^4dx-\frac 12\intR v^3dx-\frac 32\b \intR u^2v\,dx.
\end{split}
\end{align}
\begin{Definition}
We say that $\bu\in \E$ is a non-trivial {\it bound state} of \eqref{NLS-KdV2} if $\bu$ is a non-trivial  critical point of $J$. A bound state $\wt{\bu}$ is called ground state if its energy is minimal among all the non-trivial bound states, namely
\begin{equation}\label{eq:gr}
J(\wt{\bu})=\min\{J(\bu): \bu\in \E\setminus\{\bo\},\; J'(\bu)=0\}.
\end{equation}
\end{Definition}
\section{Nehari manifold and key results}\label{sec:key}\

We will work mainly  in  $\h$ thus, using \eqref{Nehari def} and \eqref{Riesz}, we will take the the Nehari manifold as follows
,
\begin{equation}\label{Nehari}
\cN =\{ \bu\in \h\setminus\{\bo\}: G (\bu)=0\}.
\end{equation}

\begin{Proposition}
The Nehari manifold $\cN$ is a natural constraint for the functional $J$.
\end{Proposition}
\begin{pfn}
We will prove the Proposition through Theorem \ref{TH:natural constrain}.
For all $\bu,\bh\in \E$ we have,
\begin{equation}\label{eq:DG}
dG(\bu)[\bh]= 2\bra\bu,\bh\ket-4\intR u^3h_1dx-\frac 32\intR v^2h_2dx-3\b \intR uvh_1\,dx-\frac 32\b \intR u^2h_2\,dx,
\end{equation}
but in particular, if $\bu=\bh$ and $\bu\in\cN$, we combine the above expression with the fact $G(\bu)=0$ and we obtain 
\begin{equation}\label{eq:gamma}
dG(\bu)[\bu]=dG(\bu)[\bu]-3G(\bu)= - \|\bu \|^2-\intR u^4\,dx<0,\quad\forall\, \bu\in \cN.
\end{equation}
Now, the above inequality jointly with \eqref{deriv inner} and \eqref{Riesz}, it follows that $\cN$ is a smooth manifold locally near any point $\bu\not= \bo$ with $G(\bu)=0$. The second derivatives have the form
\begin{align}
\displaystyle d^2I_1(u)[h_1][k_1]&=\intR ({h_1'k_1'}+\l_j h_1k_1)\,dx-3\intR u^2h_1k_1\,dx,\label{ddi1}\\
\displaystyle  
d^2I_2(v)[h_2][k_2]&=\displaystyle  \intR ({h_2'k_2'}+\l_j h_2k_2)\,dx-\intR vh_2k_2\,dx,\label{ddi2}
\end{align}
and
\begin{align}\label{DDphi}
\begin{split}
d^2J(\bu)[\bh][\bk]=&d^2I_1(u)[h_1][k_1]+d^2I_2(v)[h_2][k_2]\\
&-\b \intR vh_1k_1\,dx-\b \intR uh_2k_1\,dx-\b \intR uh_1k_2\,dx.
\end{split}
\end{align}
Evaluating at $\bu=\bo$ we obtain
\[
d^2J(\bo)[\bh]^2=d^2I_1(0)[h_1]^2+d^2I_2(0)[h_2]^2=\|h_1\|^2+\|h_2\|^2=\|\bh\|^2.
\]
Thus, $d^2J(\bo)$ is positive definite, so we infer that $\bo$ is a strict minimum for $J$. As a consequence,  $\bo$ is an isolated point of the set $G^{-1}(0)$, proving that, on the one hand $\cN$ is a smooth complete manifold of codimension $1$, and on the other hand there exists a constant $\rho>0$ so that
\be\label{eq:bound}
\|\bu\|^2>\rho,\qquad\forall\,\bu\in \cN.
\ee
Therefore, since \eqref{eq:gamma} and \eqref{eq:bound}, we can conclude that $\cN$ is a natural constraint for $J$ thanks to the theorem \ref{TH:natural constrain}.
\end{pfn}

\begin{remarks}\label{rem:obs1}

\begin{itemize}
\hspace{1cm}
\item[(i)] It is relevant to point out that working on the Nehari manifold we can combine te expresions \eqref{J con Q} with the fact $G(\bu)=0$ and we get that the functional $J$ restricted to $\cN$ takes the form \be\label{eq:restriction0}
J(\bu)=J(\bu)-\frac{1}{3}G(\bu)= \frac 16\|\bu\|^2+\frac{1}{12}\intR u^4dx,\qquad \forall\, \bu\in\cN,
\ee and substituting   \eqref{eq:bound} into \eqref{eq:restriction0} we
have
\begin{equation}\label{eq:restriction}
J(\bu)\ge   \frac 16\|\bu\|^2>\frac 16 \rho,\qquad \forall\, \bu\in\cN.
\end{equation}
Therefore, \eqref{eq:restriction} shows that the functional $J$ is bounded from below  on $\cN$, so one can try to minimize the restricted functional 
\begin{equation}\label{F}
 F=J|_{\cN},
\end{equation} on the Nehari manifold.
\item[(ii)] Notice that the full Nehari manifold
\begin{equation}\label{fullNehari}
\cM =\{ \bu\in \E\setminus\{\bo\}: G (\bu)=0\},
\end{equation}
defined over $\E$ instead $\h$, satisfies the same proprieties than $\cN$, namely equations  \eqref{eq:gamma}, \eqref{eq:restriction0} \eqref{eq:restriction} holds for $\bu\in\cM$ and it is also a natural constraint for $J$. Analogously we can define the functional 
\begin{equation}\label{Ftecho}
 \overline{F}=J|_{\cM},
\end{equation} and try to minimize it on the full Nehari manifold.

\item[(iii)] With respect to the Palais-Smale  condition, we recall that in the one dimensional case,
one cannot expect a compact embedding of $E$ into $L^q(\R)$ for $2<
q<\infty$. Indeed, working on $H$ (the radial or even case) is not
true too; see \cite[Remarque I.1]{Lions-JFA82}. However, we will
show that for a Palais-Smale sequence we can find a subsequence for which the
weak limit is a solution. This fact jointly with some properties of
the Schwarz symmetrization will permit us to prove the existence of
positive even ground states in Theorem \ref{th:1}. With some extra work one could also consider the non-negative radially decreasing functions, where one has the required compactness thanks to Berestycki and Lions \cite{lions}.
\end{itemize}
\end{remarks}
Due to the lack of compactness mentioned above in Remark \ref{rem:obs1}-$(iii)$, we state a measure theory result given in \cite{lions2}  that we will use in the proof of Theorem \ref{th:1}.
\begin{Lemma}\label{lem:measure}
If $2<q<\infty$, there exists a constant $C>0$ so that
\begin{equation}\label{eq:measure}
\intR |u|^q \, dx\le C\left( \sup_{z\in\R}\int_{|x-z|<1}|u(x)|^2dx\right)^{\frac{q-2}{2}}
\| u\|^2_{E},\quad \forall\: u\in E.
\end{equation}
\end{Lemma}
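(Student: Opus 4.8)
The strategy is a tiling argument: localize the one--dimensional Sobolev embedding to unit intervals, extract the quantity $\displaystyle M:=\sup_{z\in\R}\int_{|x-z|<1}|u|^2\,dx$ on each of them, and then reassemble the local pieces with a discrete H\"older inequality. Write $\R=\bigcup_{k\in\Z}I_k$ with $I_k=(k,k+1)$, so that the $I_k$ are pairwise disjoint, cover $\R$, and each has length $1<2$; consequently $\int_{I_k}|u|^2\le M$ for every $k$.

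The first step is the local estimate on one interval $I_k$. Since $mp=2>1=N$, Theorem~\ref{cont emb} gives $W^{1,2}(I_k)\hookrightarrow\mathcal C(\overline{I_k})$; more precisely, the one--dimensional Gagliardo--Nirenberg inequality
\[
\|u\|^2_{L^\infty(I_k)}\le C\big(\|u'\|_{L^2(I_k)}\,\|u\|_{L^2(I_k)}+\|u\|^2_{L^2(I_k)}\big),
\]
with $C$ independent of $k$ by translation invariance, follows from $u(x)^2-u(y)^2=2\int_y^x u\,u'$ after averaging in $y$ over $I_k$. Combining this with the pointwise bound $|u|^q\le\|u\|^{q-2}_{L^\infty(I_k)}|u|^2$ and then inserting $\|u\|_{L^2(I_k)}\le M^{1/2}$, I obtain
\[
\int_{I_k}|u|^q\le C\big(M^{1/2}\|u'\|_{L^2(I_k)}+M\big)^{\frac{q-2}{2}}\int_{I_k}|u|^2\le C\,M^{\frac{q-2}{4}}\|u'\|^{\frac{q-2}{2}}_{L^2(I_k)}\int_{I_k}|u|^2+C\,M^{\frac{q-2}{2}}\int_{I_k}|u|^2,
\]
where I used $(a+b)^{\theta}\le C_\theta(a^\theta+b^\theta)$ for $\theta=\tfrac{q-2}{2}>0$.

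Next I would sum over $k$. The second term sums at once: $\sum_k\int_{I_k}|u|^2=\|u\|^2_{L^2(\R)}\le C\|u\|^2_E$, contributing $C\,M^{\frac{q-2}{2}}\|u\|^2_E$. For the first term, apply H\"older's inequality for series to $\sum_k \|u'\|^{\frac{q-2}{2}}_{L^2(I_k)}\big(\int_{I_k}|u|^2\big)$ with conjugate exponents $s=\tfrac{4}{q-2}$ and $s'=\tfrac{4}{6-q}$: since $\tfrac{(q-2)s}{2}=2$ one gets $\big(\sum_k\|u'\|^{2}_{L^2(I_k)}\big)^{1/s}=\|u'\|^{\frac{q-2}{2}}_{L^2(\R)}$, while $\sum_k\big(\int_{I_k}|u|^2\big)^{s'}\le M^{s'-1}\sum_k\int_{I_k}|u|^2\le M^{s'-1}\|u\|^2_{L^2(\R)}$. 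Collecting the powers of $M$ (namely $\tfrac{q-2}{4}+\tfrac{q-2}{4}=\tfrac{q-2}{2}$) and of the norms ($\tfrac{q-2}{2}+\tfrac{6-q}{2}=2$, using $\|u'\|_{L^2(\R)},\|u\|_{L^2(\R)}\le C\|u\|_E$) makes this contribution collapse to $C\,M^{\frac{q-2}{2}}\|u\|^2_E$ as well, which yields \eqref{eq:measure}.

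The step I expect to be delicate is precisely this final exponent bookkeeping: the powers of $M$ and of $\|u\|_E$ must add up to exactly $\tfrac{q-2}{2}$ and $2$, and this forces both the balanced form of the Gagliardo--Nirenberg inequality (the cross term $\|u'\|_{L^2(I_k)}\|u\|_{L^2(I_k)}$ is essential to produce the factor $M^{1/2}$) and the specific choice of conjugate exponents above, which is legitimate as long as $s=\tfrac{4}{q-2}\ge 1$, i.e. $2<q\le 6$; this range covers the nonlinear and coupling terms ($u^4$, $v^3$, $u^2v$) relevant to \eqref{NLS-KdV2}. The remaining points---uniformity in $k$ of the local embedding constants, and the equivalence of $\|\cdot\|_E$ with the $W^{1,2}$--norm---are routine, the former by translation invariance and the latter already established earlier.
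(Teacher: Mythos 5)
Your tiling argument is correct, and it is worth pointing out that the paper itself gives no proof of Lemma \ref{lem:measure}: the inequality is simply quoted from Lions \cite{lions2}. Your localization on the unit intervals $I_k$, the Gagliardo--Nirenberg bound $\|u\|_{L^\infty(I_k)}^2\le C(\|u'\|_{L^2(I_k)}\|u\|_{L^2(I_k)}+\|u\|_{L^2(I_k)}^2)$, and the discrete H\"older reassembly reconstruct exactly the standard concentration--compactness argument behind that citation. The exponent bookkeeping you flag as delicate does check out: with $s=\tfrac{4}{q-2}$, $s'=\tfrac{4}{6-q}$ you collect $M^{\frac{q-2}{4}}$ from the prefactor and $M^{(s'-1)/s'}=M^{\frac{q-2}{4}}$ from the $\ell^{s'}$ factor (here $M:=\sup_{z\in\R}\int_{|x-z|<1}|u|^2dx$), while the norm exponents satisfy $\tfrac{q-2}{2}+\tfrac{6-q}{2}=2$, so the first sum also collapses to $C\,M^{\frac{q-2}{2}}\|u\|_E^2$; at $q=6$ one takes $s=1$, $s'=\infty$.

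The one discrepancy is the range of $q$: your argument yields \eqref{eq:measure} only for $2<q\le 6$, whereas the lemma is stated for all $2<q<\infty$. This is not a defect of your bookkeeping, because for any fixed $q>6$ the inequality as stated is false: for the concentrating family $u_\lambda(x)=\sqrt{\lambda}\,\phi(\lambda x)$ one has $\intR|u_\lambda|^q\sim\lambda^{q/2-1}$, while $M(u_\lambda)$ stays bounded and $\|u_\lambda\|_E^2\sim\lambda^2$, so the right-hand side grows only like $\lambda^2<\lambda^{q/2-1}$. Hence \eqref{eq:measure} holds exactly in the range $2<q\le 6$ (the one-dimensional case of $q\le 2+\tfrac{4}{N}$, which is how this quantitative inequality is normally stated in the literature), and the paper's ``for all $2<q<\infty$'' is an overstatement inherited from the citation. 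This does not affect the application: in the proof of Theorem \ref{th:1} the vanishing of the Palais--Smale sequence in $L^q(\R)$ for every $2<q<\infty$ follows from your restricted inequality by interpolating with the uniform bound of $\bu_n$ in $L^2\cap L^\infty$ (via $E\hookrightarrow L^\infty$), or simply by noting that only the exponents $q=3,4$ enter the energy. So your proof is adequate for the paper's purposes and, in fact, proves the lemma on the largest range on which it is true.
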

\

Taking into account the form of the second equations of  \eqref{NLS-KdV2} we  note that the system only admit semi-trivial solutions coming from the equations $-v''+\l_2v=\frac 12 v^2$, namely the possible semi-trivial solutions has the form $(0,V_2)$ where $V_2$ is the solution of the uncoupled second equation $-v''+\l_2v=\frac 12 v^2$. Recall that $V_2$ is obtained as follows
\be\label{eq:segunda}
V_2(x)=2\l_2\,V(\sqrt{\l_2}\,x),
\ee 
where $V$ is the unique positive even solution of equation $V''-V+V^2=0$ given by \eqref{v semitrivial}; see \cite{kw}. Hence $\bv_2 = (0,V_2)$ is a particular solution of  \eqref{NLS-KdV2} for any $\b\in\R$, and
moreover, it is the unique non-negative semi-trivial solution of
\eqref{NLS-KdV2}. We also define the following Nehari manifold  corresponding to the second equation
\begin{equation}\label{N2}
\cN_2 =\left\{v\in H\setminus\{0\} : P_2(v)=0\right\}=\left\{v\in H\setminus\{0\} : \|v\|_2^2 -\frac 12\intR v^3dx=0\right\},
\end{equation}
and define the tangents spaces
\[
T_{\bv_2}\cN=\left\lbrace 
\bh\in \h:dG(\bv_2)[\bh]=0
\right\rbrace\quad\text{and}\quad T_{V_2}\cN_2=\left\lbrace 
h\in H:dP_2(V_2)[h]=0
\right\rbrace.
\]
\begin{Lemma}\label{lemma tangent}
Let $\bh=(h_1,h_2)\in \E$. Then
\begin{equation}\label{eq:tang1}
\bh\in T_{\bv_2} \cN  \Longleftrightarrow h_2\in T_{V_2} \cN_2.
\end{equation}
\end{Lemma}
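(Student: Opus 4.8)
The plan is to reduce the stated equivalence to the single algebraic identity
\[
dG(\bv_2)[\bh]=dP_2(V_2)[h_2],
\]
after which the claim is immediate. Throughout I work within $\h$, the ambient space in which $T_{\bv_2}\cN$ is defined, so that ``$\bh\in\E$'' is to be read as $\bh\in\h$; for the converse implication one uses that $h_2\in T_{V_2}\cN_2$ already forces $h_2\in H$.

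First I would specialize the formula \eqref{eq:DG} for $dG$ to the semi-trivial point $\bu=\bv_2=(0,V_2)$. Since the first component of $\bv_2$ vanishes, each of the three terms of \eqref{eq:DG} carrying a factor coming from the first component drops out, and from $\bra\bv_2,\bh\ket=\bra V_2,h_2\ket_2$ one is left with
\[
dG(\bv_2)[\bh]=2\bra V_2,h_2\ket_2-\frac 32\intR V_2^2h_2\,dx .
\]

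Next I would compute the right-hand side. Recall from \eqref{J1,J2} (see also \eqref{N2}) that $P_2(v)=DI_2(v)[v]=\|v\|_2^2-\frac12\intR v^3\,dx$. Differentiating along $h$ gives $dP_2(v)[h]=2\bra v,h\ket_2-\frac32\intR v^2h\,dx$, hence, at $v=V_2$,
\[
dP_2(V_2)[h_2]=2\bra V_2,h_2\ket_2-\frac 32\intR V_2^2h_2\,dx ,
\]
which is precisely the expression already found for $dG(\bv_2)[\bh]$. This establishes the identity, and therefore $dG(\bv_2)[\bh]=0$ if and only if $dP_2(V_2)[h_2]=0$, i.e. $\bh\in T_{\bv_2}\cN\Longleftrightarrow h_2\in T_{V_2}\cN_2$.

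There is no real obstacle here; the proof is a direct computation. The only points deserving care are tracking the coefficient $\frac32$ produced when differentiating the cubic term $\intR v^3\,dx$, and observing that every coupling (cross) term in \eqref{eq:DG} involves the first component of the base point and hence vanishes at $\bv_2$. This vanishing is exactly what makes the constraint $\cN$ ``decouple'' along the semi-trivial solution, yielding the same tangency condition as the scalar Nehari manifold $\cN_2$.
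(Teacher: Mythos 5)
Your proposal is correct and is essentially identical to the paper's own proof: both evaluate \eqref{eq:DG} at $\bv_2=(0,V_2)$, observe that all terms involving the first component vanish so that $dG(\bv_2)[\bh]=2\bra V_2,h_2\ket_2-\tfrac32\intR V_2^2h_2\,dx=dP_2(V_2)[h_2]$, and conclude the equivalence of the two tangency conditions. Your remark about reading the statement inside $\h$ is a harmless clarification of a looseness already present in the paper and does not change the argument.
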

\begin{pfn}
We have
\[
\begin{array}{ccl}
\bh\in T_{\bv_2} \cN & \Longleftrightarrow & dG(\bv_2)[\bh]=0\\
&\Longleftrightarrow& 2\bra V_2,h_2\ket_2-\frac{3}{2}\int_{\R}V_2^2h_2=0\\
&\Longleftrightarrow& dP_2(V_2)[h_2]=0\\
& \Longleftrightarrow & h_2\in T_{V_2} \cN_2.\\
\end{array} 
\]\end{pfn}

Now we are going to see how is the geometry of the functional $J$ around the point $\bv_2$ depending of the parameter $\beta$.

\begin{Proposition}\label{lem:gl3}
There exists $\L>0$ such that
\begin{itemize}
\item[(i)]  if $\b< \L$, then $\bv_2$ is a strict local minimum  of $J$ constrained on $\cN$,
\item[(ii)] for any   $\b>\L$, then $\bv_2$ is a  saddle point of $J$ constrained  on $\cN$. Moreover,  
\begin{equation}\label{eq:infimo B>L1}
\dyle\inf_{\cN}J<J(\bv_2).
\end{equation}
\end{itemize}
\end{Proposition}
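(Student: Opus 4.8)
The plan is to study the second variation of $J$ restricted to $\cN$ at the point $\bv_2=(0,V_2)$. Since $\bv_2$ is already a constrained critical point of $J$ on $\cN$ (it is a genuine critical point of $J$), its nature is governed by the quadratic form $d^2J(\bv_2)$ restricted to the tangent space $T_{\bv_2}\cN$, which by Lemma \ref{lemma tangent} splits as $\{h_1\in H\}\times T_{V_2}\cN_2$. The key observation is that on $T_{V_2}\cN_2$ the ``$v$-part'' $d^2I_2(V_2)[h_2]^2$ is positive definite — this is the standard fact that $V_2$ is a nondegenerate minimum of $I_2$ on $\cN_2$ (it comes from the uniqueness and nondegeneracy of the soliton $V$ in \eqref{v semitrivial}, via \cite{kw}). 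Hence the sign of the full quadratic form is decided entirely by the $h_1$-direction, where
\[
d^2J(\bv_2)[(h_1,0)]^2 = d^2I_1(0)[h_1]^2 - \b\intR V_2 h_1^2\,dx = \|h_1\|_1^2 - \b\intR V_2 h_1^2\,dx .
\]

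**Definition of $\L$ and the dichotomy.** I would define
\[
\L = \inf_{h_1\in H\setminus\{0\}} \frac{\|h_1\|_1^2}{\intR V_2 h_1^2\,dx},
\]
which is positive (the quotient is bounded below by a positive constant since $V_2\in L^\infty$ and, more to the point, since $-\Delta + \l_1 - \b V_2$ has purely essential spectrum $[\l_1,\infty)$ away from a finite-dimensional negative part, so the infimum is attained at a bound state of the Schrödinger operator $-u''+\l_1 u = \b V_2 u$; this is where one uses the decay of $V_2$ and a compactness/Glazman-type argument). For $\b<\L$ the form $\|h_1\|_1^2 - \b\intR V_2 h_1^2$ is positive definite, so $d^2J(\bv_2)$ is positive definite on all of $T_{\bv_2}\cN$, giving that $\bv_2$ is a strict local minimum of $F=J|_\cN$; this proves (i). For $\b>\L$ there exists $\varphi\in H$ with $\|\varphi\|_1^2 - \b\intR V_2\varphi^2 < 0$, so the form is negative in the direction $(\varphi,0)$ while remaining positive in the $T_{V_2}\cN_2$ directions — hence $\bv_2$ is a saddle point, proving the first half of (ii).

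**The strict inequality \eqref{eq:infimo B>L1}.** For this I would produce an explicit competitor below the level $J(\bv_2)$. Take $\varphi\in H$ as above and, for small $t>0$, consider the path $t\mapsto (t\varphi, V_2)$; project it onto $\cN$ by choosing a scaling $\bu_t = (s_1(t)\, t\varphi,\, s_2(t)\,V_2)$ (or more simply rescale the whole pair, $\bu_t = r(t)(t\varphi,V_2)$) so that $G(\bu_t)=0$. Since $\bv_2\in\cN$ and the second variation of $F$ at $\bv_2$ in the admissible tangent direction corresponding to $\varphi$ is strictly negative, a Taylor expansion of $F$ along this curve gives $F(\bu_t) = J(\bv_2) - c\,t^2 + o(t^2)$ for some $c>0$, hence $\inf_\cN J \le F(\bu_t) < J(\bv_2)$ for $t$ small. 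One must check that the reparametrization onto $\cN$ is well-defined and smooth near $t=0$ — this follows from $dG(\bv_2)[\bv_2]\neq 0$ (cf. \eqref{eq:gamma}) via the implicit function theorem — and that it does not destroy the negative leading term, which it does not since the correction is higher order.

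**Main obstacle.** The routine parts are the computation of $d^2J(\bv_2)$ and the Taylor expansion along the curve. The genuinely delicate point is establishing that $\L>0$ and that the infimum defining it is attained (equivalently, that the Schrödinger operator $-\partial_x^2+\l_1-\b V_2$ actually has a negative eigenvalue exactly when $\b$ exceeds this threshold), together with the positive-definiteness of $d^2I_2(V_2)$ on $T_{V_2}\cN_2$ — both rest on spectral properties of the linearization around the explicit soliton $V$ and on its exponential decay, and this is where the analysis, rather than the algebra, has to be done carefully.
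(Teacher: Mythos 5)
Your proposal is correct and follows essentially the same route as the paper: the same definition of $\L$ as $\inf_{\varphi\in H\setminus\{0\}}\|\varphi\|_1^2/\intR V_2\varphi^2\,dx$, the same splitting of $T_{\bv_2}\cN$ via the tangent-space lemma, the same use of the nondegeneracy of $V_2$ on $\cN_2$, and the same dichotomy in $\b$ versus $\L$. The only differences are harmless: your worry about attainment of $\L$ is unnecessary (part (ii) only needs a test function with ratio below $\b$, which the infimum definition provides, and $\L>0$ follows from $V_2\in L^\infty$), while your Taylor-expansion curve argument for \eqref{eq:infimo B>L1} simply makes explicit a step the paper treats as obvious.
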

\begin{pf} \hspace{1cm}
\begin{itemize}
\item[(i)]
 We define
\be\label{eq:Lambda} \L=\inf_{\varphi\in H\setminus\{
0\}}\frac{\|\varphi\|_1^2}{\intR V_2\varphi^2dx}. \ee One has that
for $\bh\in  T_{\bv_2}\cN$,
\be\label{eq:Phi-segunda}
d^2_{\cN}J(\bv_2)[\bh]^2 =\|h_1\|_1^2 +d^2_{\cN_2}I_2(V_2)[h_2]^2-\b\intR V_2
h_1^2dx. \ee Let us take $\bh=(h_1,h_2)\in T_{\bv_2}\cN$, by
\eqref{eq:tang1} $h_2\in T_{V_2} \cN_2$, then using that $V_2$ is
the minimum of $I_2$ on $\cN_2$, there exists a  constant $c>0$ so
that
\be\label{eq:minimo-pos}
d^2_{\cN_2}I_2 (V_2)[h_2]^2\ge c\|h_2\|_2^2.
\ee
Due to \eqref{eq:Lambda} we obtain that,
$$\int_{\R}V_2h_1^2\leq \|h_1\|^2_1/\L,\qquad\forall\,h_1\in H.$$
Thus, substituting both previous inequalities in \eqref{eq:Phi-segunda} we arrive at
\be\label{eq:desig seg der}
d^2_{\cN}J(\bv_2)[\bh]^2\geq\left(1-\frac{\beta}{\L} \right) \|h_1\|_1^2+c\|h_2\|_2^2,\qquad \forall\ \bh\in T_{\bv_2}\cN.
\ee
Moreover, since $\beta<\L$, then $1-\beta/\L>0$ and $d^2_{\cN}J(\bv_2)$ is positive definite. Therefore, $\bv_2$ is a strict local minimum of $J$ on $\cN$.

\item[(ii)]
Since $\beta >\L$, there exists $\wt{h}\in H\setminus \{0\}$ such that
$$
\L< \frac{\|\wt{h}\|_1^2}{\int_{\R} V_2\wt{h}^2dx}<\b.
$$  
Using the equivalence \eqref{eq:tang1}, we have $\bh_1=(\wt{h},0)\in T_{\bv_2}\cN$ and
$$
d^2_{\cN}J(\bv_2)[\bh_1]^2 =\|\wt{h}\|_1^2 -\b_{\R^N} V_2 \wt{h}^2dx<0.$$
On the other hand, taking $h_2\in T_{V_2}\cN_2$ not equal to zero, then $\bh_2=(0,h_2)\in T_{\bv_2}\cN$ and
$$d^2_{\cN}J(\bv_2)[\bh_2]^2=I_2'' (V_2)[h_2]^2\ge c\|h_2\|_2^2>0.
$$
Consequently, this is sufficient to conclude that $\bv_2$ is a saddle point of $J$ on $\cN$ and obviously inequality \eqref{eq:infimo B>L1} holds.
\end{itemize}
 \end{pf}
\section{Existence results}\label{sec:mainR}\
\subsection{Existence of ground states}\label{sec:ground}

Concerning the existence of ground state solutions of
\eqref{NLS-KdV2}, the first result is the following.
\begin{Theorem}\label{th:1}
Suppose that $\b>\L$, then system \eqref{NLS-KdV2} has a positive
even ground state $\wt{\bu}=(\wt{u},\wt{v})$.
\end{Theorem}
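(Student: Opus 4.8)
The plan is to realise the ground state as a minimiser of the restricted functional $F=J|_{\cN}$ of \eqref{F} and then to make it positive and even by Schwarz symmetrisation. Set $m=\inf_{\cN}J$; by \eqref{eq:restriction} one has $m\ge\tfrac16\rho>0$, so the problem is non-degenerate. First I would build a good minimising sequence: starting from any $\bu_n=(u_n,v_n)\in\cN$ with $J(\bu_n)\to m$, replace it by the Schwarz symmetrisation $\bu_n^\star:=\big((|u_n|)^\star,(|v_n|)^\star\big)$. Using the P\'olya--Szeg\"o inequality (Theorem \ref{th:deriv}) for the gradient parts of $\|\cdot\|_j$, the equimeasurability identities (Lemma \ref{th:sim1}, Corollary \ref{corolario sim}) for $\intR u^4$ and $\intR v^3$, and the rearrangement inequality of Theorem \ref{lemma mult} for the coupling term $\intR u^2v$, one gets $J(t\bu_n^\star)\le J(t\bu_n)$ for every $t>0$. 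A useful elementary fact to record here is that, for any $\bu$ on the full Nehari manifold $\cM$ of \eqref{fullNehari}, $t\mapsto G(t\bu)/t^2$ is affine or strictly concave, equals $\|\bu\|^2>0$ at $t=0$ and vanishes at $t=1$; hence $\tfrac{d}{dt}J(t\bu)=t\,G(t\bu)/t^2$ is positive on $(0,1)$ and negative on $(1,\infty)$, so $\max_{t>0}J(t\bu)=J(\bu)$ on $\cM$, and (since $\b>0$) each ray $t\mapsto t\bu_n^\star$ meets $\cN$ at a unique point $t_n\bu_n^\star$. Replacing $\bu_n$ by $\bw_n:=t_n\bu_n^\star\in\cN$ we obtain a minimising sequence of non-negative, even, radially non-increasing functions, with $m\le J(\bw_n)\le J(\bu_n)\to m$.

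The hard part is the loss of compactness of $E\hookrightarrow L^q(\R)$, which persists even on the even subspace; it is defeated precisely by the reduction to radially non-increasing functions. Write $\bw_n=(w_n,z_n)$. Since the $w_n,z_n$ are radially non-increasing and bounded in $W^{1,2}(\R)$, Lemma \ref{lema para 2} yields $|w_n(x)|,|z_n(x)|\le C|x|^{-1/2}$, so no mass escapes to infinity. Passing to a subsequence with $\bw_n\rightharpoonup\wt{\bu}=(\wt u,\wt v)$ in $\h$ and $w_n\to\wt u$, $z_n\to\wt v$ almost everywhere, local compactness plus dominated convergence on $\{|x|>1\}$ give the strong convergences $\intR w_n^4\to\intR\wt u^4$, $\intR z_n^3\to\intR\wt v^3$ and $\intR w_n^2z_n\to\intR\wt u^2\wt v$. (Alternatively these follow from the compact embedding of Theorem \ref{caso N=1}, or, if one prefers to work with a Palais--Smale sequence obtained from Ekeland's principle, from the measure Lemma \ref{lem:measure} as announced in Remark \ref{rem:obs1}-$(iii)$.)

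Next I would identify $\wt{\bu}$ as a minimiser. From $G(\bw_n)=0$ and the strong convergences, $\|\bw_n\|^2\to\intR\wt u^4+\tfrac12\intR\wt v^3+\tfrac32\b\intR\wt u^2\wt v$, which is $\ge\rho>0$ by \eqref{eq:bound}; hence $\wt{\bu}\ne\bo$. Weak lower semicontinuity of the norm gives $G(\wt{\bu})\le 0$, and if this is strict we rescale $\wt{\bu}$ by the unique $\theta\in(0,1)$ with $\theta\wt{\bu}\in\cN$ and note, using $\|\wt{\bu}\|^2\le\liminf\|\bw_n\|^2$ and $\max_{t>0}J(t\bw_n)=J(\bw_n)$, that $J(\theta\wt{\bu})\le\liminf J(\theta\bw_n)\le\liminf J(\bw_n)=m$; so after possibly replacing $\wt{\bu}$ by $\theta\wt{\bu}$ we have $\wt{\bu}\in\cN$, $\wt{\bu}\ge\bo$ even, $\wt{\bu}\ne\bo$ and $J(\wt{\bu})=m$. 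Since $\cN$ is a natural constraint for $J$, $\wt{\bu}$ is a critical point of $J$, hence a bound state of \eqref{NLS-KdV2}; elliptic bootstrapping gives $\wt u,\wt v\in\mathcal{C}^\infty(\R)$ with $\wt u,\wt v\to0$ at infinity.

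Finally I would establish strict positivity and the ground-state property. The equations together with $\wt u,\wt v\ge 0$ and $\b>0$ give $-\wt u''+\l_1\wt u\ge 0$ and $-\wt v''+\l_2\wt v\ge 0$, so by the strong maximum principle each of $\wt u,\wt v$ is either identically zero or strictly positive. If $\wt v\equiv 0$, the second equation forces $\tfrac12\b\wt u^2\equiv 0$, so $\wt u\equiv 0$ and $\wt{\bu}=\bo$, impossible; if $\wt u\equiv 0$, then $\wt{\bu}=(0,V_2)=\bv_2$ with $V_2$ as in \eqref{eq:segunda}, and Proposition \ref{lem:gl3}-$(ii)$ gives $J(\wt{\bu})=J(\bv_2)>m$, contradicting $J(\wt{\bu})=m$ — this is the one point where the hypothesis $\b>\L$ is used. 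Hence $\wt u>0$ and $\wt v>0$. To see that $\wt{\bu}$ is a ground state, observe that every non-trivial bound state $\bu$ of \eqref{NLS-KdV2} satisfies $G(\bu)=\langle J'(\bu),\bu\rangle=0$, hence lies on $\cM$; running the symmetrisation-and-rescaling step of the first paragraph on a minimising sequence for $\overline F=J|_{\cM}$ shows $\inf_{\cM}\overline F=\inf_{\cN}J=m$, whence $J(\wt{\bu})=m\le J(\bu)$ for every such $\bu$. Thus $\wt{\bu}=(\wt u,\wt v)$ is a positive even ground state.
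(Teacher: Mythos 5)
Your argument is correct, but it follows a genuinely different route from the paper's. The paper minimizes over the full Nehari manifold $\cM$: it produces a minimizing Palais--Smale sequence by Ekeland's principle, verifies that it is a Palais--Smale sequence for the free functional (showing $\l_n\to 0$ and bounding $\|G'(\bu_n)\|$), rules out vanishing with Lions' measure Lemma \ref{lem:measure}, recenters by translations $z_n$ to obtain a non-trivial weak limit which is a constrained minimizer, gets positivity from the maximum principle, and only afterwards applies the Schwarz symmetrization to the minimizer, projecting $\wt{\bu}^\star$ back onto the manifold by a scaling $t_0\le 1$ to make it even. You instead symmetrize the minimizing sequence at the outset and project it back onto $\cN$ via the fibering property $\max_{t>0}J(t\bu)=J(\bu)$ on $\cM$; compactness is then supplied by the decay of radially non-increasing functions (Lemma \ref{lema para 2}, Theorem \ref{caso N=1}), i.e.\ precisely the Berestycki--Lions alternative the paper mentions but does not pursue in Remark \ref{rem:obs1}-$(iii)$, and the weak limit is returned to $\cN$ by the rescaling $\theta\le 1$ rather than through criticality of a Palais--Smale limit. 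Both proofs use Proposition \ref{lem:gl3}-$(ii)$ identically to exclude the semi-trivial solution (the only place $\b>\L$ enters) and the maximum principle for strict positivity. What each buys: your route avoids Ekeland, the $\l_n\to0$ and $\|G'(\bu_n)\|$ estimates and the concentration--compactness/translation step, and it yields $\inf_{\cM}J=\inf_{\cN}J$ cleanly, which makes the ground-state property immediate; the paper's translation argument is more robust, since it does not rely on rearrangement inequalities and so adapts to settings where symmetrization is unavailable. One small point of order: invoking the natural-constraint property of $\cN$ only gives, a priori, criticality of $\wt{\bu}$ within $\h$; either cite the principle of symmetric criticality, or simply move your final observation $\inf_{\cM}J=\inf_{\cN}J$ earlier, since then $\wt{\bu}\in\cN\subset\cM$ minimizes $J$ on $\cM$, which is a natural constraint in $\E$ by Remark \ref{rem:obs1}-$(ii)$, and $J'(\wt{\bu})=0$ in $\E$ follows exactly as in the paper.
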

\begin{pf}
To prove this theorem we will consider the full Nehari manifold $\cM$ (see Remark \ref{rem:obs1}-$(ii)$) and we divide the proof into two steps. In the first step, we prove that $\inf_{\cM}J$ is achieved at some positive function $\wt{\bu}\in\E$, while in the second step, we show that $\wt{\bu}$ can be taken even.

{\it Step 1.}
By the Ekeland's variational principle (see Theorem \ref{principio de eke}), there exists a minimizing Palais-Smale sequence $\bu_n$ in $\cM$, i.e.,
\be\label{eq:PS1}
J (\bu_n)\to c=\inf_{\cM}J,
\ee
\be\label{eq:PS2}
\n_{\cM}J(\bu_n)\to 0.
\ee
By \eqref{eq:restriction0} and \eqref{eq:PS1}, easily one finds that $\{\bu_n\}$ is a bounded sequence on $\E$, and relabeling, we can assume that
\begin{align*}
\bu_n\rightharpoonup \bu &\quad\text{in}\,\,\E,\\
\bu_n\to \bu &\quad\text{in}\,\,\mathbb{L}^q_{loc}(\R),\\
\bu_n\to \bu& \quad a.e.\ \  \text{in}\, \,\R,
\end{align*}
where 
$$\mathbb{L}^q_{loc}(\R)=L^q_{loc}(\R)\times  L^q_{loc}(\R),\qquad1\le q<\infty.$$ Moreover, we know that
\begin{equation}\label{four}
\nabla_{\cM} J(\bu_n)=J'(\bu_n)-\l_n G'(\bu_n),
\end{equation}
and, thanks to the Cauchy-Schwarz inequality, \eqref{eq:PS2} and \eqref{eq:bound} we obtain
\[
\bra\nabla_{\cM} J(\bu_n),\bu_n\ket \leq \|\nabla_{\cM} J(\bu_n)\|\|\bu_n\|\to 0\qquad\text{as}\quad n\to\infty.\]
We also have, $\bra J'(\bu_n),\bu_n\ket=G(\bu_n)= 0$ because $\bu_n\in {\cM}$, thus, 
\[
\bra\nabla_{\cM} J(\bu_n),\bu_n\ket =-\l_n \bra G'(\bu_n),\bu_n\ket\to 0,\qquad\text{as}\quad n\to\infty,\]
but $|\bra G'(\bu_n),\bu_n\ket|> \rho$ by \eqref{eq:gamma}, then, $\l_n\to 0$ as $n\to\infty$. 

Now, we will show that $\|G'(\bu_n)\|$ is bounded. From the Riesz theorem it follows that
\begin{equation}\label{eq:normsup}
\|G'(\bu_n)\|=\|dG(\bu_n)\|=\sup\limits_{\substack{\bh\in \E\\ \|\bh\|=1}}\big|dG(\bu_n)[\bh]\big|,
\end{equation}
where, applying triangular inequality in \eqref{eq:DG}, we deduce 
\begin{align*}
\big|dG(\bu_n)[\bh]\big|\leq & 2|\bra\bu_n,\bh\ket|+4\|u^3_nh_1\|_{L^1}+\frac 32\|v_n^2h_2\|_{L^1}+3\b \| u_nv_nh_1\|_{L^1}+\frac 32\b \|u_n^2h_2\|_{L^1},
\end{align*}
thus, by H\"older inequality
\begin{align*}
\big|dG(\bu_n)[\bh]\big|\leq & 2\|\bu_n\|\|\bh\|+4\|u_n^3\|_{L^{4/3}}\|h_1\|_{L^4}+\frac 32\|v_n^2\|_{L^2}\|h_2\|_{L^2}\\&+3\b \|u_n\|_{L^3}\|v_n\|_{L^3}\|h_1\|_{L^3}+\frac 32\b\|u_n^2\|_{L^2}\|h_2\|_{L^2}\\
\leq & 2\|\bu_n\|\|\bh\|+4\|u_n\|^{3}_{L^{4}}\|h_1\|_{L^4}+\frac 32\|v_n\|^2_{L^4}\|h_2\|_{L^2}\\&+3\b \|u_n\|_{L^3}\|v_n\|_{L^3}\|h_1\|_{L^3}+\frac 32\b\|u_n\|^2_{L^4}\|h_2\|_{L^2}.
\end{align*}
Notice that all the above norms are well defined since $E\subset L^q(\R)$ for all $q\in [2,\infty]$ by the Theorem \ref{cont emb}. Moreover, using the continuous embedding,  we obtain  constants $C_1,C_2,C_3,C_4$ such that
\begin{align*}
\big|dG(\bu_n)[\bh]\big|\leq & 2\|\bu_n\|\|\bh\|+C_1\|u_n\|^3_1\|h_1\|_1+C_2\|v_n\|_2^2\|h_2\|_2\\&+C_3 \|u_n\|_1\|v_n\|_2\|h_1\|_1+C_4\|u_n\|_1^2\|h_2\|_2,
\end{align*}
where, knowing that 
$$
\|\bh\|^2=\|h_1\|_1^2+\|h_2\|_2^2=1\quad\text{and}\quad \|\bu_n\|^2=\|u_n\|_1^2+\|v_n\|_2^2,
$$
we arrive to  
\begin{align*}
\big|dG(\bu_n)[\bh]\big|\leq  2\|\bu_n\|+C_1\|\bu_n\|^3+\left(C_2+C_3 +C_4\right)\|\bu_n\|^2\quad\text{with}\quad\|\bh\|=1.
\end{align*}
The right part in the above inequality is polynomially dependent of $\|\bu_n\|$ and it is clearly bounded since $\bu_n$ is bounded. From \eqref{eq:normsup} we obtain that $\|G'(\bu_n)\|\leq C<+\infty$, hence,  taking into account that $\nabla_\cN J(\bu_n)$ and $G'(u)$ are orthogonal and the fact $\l_n\to 0$, we deduce from \eqref{four} that
$$
\|J'(\bu_n)\|=\|\nabla_\cN J(\bu_n)\|+|\l_n| \|G'(\bu_n)\|\to 0,\qquad\text{as}\quad n\to\infty.
$$
Therefore $\bu_n$ is also a Palais-Smale sequence of  $J$ in $\E$. 

Let us define $\mu_n(x)=u_n^2(x)+v_n^2(x)$, where $\bu_n=(u_n,v_n)$. We {\it claim} that there is no evanescence, i.e., exist $R, C>0$ so that
\be\label{eq:vanishing}
\sup_{z\in\R}\int_{|z-x|<R}\mu_n (x)dx\ge C>0,\quad\forall n\in\mathbb{N}.
\ee
On the contrary, if we suppose
$$
\sup_{z\in\R}\int_{|z-x|<R}\mu_k(x)dx\to 0,
$$
by Lemma \ref{lem:measure}, applied in a similar way as in \cite{c-fract},  we find that $\bu_n\to \bo$ strongly in
$\mathbb{L}^{q}(\R)$ for any $2<q<\infty$, and as a consequence the weak limit $\bu^*\equiv \bo$. This is a contradiction since $\bu_n \in{\cM}$, and by \eqref{eq:restriction0}, \eqref{eq:restriction}, \eqref{eq:PS1} there holds
$$
0< \frac 17\rho <c+o_n(1)=J(\bu_n)=\overline{ F}(\bu_n),\quad\mbox{with } o_n(1)\to 0 \quad\mbox{as }n\to\infty,
$$
hence \eqref{eq:vanishing} is true and the {\it claim} is proved.

We observe that   we can find a sequence of points
$\{z_n\}\subset\R$ so that by \eqref{eq:vanishing}, the translated sequence $\overline{\mu}_n(x)= \mu_n(x+z_n)$ satisfies
$$
\liminf_{n\to\infty}\int_{B_R(0)}\overline{\mu}_n\ge C >0.
$$
Taking into account that $\overline{\mu}_n\to \overline{\mu}$
strongly in $L_{loc}^1(\R)$, we obtain that
$\overline{\mu}\not\equiv 0$. We can also prove that 
$\overline{\bu}_n(x)=\bu_n(x+z_n)\in \cM$ since the invariance of $G$ under translations and $\overline{\bu}_n$ is a Palais-Smale sequence of  $J$ in $\E$. In fact, by the form of the functional $J$ is clear that
\[
J(\overline{\bu}_n)=J(\bu_n)\to c,\qquad\text{as}\quad n\to\infty,
\]
and, if for all direction $\bh\in \E$ we define $\overline{\bh}(x)=\bh(x-z_n)$, we have that 
\[
dJ(\overline{\bu}_n)[\bh]=dJ(\bu_n)[\overline{\bh}],
\]
hence
\begin{align*}
\|J'(\overline{\bu}_n)\|&=\|dJ(\overline{\bu}_n)\|=\sup\limits_{\substack{\bh\in \E\setminus\{0\}}}\big|dJ(\overline{\bu}_n)[\bh]\big|=\sup\limits_{\substack{\overline{\bh}\in \E\setminus\{0\}}}\big|dJ(\bu_n)[\overline{\bh}]\big|=\|J'(\bu_n)\|\to 0.
\end{align*}
In particular, the weak limit of $\overline{\bu}_k$, denoted by $\overline{\bu}$, satisfies the following conditions thanks to the  weakly lower semi-continuity of the functional $\overline{F}$ defined in \eqref{Ftecho},
$$
J (\overline{\bu})  =  \dyle F(\overline{\bu})
\le  \dyle\liminf_{k\to\infty} F(\overline{\bu}_k)
=  \dyle\liminf_{k\to\infty}J(\overline{\bu}_k)=\dyle\liminf_{k\to\infty}J(\bu_k)= c,
$$
Then, using the Propositions \ref{punto estacionario constrained}, we have that $\overline{\bu}$ is a constrained critical point of $J$ in $\cM$. Furthermore, by \eqref{eq:infimo B>L1} we know that necessarily
\begin{equation}\label{fact1}
 J (\overline{\bu})\leq c\leq \inf_{\cN}J<J(\bv_2).
\end{equation}
 Taking into account the maximum principle in the second equation of \eqref{NLS-KdV2} it follows that $\overline{v}>0$, thus, if we take $\wt{\bu}=|\overline{\bu}|=(|\overline{u}|,|\overline{v}|)=(|\overline{u}|,\overline{v})$, we can check easily that $G(\wt{\bu})= G(\overline{\bu})=0$, so $\wt{\bu}\in \cM$ and 
 \be\label{eq:min}
J(\wt{\bu})=J(\overline{\bu})=\min\{J(\bu)\, :\: \bu\in\cM\},
\ee
so we have  $\wt{\bu}\ge \bo$ is a critical point of $J$. Finally, by the maximum principle applied to the first equation and the fact \eqref{fact1}, we get $\wt{\bu}> \bo$.

\

{\it Step 2.} Let us denote by $\wt{\bu}^\star=(\wt{u}^\star,\wt{v}^\star)$ the Schwarz symmetrization function associated to each component of $\wt{\bu}=(\wt{u},\wt{v})$. Note that it is possible since $\wt{\bu}=(\wt{u},\wt{v})>0$ and both components vanish at infinity. Using the classical properties of the Schwarz symmetrization (see Section \ref{Schwarz}),  it follows from Theorem \ref{th:deriv} and Lemma \ref{th:sim1} that
\[
\|\wt{u}^\star\|_1^2=\intR \left({|\wt{u}'^\star|}^2+\l_1 \wt{u}^{\star2}\right)\leq\intR \left({|\wt{u}'|}^2+\l_1 \wt{u}^2\right)=\|\wt{u}\|_1^2,
\]
and analogously $\|\wt{v}\|_2^2\geq\|\wt{v}^\star\|_2^2$, thus
\begin{equation}\label{eq:primera}
\|\wt{\bu}^{\star}\|^2\le \|\wt{\bu}\|^2.
\end{equation}
Now, using the Lemma \ref{lema desig} and Theorem \ref{lemma mult} we obtain
\be\label{12}
Q_\b(\wt{\bu}^{\star})\ge Q_\b(\wt{\bu})\qquad\text{and}\qquad{G}(\wt{\bu}^\star)\le {G}(\wt{\bu}).
\ee
Since $\wt{\bu}^\star$ is a radially symmetric function we know that there exists a unique $t_0>0$ so that $t_0\,\wt{\bu}^{\star}\in {\mathcal{N}}$. In fact, $t_0$ comes from  $G (t_0\wt{\bu}^\star)=0$, i.e.,
\be\label{eq:t}
\| \wt{\bu}^{\star}\|^2= t_0^2\intR (\wt{u}^\star)^4dx+t_0\left( \frac 12\intR (\wt{v}^\star)^3dx+\frac 32\b \intR (\wt{u}^\star)^2\wt{v}^\star \,dx\right),
\ee
and, using that $G(\wt{\bu})=0$, we have
\be\label{eq:t1}
\| \wt{\bu}\|^2= \intR (\wt{u})^4dx+ \frac 12\intR (\wt{v})^3dx+\frac 32\b \intR (\wt{u})^2\wt{v} \,dx.
\ee
Then, from \eqref{eq:primera},\eqref{eq:t},\eqref{eq:t1} and the fact that $\wt{\bu}>\bo$ and $t_0>0$ we find

\begin{align*}
 t_0^2\intR (\wt{u}^\star)^4dx+t_0\left( \frac 12\intR (\wt{v}^\star)^3dx+\frac 32\b \intR (\wt{u}^\star)^2\wt{v}^\star \,dx\right)\leq\\
\intR (\wt{u})^4dx+ \frac 12\intR (\wt{v})^3dx+\frac 32\b \intR (\wt{u})^2\wt{v} \,dx.
\end{align*}
Thus, clearly $t_0\le 1$ due to the inequalities of the Schwarz symmetrization, and consequently,
\be\label{eq:previa}
J(t_0\,\wt{\bu}^\star)= \frac 16 t_0^2\|\wt{\bu}^\star\|^2+\frac{1}{12}t_0^4\intR (\wt{u}^\star)^4\,dx\leq \frac 16 \|\wt{\bu}\|^2+\frac{1}{12}\intR \wt{u}^4\,dx=J(\wt{\bu}).
\ee
From inequalities \eqref{eq:previa}, \eqref{eq:primera} and the one obtained by the Schwarz 
symmetrization, we get
$$
J(t_0\,\wt{\bu}^{\star})\leq J(\wt{\bu})= \min \{J(\bu)\, :\:\bu\in \cM\},
$$
thus, the above inequality is indeed an equality and the infimum of $J$ on the full Nehari manifold is attained at an even function. Therefore, $t_0\,\wt{\bu}$ is a constrained critical point of $J$ on $\cN$, hence, it is a positive even ground state of $J$.
\end{pf}

The last result in this subsection deals with the existence of
positive  ground states of  \eqref{NLS-KdV2} not only for $\b>\L$,
but also for $0<\b\le\L$, at least for  $\l_2$ large enough.
\begin{Theorem}\label{th:ground2}
There exists $\L_2>0$ such that if $\l_2>\L_2$, System \eqref{NLS-KdV2} has an even ground state
$\wt\bu>\bo$ for every $\b>0$.
\end{Theorem}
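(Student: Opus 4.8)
The plan is to run, for $\l_2$ large, exactly the machinery that proves Theorem \ref{th:1}: minimize $J$ over the full Nehari manifold $\cM$ introduced in \eqref{fullNehari}, recover compactness of a minimizing Palais-Smale sequence up to translations via the measure Lemma \ref{lem:measure}, extract a non-negative minimizer by the maximum principle, and finally symmetrize it to an even one. The single new ingredient needed, compared with Theorem \ref{th:1}, is a reason why the least energy level $c:=\inf_{\cM}J$ lies strictly below $J(\bv_2)$: there this came from $\b>\L$ together with Proposition \ref{lem:gl3}-$(ii)$, whereas now, in the range $0<\b\le\L$, the point $\bv_2$ is only a local minimum of $J$ on $\cN$ by Proposition \ref{lem:gl3}-$(i)$, and it is precisely the hypothesis $\l_2\gg1$ that will replace that estimate.

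So I would first fix $\L_2$. Let $U_1$ be the unique positive even solution of the uncoupled equation $-u''+\l_1u=u^3$, obtained by rescaling the one-dimensional profile as in \eqref{eq:segunda}. Testing that equation against $U_1$ gives $\|U_1\|_1^2=\intR U_1^4\,dx$, so $(U_1,0)\in\cM$ and, since $I_2(0)=0$, $J((U_1,0))=I_1(U_1)=\tfrac14\|U_1\|_1^2=:m_1$, a constant independent of $\b$ and $\l_2$; hence $c\le m_1$. On the other hand $\bv_2\in\cM$, and since $V_2$ lies on $\cN_2$ one has $J(\bv_2)=I_2(V_2)=\tfrac16\|V_2\|_2^2$; the scaling $V_2(x)=2\l_2V(\sqrt{\l_2}\,x)$ of \eqref{eq:segunda} gives $\|V_2\|_2^2=4\l_2^{5/2}\|V\|_{W^{1,2}(\R)}^2$, whence $J(\bv_2)=\tfrac23\l_2^{5/2}\|V\|_{W^{1,2}(\R)}^2\to+\infty$ as $\l_2\to\infty$. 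Choosing $\L_2:=\bigl(3m_1/(2\|V\|_{W^{1,2}(\R)}^2)\bigr)^{2/5}$, I would obtain $c\le m_1<J(\bv_2)$ for all $\l_2>\L_2$ and all $\b>0$.

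With $\l_2>\L_2$ fixed, the rest repeats the proof of Theorem \ref{th:1}. By Remark \ref{rem:obs1}-$(ii)$, $J$ is bounded below on $\cM$ and $c\in(0,\infty)$, so Ekeland's principle (Theorem \ref{principio de eke}) gives $\bu_n\in\cM$ with $J(\bu_n)\to c$ and $\n_{\cM}J(\bu_n)\to0$; as in Step 1 of that proof, \eqref{eq:restriction0} makes $\{\bu_n\}$ bounded in $\E$, the Lagrange multipliers tend to $0$, and $\{\bu_n\}$ is a Palais-Smale sequence for $J$ on $\E$. Setting $\mu_n=u_n^2+v_n^2$, Lemma \ref{lem:measure} rules out vanishing (else $\bu_n\to\bo$ in $\mathbb{L}^q(\R)$ for all $2<q<\infty$, contradicting $c>0$ via \eqref{eq:restriction0}); translating, $\overline{\bu}_n(x):=\bu_n(x+z_n)$ stays in $\cM$ with the same energy and Palais-Smale property, and its weak limit $\overline{\bu}\ne\bo$ satisfies $J(\overline{\bu})\le c$ by weak lower semicontinuity of $\overline{F}$, hence is a constrained critical point of $J$ on $\cM$ (Proposition \ref{punto estacionario constrained}), hence a critical point of $J$ solving \eqref{NLS-KdV2} (Theorem \ref{TH:natural constrain}). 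The maximum principle on the second equation, whose right-hand side is nonnegative because $\b>0$, gives $\overline{v}>0$; replacing $\overline{u}$ by $|\overline{u}|$ changes neither $G$ nor $J$, so $\wt{\bu}:=(|\overline{u}|,\overline{v})\in\cM$ still realizes $c$ and solves \eqref{NLS-KdV2} with $\wt{\bu}\ge\bo$. If $|\overline{u}|\equiv0$ then $\wt{\bu}=(0,\overline{v})=\bv_2$ by uniqueness of $V_2$, contradicting $J(\wt{\bu})=c<J(\bv_2)$; so $|\overline{u}|\not\equiv0$, and the strong maximum principle on the first equation gives $\wt{\bu}>\bo$.

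To finish, I would symmetrize as in Step 2 of the proof of Theorem \ref{th:1}. Let $\wt{\bu}^\star=(\wt{u}^\star,\wt{v}^\star)$ be the componentwise Schwarz symmetrization of $\wt{\bu}$; Theorem \ref{th:deriv} and Lemma \ref{th:sim1} give $\|\wt{\bu}^\star\|^2\le\|\wt{\bu}\|^2$, while Lemma \ref{lema desig} and Theorem \ref{lemma mult} give $Q_\b(\wt{\bu}^\star)\ge Q_\b(\wt{\bu})$ and $G(\wt{\bu}^\star)\le G(\wt{\bu})=0$ — here $\b>0$ is what makes the coupling integral $\intR u^2v\,dx$ non-decreasing under symmetrization. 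Then there is a unique $t_0\in(0,1]$ with $t_0\wt{\bu}^\star\in\cN$ (the bound $t_0\le1$ coming from these inequalities exactly as in Theorem \ref{th:1}), and \eqref{eq:restriction0} yields $J(t_0\wt{\bu}^\star)\le J(\wt{\bu})=c$; since $t_0\wt{\bu}^\star\in\cN\subset\cM$ this is an equality, so $t_0\wt{\bu}^\star$ is a positive even critical point of $J$ with energy $c=\inf_{\cM}J$, which is the least energy of any nontrivial critical point of $J$ (all lying on $\cM$). Thus $t_0\wt{\bu}^\star>\bo$ is the required even ground state. I expect the compactness part to be a routine transcription of Theorem \ref{th:1}; the real point, and the main potential obstacle, is the choice of $\L_2$ — producing on $\cM$ a competitor whose energy stays bounded while $J(\bv_2)\to+\infty$ as $\l_2\to\infty$, which is exactly what allows $\b>0$ to replace $\b>\L$.
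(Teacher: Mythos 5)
Your argument is correct, and while its scaffolding (minimization on $\cM$, the non-vanishing argument via Lemma \ref{lem:measure}, the maximum principle, and the Schwarz symmetrization step) is the same transcription of Theorem \ref{th:1} that the paper itself performs, the one genuinely new ingredient --- the strict inequality $\inf_{\cM}J<J(\bv_2)$ for $\l_2$ large --- is obtained by a different device than in the paper. The paper builds the competitor $\bu_1=t(V_2,V_2)$, with $t>0$ the unique value placing it on the Nehari manifold, and verifies $J(\bu_1)<J(\bv_2)$ by computing the $\cosh$-integrals of $V_2$ explicitly, reducing everything to the inequality \eqref{eq:segundo2} and to the fact that $t\to0^+$ as $\l_2\to\infty$; this is the same device it reuses later in Theorem \ref{Th:lambda>L_2} for the fourth-order system. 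You instead take the competitor $(U_1,0)$, which lies on $\cM$ (in fact on $\cN$, since $U_1$ is even) simply because $\|U_1\|_1^2=\intR U_1^4\,dx$, whose energy $\tfrac14\|U_1\|_1^2$ is independent of $\b$ and $\l_2$, and you pair this with the scaling identity $J(\bv_2)=\tfrac16\|V_2\|_2^2=\tfrac23\,\l_2^{5/2}\|V\|_{W^{1,2}(\R)}^2\to\infty$. Your route is shorter (no explicit soliton integrals, no quadratic and quartic equations in $t$), produces an explicit admissible $\L_2$ depending only on $\l_1$, and makes the uniformity in $\b>0$ transparent, whereas the paper fixes $\b$ before letting $\l_2\to\infty$; what the paper's computation buys in exchange is quantitative information on how the mixed state $t(V_2,V_2)$ sits on $\cN$, which is not needed for the statement. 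Two small points to tidy: $U_1$ is the function \eqref{eq:segunda2}, not a rescaling as in \eqref{eq:segunda}; and when you exclude $|\overline{u}|\equiv 0$, note that at that stage $\overline{v}$ is only known to be a positive $W^{1,2}$ solution of $-v''+\l_2v=\tfrac12 v^2$, hence a translate of $V_2$ (evenness is not yet available), so $I_2(\overline{v})=I_2(V_2)=J(\bv_2)$, which still contradicts $J(\wt{\bu})<J(\bv_2)$.
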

\begin{pf} Arguing in the same way as in the proof of Theorem \ref{th:1},
we initially have that  there exists an even ground state
$\wt{\bu}\ge \bo$. Moreover, in Theorem \ref{th:1} for $\b>\L$ we
proved that $\wt{\bu}>\bo$. Now we need to show that for $\b\le \L$
indeed $\wt{\bu}>\bo$ which follows by the maximum principle
provided $\wt{\bu}\neq \bv_2$. Taking into account Proposition
\eqref{lem:gl3}-$(i)$, $\bv_2$ is a strict local minimum, but this
does not allow us to prove that $\wt{\bu}\neq \bv_2$. The   idea
here consists on proving the existence of  a function
$\bu_1=(u_1,v_1)\in\cN$ with $J(\bu_1)<J(\bv_2)$. To do so,
since $\bv_2=(0,V_2)$ is a local minimum of $J $ on $\cN$
provided $0<\b<\L$, we cannot find $\bu_1$ in a neighborhood of
$\bv_2$ on $\cN$. Thus, we define $\bu_1=t(V_2,V_2)$ where  $t>0$ is
the unique value so that $\bu_1\in \cN$.

Notice that  $t>0$  is given by $G(\bu_1)=0$, i.e.,
\be\label{eq:v22} \| (V_2,V_2)\|^2=t^2\intR V_2^4\, dx+\frac 12
t(1+3\b)\intR V_2^3\,dx.
\ee
Moreover, we can write 
\be
\|(V_2,V_2)\|^2=2\|V_2\|_2^2+(\l_1-\l_2)\intR V_2^2\,dx,
\ee
and taking into account that $V_2\in\cN_2$, then $P_2(V_2)=0$, thus 
\be\label{eq:v2}
\|(V_2,V_2)\|^2=\intR V_2^3\,dx+(\l_1-\l_2)\intR V_2^2\,dx,
\ee
hence,
substituting \eqref{eq:v2} into \eqref{eq:v22} we get
\begin{equation}\label{sus}
t^2\intR V_2^4\, dx+\frac 12 t(1+3\b)\intR V_2^3\,dx=\intR V_2^3\,dx+(\l_1-\l_2)\intR V_2^2\,dx.
\end{equation}
Now, using
$$
\intR \cosh^{-8}(x)\,dx=\frac{32}{35},\qquad \intR \cosh^{-6}(x)\,dx=\frac{16}{15},\qquad \intR \cosh^{-4}(x)\,dx=\frac{4}{3},
$$
we obtain from \eqref{eq:segunda} that
$$
\intR V^4_2\,dx=3^4\l_2^4\frac{32}{35}\frac{2}{\sqrt{\l_2}},\qquad \intR \cosh^{-6}(x)\,dx=3^3\l_2^3\frac{16}{15}\frac{2}{\sqrt{\l_2}},$$
$$ \intR \cosh^{-4}(x)\,dx=3^2\l_2^2\frac{4}{3}\frac{2}{\sqrt{\l_2}},
$$
thus, substituting the above expressions in \eqref{sus} and dividing the $L^1$ norm of $V_2$ we find
\be\label{eq:segundo1}
\frac{18}{7}\l_2 t^2+\frac 12 t(1+3\b)-\left(1+5\frac{\l_1-\l_2}{12\l_2}\right)=0.
\ee
The energies  of $\bu_1$, $\bv_2$ are given by
$$
J(t(V_2,V_2))=\frac 16 t^2\left(\intR V_2^3\,dx+ (\l_1-\l_2)\intR V_2^2\,dx\right)+\frac{1}{12}t^4\intR V_2^4\,dx,
$$
$$
J (\bv_2)=\frac{1}{12}\intR V_2^3\,dx.
$$
Thus, we want to prove that for the unique $t>0$ given by \eqref{eq:segundo1} we have
$$
\frac 16 t^2\left(\intR V_2^3\,dx+ (\l_1-\l_2)\intR V_2^2\,dx\right)+\frac{1}{12}t^4\intR V_2^4\,dx<\frac{1}{12}\intR V_2^3\,dx,
$$
then arguing as for \eqref{eq:segundo1}, it is sufficient to prove that the following inequality holds
\be\label{eq:segundo2}
\frac{18}{7}\l_2 t^4+t^2\left(2+5\frac{\l_1-\l_2}{6\l_2}\right)-1<0.
\ee
Using \eqref{eq:segundo1} and the fact that  $2+5\frac{\l_1-\l_2}{6\l_2}>0$ for every $\l_1,\,\l_2>0$, fixed $\b>0$ we have that \eqref{eq:segundo2} is satisfied provided $\l_2$ is sufficiently large, namely $\l_2>\L_2>0$, proving that  $J(\bu_1)<J(\bv_2)$ which
 concludes the result.
 \end{pf}
\subsection{Existence of bound states}\label{sec:bound}

In this subsection we establish  existence of bound states to
\eqref{NLS-KdV2}. The first theorem deals with  a perturbation
technique, in which we suppose that $\b=\e\wt{\b}$, with $\wt{\b}$
fixed and independent of $\e$. Note that $\wt{\b}$ can be negative, and $0<\e\ll1$. Then we rewrite the energy functional
$J$ as $J_\e$ to emphasize its dependence  on $\e$,
$$
J_\e(\bu)=J_0(\bu)-\frac 12\e \wt{\b} \intR u^2v\,dx,
$$
where $J_0=I_1+I_2$.

Let us set $\bu_0=(U_1,V_2)$, where $V_2$ is given by
\eqref{eq:segunda} and $U_1$ is the unique positive solution of
$-u''+\l_1u=u^3$ in $H$; see \cite{cof,kw}. This function $U_1$ has
the following explicit expression, \be\label{eq:segunda2}
U_1(x)=\frac{\sqrt{2\l_1}}{\cosh (\sqrt{\l_1}x)}. \ee Note also that
$U_1$ satisfies the following identity \be\label{eq:gr2}
 \|U_1\|_1=\inf_{u\in H\setminus\{0\}} \frac{\|u\|_1^2}{\left( \intR u^4dx\right)^{1/2}}.
\ee
\begin{Theorem}\label{th:2}
There exists $\e_0>0$ so that for any $0<\e <\e_0$ and
$\b=\e\wt{\b}$, system \eqref{NLS-KdV2} has an even bound state
$\bu_{\e}$ with  $\bu_{\e}\to \bu_0$ as $\e\to 0$. Moreover, if $\b>0$ then $\bu_\e>\bo.$
\end{Theorem}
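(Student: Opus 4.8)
The plan is to obtain $\bu_\e$ by continuation (implicit function theorem) from the uncoupled critical point $\bu_0=(U_1,V_2)$ of $J_0=I_1+I_2$. I would work in the even subspace $\h=H\times H$, on which $J_\e\in\mathcal{C}^2(\h,\R)$, and consider the $\mathcal{C}^1$ map
\[
\Phi:\R\times\h\to\h,\qquad \Phi(\e,\bu)=\nabla_\h J_\e(\bu),
\]
which is of class $\mathcal{C}^1$ because $J_\e(\bu)=J_0(\bu)-\tfrac12\e\wt\b\intR u^2v\,dx$ depends affinely on $\e$ and all nonlinearities are polynomial. Since $U_1$ solves $-u''+\l_1u=u^3$ and $V_2$ solves $-v''+\l_2v=\tfrac12v^2$ in $H$ (see \eqref{eq:segunda}, \eqref{eq:segunda2}), we have $\Phi(0,\bu_0)=\bo$. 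One must work in $\h$ rather than in $\E$: on the full space $J_0''(\bu_0)$ has the translation directions $(U_1',0),(0,V_2')$ in its kernel, so it is not invertible there.

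The crux, and the main obstacle, is to show that $\partial_{\bu}\Phi(0,\bu_0)=J_0''(\bu_0)$ is an isomorphism of $\h$. As $J_0$ decouples, $J_0''(\bu_0)=\mathrm{diag}\bigl(I_1''(U_1),I_2''(V_2)\bigr)$, so it suffices to prove $I_1''(U_1)$ and $I_2''(V_2)$ are isomorphisms of $H$. Writing $I_1''(U_1)=\mathrm{Id}-L_1$ with $\bra L_1h,k\ket_1=3\intR U_1^2hk\,dx$ and $I_2''(V_2)=\mathrm{Id}-L_2$ with $\bra L_2h,k\ket_2=\intR V_2hk\,dx$, the operators $L_1,L_2$ are compact on $H$: if $h_n\rightharpoonup0$ in $H$, then $h_n\to0$ in $L^2_{loc}$ and is bounded in $L^\infty$ (by $W^{1,2}(\R)\hookrightarrow L^\infty$), and since $U_1^2$ and $V_2$ are bounded and exponentially decaying, $\|U_1^2h_n\|_{L^2}\to0$ and $\|V_2h_n\|_{L^2}\to0$, hence $\|L_jh_n\|_j\to0$. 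By the Fredholm alternative, $I_1''(U_1)$ and $I_2''(V_2)$ are invertible iff injective, and injectivity is exactly the non-degeneracy of the scalar ground states: a kernel element would be an even $H^2(\R)$ solution of $-\phi''+\l_1\phi=3U_1^2\phi$, resp. $-\psi''+\l_2\psi=V_2\psi$, whereas the kernels of these linearizations in $W^{1,2}(\R)$ are spanned by $U_1'$, resp. $V_2'$ (classical; for the explicit solutions \eqref{eq:segunda2}, \eqref{eq:segunda} this can be checked directly, these being explicitly solvable P\"oschl--Teller operators), and $U_1',V_2'$ are odd, hence not in $H$. Thus $J_0''(\bu_0)$ is an isomorphism of $\h$.

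Granting this, the implicit function theorem gives $\e_0>0$ and a $\mathcal{C}^1$ curve $(-\e_0,\e_0)\ni\e\mapsto\bu_\e\in\h$ with $\bu_\e\to\bu_0$ in $\h$ as $\e\to0$ and $\nabla_\h J_\e(\bu_\e)=\bo$. For $\e$ small $\bu_\e\neq\bo$ since $\bu_0=(U_1,V_2)\neq\bo$, so $\bu_\e$ is a non-trivial even critical point of $J_\e$ on $\h$; because $J_\e$ is invariant under the orthogonal $\Z_2$-action $u(x)\mapsto u(-x)$, Palais' principle of symmetric criticality yields $J_\e'(\bu_\e)=0$ in $\E$, i.e. $\bu_\e=(u_\e,v_\e)$ is an even bound state of \eqref{NLS-KdV2}.

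Finally, if $\b=\e\wt\b>0$ I would prove $\bu_\e>\bo$ by the maximum principle. From the second equation $-v_\e''+\l_2v_\e=\tfrac12v_\e^2+\tfrac12\b u_\e^2\ge0$, so $v_\e$ is a supersolution of $-v''+\l_2v=0$ vanishing at infinity; testing against the even function $v_\e^-$ gives $\|v_\e^-\|_2^2\le0$, hence $v_\e\ge0$, and the strong maximum principle rules out $v_\e\equiv0$ (which would force $u_\e\equiv0$, contradicting $\bu_\e\neq\bo$), so $v_\e>0$. For $u_\e$, since $\bu_\e\to\bu_0$ in $\h\hookrightarrow L^\infty$ and $U_1^2+\b V_2\to0$ at infinity, I can fix $R$ large and $\e$ small so that $u_\e>0$ on $[-R,R]$ (closeness to $U_1>0$) and $u_\e^2+\b v_\e<\l_1/2$ on $\{|x|>R\}$; testing $-u_\e''+\l_1u_\e=u_\e(u_\e^2+\b v_\e)$ against $u_\e^-$ (supported in $\{|x|>R\}$) gives $\|u_\e^-\|_1^2=\intR(u_\e^2+\b v_\e)(u_\e^-)^2\le\tfrac12\|u_\e^-\|_1^2$, hence $u_\e\ge0$, and the strong maximum principle gives $u_\e>0$. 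Therefore $\bu_\e>\bo$, which completes the proof.
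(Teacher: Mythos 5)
Your proposal is correct, and it splits naturally into a part that mirrors the paper and a part that genuinely differs. For existence you take essentially the paper's route: the paper invokes the non-degeneracy of $U_1$ and $V_2$ (from \cite{kw}) and applies the Local Inversion Theorem on $\h$ (citing \cite{a-m}), whereas you unpack that same step, verifying that $J_0''(\bu_0)=\mathrm{diag}\bigl(I_1''(U_1),I_2''(V_2)\bigr)$ is an isomorphism of $\h$ via a compact-perturbation/Fredholm argument and the fact that the kernels of the scalar linearizations are spanned by the odd functions $U_1'$, $V_2'$, hence trivial on even functions, and then using symmetric criticality to return to $\E$; this adds welcome detail but is the same idea. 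Where you diverge is the positivity of the first component for $\b=\e\wt{\b}>0$: the paper argues by contradiction through the variational characterization \eqref{eq:gr2} of $U_1$, showing that $u_\e^-\not\equiv 0$ would force $\|u_\e^\pm\|_1^2\ge\|U_1\|_1^2+o(1)$ and hence an energy excess incompatible with $J(\bu_\e)\to J(\bu_0)$ (see \eqref{eq:eval-funct}--\eqref{eq:final}); you instead use the uniform convergence $\bu_\e\to\bu_0$ in $L^\infty$ and the decay of $U_1,V_2$ to confine the support of $u_\e^-$ to $\{|x|>R\}$, where the potential $u_\e^2+\b v_\e$ lies below $\l_1/2$, and a direct test with $u_\e^-$ forces $u_\e^-\equiv 0$. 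Both arguments are sound: the paper's is variational and reuses the ground-state characterization of $U_1$, while yours is more elementary and local; similarly, for $v_\e>0$ the paper uses $v_\e\to V_2$ plus the maximum principle, while you use the strong maximum principle together with the observation that $v_\e\equiv 0$ would force $u_\e\equiv 0$, and your computation giving $v_\e\ge 0$ coincides with the paper's.
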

In order to prove this result, we follow some ideas of \cite[Theorem
4.2]{c} with appropriate modifications.

\

\noindent {\it Proof of Theorem \ref{th:2}.}
It is well known that $U_1$ and $V_2$ are non-degenerate critical points of $I_1$ and $I_2$ on $H$ respectively; \cite{kw}. Plainly, $\bu_0$ is a non-degenerate critical point of $J_0$ acting  on $\h$.
Then, by the Local Inversion Theorem, there exists a critical point $\bu_\e$ of $J_\e$ for any $0<\e<\e_0$ with $\e_0$ sufficiently small; see  \cite{a-m} for more details. Moreover, $\bu_\e\to \bu_0$ on $\mathbb{H}$ as $\e\to 0$. To complete the proof it remains to show that if $\b>0$, then $\bu_\e> \bo$.

Let us  denote the positive part  $\bu_\e^+=(u_{\e}^+,v_{\e}^+)$ and  the negative part $\bu_\e^-=(u_{\e}^-,v_{\e}^-)$. By  \eqref{eq:gr2} we have
\begin{equation}\label{eq:pm1}
\|u_{\e}^\pm\|_1^2 \geq  \|U_1\|_1\left(\intR (u_{\e}^\pm)^4dx\right)^{1/2}.
\end{equation}
Multiplying  the second equation of \eqref{NLS-KdV2} by $v_{\e}^-$ and integrating on $\R$ one obtains
\be
\|v_{\e}^-\|_2^2 =\intR (v_{\e}^-)^3dx + \e \wt{\b}\intR (u_{\e})^2v_{\e}^- dx\le 0,
\ee
thus $\|v_\e^-\|_2=0$ which implies $v_\e=v_\e^+\ge 0$. Furthermore,  $\bu_\e\to\bu_0$ implies
$v_\e\to V_2$, which jointly with the maximum principle gives $v_\e>0$ provided $\e$ is sufficiently small.

Multiplying now  the first equation of \eqref{NLS-KdV2} by $u_{\e}^\pm$ and integrating on $\R$ one obtains
\begin{eqnarray*}
\|u_{\e}^\pm\|_1^2 &=& \intR (u_{\e}^\pm)^4dx + \e \wt{\b}\intR (u_{\e}^\pm)^2v_{\e} \,dx\\
&\leq&\intR (u_{\e}^\pm)^4dx +\e \wt{\b}\left( \intR (u_{\e}^\pm)^4dx\right)^{1/2}\left(   \intR v_{\e}^2 \,dx \right)^{1/2}.
\end{eqnarray*}
This, jointly with \eqref{eq:pm1}, yields
\be\label{eq:primera1}
\|u_{\e}^\pm\|_1^2 \le \frac{\|u_{\e}^\pm\|_1^4}{ \|U_1\|_1^2} + \e\, \theta_\e\;\frac{\|u_{\e}^\pm\|_1^2}{ \|U_1\|_1},
\ee
where
$$
\theta_\e = \wt{\b}\left( \intR v_{\e}^2\right)^{1/2}.
$$
Hence, if  $\|u_{\e}^\pm\|>0$, one infers
\begin{equation}\label{eq:pmm1}
\|u_{\e}^\pm\|_1^2 \ge  \|U_1\|_1^2 +o(1),
\end{equation}
where $o(1)=o_\e (1)\to 0$ as $\e\to 0$.
Using again  $\bu_\e \to \bu_0$, then $u_{\e}\to U_1>0 $, as a consequence, for $\e$ small enough, $\|u_{\e}^+\|>0$. Thus
\eqref{eq:pmm1} gives
\begin{equation}\label{eq:pm11}
\|\bu_{\e}^+\|^2 =\|u_{\e}^+\|_1^2+\|v_{\e}^+\|_2^2 \geq   \|U_1\|_1^2 +o(1).
\end{equation}
Now, suppose for a contradiction,  that $\|u_{\e}^-\|_1>0$. Then as for \eqref{eq:pm11}, one obtains
\begin{equation}\label{eq:pm12}
\|\bu_\e^-\|^2 =\|u_{\e}^-\|_1^2+\|v_{\e}^-\|_2^2 \geq  \|U_1\|_1^2+o(1).
\end{equation}
On one hand, using \eqref{eq:pm11}-\eqref{eq:pm12}, we find
\begin{align}\label{eq:eval-funct}
\begin{split}
J(\bu_\e) & =  \dyle \frac 16 \|\bu_\e\|^2+\frac{1}{12}\intR u_\e^4\,dx \\
& =   \dyle\frac 16 \left[  \|\bu_\e^+\|^2 + \|\bu_\e^-\|^2\right]+\frac{1}{12}\intR [(u_\e^+)^4+(u_\e^-)^4]\,dx \\
 &  \ge\dyle \frac 16  \|\bu_0\|^2 +\frac 16 \| U_1\|_1^2+ \frac{1}{12}\intR U_1^4\,dx  +o(1).
\end{split}
\end{align}
On the other hand,  since $\bu_\e \to \bu_0$ we have
\be\label{eq:final}
J(\bu_\e)= \frac 16 \|\bu_\e\|^2+\frac{1}{12}\intR u_\e^4\,dx\to \frac 16\|\bu_0\|^2+ \frac{1}{12}\intR U_1^4\,dx,
\ee
which is in contradiction with \eqref{eq:eval-funct}, proving that $u_{\e}\geq 0$.

In conclusion, we have proved that $v_\e>0$ and $u_\e\ge 0$. To
prove the positivity of $u_\e$, using once more that $\bu_\e\to
\bu_0$,  and $\b=\e\wt{\b}\ge 0$ we can apply the maximum principle
to the first equation of \eqref{NLS-KdV2}, which implies that
$u_\e>0$, and finally, $\bu_\e>\bo$. \rule{2mm}{2mm}\medskip

From  the existence of a positive ground state established in
Theorem \ref{th:1} for $\b>\L$, and more precisely in Theorem
\ref{th:ground2} for $\b>0$, provided  $\l_2$ is sufficiently large,
we can show the existence of a different positive bound state of
\eqref{NLS-KdV2} in the following.
\begin{Theorem}\label{th:bound2}
 In the hypotheses of Theorem \ref{th:ground2} and $0<\b<\L$, there
exists an even bound state $\bu^*>\bo$ with
$J(\bu^*)>J(\bv_2)$.
\end{Theorem}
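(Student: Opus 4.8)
The plan is to find $\bu^*$ as a mountain--pass critical point of $J$ constrained to the Nehari manifold $\cN$, using that under the present hypotheses $J|_{\cN}$ has \emph{two} distinct local minima. Since $\l_2>\L_2$, Theorem \ref{th:ground2} provides the even ground state $\wt\bu>\bo$, which minimizes $J$ on $\cN$ and, by the computation in that proof, satisfies $J(\wt\bu)=\dyle\inf_{\cN}J\le J(t(V_2,V_2))<J(\bv_2)$; in particular $\wt\bu\neq\bv_2$. On the other hand, because $0<\b<\L$, Proposition \ref{lem:gl3}-$(i)$ says that the semi-trivial solution $\bv_2$ is a \emph{strict} local minimum of $J$ on $\cN$. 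Recalling that $\cN$ is a complete $\mathcal{C}^1$ manifold and a natural constraint for $J$ (Theorem \ref{TH:natural constrain}), I would set
\[
c^*=\inf_{\g\in\G^*}\ \max_{s\in[0,1]}J(\g(s)),\qquad \G^*=\{\g\in\mathcal{C}([0,1],\cN):\g(0)=\bv_2,\ \g(1)=\wt\bu\},
\]
which is nonempty since, e.g., the radial projection onto $\cN$ of the segment joining $\bv_2$ and $\wt\bu$ lies in it. Because $\bv_2$ is a strict local minimum there are $\d,\eta>0$ with $J(\bu)\ge J(\bv_2)+\eta$ for every $\bu\in\cN$ with $\|\bu-\bv_2\|=\d$, and each $\g\in\G^*$ crosses that sphere, hence $c^*\ge J(\bv_2)+\eta>J(\bv_2)>J(\wt\bu)$. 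Thus any critical point of $J$ at level $c^*$ is automatically different from $\bv_2$ and from $\wt\bu$ and has energy $>J(\bv_2)$, which is exactly the asserted property.

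\textbf{Compactness (the main obstacle).} A standard mountain--pass scheme on the manifold $\cN$ produces a Palais-Smale sequence $\bu_n\in\cN$ for $J|_{\cN}$ with $J(\bu_n)\to c^*$ and $\n_{\cN}J(\bu_n)\to 0$; arguing as in Step 1 of the proof of Theorem \ref{th:1}, \eqref{eq:restriction0} gives boundedness, the Lagrange multipliers satisfy $\l_n\to 0$, $\|G'(\bu_n)\|$ stays bounded, so $J'(\bu_n)\to 0$ in $\E'$ and $\{\bu_n\}$ is a Palais-Smale sequence for $J$ on $\E$. The hard part is that in dimension one $H$ does not embed compactly into $L^q(\R)$, so the Palais-Smale condition is not free. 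I would handle this in one of two ways. First, mimic the proof of Theorem \ref{th:1}: by the measure Lemma \ref{lem:measure}, evanescence of $\mu_n=u_n^2+v_n^2$ is impossible (it would force $\bu_n\to\bo$ in $\mathbb{L}^q(\R)$, hence $c^*=\lim J(\bu_n)=0$, contradicting $c^*>0$), and a concentration/translation argument then yields a nontrivial critical point $\bu^*$ of $J$; the delicate residual point is to exclude a loss of energy (``dichotomy''), so that $J(\bu^*)=c^*$ and hence $J(\bu^*)>J(\bv_2)$. Alternatively, as suggested in Remark \ref{rem:obs1}-$(iii)$, one can run the whole minimax inside the cone of nonnegative radially decreasing functions, where Theorem \ref{caso N=1} supplies the compact embedding and the Palais-Smale condition holds outright, at the cost of a Berestycki--Lions type argument to check that the constrained critical point so obtained is a genuine solution of \eqref{NLS-KdV2}.

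\textbf{Positivity and evenness.} Replacing a path $\g=(\g_1,\g_2)$ by the componentwise absolute value $(|\g_1|,|\g_2|)$ and rescaling the outcome back onto $\cN$ does not increase $\max_s J(\g(s))$ --- this uses $\b>0$, as in the proof of Theorem \ref{th:1} --- so $c^*$ is unchanged when $\G^*$ is restricted to nonnegative even paths, and therefore $\bu^*=(u^*,v^*)$ may be taken even with $\bu^*\ge\bo$. The maximum principle applied to the second equation of \eqref{NLS-KdV2} gives $v^*>0$; if $u^*\equiv 0$, then $\bu^*=(0,v^*)$ solves $-v''+\l_2v=\frac12 v^2$, forcing $\bu^*=\bv_2$ and contradicting $J(\bu^*)=c^*>J(\bv_2)$. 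Hence $u^*\not\equiv 0$, and the maximum principle in the first equation yields $u^*>0$. Consequently $\bu^*>\bo$ is an even bound state of \eqref{NLS-KdV2} with $J(\bu^*)>J(\bv_2)$, as claimed.
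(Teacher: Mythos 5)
Your overall minimax scheme is the same as the paper's: the mountain--pass geometry on the Nehari manifold between the strict local minimum $\bv_2$ (Proposition \ref{lem:gl3}-$(i)$, valid since $0<\b<\L$) and the ground state $\wt\bu$ of Theorem \ref{th:ground2} with $J(\wt\bu)<J(\bv_2)$, followed by Ekeland/Palais--Smale machinery and the concentration--translation argument of Theorem \ref{th:1} to cope with the one--dimensional lack of compactness. The genuine gap is in your positivity step. Your pointwise observation is correct as far as it goes: for $\g(s)\in\cN$ and $\b>0$, the rescaled componentwise absolute value $t_0(s)\,(|\g_1(s)|,|\g_2(s)|)$ lies on $\cN$ with $t_0(s)\le1$, so it does not increase $J$, and hence $c^*$ is unchanged if one restricts to nonnegative paths. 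But the conclusion ``therefore $\bu^*$ may be taken $\ge\bo$'' is a non sequitur: the Mountain Pass Theorem produces a Palais--Smale sequence at level $c^*$, and nothing forces that sequence, or its weak limit, to lie in (or asymptotically near) the cone of nonnegative functions merely because the min--max value can be computed over paths contained in that cone. To extract a nonnegative critical point one needs either a quantitative deformation argument respecting the cone, or a truncation of the nonlinearity.

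The paper takes the second route, and this is where most of its effort goes. It introduces the auxiliary system \eqref{NLS-KdV+} (with $(u^+)^3$, $\b u^+v$, $\b(u^+)^2$), its energy $J^+$ and Nehari manifold $\cN^+$, so that every nontrivial solution automatically satisfies $v>0$, $u\ge0$ by the maximum principle. The price is that $J^+$ is only $\mathcal{C}^1$, so Proposition \ref{lem:gl3}-$(i)$ cannot be invoked to see that $\bv_2$ is still a strict local minimum of $J^+$ on $\cN^+$; the paper verifies this by hand, without second derivatives, via the estimate $J^+(t_\e\bv_\e)>J^+(\bv_2)$ for $\bv_\e=(\e h_1,V_2+\e h_2)$, using \eqref{eq:Lambda} and $\b<\L$. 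The mountain--pass point of $J^+$ then solves \eqref{NLS-KdV+}, satisfies $J(\bu^*)=J^+(\bu^*)>J^+(\bv_2)=J(\bv_2)>0$, hence $u^*\not\equiv0$, and the maximum principle applied to each equation yields $\bu^*>\bo$. Your proposal is missing exactly this ingredient, and without it (or an equivalent cone--respecting deformation) the claim $\bu^*>\bo$ does not follow. A secondary point, which you at least flag and the paper also treats lightly, is that with only weak convergence in dimension one, lower semicontinuity gives $J(\bu^*)\le c^*$, so identifying the level (excluding dichotomy) is needed for $J(\bu^*)>J(\bv_2)$; but the positivity issue is the structural gap.
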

\begin{pf}
The positive ground state $\wt\bu$ founded in Theorem
\ref{th:ground2} satisfies $J(\wt{\bu})<J(\bv_2)$ and even
more, if $\b<\L$ by Proposition \ref{lem:gl3}, $\bv_2$ is a strict
local minimum of $J$ constrained on $\cN$. As a consequence, we
have the Mountain Pass geometry between $\wt{\bu}$ and
$\bv_2$ on $\cN$. We define the set of all continuous paths joining
$\wt{\bu}$ and $\bv_2$  on the Nehari manifold by
$$
\G=\{ \g\in \mathcal{C}([0,1],\cN)\ | \ \g(0)=\wt{\bu},\: \g(1)=\bv_2\}.
$$
Thanks to the  Mountain Pass Theorem, there exists a Palais-Smale sequence $\bu_k\subset\cN$, such that
$$
J (\bu_k)\to c,\qquad \n_{\cN}J(\bu_k)\to 0,
$$
where
\be\label{eq:MP-level}
c=\inf_{\g\in\G}\max_{0\le t\le 1}J(\g(t)).
\ee
Plainly, by \eqref{eq:restriction0} the sequence $\{\bu_k\}$ is bounded  on $\h$, and we obtain a weakly convergent subsequence  $\bu_k\rightharpoonup\bu^*\in \cN$.

The  difficulty of the lack of compactness, due to work in the one dimensional case (see Remark \ref{rem:obs1}-$(iii)$), can be circumvent  in a similar way as in the proof of Theorem \ref{th:1}, so we omit the full detail for short. Thus, we find that the weak limit $\bu^*=(u^*,v^*)$ is an even bound state of \eqref{NLS-KdV2}, and clearly, $J (\bu^*)>J(\bv_2)$.

It remains to prove that  $\bu^*>\bo$. To do so, let us introduce the following problem
\begin{equation}\label{NLS-KdV+}
\left\{\begin{array}{rcl}
-u'' +\l_1 u & = & (u^+)^3+\beta u^+v \\
-v'' +\l_2 v & = & \frac 12 v^2+\frac 12\beta (u^+)^2.
\end{array}\right.
\end{equation}
By the maximum principle every nontrivial solution $\bu=(u,v)$ of
\eqref{NLS-KdV+}  has the second component  $v>0$ and the first one
$u\ge 0$.  Let us define its energy functional
$$
J^+ (\bu)=\frac 12\|\bu \|^2 -G_\b (u^+,v),
$$
and consider the corresponding Nehari manifold
$$
\cN^+=\{\bu\in\h\setminus\{\bo\}\, :\: (\n J^+ (\bu)|\bu)=0\}.
$$
Also, we denote
$$
I_1^+(u)=\frac12 \| u\|_1^2-\frac 14\intR (u^+)^4\,dx.
$$
It is not very difficult to show that the properties proved for
$J$ and $\cN$ still hold for $J^+$ and $\cN^+$. Unfortunately,
$J^+$ is not $\mathcal{C}^2$, thus Proposition
\ref{lem:gl3}-$(i)$ does not hold directly for $J^+$. To solve
this difficulty, we are going to prove that $\bv_2$ is a strict
local minimum of $J^+$ constrained on $\cN^+$ without using the
second derivative of the functional. Note that in a similar way as
in \eqref{eq:tang1}, there holds \be \bh=(h_1,h_2)\in
T_{\bv_2}\cN^+\Longleftrightarrow h_2\in T_{V_2}\cN_2. \ee Taking
$\bh\in T_{\bv_2}\cN^+$ with $\|\bh\|=1$, we consider $\bv_\e=(\e
h_1,V_2+\e h_2)$. Plainly, there exists a unique $t_{\e}>0$ so that
$t_\e\bv_\e\in \cN^+$. Thus, we want to prove there exists $\e_1>0$
so that
$$
J^+(t_\e\bv_\e)>J^+ (\bv_2),\qquad \forall\: 0<\e<\e_1.
$$
It is convenient to distinguish if $h_1=0$ or not. In the former case, $h_1=0$, $\bv_\e=(0, V_2+\e h_2)$. Hence $t_\e\bv_\e\in \cN^+\Leftrightarrow t_\e(V_2+\e h_2)\in\cN_2$. Furthermore,
\be\label{eq:anterior}
J^+(t_\e\bv_\e)=I_2(t_\e(V_2+\e h_2))>I_2(V_2)=J(\bv_2)=J^+(\bv_2),
\ee
where the previous inequality holds because $V_2$ is a strict local minimum of $I_2$  on $\cN_2$.

Let us now consider the case $h_1\neq 0$. There holds
\be\label{eq:posterior}
J^+(t_\e \bv_\e)=I_2(t_\e(V_2+\e h_2))+I_1^+(t_\e\e h_1)-\frac 12\b \e^2t_\e^2\intR (h_1^+)^2(V_2+\e h_2)\,dx.
\ee
By \eqref{eq:anterior} and \eqref{eq:posterior} it follows,
\be\label{eq:posterior2}
J^+(t_\e \bv_\e)>J^+(\bv_2)+I_1^+(t_\e\e h_1)-\frac 12\b \e^2t_\e^2\intR (h_1^+)^2(V_2+\e h_2)\,dx.
\ee
To finish, it is sufficient to show that
$$
\mathcal{J} (t_\e\bv_\e):=I_1^+(t_\e\e h_1)-\frac 12\b \e^2t_\e^2\intR ( h_1^+)^2(V_2+\e h_2)\,dx>0\qquad \forall\: 0<\e<\e_1.
$$
Let $\a<1$ be such that $\a>\frac{\b}{\L}$. By \eqref{eq:Lambda} and $\b<\L$ there holds
$$
\b\intR V_2( h_1^+)^2\,dx <\a\| h_1\|_1^2,
$$
then for $\e_1$ smaller than before (if necessary) we have
\be\label{eq:posterior3}
\b\intR (V_2+\e h_2)(h_1^+)^2\,dx <\a\| h_1\|_1^2 \qquad \forall\: 0<\e<\e_1.
\ee
Using  \eqref{eq:posterior3} and the Sobolev inequality, we obtain
$$
\mathcal{J} (t_\e\bv_\e)>\frac 12t_\e^2\e^2\| h_1\|_1^2(1-\a-ct_\e^2\e^2),\quad\mbox{ for a constant } c>0.
$$
Now, taking into account that $t_\e\to 1$ as $\e\searrow 0$, we infer there exists a  constant $c_0>0$ so that
\be\label{eq:final2}
\mathcal{J} (t_\e\bv_\e)>\e^2c_0\|h_1\|_1^2.
\ee
Finally, by \eqref{eq:posterior2}, \eqref{eq:final2} it follows that
$$
J^+(t_\e\bv_\e)>\e^2c_0\| h_1\|_1^2+J^+(\bv_2) > J^+(\bv_2),
$$
which proves that $\bv_2$ is a strict local minimum for $J^+$ on $\cN^+$.

\noindent From the preceding arguments, it follows that $J^+$ has a MP critical point $\bu^*\in \cN^+$, which gives rise to a solution of \eqref{NLS-KdV+}.
In particular, one finds that $u, v\ge 0$.
   In addition, since $\bu^*$ is a MP critical point, one
has that $J (\bu^*)=J^+(\bu^*)>J^+(\bv_2)=J(\bv_2)>0$,
which implies  $u^*\ge 0$ with $u^*\not\equiv 0$, and by the maximum
principle applied to each single equation we get $u^*,\, v^*>0$,
hence $\bu^*>\bo$. \end{pf}

In view of Theorems \ref{th:ground2}, \ref{th:bound2}, some remarks
are in order.

\begin{remarks}\label{rem:11}  In the hypotheses of Theorems \ref{th:ground2}, \ref{th:bound2} we have found the coexistence of two positive solutions, the ground state $\wt{\bu}$ in Theorem \ref{th:ground2} and the bound state $\bu^*$ in Theorem \ref{th:bound2}, proving a non-uniqueness result of positive solutions to \eqref{NLS-KdV2}. This is a great difference with the more studied system of coupled nonlinear Schr\"odinger equations
$$
\left \{
\begin{array}{ll}
- \D u_1+ \l_1 u_1 &=  \mu_1u_1^3+\b u_2^2 u_1\\
- \D u_2 + \l_2 u_2 &= \mu_2  u_2^3+\b u_1^2u_2,
\end{array} \right.
$$
(see for instance
\cite{a,ac1,ac2,bt,chen-zou,c,fl,iko,itanaka,linwei,liu-wang,mmp,sirakov,wy}
and the references therein) for which it is known that there is
uniqueness of positive solutions, under appropriate conditions on
the parameters including the case $\b>0$ small; see more
specifically  \cite{iko,wy}. Indeed, for $\b>0$ small, the ground
state is not positive, and it is given by one of the two
semi-trivial solutions  $(U^{(1)},0)$ or $(0,U^{(2)})$ depending on
if $J (U^{(1)},0)$ is lower or grater than $J(0,U^{(2)})$
which plainly corresponds to $\l_1^{2-\frac N2}\mu_2<\l_2^{2-\frac
N2}\mu_1$ or $\l_1^{2-\frac N2}\mu_2>\l_2^{2-\frac N2}\mu_1$
respectively. Here $U^{(j)}$ is the unique\footnote{See
\cite{cof,kw} for this uniqueness result.} positive radial solution
of $-\D u_j+\l_ju_j=\m_j u_j^3$ in $W^{1,2}(\mathbb{R}^N)$, for
$N=1,\,2,\, 3$ and $j=1,\, 2$.

\end{remarks}

\section{Extended system}\label{sec:extended}\

Note that  System \eqref{NLS-KdV}  has no sense in the dimensional case $N=2,\, 3$, however, \eqref{NLS-KdV2}  makes sense to be extended to more dimensions. Moreover, previous results can be established in the dimensional case $N=2,3$ with minor changes for system
\begin{equation}\label{NLS-KdV2-n}
\left\{\begin{array}{rcl}
-\D u +\l_1 u & = & u^3+\beta uv \\
-\D v +\l_2 v & = & \frac 12 v^2+\frac 12\beta u^2,
\end{array}\right.
\end{equation}
working on the corresponding Sobolev Spaces $E=W^{1,2}(\R^N)$, $N=2,3$ and its radial subspace $H=E_r$. In particular, Theorems \ref{th:1}, \ref{th:ground2}, \ref{th:2} and \ref{th:bound2} can be obtained for $N=2,3$ in a less complicated way due to the compact embedding of $H$ given by Theorem \ref{radial compact embe}. Thus, we obtain the corresponding positive radially symmetric bound and ground state solutions.

\begin{remarks}\label{rem:compacidad}\
\begin{itemize}

\item[(i)] Following some ideas by Ambrosetti and Colorado in \cite{ac2}, as Liu and Zheng cited in \cite{lz}, they proved a partial result on existence of solutions
to the corresponding system \eqref{NLS-KdV2} in the dimensional case $N=2,\, 3$. Precisely, in \cite{lz} the authors showed that the infimum of the energy functional on the corresponding Nehari manifold (defined on the radial Sobolev space) is achieved by a non-negative bound state, although it was not shown that the infimum on the Nehari Manifold is a ground state, i.e., the least energy solution
 of the functional that we have proved here for $N=1,\,2,\,3$. Also, in \cite{lz} was not investigated
 the existence of other bound states, as he have done in this manuscript, not only in the non-critical dimensions $N=2,\,3$ but also in the one dimensional case, $N=1$, which is the relevant case as the application in physics dealing with the interaction between  the short and long capillary - gravity water waves.
\item[(ii)] System \eqref{NLS-KdV2-n} can be seen as the stationary system of two coupled
nonlinear Schr\"odinger  equations when one looks for solitary wave solutions, and  $(u,v)$ are the corresponding standing wave solutions. It is well known that time-dependent systems of
nonlinear Schr\"odinger  equations have applications in some aspects of Optics, Hartree-Fock theory for Bose-Einstein
 condensates, among other physical phenomena; see for instance the earlier mathematical works
 \cite{Ack,a,ac1,ac2,acr,bt,fo,linwei,mmp,sirakov}, the more recent list (far from complete)
 \cite{chen-zou,itanaka,liu-wang} and references therein. See also
 \cite{bor,rep} for some recent results on nonlinear Schr\"odinger equations, and also \cite{gmp} for other results including higher-order nonlinear Schr\"odinger equations.

 \end{itemize}
\end{remarks}


\chapter{A higher order system of nonlinear Schr\"odinger--Korteweg-de Vries equations}\label{chap 4to}
\markboth{HIGHER ORDER SYSTEM OF NLS--KdV EQUATIONS}{HIGHER ORDER SYSTEM OF NLS--KdV EQUATIONS}
\textbf{Publication.} The results presented in this chapter correspond to the content of the submitted paper \cite{acf}. 

\

In this chapter we will analyze the existence of solutions of a higher order system coming from \eqref{NLS-KdV}. More
precisely, we consider the following system
\begin{equation}\label{NLS-KdV edu}\tag{S2}
\left\{\begin{array}{rcl}
if_t -  f_{xxxx} + |f|^2f+ \b fg & = & 0\\
g_t-g_{xxxxx}+|g|g_x+\frac12\beta(|f|^2)_x& = & 0
\end{array}\right.
\end{equation}
where $f=f(x,t)\in \mathbb{C}$ while $g=g(x,t)\in \mathbb{R}$, and $\b\in \mathbb{R}$ is the  coupling coefficient.
We look for ``standing-traveling" wave
solutions of the form
$$
(f(x,t),g(x,t))=\left(e^{i\l_1 t} u(x),v(x-\l_2t)\right),
$$
where $u,v$ are real functions and $\l_1,\l_2$ real positive  parameters.  Performing the change of variable we have
\begin{equation}
\begin{array}{rcl}
if_t(x,t)&=&-\l_1e^{i\l_1 t} u(x),\\&\\
f_{xxxx}(x,t)&=&e^{i\l_1 t} u^{(iv}(x),\\&\\
|f(x,t)|^2f(x,t)&=&e^{i\l_1 t} (u(x))^3,\\&\\
\beta f(x,t)g(x,t)&=&\beta e^{i\l_1 t} u(x)v(x-\l_2t),\\&\\
g_{xxxx}(x,t)&=&v^{(iv}(x-\l_2t),\\&\\
|g(x,t)|g(x,t)&=&|v(x-\l_2t)|v(x-\l_2t),\\&\\
|f(x,t)|^2&=&(u(x))^2,
\end{array} 
\end{equation} 
where $w^{(iv}$ denotes the fourth derivative of $w$. Then, the first equation of \eqref{NLS-KdV edu} takes the form 
\begin{equation}\label{convert la prmera} 
u^{(iv} +\l_1u  = u^3+\beta uv.
\end{equation}
On the other hand, the second equation of \eqref{NLS-KdV edu} can be written as
\[g_{xxxxx}+\l_2g_x=\frac{1}{2}(|g|g)_x+\frac12\beta(|f|^2)_x, \]
where integrating we obtain
\[g_{xxxx}+\l_2g=\frac{1}{2}|g|g+\frac12\beta|f|^2 ,\]
which is equivalent to
\begin{equation}\label{convert la segunda}
v^{(iv}+\l_2v  =\frac{1}{2}|v|v+\frac{1}{2}\beta u^2.
\end{equation}
We arrive at the fourth-order stationary system
 \be \left\lbrace
\begin{array}{ccl}\label{eq:NLS-KdV222}
u^{(iv} +\l_1u & =& u^3+\beta uv\\
v^{(iv}+\l_2v & =& \frac{1}{2}|v|v+\frac{1}{2}\beta u^2. 
\end{array}
 \right.
\ee 
 Although
system \eqref{NLS-KdV edu} only makes physical sense in dimension $N=1$, passing
to the stationary system \eqref{eq:NLS-KdV222}, it makes sense to
consider it in higher dimensional cases, as the following,
\be
\left\lbrace
\begin{array}{ccl}\label{eq:NLS-KdV2}
\Delta^2u +\l_1u & =& u^3+\beta uv\\
\Delta^2v+\l_2v & =& \frac{1}{2}|v|v+\frac{1}{2}\beta u^2,
\end{array}
 \right.
\ee
 where $u,v\in W^{2,2}(\R^N)$, $1\le N\le 7$, $\l_j>0$ with
$j=1,2$ and $\beta>0$ is the coupling parameter.

As we shall see, system \eqref{eq:NLS-KdV2} has a non-negative semi-trivial solution $\bv_2=(0,V_2)$ where $V_2$ is a radially symmetric ground state of the equation $\Delta^2v+\l_2v = \frac{1}{2}|v|v$. Then, in order
to find non-negative bound or ground state solutions, we need to
check that they are different from $\bv_2$.


\section{Functional setting and notation}\label{sec:2}\

Let us redefine $E$ as the Sobolev space $W^{2,2}(\R^N)$ then, we define the following equivalent norms and inner products in $E$ as follows

\[\bra u,v\ket_j=\int_{\R^N}\Delta u\cdot\Delta v\ dx+\l_j\int_{\R^N}uv\,dx,\qquad\|u\|_j^2=\bra u,u\ket_j,\qquad j=1,2.\]
Let us define the product Sobolev space $\E:=E\times E$ and we will take the following inner product in $\E$,
\begin{equation}\label{inner product}
\bra\bu_1,\bu_2\ket=\bra u_1,u_2\ket_1+\bra v_1,v_2 \ket_2,
\end{equation}
which induces the following norm
$$
\|\bu\|=\sqrt{\|u\|_1^2+\|v\|_2^2}.
$$
We denote by $H$ the space of radially symmetric functions in $E$, and $\h=H\times H$. The functional associated to both equations in \eqref{eq:NLS-KdV2}, without the coupling term, take the forms 
$$
I_1(u)=\frac 12 \|u\|_1^2 -\frac 14\, \int_{\R^N} u^4dx,\qquad I_2(v)=\frac 12 \|v\|_2^2 -\frac 16\, \int_{\R^N} |v|^3dx,\qquad u,\, v\in E,
$$
respectively and, hence, the complete energy functional associated to system \eqref{eq:NLS-KdV2} is
\begin{equation}\label{Phi2-2}
J (\bu)= I_1(u)+I_2(v)- \frac 12\b \int_{\R^N} u^2v\,dx,\qquad \bu\in \E.
\end{equation}
Notice that $I_1,I_2$ and $J$ are differentiable on $\E$ and their differentials at $\bu=(u,v)\in \E$ are given by
\begin{align}
\displaystyle dI_1(u)[h_1]&=\int_{\R^N} ({\Delta u\cdot \Delta h_1}+\l_1 uh_1)\,dx-\int_{\R^N}u^3h_1dx\label{di1-2},\\
\displaystyle  dI_2(v)[h_2]&=\displaystyle  \int_{\R^N} (\Delta v\cdot \Delta h_2+\l_2 vh_2)\,dx-\frac 12\int_{\R^N} |v|vh_2dx,\label{di2-2}
\end{align}
and
\begin{align}\label{Dphi-2}
\begin{split}
dJ(\bu)[\bh]
=&dI_1(u)[h_1]-\b \int_{\R^N} uvh_1\,dx
+dI_2(v)[h_2]-\frac 12\b \int_{\R^N} u^2h_2\,dx.
\end{split}
\end{align}
We set 
\begin{equation}\label{J1,J2-2}
P_1(u)=dI_1(u)[u],\qquad P_2(v)=dI_2(v)[v],
\end{equation}
and
\begin{align}\label{Psi-2}
\begin{split}
G(\bu)=&dJ(\bu)[\bu]=P_1(u)+P_2(v)-\frac 32\b \int_{\R^N} u^2v\,dx\\
=&\|\bu\|^2-\int_{\R^N} u^4dx-\frac 12\int_{\R^N} |v|^3dx-\frac 32\b \int_{\R^N} u^2v\,dx.
\end{split}
\end{align}

\section{Natural constraints and key results}\label{sec:3}\

We can easily see that the functional $J$ is not bounded below
on $\mathbb{E}$. Thus, we are going to work on the so called Nehari
manifold, which we will prove that it is a natural constraint for the functional $J$,
and even more the functional constrained to the Nehari manifold is
bounded below. Let us define the following radial Nehari manifolds 
\begin{equation}
\cN =\{ \bu\in \h\setminus\{\bo\}: G(\bu)=0\}, 
\end{equation}
and the full Nehari manifold 
\begin{equation}\label{full nehari}
\cM =\{ \bu\in \E\setminus\{\bo\}: G(\bu)=0\}.
\end{equation}

\begin{remark}
All the properties we are going to prove in this section are satisfied for both $\cM$ and $\cN$, but the Palais-Smale  condition in Lemma \ref{Lemma PS},
 is only satisfied for $J$ on $\cN$, see Theorem \ref{radial compact embe}.
 To be short, we are going to demonstrate the following properties only for $\cN$.
\end{remark}
\begin{Proposition}
The Nehari manifold $\cN$ is a natural constraint for the functional $J$.
\end{Proposition}
\begin{pfn}
 For all $\bu,\bh\in \E$, we have 
\begin{align}\label{eq:DG-2}
\begin{split}
dG(\bu)[\bh]=& 2\bra\bu,\bh\ket-4\int_{R^N} u^3h_1dx-\frac 32\int_{R^N} |v|vh_2dx\\&-3\b \int_{R^N} uvh_1\,dx-\frac 32\b \int_{R^N} u^2h_2\,dx.
\end{split}
\end{align}
In particular, if $\bh=\bu\in\cN$, we can combine the above expression with the fact $G(\bu)=0$ and we obtain 
\begin{equation}\label{eq:delta psi restringida a N}
dG(\bu)[\bu]=dG(\bu)[\bu]-3G(\bu)= - \|\bu \|^2-\int_{R^N} u^4\,dx<0,\quad\forall\, \bu\in \cN.
\end{equation}
Then, $\cN$ is a locally smooth manifold near any point $\bu\neq  0$ with $G(\bu)=0$.  Now we are going to prove that $\bo$ is away from the Nehari manifold using the second derivative of the functional $J$. We can see that 
$$dJ(\bo)[\bh]=0.$$
At this point we would like to indicate that $\bo$ is a critical point of $J$.
Moreover,
\begin{align}
\displaystyle d^2I_1(u)[h_1][k_1]&=\int_{R^N} ({\Delta h_1\cdot \Delta k_1}+\l_j h_1k_1)\,dx,-3\int_{R^N} u^2h_1k_1\,dx\label{ddi1-2}\\
\displaystyle  
d^2I_2(v)[h_2][k_2]&=\displaystyle  \int_{R^N} ({\Delta h_2\cdot \Delta k_2}+\l_j h_2k_2)\,dx-\int_{R^N} |v|h_2k_2\,dx,\label{ddi2-2}
\end{align}
and
\begin{align}\label{DDphi-2}
\begin{split}
d^2J(\bu)[\bh][\bk]=&d^2I_1(u)[h_1][k_1]+d^2I_2(v)[h_2][k_2]\\
&-\b \int_{R^N} vh_1k_1\,dx-\b \int_{R^N} uh_2k_1\,dx-\b \int_{R^N} uh_1k_2\,dx.
\end{split}
\end{align}
Then, 
$$d^2J(\bo)[\bh]^2=\|\bh\|^2,$$
is positive definite, so that we infer that $\bo$ is a strict minimum for $J$. Consequently, $\bo$ is an isolated point of the set of all critical points of $J$, thus $\bo$ is away from $\cN$.

Therefore, we have that $\cN$ is a smooth complete manifold of codimension one, and there exists a constant $\rho>0$ such that
\be\label{eq:away from zero}
\|\bu\|^2>\rho,\qquad\forall\bu\in\cN.
\ee
Furthermore, \eqref{eq:delta psi restringida a N} and \eqref{eq:away from zero} imply that $\cN$ is a Natural constraint of $J$ by the Theorem \ref{TH:natural constrain}, i.e., $\bu\in \h\setminus\{\bo\}$ is a critical point of $J$ if and only if $\bu$ is a critical point of $J$ constrained on $\cN$.
\end{pfn}

\begin{remarks}\label{re:functionl constrain y PS condition}\ 

\begin{enumerate}
\item[(i)] The functional constrained on $\cN$ takes the form
\begin{equation}\label{eq:Functional constrain N2}
J|_{\cN}(\bu)= \frac 16\|\bu\|^2+\frac{1}{12}\int_{\R^N} u^4\,dx.
\end{equation}
Even more, using \eqref{eq:away from zero} and \eqref{eq:Functional constrain N2},
\begin{equation}\label{eq:Functional constrain N}
J(\bu)>\frac{1}{6}\rho,\qquad \forall\,\bu\in\cN.
\end{equation}
Therefore, $J$ is bounded from below on $\cN$, so we can try to
minimize it on the Nehari manifold.
\item[(ii)] Theorem \ref{cont emb} give us in particular the continuous embedding
\[
E\hookrightarrow L^q(\R^N),\qquad \text{with } 2\le q \leq 4<2^*,
\] 
for $N\leq 7$, since the critical exponent in this case is given by
\[
2^*=\left\lbrace
\begin{array}{ccl}
\frac{2N}{N-4} & \text{if} & N\ge 5 \\ 
\infty & \text{if} & N=1,2,3,4.
\end{array} 
 \right. 
\]
Therefore, $J$ is well defined for $N\leq 7$.
\item[(iii)]
Concerning the Palais-Smale condition for $N\geq 2$ it follows due to the compact embedding. By Theorem \ref{radial compact embe} we obtain:
\[
H\hookrightarrow\hookrightarrow L^q(\R^N),\qquad \text{with }2<q< 2^*.
\] 
In the one-dimensional case we have no compact embedding, but we can avoid this problem proceeding in the same way as in the previous chapter.
\end{enumerate}
\end{remarks}

System  \eqref{eq:NLS-KdV2} only admits one kind of semi-trivial
solutions of the form $(0, v)$. Indeed, if we suppose  $v=0$, the
second equation in \eqref{eq:NLS-KdV2} gives us that $u=0$ as well. Thus, let
us take $\bv_2=(0, V_2)$, where $V_2$ is a 
radially symmetric  ground state solution of the equation
$\Delta^2v+\lambda_2v = \frac{1}{2}|v|v$. In particular, we can
assume that $V_2$ is positive because in other case, taking $|V_2|$,
it has the same energy. Moreover, if we denote by $V$ a positive radially
symmetric ground state solution of the equation $\Delta^2v+v =
\frac{1}{2}|v|v$, then, after some rescaling $V_2$ can be defined by 
\be\label{elemento reescalado} V_2(x)=\lambda_2V(\sqrt[4]{\lambda_2}x). 
\ee
 As a
consequence, $\bv_2=(0,V_2)$ is a non-negative semi-trivial solution
of \eqref{eq:NLS-KdV2}, independently of the value of $\beta$.

Furthermore, since the only semi-trivial solution comes from the second equation, as seen just above, we define the Nehari manifold corresponding to the second equation
\[
\cN_2=\left\lbrace 
v\in H\setminus\{0\}:P_2(v)=0
\right\rbrace. \]
Moreover, if we consider the tangent spaces 
\[
T_{\bv_2}\cN=\left\lbrace 
\bh\in \h:dG(\bv_2)[\bh]=0
\right\rbrace\quad\text{and}\quad T_{V_2}\cN_2=\left\lbrace 
h\in H:dP_2(V_2)[h]=0
\right\rbrace,
\]
we have
\begin{equation}\label{eq:tang2}
\bh=(h_1,h_2)\in T_{\bv_2} \cN  \Longleftrightarrow h_2\in T_{V_2} \cN_2.
\end{equation}
The proof of the above equivalence is identical to the proof of Lemma \ref{lemma tangent}. 

In the following result we establish the
character of $\bv_2$ in terms of the size of the coupling parameter.

\begin{Proposition}\label{Prop:fund}
There exists $\L'>0$ such that:
\begin{itemize}
\item[(i)] if $\beta<\L'$, then $\bv_2$ is a strict local minimum of $ J$ constrained on $\cN$,
\item[(ii)] if $\beta>\L'$, then $\bv_2$ is a saddle point of $ J$ constrained on $\cN$. Moreover,
\be\label{eq:infimo B>L}
\inf\limits_\cN J<J(\bv_2).
\ee
\end{itemize}
\end{Proposition}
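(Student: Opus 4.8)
The plan is to copy, almost verbatim, the proof of Proposition~\ref{lem:gl3}, since the biharmonic structure enters only through the bilinear forms while the geometric bookkeeping is unchanged. First I would introduce the threshold
\be
\L'=\inf_{\varphi\in H\setminus\{0\}}\frac{\|\varphi\|_1^2}{\int_{\R^N} V_2\,\varphi^2\,dx},
\ee
and record that $0<\L'<\infty$: the quadratic form $\varphi\mapsto\int_{\R^N}V_2\varphi^2$ is bounded on $H$ by the Sobolev inequalities, so $\L'>0$, whereas the choice $\varphi=V_2$ shows $\L'<\infty$ (in particular Part (ii) is non-vacuous). The key preliminary step is the constrained Hessian of $J$ at $\bv_2$. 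Since $\bv_2=(0,V_2)$ is a free critical point of $J$ for every $\beta$ (and $V_2$ a free critical point of $I_2$ lying on $\cN_2$), the constrained second derivatives coincide with the free ones on the respective tangent spaces; evaluating \eqref{DDphi-2} at $\bu=\bv_2$ the coupling terms carrying the factor $u$ vanish, and $d^2I_1(0)[h_1]^2=\|h_1\|_1^2$, so that for $\bh=(h_1,h_2)\in T_{\bv_2}\cN$ one obtains
\be\label{eq:HessV2}
d^2_{\cN}J(\bv_2)[\bh]^2=\|h_1\|_1^2+d^2_{\cN_2}I_2(V_2)[h_2]^2-\b\int_{\R^N}V_2\,h_1^2\,dx.
\ee

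For Part (i), with $\b<\L'$, I would use the equivalence \eqref{eq:tang2} to force $h_2\in T_{V_2}\cN_2$, invoke the coercivity of $d^2_{\cN_2}I_2(V_2)$ on $T_{V_2}\cN_2$, namely the existence of $c>0$ with $d^2_{\cN_2}I_2(V_2)[h_2]^2\ge c\|h_2\|_2^2$, exactly as in the second-order case, and combine it with $\int_{\R^N}V_2h_1^2\le\|h_1\|_1^2/\L'$ to bound the right-hand side of \eqref{eq:HessV2} below by $(1-\b/\L')\|h_1\|_1^2+c\|h_2\|_2^2\ge\tilde c\,\|\bh\|^2$ for some $\tilde c>0$. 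Since $J\in\mathcal C^2$, a Taylor expansion with uniform remainder along curves in $\cN$ issuing from $\bv_2$ then promotes this coercive positivity to the conclusion that $\bv_2$ is a strict local minimum of $J$ on $\cN$.

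For Part (ii), with $\b>\L'$, the definition of the infimum furnishes $\wt h\in H\setminus\{0\}$ with $\L'<\frac{\|\wt h\|_1^2}{\int_{\R^N}V_2\wt h^2\,dx}<\b$; then $\bh_1=(\wt h,0)\in T_{\bv_2}\cN$ by \eqref{eq:tang2}, and \eqref{eq:HessV2} gives $d^2_{\cN}J(\bv_2)[\bh_1]^2=\|\wt h\|_1^2-\b\int_{\R^N}V_2\wt h^2\,dx<0$. Conversely, picking any $h_2\in T_{V_2}\cN_2\setminus\{0\}$ (such $h_2$ exist, $\cN_2$ being a manifold of codimension one in $H$) and setting $\bh_2=(0,h_2)\in T_{\bv_2}\cN$ yields $d^2_{\cN}J(\bv_2)[\bh_2]^2=d^2_{\cN_2}I_2(V_2)[h_2]^2\ge c\|h_2\|_2^2>0$. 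Hence the constrained Hessian at $\bv_2$ is indefinite, so $\bv_2$ is a saddle point of $J$ on $\cN$; moving from $\bv_2$ inside $\cN$ along the negative direction $\bh_1$ then lowers $J$ below $J(\bv_2)$, which gives \eqref{eq:infimo B>L}.

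The step I expect to be the real obstacle is the coercivity estimate $d^2_{\cN_2}I_2(V_2)[h_2]^2\ge c\|h_2\|_2^2$, i.e.\ the non-degeneracy (in the radial class) of the biharmonic ground state $V_2$ of $\Delta^2v+\l_2v=\tfrac12|v|v$; fourth-order ground states are considerably more delicate in this respect than their second-order counterparts, and this is where the genuinely new work lies. Along the way one should also check that $I_2\in\mathcal C^2$ despite the only $\mathcal C^1$ nonlinearity: this holds because $v\mapsto|v|$ is continuous between the relevant Lebesgue spaces (which embed $E$ by Theorem~\ref{cont emb} for $N\le7$), so that $d^2I_2(v)[h][k]=\bra h,k\ket_2-\int_{\R^N}|v|hk\,dx$ depends continuously on $v$ and Theorem~\ref{TH:natural constrain} together with the Hessian manipulations above are legitimate. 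Everything else reduces to the same bilinear bookkeeping already carried out for Proposition~\ref{lem:gl3}.
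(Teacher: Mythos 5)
Your proposal follows essentially the same route as the paper's proof: the same definition of $\L'$, the same constrained Hessian identity at $\bv_2$, the reduction via \eqref{eq:tang2} to $h_2\in T_{V_2}\cN_2$, the coercivity bound $d^2_{\cN_2}I_2(V_2)[h_2]^2\ge c\|h_2\|_2^2$ leading to $(1-\b/\L')\|h_1\|_1^2+c\|h_2\|_2^2$, and the identical choice of test directions $(\wt h,0)$ and $(0,h_2)$ for the saddle point and for \eqref{eq:infimo B>L}. The extra points you flag (positivity and finiteness of $\L'$, the Taylor-expansion step, the $\mathcal{C}^2$ regularity of $I_2$, and the non-degeneracy of the biharmonic ground state behind the coercivity constant $c$) are refinements of steps the paper simply asserts, not a different argument.
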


\begin{pfn}
\begin{itemize}
\item[(i)] We define
\be\label{Lambda}
\L'=\inf\limits_{\varphi\in H\setminus\{0\}}\frac{\|\varphi\|^2_1}{\int_{\R^N}V_2\varphi^2}.
\ee
For $\bh\in  T_{\bv_2}\cN$ one has that
\be\label{eq:seg deriv}
d^2_\cN J(\bv_2)[\bh]^2=\|h_1\|^2_1+d^2_{\cN_2}I_2(V_2)[h_2]^2-\beta\int_{\R^N}V_2h_1^2.
\ee
Since $\bh=(h_1,h_2)\in T_{\bv_2}\cN$, then, $h_2\in T_{V_2} \cN_2$ thanks to \eqref{eq:tang2}. Thus, since $V_2$ is a minimum of $I_2$ on $\cN_2$, there exists a constant $c>0$ so that
\be\label{eq:minimo-pos}
 d^2_{\cN_2}I_2 (V_2)[h_2]^2\ge c\|h_2\|_2^2.
\ee
From \eqref{Lambda} we obtain that
$$\int_{\R^N}V_2h_1^2\leq \|h_1\|^2_1/\L',\qquad\forall\,h_1\in H.$$
Thus, substituting both previous inequalities in \eqref{eq:seg deriv} we arrive at
\be\label{eq:desig seg der}
d^2_\cN J(\bv_2)[\bh]^2\geq\left(1-\frac{\beta}{\L'} \right) \|h_1\|_1^2+c\|h_2\|_2^2.
\ee
Moreover, since $\beta<\L'$ we have that $ d^2_\cN J(\bv_2)[\bh]^2$ is positive definite. Therefore, $\bv_2$ is a strict local minimum of $J$ on $\cN$.

\item[(ii)]
Since $\beta >\L'$, there exists $\wt{h}\in H$ such that
$$
\L'< \frac{\|\wt{h}\|_1^2}{\int_{\R^N} V_2\wt{h}^2dx}<\b,
$$  
and, using the equivalence \eqref{eq:tang2}, we obtain $\bh_1=(\wt{h},0)\in T_{\bv_2}\cN$ and
$$
d^2_\cN J(\bv_2)[\bh_1]^2 =\|\wt{h}\|_1^2 -\b\int_{\R^N} V_2 \wt{h}^2dx<0.$$
On the other hand, taking $h_2\in T_{V_2}\cN_2$ not equal to zero, then $\bh_2=(0,h_2)\in T_{\bv_2}\cN$ and
$$d^2_\cN J(\bv_2)[\bh_2]^2=d^2_{\cN_2}I_2 (V_2)[h_2]^2\ge c_2\|h_2\|_2^2>0.
$$
Consequently, this is sufficient to conclude that $\bv_2$ is a saddle point of $J$ on $\cN$ and obviously inequality \eqref{eq:infimo B>L} holds.
\end{itemize}
\end{pfn}

To conclude this section we also proof that the functional $J$ satisfies the Palais-Smale condition in $\cN$ on the appropriate dimensions.

\begin{Lemma}\label{Lemma PS}
Assume that $2\le N\leq 7$, then $J$ satisfies the Palais-Smale  condition on $\cN$.
\end{Lemma}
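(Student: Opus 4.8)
The plan is to show that any Palais--Smale sequence for $J$ on $\cN$ has a strongly convergent subsequence, exploiting the compact embedding $H\hookrightarrow\hookrightarrow L^q(\R^N)$ for $2<q<2^*$ valid for $2\le N\le 7$ (Remark \ref{re:functionl constrain y PS condition}-$(iii)$, based on Theorem \ref{radial compact embe}). First I would take a sequence $\bu_n=(u_n,v_n)\in\cN$ with $J(\bu_n)\to c$ and $\n_{\cN}J(\bu_n)\to 0$. The formula \eqref{eq:Functional constrain N2}, namely $J|_{\cN}(\bu)=\frac16\|\bu\|^2+\frac1{12}\int_{\R^N}u^4\,dx$, immediately gives that $\{\bu_n\}$ is bounded in $\h$. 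By reflexivity (Theorem \ref{reflex}) there is a relabelled subsequence with $\bu_n\wk\bu=(u,v)$ in $\h$, and by the compact embedding $u_n\to u$, $v_n\to v$ strongly in $L^q(\R^N)$ for every $q\in(2,2^*)$; in particular in $L^3$ and $L^4$.

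Next I would promote the constrained Palais--Smale condition to an unconstrained one. Exactly as in Step 1 of the proof of Theorem \ref{th:1}, from $\n_{\cN}J(\bu_n)=J'(\bu_n)-\l_n G'(\bu_n)$, pairing with $\bu_n$ and using $G(\bu_n)=0$, $\langle G'(\bu_n),\bu_n\rangle=dG(\bu_n)[\bu_n]<-\rho$ by \eqref{eq:delta psi restringida a N}--\eqref{eq:away from zero}, together with $\|G'(\bu_n)\|$ bounded (this bound follows from \eqref{eq:DG-2}, Hölder, and the continuous embeddings, just as in the second-order case), one deduces $\l_n\to0$ and hence $J'(\bu_n)\to 0$ in $\E'$. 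So $\bu_n$ is a Palais--Smale sequence for the free functional $J$ on $\h$.

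Then I would pass to the limit in $dJ(\bu_n)[\bh]\to 0$. The linear/quadratic parts converge by weak convergence, and the nonlinear terms $\int u_n^3 h_1$, $\int |v_n|v_n h_2$, $\int u_n v_n h_1$, $\int u_n^2 h_2$ converge to the corresponding expressions in $\bu$ because of the strong $L^3$ and $L^4$ convergence (the maps $u\mapsto u^3$, $v\mapsto|v|v$, $(u,v)\mapsto uv$, $u\mapsto u^2$ are continuous from $L^4$ or $L^3$ into the relevant dual Lebesgue spaces). Hence $J'(\bu)=0$. To upgrade to strong convergence I would write $\|\bu_n-\bu\|^2=\langle J'(\bu_n)-J'(\bu),\bu_n-\bu\rangle + (\text{nonlinear remainder})$: more concretely, using $\|\bu_n\|^2=\langle J'(\bu_n),\bu_n\rangle + \int u_n^4+\tfrac12\int|v_n|^3+\tfrac32\beta\int u_n^2 v_n$, the right-hand side converges (first term to $0$, the rest to the analogous quantities for $\bu$ by strong $L^3,L^4$ convergence), so $\|\bu_n\|^2\to\|\bu\|^2=\langle J'(\bu),\bu\rangle+\int u^4+\cdots$. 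Combined with $\bu_n\wk\bu$ in the Hilbert space $\h$, this norm convergence forces $\bu_n\to\bu$ strongly.

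The main obstacle is the handling of the nonlinear terms' convergence and, above all, the promotion from the constrained to the unconstrained Palais--Smale condition (the $\l_n\to0$ argument), which requires the uniform bound on $\|G'(\bu_n)\|$; this is where one must be careful to use the continuous embeddings $E\hookrightarrow L^q(\R^N)$, $2\le q\le 4$ (Remark \ref{re:functionl constrain y PS condition}-$(ii)$), valid precisely for $N\le 7$. Everything else is routine once the compact embedding is available, which is exactly why the statement is restricted to $2\le N\le 7$ — the one-dimensional case lacking compactness must be treated separately by the measure-lemma argument mentioned in Remark \ref{re:functionl constrain y PS condition}-$(iii)$.
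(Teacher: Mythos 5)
Your proposal is correct and follows essentially the same route as the paper: boundedness via $J|_{\cN}(\bu)=\frac16\|\bu\|^2+\frac1{12}\int u^4$, weak convergence plus the compact radial embedding for the nonlinear terms, the $\l_n\to 0$ and $\|G'(\bu_n)\|$-bounded argument to pass from the constrained to the free Palais--Smale condition, and then strong convergence. The only difference is cosmetic: you spell out the final step (norm convergence $\|\bu_n\|^2\to\|\bu\|^2$ combined with weak convergence in the Hilbert space $\h$, and $J'(\bu)=0$ so that the limit lies in $\cN$), which the paper states more tersely via $J'(\bu_n)[\bu_0]\to 0$.
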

\begin{pfn}
Let $\bu_n=(u_n,v_n)\in \cN$ be a Palais-Smale sequence such that 
$$
J(\bu_n)\to c>0 \quad \text{and}\quad \nabla_\cN J(\bu_n)\to 0,\quad \text{as}\quad n\to\infty.
$$
 From \eqref{eq:Functional constrain N2} it follows that $\bu_n$ is bounded and, due to the reflexivity of $W^{2,2}(\R^N)$, we have a convergent subsequence $\bu_n\rightharpoonup \bu_0$  (relabelling). Since $H$ is compactly embedded into $L^q(\R^N)$ with $2<q<4+\frac{2}{3}$ while $2\le N\leq 7$ (see Remark \ref{re:functionl constrain y PS condition}-(iii)), we infer that
\[
\int_{\R^N}u_n^4\to\int_{\R^N}u_0^4,
\qquad \int_{\R^N}|v_n|^3\to\int_{\R^N}|v_0|^3,
\qquad \int_{\R^N}u_n^2v_n\to\int_{\R^N}u_0^2v_0.
\]
Moreover, using the fact that $\bu_n\in\cN$ and \eqref{eq:away from zero}, we have that
\begin{align*}
\|\bu_n\|^2=\int_{\R^N}& u_n^4\,dx +\frac{1}{2} \int_{\R^N} |v_n|^3 \,dx+\frac{3}{2}\beta\int_{\R^N} u_n^2v_n\,dx\to\\&\int_{\R^N} u_0^4\,dx +\frac{1}{2} \int_{\R^N} |v_0|^3\,dx +\frac{3}{2}\beta\int_{\R^N} u^2_0v_0\,dx\geq\rho,
\end{align*}
which implies that $\bu_0\neq 0$. We know that
\begin{equation}\label{four2}
\nabla_\cN J(\bu_n)=J'(\bu_n)-\l_n G'(\bu_n)\to 0,
\end{equation}
then, evaluating the above functional in the direction $\bu_n$, we have
\[\big|\bra\nabla_\cN J(\bu_n),\bu_n\ket\big|\leq \|\nabla_\cN J(\bu_n)\|\|\bu_n\|\to 0\qquad\text{as}\quad n\to\infty.\] On the other hand $\bra J'(\bu_n),\bu_n\ket=G(\bu_n)=0$ since $\bu_n\in \cN$ and, using \eqref{eq:delta psi restringida a N} jointly with \eqref{eq:away from zero}, we obtain
\[
\rho \big|\l_n\big|\leq\big|\l_n\bra G'(\bu_n),\bu_n\ket\big|\to 0,
\]
then, $\l_n\to 0$ as $n\to\infty$.

Now, we will show that $\|G'(\bu_n)\|$ is bounded in a similar way as we saw in the previous chapter. We can write the norm of a linear functional as
\begin{equation}\label{eq:normsup}
\|G'(\bu_n)\|=\|dG(\bu_n)\|=\sup\limits_{\substack{\bh\in \E\\ \|\bh\|=1}}\big|dG(\bu_n)[\bh]\big|.
\end{equation}
Using the triangular inequality in \eqref{eq:DG-2} we obtain
\begin{align*}
\big|dG(\bu_n)[\bh]\big|\leq & 2|\bra\bu_n,\bh\ket|+4\|u^3_nh_1\|_{L^1}+\frac 32\|v_n^2h_2\|_{L^1}+3\b \| u_nv_nh_1\|_{L^1}+\frac 32\b \|u_n^2h_2\|_{L^1},
\end{align*}
thus, applying the H\"older inequality
\begin{align*}
\big|dG(\bu_n)[\bh]\big|\leq & 2\|\bu_n\|\|\bh\|+4\|u_n^3\|_{L^{4/3}}\|h_1\|_{L^4}+\frac 32\|v_n^2\|_{L^2}\|h_2\|_{L^2}\\&+3\b \|u_n\|_{L^3}\|v_n\|_{L^3}\|h_1\|_{L^3}+\frac 32\b\|u_n^2\|_{L^2}\|h_2\|_{L^2}\\
\leq & 2\|\bu_n\|\|\bh\|+4\|u_n\|^{3}_{L^{4}}\|h_1\|_{L^4}+\frac 32\|v_n\|^2_{L^4}\|h_2\|_{L^2}\\&+3\b \|u_n\|_{L^3}\|v_n\|_{L^3}\|h_1\|_{L^3}+\frac 32\b\|u_n\|^2_{L^4}\|h_2\|_{L^2}.
\end{align*}
Note that all the above norms are well defined due to the continuous embedding mentioned in Remark \ref{re:functionl constrain y PS condition}-(ii). Moreover, by the same reason, there exist constant $C_1,C_2,C_3,C_4$ such that 
\begin{align*}
\big|dG(\bu_n)[\bh]\big|\leq & 2\|\bu_n\|\|\bh\|+C_1\|u_n\|^3_1\|h_1\|_1+C_2\|v_n\|_2^2\|h_2\|_2\\&+C_3 \|u_n\|_1\|v_n\|_2\|h_1\|_1+C_4\|u_n\|_1^2\|h_2\|_2,
\end{align*}
where, knowing that 
$$
\|\bh\|^2=\|h_1\|_1^2+\|h_2\|_2^2=1\quad\text{and}\quad \|\bu_n\|^2=\|u_n\|_1^2+\|v_n\|_2^2,
$$
we arrive at 
\begin{align*}
\big|dG(\bu_n)[\bh]\big|\leq  2\|\bu_n\|+C_1\|\bu_n\|^3+\left(C_2+C_3 +C_4\right)\|\bu_n\|^2\quad\text{with}\quad\|\bh\|=1.
\end{align*}
The right part in the above inequality is polynomially dependent of $\|\bu_n\|$ and it is clearly bounded since $\bu_n$ is bounded. From \eqref{eq:normsup} we obtain that there exists a constant $C>0$ such that $\|G'(\bu_n)\|\leq C<+\infty$, for all $n\in\N$. Then,  taking into account that $\nabla_\cN J(\bu_n)$ and $G'(\bu_n)$ are orthogonal and the fact $\l_n\to 0$, we deduce from \eqref{four2} that
$$
\|J'(\bu_n)\|=\|\nabla_\cN J(\bu_n)\|+|\l_n| \|G'(\bu_n)\|\to 0,\qquad\text{as}\quad n\to\infty.
$$
To finish the proof, since $J'(\bu_n)[\bu_0]\to 0$ as $n\to \infty$, one can conclude that $\bu_n\to \bu_0$ strongly in $\h$. Moreover, $\bu_0$ is a critical point of $J$, hence, $\bu_0\in\cN$ since $\cN$ is a natural constraint.
\end{pfn}

\section{Existence results}\label{sec:4}\

This section is divided into two subsections depending on the dimension of the problem \eqref{eq:NLS-KdV2}.

\subsection{High-dimensional case, $2\le N\le 7$}\label{sec:4.1}
In this subsection we will see that the infimum of $J$ constrained on the radial Nehari manifold $\cN$, is attained under appropriate
parameter conditions.
We also prove the existence of a mountain pass critical point.

\begin{Theorem}\label{th:minimo1} Suppose $\beta>\L'$ and $2\leq N\leq 7$. The infimum of $J$ on $\cN$ is attained at some point $\wt{\bu}\geq \bo$
with $J(\wt{\bu})<J(\bv_2)$ and both components $\wt{u},\wt{v}\not \equiv 0$.
\end{Theorem}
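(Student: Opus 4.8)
The plan is to run the standard constrained-minimization argument on the radial Nehari manifold $\cN$, exploiting the compactness available in dimensions $2\le N\le 7$ (Remark \ref{re:functionl constrain y PS condition}-$(iii)$ and Lemma \ref{Lemma PS}). First I would note that by \eqref{eq:Functional constrain N} the functional $J$ is bounded from below on $\cN$, so $m:=\inf_{\cN}J\ge \tfrac16\rho>0$ is well defined. By Ekeland's Variational Principle (Theorem \ref{principio de eke}) there is a Palais--Smale sequence $\bu_n\in\cN$ with $J(\bu_n)\to m$ and $\nabla_\cN J(\bu_n)\to 0$. By Lemma \ref{Lemma PS}, since $2\le N\le 7$, $J$ satisfies the Palais--Smale condition on $\cN$, so up to a subsequence $\bu_n\to\wt\bu$ strongly in $\h$, and because $\cN$ is a natural constraint, $\wt\bu\in\cN$ is a critical point of $J$ with $J(\wt\bu)=m$.

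Next I would upgrade $\wt\bu$ to a non-negative minimizer. Writing $\wt\bu=(\wt u,\wt v)$ and replacing it by $|\wt\bu|=(|\wt u|,|\wt v|)$, one checks directly from the formula \eqref{Psi-2} for $G$ that $G(|\wt\bu|)=G(\wt\bu)=0$ (the terms $\int u^4$, $\int|v|^3$ are unchanged and $\int u^2 v\le \int u^2|v|$, but combined with the Nehari identity a rescaling by a unique $t_0\le 1$ brings it back to $\cN$; more cleanly, the functional restricted to $\cN$ has the form \eqref{eq:Functional constrain N2}, which is manifestly nonincreasing under this operation after the rescaling), so $|\wt\bu|\in\cN$ and $J(|\wt\bu|)\le J(\wt\bu)=m$, forcing equality. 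Hence we may assume $\wt\bu\ge\bo$. Since $\wt\bu$ is a critical point of $J$, it solves \eqref{eq:NLS-KdV2}.

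The crucial point, and where the hypothesis $\beta>\L'$ enters, is to show $\wt u\not\equiv 0$ and $\wt v\not\equiv 0$, equivalently $\wt\bu\neq\bv_2$, together with the strict inequality $J(\wt\bu)<J(\bv_2)$. I would argue as follows: by Proposition \ref{Prop:fund}-$(ii)$, when $\beta>\L'$ we have $\inf_\cN J<J(\bv_2)$, i.e. $m<J(\bv_2)$; therefore $J(\wt\bu)=m<J(\bv_2)$, which immediately gives $\wt\bu\neq\bv_2$. It remains to exclude the other semitrivial possibility $\wt\bu=(u_1,0)$ with $u_1\not\equiv 0$: but the second equation of \eqref{eq:NLS-KdV2} with $v\equiv 0$ reads $0=\tfrac12\beta u^2$, forcing $u\equiv 0$, a contradiction with $\wt\bu\in\cN$ (which excludes $\bo$). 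Hence both components of $\wt\bu$ are nontrivial. Finally, since $\wt u,\wt v\ge 0$ are nontrivial solutions of the respective equations, one could further invoke the maximum principle for $\Delta^2+\lambda_j$ on each equation to upgrade to strict positivity, although the statement only requires $\wt\bu\ge\bo$ with nontrivial components.

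The main obstacle I anticipate is the verification of the Palais--Smale condition's compactness step in the borderline dimensions — ensuring the Sobolev exponents $4$ and $3$ involved in the nonlinearities $u^4$, $|v|^3$, $u^2v$ stay strictly subcritical so that Theorem \ref{radial compact embe} applies; this is exactly the content of Remark \ref{re:functionl constrain y PS condition}-$(ii)$-$(iii)$ and Lemma \ref{Lemma PS}, so it is already available. A secondary delicate point is the careful bookkeeping in the symmetrization/absolute-value argument showing $J(|\wt\bu|)\le J(\wt\bu)$ on $\cN$, but this is routine given the explicit form \eqref{eq:Functional constrain N2} of the restricted functional and the fact that the coupling integral only decreases (in absolute value) under replacing $u,v$ by $|u|,|v|$.
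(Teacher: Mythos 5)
Your overall strategy (Ekeland plus the Palais--Smale condition of Lemma \ref{Lemma PS}, Proposition \ref{Prop:fund}-$(ii)$ to get $m<J(\bv_2)$, and the absolute-value/rescaling trick on $\cN$) is the same as the paper's, but there is a genuine gap in the step where you claim both components are nontrivial. You assert that ``$\wt u\not\equiv0$ and $\wt v\not\equiv0$'' is \emph{equivalent} to $\wt\bu\neq\bv_2$, and you only rule out the two candidates $\bv_2$ and $(u_1,0)$. This misses the semi-trivial possibility $\wt\bu=(0,\wt v)$ with $\wt v\not\equiv0$ but $\wt v\neq V_2$. In this fourth-order setting there is no uniqueness statement for nontrivial (or even nonnegative radial) solutions of $\Delta^2 v+\l_2 v=\tfrac12|v|v$; the paper only fixes $V_2$ as \emph{a} radially symmetric ground state, so $\wt\bu\neq\bv_2$ does not exclude $\wt u\equiv0$. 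The missing (and needed) argument is the energy comparison used in the paper: if $\wt u\equiv0$, then $\wt v$ is a nontrivial critical point of $I_2$ and $I_2(\wt v)=J(\wt\bu)=m<J(\bv_2)=I_2(V_2)$, contradicting the ground-state (least-energy) property of $V_2$. With that one line inserted, your nontriviality step is complete; your exclusion of $(u_1,0)$ via the second equation and $\beta>0$ is fine.

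Two secondary points. First, in the nonnegativity step your bookkeeping is off: $G(|\wt\bu|)\neq G(\wt\bu)$ in general, and the heuristic that ``the coupling integral only decreases'' is in the wrong direction --- it is precisely because $\intN \wt u^{\,2}|\wt v|\,dx\ \ge\ \intN \wt u^{\,2}\wt v\,dx$ and $\beta>0$ that the unique $t>0$ with $t|\wt\bu|\in\cN$ satisfies $t\le1$ (this is the paper's computation comparing $G(t|\wt\bu|)=0$ with $G(\wt\bu)=0$); one then gets $J(t|\wt\bu|)\le J(\wt\bu)$ from \eqref{eq:Functional constrain N2}, and minimality forces equality and $t=1$, so that $|\wt\bu|$ itself lies in $\cN$. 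Second, your closing remark about invoking ``the maximum principle for $\Delta^2+\l_j$'' should be dropped: no such maximum principle is available for the bi-harmonic operator on $\R^N$, which is exactly why the theorem (and the paper) only claims $\wt\bu\ge\bo$ with nontrivial components rather than strict positivity.
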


\begin{pfn} By the Ekeland's variational principle there exists a minimizing Palais-Smale sequence $\bu_n$ in $\cN$, i.e.,
\[
J(\bu_n)\to m=\inf\limits_\cN J\quad\text{and}\quad
\nabla_\cN J(\bu_n)\to 0.
\]
Due to the Lemma \ref{Lemma PS}, there exists $\wt{\bu}\in\cN$ such that 
$$
\bu_n\to \wt{\bu}\quad \text{strongly as}\quad n\to\infty,
$$
 hence, $\wt{\bu}$ is a minimum point of $J$ on $\cN$. Moreover, taking into account the Proposition \ref{Prop:fund}-(ii), we have
\[
J(\wt{\bu})=m<J(\bv_2).
\]
Note that the second component $\wt{v}$ can not be  zero, because if that occurs then $\wt{\bu}\equiv 0$ due to the form of the second equation of \eqref{eq:NLS-KdV2}, and zero is not in $\cN$. On the other hand, if we suppose that the first component $\wt{u}\equiv 0$, then 
$$
I_2(\wt{v})=J(\wt{\bu})<J(\bv_2)=I_2(V_2),
$$
and this is a contradiction with the fact that $V_2$ is a ground state of the equation $\Delta^2v+\l_1v = \frac{1}{2}|v|v$. 

In general we can not ensure that both components of $\wt{\bu}$ are non-negative, thus, in order to obtain this fact we take $t|\wt{\bu}|\in \cN$, and we will show that
 $$
J(t|\wt{\bu}|)\leq J(\wt{\bu}). 
 $$
 Note that by \eqref{eq:Functional constrain N2} we have that
\begin{equation}\label{eq:comparar t}
J(t|\wt{\bu}|)= \frac{t^2}{ 6}\|\wt{\bu}\|^2+\frac{t^4}{12}\int_{\R^N} \wt{u}^4\,dx,
\qquad J(\wt{\bu})= \frac 16\|\wt{\bu}\|^2+\frac{1}{12}\int_{\R^N} \wt{u}^4\,dx.
\end{equation}
Hence, to prove $J(t|\wt{\bu}|)\leq J(\wt{\bu}) $ is equivalent to show that $t\leq 1$. Taking into account that $G(t|\wt{\bu}|)=0$, we find
\begin{align*}
0=G(t|\wt{\bu}|)&=t^2\|\wt{\bu}\|^2-t^4\int_{\R^N} \wt{u}^4\,dx-t^3\frac{1}{2} \int_{\R^N} |\wt{v}|^3 \,dx-t^3\frac{3}{2}\beta\int_{\R^N} \wt{u}^2|\wt{v}|\,dx,
\end{align*}
which is equivalent to,
\begin{equation}\label{eq:condicion con t}
0=\|\wt{\bu}\|^2-t^2\int_{\R^N} \wt{u}^4\,dx -t\frac{1}{2} \int_{\R^N} |\wt{v}|^3 \,dx-t\frac{3}{2}\beta\int_{\R^N} \wt{u}^2|\wt{v}|\,dx.
\end{equation}
Furthermore, since $\wt{\bu}\in \cN$ we also have,
\begin{equation}\label{eq:condicion sin t}
0=G(\wt{\bu})=\|\wt{\bu}\|^2-\int_{\R^N} \wt{u}^4\,dx -\frac{1}{2} \int_{\R^N} |\wt{v}|^3 \,dx-\frac{3}{2}\beta\int_{\R^N} \wt{u}^2\wt{v} \,dx.
\end{equation}
Now, if we suppose that $t>1$ it follows that
\begin{align*}
t^2\int_{\R^N} \wt{u}^4 \,dx+t\frac{1}{2} \int_{\R^N} |\wt{v}|^3 \,dx&+t\frac{3}{2}\beta\int_{\R^N} \wt{u}^2|\wt{v}|\,dx>\\&\int_{\R^N} \wt{u}^4 \,dx+\frac{1}{2} \int_{\R^N} |\wt{v}|^3\,dx +\frac{3}{2}\beta\int_{\R^N} \wt{u}^2|\wt{v}|\,dx.
\end{align*}
Then, thanks to \eqref{eq:condicion con t} we obtain
\begin{align}\label{desig}
0<\|\wt{\bu}\|^2-\int_{\R^N} \wt{u}^4 \,dx-\frac{1}{2} \int_{\R^N} |\wt{v}|^3\,dx -\frac{3}{2}\beta\int_{\R^N} \wt{u}^2|\wt{v}|\,dx.
\end{align}
Combining \eqref{eq:condicion sin t} with \eqref{desig} we arrive at
\[0<\frac{3}{2}\beta\int_{\R^N} \wt{u}^2\left(\wt{v}-|\wt{v}|\right)\,dx,\]
which is a contradiction. Consequently, $t\leq 1$ and therefore $J(t|\wt{\bu}|)\leq J(\wt{\bu})$. On the other hand we know that $J$ attains its minimum at $\wt{\bu}$ on $\cN$, and therefore the last inequality can not be strict. Moreover, due to  \eqref{eq:comparar t} it can not happen that $t<1$, hence, $ t=1$ and
$$J(|\wt{\bu}|)=J(\wt{\bu}).$$
Redefining $\wt{\bu}$ as $|\wt{\bu}|$ we finally have that the minimum on the Nehari manifold is attained at $\wt{\bu}\geq 0$ with non-trivial components. 
\end{pfn}

\begin{Theorem}\label{Th:lambda>L_2}
Assume  $2\leq N\leq 7$, $\beta>0$. There exists a positive constant
$\L_2'$ such that, if $\l_2>\L_2'$, the functional $J$ attains its
infimum on $\cN$ at some $\widehat{\bu}\geq\bo$ with
$J(\widehat{\bu})<J(\bv_2)$ and both
$\widehat{u},\widehat{v}\not \equiv 0$.
\end{Theorem}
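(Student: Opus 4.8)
The plan is to follow the strategy of Theorem \ref{th:ground2}, carried out in the fourth-order framework and with the compactness step of Theorem \ref{th:1} replaced by the Palais--Smale condition of Lemma \ref{Lemma PS}. First, by Ekeland's variational principle (Theorem \ref{principio de eke}) there is a minimizing Palais--Smale sequence for $J$ on $\cN$; since $2\le N\le 7$, Lemma \ref{Lemma PS} produces a subsequence converging strongly to some $\widehat{\bu}\in\cN$ realizing $\inf_{\cN}J$, which is then a critical point of $J$ because $\cN$ is a natural constraint. Granting for the moment the key estimate $J(\widehat{\bu})<J(\bv_2)$, the remaining assertions follow exactly as in the proof of Theorem \ref{th:minimo1}: if $\widehat v\equiv 0$ then the second equation of \eqref{eq:NLS-KdV2} forces $\widehat u\equiv 0$ (here $\beta>0$), impossible since $\bo\notin\cN$; if $\widehat u\equiv 0$ then $\widehat v\in\cN_2$ solves $\Delta^2 v+\l_2 v=\tfrac12|v|v$, so $J(\widehat{\bu})=I_2(\widehat v)\ge I_2(V_2)=J(\bv_2)$, contradicting the key estimate; and replacing $\widehat{\bu}$ by $|\widehat{\bu}|$, which again lies on $\cN$ after rescaling by the unique $t>0$ with $t|\widehat{\bu}|\in\cN$ and for which $t=1$ (same argument as in Theorem \ref{th:minimo1}, using $\beta>0$ and that $\widehat u\not\equiv 0$), yields $\widehat{\bu}\ge\bo$ with non-trivial components and the same energy.

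It remains to prove $\inf_{\cN}J<J(\bv_2)$ for $\l_2$ large. Since Proposition \ref{Prop:fund}-(i) only says that $\bv_2$ is a strict local minimum when $\beta<\L'$, one cannot argue with a nearby competitor; instead, mirroring Theorem \ref{th:ground2}, I would test with $\bu_1=t(V_2,V_2)$, where $t=t(\l_2)>0$ is the unique value with $G(\bu_1)=0$ (uniqueness because $s\mapsto G(s(V_2,V_2))/s^2=\|(V_2,V_2)\|^2-s^2\int_{\R^N}V_2^4-\tfrac12(1+3\beta)s\int_{\R^N}V_2^3$ is strictly decreasing on $[0,\infty)$ and positive at $s=0$). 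Using $P_2(V_2)=0$ one gets $\|(V_2,V_2)\|^2=\int_{\R^N}V_2^3+(\l_1-\l_2)\int_{\R^N}V_2^2$, and the rescaling \eqref{elemento reescalado} gives $\int_{\R^N}V_2^{\,p}=\l_2^{\,p-N/4}\int_{\R^N}V^{\,p}$; moreover $V$ solves $\Delta^2 V+V=\tfrac12|V|V$, hence $\int_{\R^N}|\Delta V|^2+\int_{\R^N}V^2=\tfrac12\int_{\R^N}V^3$, which shows $B:=\int_{\R^N}V^2/\int_{\R^N}V^3<\tfrac12$. Dividing $\|(V_2,V_2)\|^2=t^2\int_{\R^N}V_2^4+\tfrac12(1+3\beta)t\int_{\R^N}V_2^3$ by $\int_{\R^N}V_2^3$ and writing $A:=\int_{\R^N}V^4/\int_{\R^N}V^3>0$, one obtains
\[
1+\frac{\l_1-\l_2}{\l_2}\,B=\l_2 A\,t^2+\tfrac12(1+3\beta)\,t ,
\]
whose left-hand side tends to $1-B>0$ as $\l_2\to\infty$; since the two terms on the right are non-negative, this forces $t(\l_2)\to 0$ and $\l_2 t^2\to (1-B)/A$, so in particular $\l_2 t^4\to 0$.

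Finally, by \eqref{eq:Functional constrain N2} and $J(\bv_2)=I_2(V_2)=\tfrac1{12}\int_{\R^N}V_2^3$, the inequality $J(\bu_1)<J(\bv_2)$ reads $2t^2\|(V_2,V_2)\|^2+t^4\int_{\R^N}V_2^4<\int_{\R^N}V_2^3$; substituting $\|(V_2,V_2)\|^2$ from $G(\bu_1)=0$ and dividing by $\int_{\R^N}V_2^3$ reduces it to $3\l_2 A\,t^4+(1+3\beta)t^3<1$, and eliminating $\l_2 A t^2$ via the displayed constraint turns this into
\[
3t^2+3\,\frac{\l_1-\l_2}{\l_2}\,B\,t^2-\tfrac12(1+3\beta)t^3<1 .
\]
Because $t(\l_2)\to 0$ and $\tfrac{\l_1-\l_2}{\l_2}$ stays bounded, the left-hand side tends to $0$, so there is $\L_2'>0$ for which the inequality holds whenever $\l_2>\L_2'$; this gives $J(\bu_1)<J(\bv_2)$ and hence $\inf_{\cN}J<J(\bv_2)$, closing the argument. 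The main obstacle I expect is the bookkeeping of these last two steps: keeping track of the $\l_2$-powers generated by the fourth-order rescaling and checking that the corrections coming from $\l_1$ and from the cubic-in-$t$ terms are genuinely negligible — and, unlike the second-order situation of Theorem \ref{th:ground2}, there is no closed form for $V$, so every estimate must be pushed through $V\in\cN_2$ together with the asymptotics $t(\l_2)\to 0$.
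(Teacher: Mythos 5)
Your proposal is correct and follows essentially the same route as the paper: attainment of $\inf_{\cN}J$ via Ekeland's principle and Lemma \ref{Lemma PS}, nonnegativity and nontriviality of the components as in Theorem \ref{th:minimo1}, and the key comparison $\inf_{\cN}J<J(\bv_2)$ obtained by testing with $t(V_2,V_2)\in\cN$, using $P_2(V_2)=0$ and the rescaling \eqref{elemento reescalado} to reduce to an inequality that holds because $t\to 0$ and $\l_2 t^2$ stays bounded as $\l_2\to\infty$. Your only addition is that you explicitly justify these asymptotics (via $\int_{\R^N}V^2<\tfrac12\int_{\R^N}V^3$ from the Nehari identity for $V$ and the nonnegativity of the terms in the constraint), a point the paper's proof merely asserts.
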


\begin{pfn}
Using the same argument as above in the previous  theorem, we can prove that the minimum is attained at some point $\widehat{\bu}\in \cN$, but to show that $\widehat{u},\widehat{v}\not \equiv 0$ we need to ensure that $J(\widehat{\bu})<J(\bv_2)$. In Theorem \ref{th:minimo1} this fact was proved for the case $\beta>\L'$ and here we need to prove it for $0<\beta\leq\L'$. 
In this case the point $\bv_2$ is a strict local minima and this does not guarantee that $\widehat{\bu}\not \equiv\bv_2$. 

Then, to see $J(\widehat{\bu})<J(\bv_2)$ we will use a similar procedure to the one applied in \cite[Theorem 4.3]{c3} showing 
that there exists an element of the form 
$$ 
\bw=t(V_2,V_2)\in\cN\quad \hbox{such that}\quad J(\bw)<J(\bv_2),
$$
for $\lambda_2$ big enough. Notice that, thanks to the equation $G(\bw)=0$ we have that $t>0$ satisfies the following condition
\be\label{eq:condicion1}
t^2\|(V_2,V_2)\|^2-t^4\intN V_2^4\,dx -\frac{1}{2}t^3(1+3\beta)\intN V_2^3\,dx=0,
\ee
and by definition we also have 
\be\label{eq:norma doble}
\|(V_2,V_2)\|^2=2\|V_2\|_2^2+(\lambda_1-\lambda_2)\intN V_2^2\,dx.
\ee
Moreover, we know that $V_2\in\cN_2$, hence satisfies the equation $J_2(V_2)=0$, i.e.,
\be\label{eq:norma simple}
\|V_2\|_2^2-\frac{1}{2}\intN V_2^3\,dx=0.
\ee
Observe that the absolute value does not appear because $V_2$ is positive. Substituting \eqref{eq:norma doble}  and \eqref{eq:norma simple} in \eqref{eq:condicion1} it follows
\be\label{eq:condicion2}
t^2\left(\intN V_2^3\,dx+(\lambda_1-\lambda_2) \intN V_2^2\right)\,dx -t^4\intN V_2^4 \,dx-\frac{1}{2}t^3(1+3\beta)\intN V_2^3\,dx=0.
\ee
Hence, applying the rescaling \eqref{elemento reescalado}  yields
\be\label{cambio p}
\intN V_2^p\,dx=\lambda_2^{p-\frac{N}{4}}\intN V^p\,dx.
\ee
Subsequently, substituting it into \eqref{eq:condicion2}, for $p=2,3,4$, and dividing by $t^2\lambda_2^{3-\frac{N}{4}}$ we have that
\be\label{eq:condicion}
\intN V^3\,dx+\dfrac{\lambda_1-\lambda_2}{\lambda_2}\intN V^2\,dx-t^2\lambda_2\intN V^4\,dx-\frac{1}{2}t(1+3\beta)\intN V^3\,dx=0.
\ee 
Moreover, due to \eqref{eq:Functional constrain N}, \eqref{eq:norma doble} and \eqref{eq:norma simple} we find respectively the expressions
\be\label{eq: forma de Phi(w)}
J(\bw)=\frac{1}{6}t^2\left( \intN V_2^3\,dx+(\lambda_1-\lambda_2) \intN V_2^2\right)\,dx +\frac{1}{12}t^4\intN V_2^4\,dx,
\ee
\be\label{eq: forma de Phi(bv_2)}
J(\bv_2)=I_2(V_2)=\frac{1}{2}\|V_2\|_2^2\,dx-\frac{1}{6}\intN V_2^3\,dx=\frac{1}{12}\intN V_2^3\,dx.
\ee
Furthermore, we are looking for the inequality $J(\bw)< J(\bv_2)$, or equivalently,
\be
\frac{1}{6}t^2\left( \intN V_2^3+(\lambda_1-\lambda_2) \intN V_2^2\right) +\frac{1}{12}t^4\intN V_2^4- \frac{1}{12}\intN V_2^3
<0,
\ee
and, then, applying again \eqref{cambio p} and multiplying by $6\lambda^{\frac{N}{4}-3}$, we actually have 
\be\label{desigualdad de w}
t^2\left( \intN V^3\,dx+\dfrac{\lambda_1-\lambda_2}{\lambda_2}\intN V^2\,dx\right) +\frac{1}{2}t^4\lambda_2\intN V^4\,dx-\frac{1}{2}\intN V^3\,dx<0.
\ee
On the other hand, fixing $\beta$ in condition \eqref{eq:condicion} when $\lambda_2$ is sufficiently large
then, $t$ will have to be sufficiently small in order for the element $\bw$ to stay on $\cN$, i.e., $t\to 0^+$ when $\lambda_2\to \infty$. 
Thus, applying it to \eqref{desigualdad de w} we arrive at the convergence  
\[
t^2\left( \intN V^3\,dx+\dfrac{\lambda_1-\lambda_2}{\lambda_2}\intN V^2\,dx\right)\to 0,\quad \hbox{as}\quad \lambda_2\to\infty.
\]
Moreover, since $t^2\lambda_2$ is bounded we also have
\[
\frac{1}{2}t^4\lambda_2\intN V^4\,dx\to0.
\]
Therefore, there exists a positive constant $\L'_2$ such that for $\l_2>\L'_2$ inequality \eqref{desigualdad de w} holds and, hence, 
$$
J(\widehat{\bu})\leq J(\bw)< J(\bv_2).
$$ 
Finally, to show that $\widehat{\bu}\geq\bo$ and $\widehat{u},\widehat{v}\not \equiv 0$ we can use the same argument as in  Theorem \ref{th:minimo1}.
\end{pfn}

In the following theorem we will prove the existence of a Mountain Pass critical point of $J$ on $\cN$.

\begin{Theorem}\label{Montain Pass}
Assume  $2\leq N\leq 7$ and $\beta<\L'$. There exists a constant
$\L'_2$ such that, if $\l_2>\L'_2$, then $J$ constrained on $\cN$
has a Mountain Pass critical point $\bu^*$ with
$J(\bu^*)>J(\bv_2)$.
\end{Theorem}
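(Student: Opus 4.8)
The plan is to exploit the Mountain Pass geometry created between the strict local minimum $\bv_2$ and the ground state $\widehat{\bu}$ produced in Theorem \ref{Th:lambda>L_2}, and then to apply the Palais--Smale condition on $\cN$ (Lemma \ref{Lemma PS}) to extract a genuine critical point. First I would fix $\l_2>\L_2'$, where $\L_2'$ is the constant from Theorem \ref{Th:lambda>L_2}; this guarantees the existence of $\widehat{\bu}\in\cN$ with $\widehat{\bu}\geq\bo$ and $J(\widehat{\bu})=\inf_{\cN}J<J(\bv_2)$. Since $\beta<\L'$, Proposition \ref{Prop:fund}-(i) tells us that $\bv_2$ is a strict local minimum of $J$ constrained on $\cN$. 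Thus there is a small radius $r>0$ and $\rho_0>0$ such that $J(\bu)\geq J(\bv_2)+\rho_0$ for all $\bu\in\cN$ with $0<\|\bu-\bv_2\|=r$ (taking $r$ small enough that $\widehat{\bu}$ lies outside this sphere, which is possible because $J(\widehat{\bu})<J(\bv_2)$). This is exactly the Mountain Pass configuration on the complete $\mathcal{C}^1$-manifold $\cN$ with endpoints $\bv_2$ and $\widehat{\bu}$.

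Next I would introduce the class of paths
\[
\G=\{\g\in\mathcal{C}([0,1],\cN)\ :\ \g(0)=\bv_2,\ \g(1)=\widehat{\bu}\},
\]
which is non-empty because $\cN$ is connected (one can, for instance, join the two points by first moving along $\cN_2$-type directions and then interpolating, or simply invoke path-connectedness of $\cN$ as a codimension-one manifold in a connected space), and set
\[
c=\inf_{\g\in\G}\max_{t\in[0,1]}J(\g(t)).
\]
Every path in $\G$ must cross the sphere $\{\|\bu-\bv_2\|=r\}\cap\cN$, so $c\geq J(\bv_2)+\rho_0>J(\bv_2)$; in particular $c>J(\widehat{\bu})$ and $c>J(\bv_2)$, so the Mountain Pass level is strictly above both endpoint values. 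Applying the Mountain Pass Theorem on the Hilbert manifold $\cN$ (using the deformation lemma for constrained functionals, valid since $\cN$ is a natural constraint and a complete $\mathcal{C}^1$-manifold) yields a Palais--Smale sequence $\bu_n\subset\cN$ at level $c$, i.e. $J(\bu_n)\to c$ and $\n_{\cN}J(\bu_n)\to0$. By Lemma \ref{Lemma PS}, for $2\le N\le 7$ the functional $J$ satisfies the Palais--Smale condition on $\cN$, so a subsequence converges strongly to some $\bu^*\in\cN$ with $J(\bu^*)=c$ and $\n_{\cN}J(\bu^*)=0$. Since $\cN$ is a natural constraint, $\bu^*$ is a genuine critical point of $J$, hence a bound state of \eqref{eq:NLS-KdV2}, and $J(\bu^*)=c>J(\bv_2)$ as required. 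Because $c>J(\bv_2)$ and $c>J(\widehat{\bu})$, $\bu^*$ is distinct from both $\bv_2$ and the ground state $\widehat{\bu}$.

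The main obstacle I anticipate is the careful verification of the Mountain Pass geometry on the \emph{constrained} manifold rather than on a linear space: one needs the strict local minimum property of Proposition \ref{Prop:fund}-(i) to be upgraded to a uniform quantitative statement on a small geodesic sphere in $\cN$ around $\bv_2$, and one must ensure $\widehat{\bu}$ lies strictly outside a neighbourhood of $\bv_2$ where $J>J(\bv_2)$ — this is where $J(\widehat{\bu})<J(\bv_2)$ from Theorem \ref{Th:lambda>L_2} is essential, and it forces the same largeness condition $\l_2>\L_2'$. A secondary technical point is justifying the Mountain Pass Theorem and its associated deformation argument directly on $\cN$; the cleanest route is to work with the parametrization $\varphi_{\bv_2}$ of $\cN$ near $\bv_2$ from Theorem \ref{TH:cod} to transfer the local picture to a Hilbert space, and to quote the constrained version of the Mountain Pass Theorem (as in \cite{AM,ar}), noting that $J\in\mathcal{C}^2(\E,\R)$ and $\cN$ is a $\mathcal{C}^1$ complete manifold of codimension one on which $J$ is bounded below, so all the hypotheses needed for the deformation lemma are in force. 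Everything else — boundedness of Palais--Smale sequences, non-triviality of the limit, the sign properties — is routine and parallels the arguments already carried out in Theorems \ref{th:minimo1}, \ref{Th:lambda>L_2} and in the second-order case (Theorem \ref{th:bound2}).
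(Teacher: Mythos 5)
Your proposal follows essentially the same route as the paper: combine the strict local minimum at $\bv_2$ from Proposition \ref{Prop:fund}-(i) with the point $\widehat{\bu}$ of lower energy from Theorem \ref{Th:lambda>L_2}, apply the Mountain Pass Theorem on $\cN$ over the path class $\G$ joining $\bv_2$ and $\widehat{\bu}$, and use the Palais--Smale condition of Lemma \ref{Lemma PS} to pass from the Palais--Smale sequence at level $c$ to a critical point $\bu^*$ with $J(\bu^*)=c>J(\bv_2)$. The extra care you take (the uniform energy gap on a small sphere around $\bv_2$, obtainable from the quantitative second-derivative estimate behind Proposition \ref{Prop:fund}-(i), and the non-emptiness of $\G$) only fills in details the paper leaves implicit, so the argument is correct and matches the paper's proof.
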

\begin{pfn}
Due to Proposition \ref{Prop:fund}-(i), $\bv_2$ is a strict local minima of $J$ on $\cN$, and taking into account Theorem \ref{Th:lambda>L_2} we 
obtain $\L'_2$ such that, for $\lambda_2>\L'$, we have $J(\widehat{\bu})<J(\bv_2)$. Under those conditions 
we are able to apply the Mountain Pass Theorem to $J$ on $\cN$, that provides us with a Palais-Smale sequence $\bv_n\in\cN$ such that
\[
J(\bv_n)\to c=\inf\limits_{\gamma\in\Gamma}\max\limits_{0\le t\le 1}J(\gamma(t)),
\]
where 
\[
\Gamma=\left\{\gamma\in \mathcal{C}([0,1],\cN)\  |\ \gamma(0)=\bv_2,\ \gamma(1)=\widehat{\bu} \right\}.
\]
Furthermore, applying the Lemma \ref{Lemma PS}, 
we are able to find a subsequence of $\bv_n$ such that (relabelling) $\bv_n\to\bu^*$ strongly in $\h$. Thus, $\bu^*$ is a critical point of $J$, and also by the Mountain Pass Theorem we have that
\[
J(\bu^*)>J(\bv_2),
\]
which concludes the proof.
\end{pfn}

\subsection{One-dimensional case, $N=1$}\label{sec:4.2}
Here we must  point out that we do not have the compact embedding even for $\h$. However, we  will show that for a Palais-Smale sequence we are able to find a subsequence
for which its weak limit is a solution of \eqref{eq:NLS-KdV2} belonging to $\E$.
 In order to avoid the lack of compactness for $N=1$ we will use the same idea used in \cite{c3} working on the full Nehari manifold $\cM$ defined in \eqref{full nehari}.

The next theorem is the analogous of Theorem \ref{th:minimo1} in  dimension one.

\begin{Theorem}\label{th:minimo N=1}
Suppose $N=1$ and $\beta>\L'$. The infimum of $J$ on $\mathcal{M}$
is attained at some $\wt{\bu}\geq \bo$ with both components
$\wt{u},\wt{v}\not \equiv 0$. Moreover,
$J(\wt{\bu})<J(\bv_2)$.
\end{Theorem}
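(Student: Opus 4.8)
The plan is to mimic the two–step strategy used in the proof of Theorem \ref{th:1} for the second–order system, now working on the full Nehari manifold $\cM$ of \eqref{full nehari} to circumvent the lack of compactness in dimension $N=1$. First I would invoke Ekeland's variational principle (Theorem \ref{principio de eke}), using that $J$ is bounded from below on $\cM$ by \eqref{eq:Functional constrain N} (which holds on $\cM$ as well), to produce a minimizing Palais--Smale sequence $\bu_n\in\cM$ with $J(\bu_n)\to c=\inf_\cM J$ and $\n_\cM J(\bu_n)\to 0$. By the expression \eqref{eq:Functional constrain N2} for $J$ restricted to the Nehari manifold, $\{\bu_n\}$ is bounded in $\E$, so up to relabelling $\bu_n\wk\bu$ in $\E$, $\bu_n\to\bu$ in $\mathbb{L}^q_{loc}(\R)$ and a.e. Next I would show, exactly as in Step 1 of Theorem \ref{th:1}, that $\l_n\to0$ (using \eqref{eq:delta psi restringida a N} and the bound $\|\bu_n\|^2>\rho$) and that $\|G'(\bu_n)\|$ stays bounded (via the Hölder/embedding estimates already carried out in the proof of Lemma \ref{Lemma PS}), whence $J'(\bu_n)\to0$ in $\E'$, so $\bu_n$ is a genuine Palais--Smale sequence for $J$ on $\E$.

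The crucial point — and the main obstacle — is to rule out evanescence, since in $N=1$ we have no compact embedding even on the radial space $\h$ (Remark \ref{re:functionl constrain y PS condition}-(iii)). Setting $\mu_n(x)=u_n^2(x)+v_n^2(x)$, I would claim that there exist $R,C>0$ with $\sup_{z\in\R}\int_{|x-z|<R}\mu_n(x)\,dx\ge C$ for all $n$. If not, a measure lemma of P. L. Lions type (the analogue of Lemma \ref{lem:measure}, now for $W^{2,2}(\R)\hookrightarrow L^q$) would force $\bu_n\to\bo$ in $\mathbb{L}^q(\R)$ for every $2<q<\infty$, hence $\bu\equiv\bo$; but then $0<\tfrac16\rho\le c+o_n(1)=J(\bu_n)$ would be contradicted, since the right side of \eqref{eq:Functional constrain N2} tends to $0$. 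This non-vanishing provides translations $z_n$ so that the shifted sequence $\overline{\bu}_n(x)=\bu_n(x+z_n)$ still lies in $\cM$ (translation invariance of $G$), is still Palais--Smale for $J$ on $\E$, satisfies $J(\overline{\bu}_n)\to c$, and has a weak limit $\overline{\bu}\not\equiv\bo$ in $\E$.

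From here the argument closes as in Theorem \ref{th:1}. By weak lower semicontinuity of $\overline F=J|_\cM$ and Proposition \ref{punto estacionario constrained}, $\overline{\bu}$ is a constrained critical point of $J$ on $\cM$, hence a critical point of $J$ (natural constraint), and $J(\overline{\bu})\le c$. Replacing $\overline{\bu}$ by $|\overline{\bu}|=(|\overline u|,|\overline v|)$ does not change $\|\cdot\|$ or $\int u^4$, and by \eqref{eq:condicion con t}--\eqref{eq:condicion sin t} type reasoning (exactly the computation used at the end of Theorem \ref{th:minimo1}, where $\wt v$ appears only through $|\wt v|$ in $G$ except in the term $\int u^2\wt v$) one gets that the rescaling parameter bringing $|\overline{\bu}|$ back onto $\cM$ is $\le 1$, whence $J(|\overline{\bu}|)\le J(\overline{\bu})$; since $c$ is the infimum, equality holds and $\wt{\bu}:=|\overline{\bu}|\ge\bo$ realizes $\inf_\cM J$ with $G(\wt{\bu})=0$. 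Finally, $\wt v\not\equiv0$ (else $\wt{\bu}\equiv\bo\notin\cM$ by the structure of the second equation of \eqref{eq:NLS-KdV2}) and $\wt u\not\equiv0$: if $\wt u\equiv0$ then $I_2(\wt v)=J(\wt{\bu})=c<J(\bv_2)=I_2(V_2)$ by \eqref{eq:infimo B>L} (which holds since $\b>\L'$, Proposition \ref{Prop:fund}-(ii)), contradicting that $V_2$ is a ground state of $\Delta^2 v+\l_2 v=\tfrac12|v|v$; and $J(\wt{\bu})\le c\le\inf_\cN J<J(\bv_2)$ records the energy inequality claimed in the statement.
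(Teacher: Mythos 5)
Your proposal is correct and follows essentially the same route as the paper's proof: Ekeland's principle on the full Nehari manifold $\cM$, upgrading the constrained Palais--Smale sequence to a free one, excluding evanescence via the Lions-type measure Lemma \ref{lem:measure}, translating to obtain a non-trivial weak limit, and concluding by weak lower semicontinuity of $J|_{\cM}$ together with Proposition \ref{Prop:fund}-$(ii)$ and the non-negativity/non-triviality argument of Theorem \ref{th:minimo1}. The only loose point (treating $\|\,|\overline{\bu}|\,\|=\|\overline{\bu}\|$ in the $W^{2,2}$ setting when passing to $|\overline{\bu}|$) is precisely the step the paper itself delegates to Theorem \ref{th:minimo1}, so your argument matches the paper's own level of detail.
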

\begin{pfn}    
 Again, by the Ekeland's variational principle there exists a minimizing Palais-Smale sequence $\bu_n$ in $\cM$, i.e.,
\[
J(\bu_n)\to m=\inf\limits_{\cM}J\quad\text{and}\quad
\nabla_{\cM}J(\bu_n)\to 0,
\]
thus, $\bu_n$ is bounded due to \eqref{eq:Functional constrain N}. We can assume (relabelling) that $\bu_n\rightharpoonup \bu$ weakly in $\E$, $\bu_n\to \bu$ strongly in $\mathbb{L}^q_{loc}(\R)=L^q_{loc}(\R)\times  L^q_{loc}(\R)$ for every $1\le q<\infty$ and $\bu_k\to \bu$ a.e. in $\R$. Moreover, arguing as the same way that in Lemma \ref{Lemma PS} we can obtain $J'(\bu_n)\to 0$ as $n\to\infty$.

Using the same idea that in \cite{c3} we will prove that there is no evanescence for $\mu_n(x)=u_n^2(x)+v_n^2(x)$, where $\bu_n=(u_n,v_n)$, i.e, exist $R, C>0$ so that 
\be\label{eq:vanishing}
\sup_{z\in\R}\int_{|z-x|<R}\mu_n (x)dx\ge C>0,\quad\forall n\in\mathbb{N}.
\ee
On the contrary, if we suppose
$$
\sup_{z\in\R}\int_{|z-x|<R}\mu_k(x)dx\to 0,
$$
Thanks to Lemma \ref{lem:measure}, applied in a similar way as in \cite{c-fract},  we find that $\bu_k\to \bo$ strongly in
$\mathbb{L}^{q}(\R)$ for any $2<q<\infty$, This is a contradiction since $\bu_n \in\cN$, and by \eqref{eq:Functional constrain N} jointly with the fact $J(\bu_n)\to c$ we have
$$
0< \frac 17\rho <m+o_n(1)=J(\bu_n)=F(\bu_n),\quad\mbox{with } o_n(1)\to 0 \quad\mbox{as }n\to\infty,
$$
hence \eqref{eq:vanishing} is true and there is no evanescence.

We observe that   we can find a sequence of points
$\{z_n\}\subset\R$ so that by \eqref{eq:vanishing}, the translated sequence $\overline{\mu}_n(x)= \mu_n(x+z_n)$ satisfies
$$
\liminf_{n\to\infty}\int_{B_R(0)}\overline{\mu}_n\ge C >0.
$$
Taking into account that $\overline{\mu}_n\to \overline{\mu}$
strongly in $L_{loc}^1(\R)$, we obtain that
$\overline{\mu}\not\equiv 0$, thus, the weak limit of 
$\overline{\bu}_n(x)=\bu_n(x+z_n)$ which we denote by $\overline{\bu}$ is non-trivial. Notice that $\overline{\bu}_n,\overline{\bu}\in\cM$ and $\overline{\bu}_n$ is a Palais-Smale sequence of  $J$ on $\cM$ due to the invariance of $J$ under translations. Moreover, if we set $\overline{F}=J|_{\cM}$, we obtain the following from the lower semi-continuity of $\overline{F}$,
$$
J (\overline{\bu})  =  \dyle \overline{F}(\overline{\bu})
 \le  \dyle\liminf_{n\to\infty} \overline{F}(\overline{\bu}_n)
 =  \dyle\liminf_{n\to\infty}J(\overline{\bu}_n)=
   \dyle\liminf_{n\to\infty}J(\bu_n)= m.
$$
Therefore, $\overline{\bu}$ is a non-trivial critical point of $J$ constrained on $\cM$. Furthermore, it is not a semi-trivial solutions since we know that $J(\overline{\bu})<J(\bv_2)$ from Proposition \ref{Prop:fund}-$(ii)$. Finally, to show that $\overline{\bu}\geq\bo$ we apply the same argument used in Theorem \ref{th:minimo1}.
\end{pfn}

Theorem \ref{Th:lambda>L_2} can be extended to the one-dimensional
case directly using the same idea that was utilized  in the last
proof, obtaining the following.
\begin{Corollary}\label{Cor:lambda>L_2}
Assume  $N=1$, $\beta>0$. There exists a positive constant $\L'_2$
such that, if $\l_2>\L'_2$, the functional $J$ attains its infimum
on $\cN$ at some $\widehat{\bu}\geq\bo$ with
$J(\widehat{\bu})<J(\bv_2)$ and both
$\widehat{u},\widehat{v}\not \equiv 0$.
\end{Corollary}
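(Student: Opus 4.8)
The plan is to run the proof of Theorem \ref{th:minimo N=1} essentially verbatim, inserting at the one place where it invokes Proposition \ref{Prop:fund}-$(ii)$ the $\l_2$-large energy estimate already established in Theorem \ref{Th:lambda>L_2}. Since the case $\beta>\L'$ is exactly Theorem \ref{th:minimo N=1}, I would first reduce to $0<\beta\le\L'$. As in that proof (and as in Theorem \ref{th:minimo1}) the result is most naturally obtained on the full Nehari manifold $\cM$, on which the analogues of \eqref{eq:Functional constrain N2}--\eqref{eq:Functional constrain N} and the natural-constraint property still hold. Apply Ekeland's variational principle (Theorem \ref{principio de eke}) to $J|_{\cM}$ to produce a minimizing Palais--Smale sequence $\bu_n=(u_n,v_n)\in\cM$ with $J(\bu_n)\to m:=\inf_{\cM}J$ and $\nabla_{\cM}J(\bu_n)\to\bo$; by the analogue of \eqref{eq:Functional constrain N} one has $m>\tfrac16\rho>0$ and $\{\bu_n\}$ is bounded in $\E$, so up to a subsequence $\bu_n\rightharpoonup\bu$ in $\E$, with convergence in $\mathbb{L}^q_{loc}(\R)$ and a.e. Exactly as in Lemma \ref{Lemma PS} one checks, using \eqref{eq:delta psi restringida a N} and \eqref{eq:away from zero}, that the Lagrange multipliers $\l_n\to0$ and that $\|G'(\bu_n)\|$ is bounded (Hölder plus the continuous embedding of $E$), hence $J'(\bu_n)\to0$ in $\E'$.

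Next I would rule out evanescence for $\mu_n:=u_n^2+v_n^2$, i.e. show there are $R,C>0$ with $\sup_{z\in\R}\int_{|x-z|<R}\mu_n\,dx\ge C$ for all $n$: otherwise Lemma \ref{lem:measure} forces $\bu_n\to\bo$ in $\mathbb{L}^q(\R)$ for every $2<q<\infty$, hence $J(\bu_n)\to0$, contradicting $m>\tfrac16\rho$. Choosing $z_n$ with $\liminf_n\int_{B_R(0)}\mu_n(\cdot+z_n)>0$, the translates $\overline{\bu}_n(x)=\bu_n(x+z_n)$ still lie in $\cM$ and still form a Palais--Smale sequence for $J$ (both $G$ and $J$ are translation invariant), and their weak limit $\overline{\bu}$ is nontrivial because $\mu_n(\cdot+z_n)\to\overline{\mu}$ in $L^1_{loc}(\R)$ with $\overline{\mu}\not\equiv0$. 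By weak lower semicontinuity of $\overline{F}=J|_{\cM}$ and Proposition \ref{punto estacionario constrained}, $\overline{\bu}$ is a constrained critical point of $J$ on $\cM$ with $J(\overline{\bu})\le m$, hence a genuine critical point of $J$ since $\cM$ is a natural constraint (Theorem \ref{TH:natural constrain}); in particular $J(\overline{\bu})=m$.

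The decisive step is to certify $m<J(\bv_2)$, and this is where the hypothesis $\l_2>\L_2'$ enters. For $\beta>\L'$ this is Proposition \ref{Prop:fund}-$(ii)$ (together with $\cN\subset\cM$), so assume $0<\beta\le\L'$ and reuse the computation of Theorem \ref{Th:lambda>L_2}: the even element $\bw=t(V_2,V_2)$, with $t>0$ uniquely fixed by $G(\bw)=0$, belongs to $\cN\subset\cM$, and using $\|(V_2,V_2)\|^2=2\|V_2\|_2^2+(\l_1-\l_2)\intN V_2^2\,dx$, $P_2(V_2)=0$, and the scaling \eqref{elemento reescalado} (so $\intN V_2^p\,dx=\l_2^{p-N/4}\intN V^p\,dx$ for $p=2,3,4$), one gets $t\to0^+$ and $t^2\l_2$ bounded as $\l_2\to\infty$, whence $J(\bw)<J(\bv_2)$ for all $\l_2$ beyond some $\L_2'>0$. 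This argument is purely algebraic, so it is insensitive to $N=1$ and to the loss of compactness; thus $m\le J(\bw)<J(\bv_2)$, so $\overline{\bu}\neq\bv_2$. Consequently $\overline{u}\not\equiv0$ — otherwise $\overline{\bu}=(0,\overline{v})$ with $I_2(\overline{v})=m<J(\bv_2)=I_2(V_2)$, contradicting that $V_2$ is a ground state of $\Delta^2 v+\l_2 v=\tfrac12|v|v$ — and $\overline{v}\not\equiv0$, since otherwise the second equation forces $\overline{u}\equiv0$. Finally, replacing $\overline{\bu}$ by $|\overline{\bu}|=(|\overline{u}|,|\overline{v}|)$ leaves $G$ unchanged, keeps it in $\cM$, and does not increase the energy, exactly as in Theorem \ref{th:minimo1}, so $\widehat{\bu}:=|\overline{\bu}|\ge\bo$ is the desired minimizer.

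The main obstacle is the familiar one-dimensional difficulty — recovering a nonzero weak limit in the absence of a compact embedding of $\h$ into $L^q(\R)$ — which is overcome by the evanescence/translation argument above, taken verbatim from Theorem \ref{th:minimo N=1}. The only genuine care needed is to confirm that the strict energy gap $J(\widehat{\bu})<J(\bv_2)$ produced by the $\l_2$-large test function $\bw$ is preserved after translating, which is immediate because both $J$ and $\cM$ are invariant under translations; everything else (boundedness, the multiplier estimate, positivity) is routine bookkeeping identical to the high-dimensional proofs.
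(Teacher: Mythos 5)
Your proposal is correct and is essentially the paper's own argument: the paper proves this corollary with the one-line remark that Theorem \ref{Th:lambda>L_2} extends to $N=1$ ``using the same idea as in the last proof'', i.e.\ by combining the $\l_2$-large test element $\bw=t(V_2,V_2)$, which gives $J(\bw)<J(\bv_2)$ by the purely algebraic scaling computation, with the evanescence/translation (concentration-compactness) argument of Theorem \ref{th:minimo N=1}, which is exactly the combination you spell out. The only cosmetic difference is that, just as in Theorem \ref{th:minimo N=1}, the minimizer you produce lives on the full Nehari manifold $\cM$ rather than on the radial manifold $\cN$ appearing in the statement, which is precisely how the paper itself intends the corollary to be read.
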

To finish, for $N=1$, Theorem \ref{Montain Pass}  can be obtained in
a similar manner, obtaining the following.
\begin{Corollary}\label{Cor:Montain Pass}
Assume  $N=1$ and $\beta<\L'$. There exists a constant $\L'_2$ such
that, if $\l_2>\L'_2$, then $J$  constrained on $\cN$ has a
Mountain Pass critical point $\bu^*$ with $J(\bu^*)>J(\bv_2)$.
\end{Corollary}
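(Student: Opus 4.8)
The plan is to mirror exactly the proof of Theorem \ref{Montain Pass}, replacing the compactness argument based on the radial compact embedding (Theorem \ref{radial compact embe}, used through Lemma \ref{Lemma PS}) by the concentration-compactness argument already carried out for $N=1$ in Theorem \ref{th:minimo N=1}. First I would record that, since $\beta<\L'$, Proposition \ref{Prop:fund}-$(i)$ shows $\bv_2$ is a strict local minimum of $J$ constrained on $\cN$. Next, applying Corollary \ref{Cor:lambda>L_2}, I fix the constant $\L'_2>0$ there provided, and assume $\l_2>\L'_2$; this yields a point $\widehat{\bu}\in\cN$ with $J(\widehat{\bu})<J(\bv_2)$. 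These two facts furnish the Mountain Pass geometry on the manifold $\cN$: $\bv_2$ is a strict local minimum and $\widehat{\bu}$ lies strictly below it, so the min-max level
\[
c=\inf_{\gamma\in\Gamma}\max_{0\le t\le 1}J(\gamma(t)),\qquad
\Gamma=\{\gamma\in\mathcal{C}([0,1],\cN)\ :\ \gamma(0)=\bv_2,\ \gamma(1)=\widehat{\bu}\},
\]
satisfies $c>\max\{J(\bv_2),J(\widehat{\bu})\}$, and the Mountain Pass Theorem gives a Palais-Smale sequence $\bv_n\in\cN$ at the level $c$.

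The heart of the proof is then to pass to the limit in this Palais-Smale sequence without the compact embedding, which fails for $N=1$ even on $\h$ (Remark \ref{re:functionl constrain y PS condition}-$(iii)$). Here I would repeat verbatim the mechanism from Theorem \ref{th:minimo N=1}: from \eqref{eq:Functional constrain N} the sequence $\bv_n$ is bounded in $\h$ (hence in $\E$), so up to relabelling $\bv_n\rightharpoonup\bu^*$ weakly, strongly in $\mathbb{L}^q_{loc}(\R)$ and a.e.; arguing exactly as in Lemma \ref{Lemma PS} one upgrades the constrained statement to $J'(\bv_n)\to 0$ in $\E'$. Setting $\mu_n(x)=v_{1,n}^2(x)+v_{2,n}^2(x)$ one rules out evanescence via Lemma \ref{lem:measure} (applied as in \cite{c-fract}), using $c>0$ and \eqref{eq:Functional constrain N} for the contradiction, then translates $\bv_n$ by a suitable sequence $z_n\in\R$ to obtain a non-trivial weak limit $\bu^*$, which is a critical point of $J$ constrained on $\cM$, and since $\cM$ is a natural constraint, a genuine solution of \eqref{eq:NLS-KdV2}. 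The Mountain Pass level then forces $J(\bu^*)\ge c>J(\bv_2)$, so in particular $\bu^*\neq\bv_2$ and $\bu^*$ is not semi-trivial.

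I would then close exactly as the excerpt does for Theorems \ref{th:minimo1} and \ref{Th:lambda>L_2}: to ensure $\bu^*\geq\bo$ with both components not identically zero, replace $\bu^*$ by $t|\bu^*|\in\cM$ for the unique $t>0$, and use the monotonicity computation \eqref{eq:comparar t}--\eqref{eq:condicion con t} to conclude $t=1$ and $J(|\bu^*|)=J(\bu^*)$; the inequality $J(\bu^*)>J(\bv_2)$ together with the maximum principle applied to each equation of \eqref{eq:NLS-KdV2} upgrades non-negativity to strict positivity of $u^*$ once $u^*\not\equiv 0$, and the second equation then gives $v^*>0$ as well. The main obstacle is purely that of writing the $N=1$ concentration-compactness argument without simply citing Lemma \ref{Lemma PS}; everything else is a line-by-line transcription of the already-established $N\geq2$ proof (Theorem \ref{Montain Pass}) and of the one-dimensional minimization proof (Theorem \ref{th:minimo N=1}). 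In practice the statement is phrased as Corollary \ref{Cor:Montain Pass} precisely because, after Theorems \ref{Montain Pass} and \ref{th:minimo N=1}, no new idea is required — one only notes that the non-evanescence/translation scheme replaces the compact embedding at the single point where compactness enters.
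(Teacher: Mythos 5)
Your proposal follows exactly the route the paper intends: the paper proves this corollary only by remarking that Theorem \ref{Montain Pass} extends to $N=1$ ``in a similar manner'', i.e.\ by combining the Mountain Pass geometry coming from Proposition \ref{Prop:fund}-$(i)$ and Corollary \ref{Cor:lambda>L_2} with the non-evanescence/translation scheme of Theorem \ref{th:minimo N=1} (Lemma \ref{lem:measure}) in place of the compactness Lemma \ref{Lemma PS}, which is precisely what you do. Your closing paragraph on non-negativity via $t|\bu^*|$ is not required by the statement (and that replacement is only justified for minimizers, not for a mountain-pass point), but the corollary as stated is obtained by your argument in essentially the same way as in the paper.
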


\chapter*{Conclusions}
\addcontentsline{toc}{chapter}{Conclusions}
\markboth{CONCLUSIONS}{CONCLUSIONS}

In this work, we have studied the existence of bound and ground states for  a coupled stationary system of nonlinear Schr\"odinger--Korteweg-de Vries equations. This fact has been proved for dimensions $1\leq N\leq 3$  where, for $N=1$,  we have used a  measure lemma in order to circumvent the lack of compactness and, for $N=2,3$, the compact embedding of the radial Sobolev space. 

Also, we have considered for the first time a higher order system of nonlinear Schr\"odinger--Korteweg-de Vries equations as a natural extension of the above mentioned system. From this, we have obtained a bi-harmonic stationary system looking for ``standing-traveling'' waves solutions and, using similar variational techniques, we have  proved the existence and multiplicity of solutions under appropriate conditions on the parameters and with dimension $1\leq N\leq 7$. To the best of our knowledge this is a new result.


\providecommand{\bysame}{\leavevmode\hbox to3em{\hrulefill}\thinspace}
\providecommand{\MR}{\relax\ifhmode\unskip\space\fi MR }
\providecommand{\MRhref}[2]{%
  \href{http://www.ams.org/mathscinet-getitem?mr=#1}{#2}
}
\providecommand{\href}[2]{#2}



\printindex
\end{document}